\documentclass[11pt,oneside,reqno]{amsart}
\usepackage{mathpazo} % math & rm
\normalfont
\usepackage[T1]{fontenc}

\usepackage{multirow}

\usepackage[utf8]{inputenc}
\usepackage{amsmath}
\usepackage{amsfonts}
\usepackage{amssymb}
\usepackage{amsthm}
\usepackage{mathtools}%gives the arrow typeset for inclusion
\usepackage{stmaryrd}%Including this for the "double-bracket" \llbracket and \rrbracket
\usepackage{tikz}
\usepackage{tikz-cd}
\usepackage[all]{xy}
\usepackage{enumitem}%This package allows me to change the formatting of the numbers for an enumerate environment.
\usepackage[margin=1.2in]{geometry} % to set the margins

\usepackage{skak}
\usepackage{accents}

\usepackage{cjhebrew}%To use hebrew letters, because I ran out of letters
\usepackage{wasysym}%To use \gemini, because I ran out of letters

\usepackage{hyperref}

\newtheorem{thm}{Theorem}[section]
\newtheorem{theorem}[thm]{Theorem}
\newtheorem{lemma}[thm]{Lemma}
\newtheorem{conj}[thm]{Conjecture}
\newtheorem{conjecture}[thm]{Conjecture}
\newtheorem{coro}[thm]{Corollary}

\newtheorem{prop}[thm]{Proposition}
\newtheorem{proposition}[thm]{Proposition}
\newtheorem{question}[thm]{Question}
\theoremstyle{definition}
\newtheorem{defi}[thm]{Definition}
\newtheorem{definition}[thm]{Definition}
\newtheorem{remark}[thm]{Remark}
\newtheorem{example}[thm]{Example}
\newtheorem{notation}[thm]{Notation}

\newtheorem*{conjecture*}{Conjecture}
\newtheorem{construction}[thm]{Construction}

\newcommand\xqed[1]{%
  \leavevmode\unskip\penalty9999 \hbox{}\nobreak\hfill
  \quad\hbox{#1}}
\newcommand\exEnd{\xqed{$\diamondsuit$}}

\newcommand{\B}{\mathbb{B}}

\newcommand{\R}{\mathbb{R}}
\newcommand{\T}{\mathbb{T}}
\newcommand{\TT}{\T}
\newcommand{\Z}{\mathbb{Z}}
\newcommand{\ZZ}{\Z}
\newcommand{\N}{\mathbb{N}}
\newcommand{\Q}{\mathbb{Q}}

\newcommand{\calC}{\mathcal{C}}
\newcommand{\calc}{\calC}

\newcommand{\fraka}{\mathfrak{a}}

\newcommand{\frakf}{\mathfrak{f}}
\newcommand{\frakp}{\mathfrak{p}}

\newcommand{\comd}[1]{$$\xymatrix{#1}$$}
\newcommand{\inj}[0]{\ar@{^{(}->}}
\newcommand{\surj}[0]{\ar@{->>}}
\newcommand{\bij}[0]{\ar@{^{(}->>}}
\newcommand{\lbij}[0]{\ar@{_{(}->>}}
\newcommand{\linj}[0]{\ar@{_{(}->}}
\newcommand{\parr}[0]{\ar@{.>}}
\newcommand{\pinj}[0]{\ar@{^{(}.>}}
\newcommand{\psurj}[0]{\ar@{.>>}}
\newcommand{\pbij}[0]{\ar@{^{(}.>>}}
\newcommand{\lin}[0]{\ar@{-}}
\newcommand{\llin}[0]{\ar@{=}}
\newcommand{\usu}[0]{\ar@{->}}%The usual, unadorned, arrow. You can also just use \ar
\newcommand{\toup}[1]{\stackrel{#1}{\longrightarrow}}%An arrow to the right, taking an argument that is the name of the function, to be placed over the arrow

\newcommand{\dsum}{\displaystyle\sum}
\newcommand{\dprod}{\displaystyle\prod}

\newcommand{\sdrop}{\backslash}
\newcommand{\into}{\hookrightarrow}

\newcommand{\ph}{\varphi}
\newcommand{\TerryPair}[2]{\Lambda_{#2}(#1)}

\newcommand{\supp}{\operatorname{supp}}%Support of, well, anything.
\newcommand{\Frac}{\operatorname{Frac}}%Fraction field of a integral domain
\newcommand{\trop}{\operatorname{trop}}

\def\Mon{\mathcal{M}}
\def\base{S}%The base-semifield that we are working over. For now, it is S.
\newcommand{\dcap}{\displaystyle\bigcap}

\newcommand{\CR}{A} %coordinate semiring
\newcommand{\FCR}{R} %Total semiring of fractions of the coordinate semiring 

\newcommand{\angbra}[1]{\left\langle #1\right\rangle}%Angle brackets, to be used for a pairing or a module being generated by a list of things.
\newcommand{\AangX}[2]{#1\!\angbra{#2}}
\newcommand{\aangx}[2]{\AangX{#1}{#2}}

\def\J-int{{J-integral}} 
\def\Jinty{{J-integrality}} 
\def\D-int{{D-integral}} 
\def\Dinty{{D-integrality}} 
\def\Q-int{{quasi-integral}}
\def\JQ-int{{annihilator-free quasi-integral}}
\def\AQ-int{{annihilator-free quasi-integral}}
\def\SQ-int{{quasi-integral}}
\def\WQ-int{{weakly quasi-integral}}
\def\dc{{downward-closed}}
\def\n-int{{N-integral}}%For "Naively integral". I can't use \N-int because it confuses \N
\newcommand{\namedClosure}[2]{\nbar{#1}^{\mathrm{#2}}}
\newcommand{\NClosure}[1]{\namedClosure{#1}{N}}
\newcommand{\Ncl}[1]{\NClosure{#1}}
\newcommand{\ncl}[1]{\Ncl{#1}}
\newcommand{\JClosure}[1]{\namedClosure{#1}{J}}
\newcommand{\Jcl}[1]{\JClosure{#1}}
\newcommand{\jcl}[1]{\Jcl{#1}}
\newcommand{\DClosure}[1]{\namedClosure{#1}{D}}
\newcommand{\Dcl}[1]{\DClosure{#1}}
\newcommand{\dcl}[1]{\Dcl{#1}}
\newcommand{\VClosure}[1]{\namedClosure{#1}{V}}
\newcommand{\Vcl}[1]{\VClosure{#1}}
\newcommand{\vcl}[1]{\Vcl{#1}}
\newcommand{\QClosure}[1]{\namedClosure{#1}{aQ}}
\newcommand{\Qcl}[1]{\QClosure{#1}}
\newcommand{\qcl}[1]{\Qcl{#1}}
\newcommand{\strongQClosure}[1]{\namedClosure{#1}{Q}}
\newcommand{\sQcl}[1]{\strongQClosure{#1}}
\newcommand{\sqcl}[1]{\sQcl{#1}}
\newcommand{\weakQClosure}[1]{\namedClosure{#1}{wQ}}
\newcommand{\wQcl}[1]{\weakQClosure{#1}}
\newcommand{\wqcl}[1]{\wQcl{#1}}

\newcommand{\wt}[1]{\widetilde{#1}}
\newcommand{\what}[1]{\widehat{#1}}%A wide hat. Also, WHAT?

\newcommand{\Bend}{\operatorname{Bend}}%The bend congruence of an ideal.
\newcommand{\bend}{\operatorname{bend}}%The bend relations of a polynomial.
\newcommand{\nbar}[1]{\overline{#1}}%A new bar. Better than the usual \bar.
\DeclareFontFamily{U}{min}{}
\DeclareFontShape{U}{min}{m}{n}{<-> dmjhira}{}
\newcommand{\MissMon}{missing monomial}%A missing monomial for a polynomial f. May or may not be friends with Mrs. Pacman.
\newcommand{\missmon}{\MissMon}

\newcommand{\newt}{\operatorname{Newt}}

\newcommand{\susbeteq}{\subseteq}% Because Nati makes a lot of typos.

\newcommand{\dom}{\operatorname{dom}}%domain
\DeclareMathOperator{\wsgn}{\wt{sgn}}%The "weird" sign of a partial permutation.

\title{Integral Closure for (additively idempotent) semirings}
\author{Netanel Friedenberg}
\address{Department of Mathematics, Tulane University, New Orleans, LA 70118, USA}
\email{nfriedenberg@tulane.edu}

\author[K.~Mincheva]{Kalina~Mincheva}
\address{Department of Mathematics, Tulane University, New Orleans, LA 70118, USA}
\email{kmincheva@tulane.edu}
\date{}

\begin{document}

\begin{abstract}
    In commutative ring theory there are multiple equivalent definitions of integrality. These notions diverge when working with idempotent semirings. In this paper we present these different definitions of integrality {for semirings} and explore the relations between them. As a tool, we prove a Cayley-Hamilton theorem over additively idempotent semirings, which may be of broader interest. In examples, we compute integral closures of coordinate semirings in their total semiring of fractions and integral closures of sub-semirings of coordinate semirings. Such computation gives avenues to defining and understanding the normalization of tropical varieties as well as computing normalization of varieties tropically.
\end{abstract}

\maketitle

%=======================================
% Notes (done with): 62
% Notes (left): 63-66
%=======================================

%\section{Conventions}
\tableofcontents
\section{Introduction}
% \red{TODO
% \begin{enumerate}
%     \item check for color
%     \item check that the theorems in the intro have the updated statements in case updates have happened
%     \item We might consider going through and, before any lemma that is elementary, saying that the lemma is elementary and maybe ``we include the details for completeness'' or something like that.
% \end{enumerate}
% }

\subsection{Context and motivation}   

Tropical geometry associates a combinatorial object, called a tropical variety, to an algebraic variety, allowing for the use of combinatorial tools to approach classical problems of algebraic geometry. Historically, the underlying algebra of a tropical variety have been ignored and algebraic computations were done on the classical side. However, the semiring algebra retains much more information than just the tropical variety and can provide deeper insights than the combinatorial properties of tropical varieties alone. In recent years, there has been a lot of effort dedicated to developing the necessary tools of additively idempotent commutative algebra using different frameworks, among which are prime congruences \cite{JM17}, \cite{FM23}, congruences on rational function semifields \cite{Son23a}, \cite{Son23b}, tropical ideals \cite{MR18}, and tropical schemes \cite{GG13}. These approaches allow for both the exploration of properties of tropicalized spaces without tying them up to the original varieties and for augmenting existing combinatorial methods as done in \cite{AKS25}.

In this paper we focus on the integral closure of a semiring. Unsurprisingly, the tropical analogues of equivalent notions of integral elements over rings are no longer equivalent. Recall that if $B$ is a ring and $A$ a subring, then an element $b\in B$ is integral over $A$ if (1) $b$ is the root a monic polynomial with coefficients in $A$ or equivalently (2) there exists a faithful $A[b]$-module $M$ such that $M$ is finitely generated as an $A$-module.
In \cite{Tol16}, Tolliver first considers one analogue of each of (1) and (2) for additively idempotent semirings, which we refer to as \J-int and \JQ-int, respectively. In this paper we also add a number of refinements. We ask when the different notions of integrality agree and are equal to the intersection of valuation semirings. We find that, while some definitions are easier to compute with, others more often agree with the suitable intersection of valuation semirings.

Of particular interest is the case of finding the integral closure of the `coordinate semiring' of a tropical variety in its total semiring of fractions. Such computation gives avenues to defining and understanding the normalization of tropical varieties as well as computing normalization of varieties tropically. Due to the piecewise-linear nature of tropical varieties the only way one can approach normalization tropically is from this algebraic point of view. %\red{sure. You're welcome to add such a sentence.}

% \green{Our last} \orange{Another} motivation \orange{to study integral elements} 
Another motivation to study integral elements 
comes from the theory of adic spaces. One defines the adic spectrum of a pair of rings $(R, R^+)$, where the subring $R^+$ of an f-adic ring $R$ satisfies a number of conditions, one of which is that it is integrally closed in $R$. 
% \green{To describe a tropical adic space which we started in \cite{FM23}, we need to be able to compute the sub-semiring of $A$, which plays an analogous role to $R^+$ for an adic space. For us this is the intersection of all valuation semirings containing $A$, or the "valuative" integral closure of $A$.}
So, to continue the project, started in \cite{FM23}, of defining tropical adic spaces, we need to understand integral closure of sub-semirings of additively idempotent semirings.

%\blue{I think it's better here but can be par 2} 
There are other works that have considered integral elements in the context of semirings. Our starting point is unpublished work by Tolliver in \cite{Tol16}, where he introduces the notions of \J-int and \JQ-int elements. In \cite{JMR22} the authors use a similar definition to \J-int for integrality in the context of systems. More recently, Jun, Ray, and Tolliver in \cite{JRT20} present some versions of integral closure operations for ideals. They suggest in \cite[Remark 5.7]{JRT20} that their integral closure is contained in an analogue of the ring theoretic integral closure of an ideal defined in terms of valuations. The unpublished manuscript \cite{Mac18} introduces a notion for integrality for elements and order ideals of modules over semirings. However, this notion of integrality does not align with the notions presented in this paper, \cite{Tol16}, and \cite{JRT20}, and is constructed in order to force elements to be multiplicatively cancellative.

\subsection{Results}   

Since the equivalent notions of integrality for ring elements diverge over semirings our first goal is to see how they relate. 
Four notable notions are \J-int elements, which are defined in terms of a monic polynomial, \SQ-int elements, which are defined in terms of faithful submodules of the ambient semiring, \D-int elements, which are those bounded above by \J-int elements\footnote{Equivalently, the set of \D-int elements is the saturation (or downward-closure) of the set of \J-int elements.}, and the valuative integral closure, defined as an intersection of valuation semirings.
We refer the reader to Definition~\ref{def: integrals} for the rigorous definition of each. For an extension of semirings $A \subseteq R$ we denote the \J-int (resp. \D-int, resp. \SQ-int, resp. valuative) closure of $A$ in $R$ by $\Jcl{A}$ (resp. $\dcl{A}$, resp. $\sQcl{A}$, resp. $\vcl{A}$). 

%\red{(This next sentence needs to be changed. And we may want to re-arrange where in the intro this is based on the fact that Theorem~\ref{thm: intro-int-semifield} is now done later in the paper.)}
%\green{We first focus on the case when in the extension of semirings $A \subseteq R$, $R$ is a semifield. We recover a result stated in \cite{Tol16} about the integral closure of an additively idempotent semiring $A$ in a semifield. %The original result contains a small gap; see Remark~\ref{rmk: gap}. Our proof of this result uses very different techniques. The tools we develop are semiring analogues of tools used to prove that integral closures are intersections of valuation rings in the full generality of arbitrary ring extensions; see \cite{Gra82} and \cite{Man2}.}

One of the main technical tools for working with \J-int elements is a tropical version of the Cayley-Hamilton theorem. It allows us to show that in certain cases all introduced notions of integral closure agree.
\begin{theorem}[Theorem~\ref{thm:CH}]
    Let $A = (a_{ij})$ be an $n\times n$ matrix over an additively idempotent semiring $S$ and let $p(x)$ be the characteristic polynomial of $A$. Then $A$ satisfies the bend relations of $p(x)$.
\end{theorem}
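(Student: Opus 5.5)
We plan to adapt Straubing's combinatorial proof of the classical Cayley--Hamilton theorem. Over an additively idempotent semiring there are no signs, so we take the characteristic polynomial in the form $p(x)=\sum_{k=0}^{n} c_k x^{n-k}$. Here $c_k$ is the sum, over all $k$-element subsets $K\subseteq\{1,\dots,n\}$ and all permutations $\sigma$ of $K$, of the weights $\prod_{i\in K} a_{i\sigma(i)}$. If the paper's definition carries sign data (via $\wsgn$), only the resulting set of monomials matters, and the argument below is insensitive to it.

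Write $\le$ for the natural order $s\le t \iff s+t=t$. Satisfying the bend relations of $p$ means the following: for each $k$, the matrix $p(A)$ is unchanged when the term $c_kA^{n-k}$ is deleted. By idempotency this is equivalent to the entrywise inequality
\[
c_kA^{n-k}\le \sum_{j\ne k} c_jA^{n-j}.
\]
So it suffices to show that every monomial contributing to the $(i,j)$ entry of $c_kA^{n-k}$ also occurs as a monomial in the $(i,j)$ entry of some $c_{k'}A^{n-k'}$ with $k'\ne k$.

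The monomials in the $(i,j)$ entry of $c_kA^{n-k}$ are indexed by pairs $(\pi,\gamma)$. Here $\pi$ is a cycle cover (a permutation) of a $k$-subset $K$, and $\gamma$ is a walk $i=v_0\to v_1\to\cdots\to v_{n-k}=j$ of length $n-k$ in the complete digraph on $\{1,\dots,n\}$. The weight of $(\pi,\gamma)$ is the product of the $a$'s along the cycles of $\pi$ and along the edges of $\gamma$.

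The key step is to construct, exactly as Straubing does, a weight-preserving map $\Phi$ on such pairs that always changes $k$. It is built as follows.
\begin{enumerate}
\item Between them, $\gamma$ visits $n-k+1$ vertex-occurrences and $K$ has $k$ elements, giving $n+1$ in total, so by pigeonhole some vertex is repeated.
\item Traverse $\gamma$ from $v_0$ and stop at the first index $t$ such that either $v_t\in K$, or $v_t=v_s$ for some $s<t$.
\item In the second case, excise the closed loop $v_s\to\cdots\to v_t$ from $\gamma$ and add it to $\pi$ as a new cycle. This increases $k$ by the loop length, which is at least $1$.
\item In the first case, remove from $\pi$ the cycle through $v_t$ and splice it into $\gamma$ at position $t$. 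This decreases $k$ by the cycle length.
\end{enumerate}
The multiset of edges is unchanged, so the weight is preserved, and the endpoints $i,j$ of the walk are unchanged. The new walk length is again $n-k'$, consistent with the new $k'$. Hence the monomial of $(\pi,\gamma)$ in $c_kA^{n-k}$ reappears in $c_{k'}A^{n-k'}$ with $k'\neq k$, which gives the bend relation.

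The main obstacle is the bookkeeping in the splicing and excising. We must check that the first-stop rule is well defined: ties between the two cases cannot occur, since a vertex of $K$ not previously on $\gamma$ is first met before any repetition through it, and the stop is reached before the end of the walk. We must also check that a cycle of $\pi$ is disjoint from the part of $\gamma$ before the stop, so that the new $\pi$ is still a permutation of a subset. We do not actually need $\Phi$ to be an involution, only that it changes $k$ and preserves weight and endpoints. Still, we would verify the involution property as a sanity check, following Straubing's original argument. Finally, we need to confirm that the convention for $c_0=1$ (the term $A^n$) and for $c_n$ (the term $c_n I$) fits the walk description: $c_0$ corresponds to the empty cover, and $c_n$ to walks of length $0$, which exist only when $i=j$.
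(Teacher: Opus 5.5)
Your proposal is correct and is essentially the paper's proof: the paper also reuses Straubing's weight-preserving involution on pairs (partial permutation, path from $i$ to $j$), observes that it always changes the path length, and concludes entrywise that $d_k(A^k)_{i,j}\le (p_{\hat{k}}(A))_{i,j}$, which by idempotency is exactly the bend relation. You are right that only weight preservation and the change of $k$ matter, not the involution property, since repeated summands collapse in an idempotent semiring; the only small slip is that the stopping index can be the final vertex of the walk rather than strictly before it.
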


%\green{We use this Cayley-Hamilton theorem to understand the relations between the different notions of integrality. Our first result in this direction provides an analogue of the equivalence of monic polynomial integrality and module integrality that is true in an arbitrary semiring.}\red{we already said the CH is used for relating notions of integrality before stating it}
%
As a consequence of this Cayley-Hamilton we obtain an analogue of the equivalence of monic polynomial integrality and module integrality that is true in an arbitrary semiring.

% \red{(It could be that we should emphasize Corollary~\ref{coro:D-intAndFaithfulA[x]Mods} more. This is because $X$ is the natural semiring analogue of the more usual faithful module condition in the ring setting, (that $x\in R$ is integral over $A\susbeteq R$ if and only if there is a faithful $A[x]$-module that is finitely generated as an $A$-module) and so this statement gives a form of equation/inequality integrality is the same as module integrality that is true in an arbitrary semiring.)}
% \blue{this is a coro of CH}
\begin{theorem}[Corollary~\ref{coro:D-intAndFaithfulA[x]Mods}]
Let $A\subseteq R$ be an extension of additively idempotent semirings. Let $$X:=\{x\in R\,:\,\text{there is a %strongly 
faithful }A[x]\text{-module that is finitely generated as an }A\text{-module}\}.$$ Then $\dcl{A}$ is the downward closure of $X$.
\end{theorem}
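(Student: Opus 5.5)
We prove two inclusions: every element of $X$ is \D-int (i.e.\ bounded above by a \J-int element), and every \D-int element lies below some element of $X$. Since $\dcl{A}$ is by definition downward closed, the first inclusion gives that the downward closure of $X$ is contained in $\dcl{A}$. The second gives the reverse containment.

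\emph{Elements of $X$ are \D-int.} Let $x\in X$, and let $M$ be a (strongly) faithful $A[x]$-module generated as an $A$-module by $m_1,\dots,m_n$. Write $x m_i=\sum_j a_{ij}m_j$ with $a_{ij}\in A$, and let $T=(a_{ij})$. Let $p(t)$ be the characteristic polynomial of $T$; it is monic with coefficients in $A$. By Theorem~\ref{thm:CH}, $T$ satisfies the bend relations of $p$. Because multiplication by $x$ on $M$ is represented by $T$ on the generators, each monomial $c_k x^k$ acts on $m_i$ as $c_kT^k$ does. Hence for every term of $p$, the sum of all terms of $p(x)$ equals the sum with that term deleted, once both act on every generator, and therefore on all of $M$. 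Faithfulness of $M$ then transfers these equalities of actions to equalities in $A[x]\subseteq R$. So $x$ satisfies the bend relations of the monic polynomial $p$.

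It remains to relate this to the definition of \J-int in Definition~\ref{def: integrals}. If \J-int means exactly satisfying the bend relations of a monic polynomial, then $x$ is \J-int and hence \D-int. If instead it means $x^n\le q(x)$ for a polynomial $q$ of lower degree, then the bend relation for the leading term reads $x^n+q(x)=q(x)$, i.e.\ $x^n\le q(x)$, so again $x$ is \J-int. The characteristic polynomial has odd-looking sign conventions; the step I would check most carefully is that the bend relation at the leading monomial really yields the needed inequality, using the paper's definition with the "weird sign" $\wsgn$ of partial permutations.

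\emph{\D-int elements lie below elements of $X$.} Let $y\le x$ with $x$ \J-int, say $x^n+q(x)=q(x)$ with $\deg q<n$. Take $M=A[x]$, which is an $A[x]$-module and faithful since it contains $1$. The relation gives $x^n\in A+Ax+\dots+Ax^{n-1}$ up to absorption. I would show by induction that $M$ is generated as an $A$-module by $1,x,\dots,x^{n-1}$. Note, however, that $x^n\le q(x)$ does not by itself put $x^n$ into that span.

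This is the main obstacle. What I would try first: whatever formulation of \J-int the paper uses, ensure it gives equality $x^n=q'(x)$ for some $q'$ of lower degree. Over idempotent semirings this can be arranged by taking $q'$ to be $q$ with the $x^n$ term added, provided the generating module is chosen as the $A$-span $N=\sum_{k<n}Ax^k$ with $x$ acting. If $N$ is not closed under multiplication by $x$, replace $x$ by the larger element $x'=x+\dots$ suitably chosen so that an honest module exists, and use $y\le x'$. I expect exactly this module construction, together with verifying strong faithfulness, to be the hardest part.
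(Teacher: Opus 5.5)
Your first direction (elements of $X$ lie in $\dcl{A}$) is correct and is the paper's argument. The paper simply cites Corollary~\ref{coro: CH-coro2}, whose proof is what you wrote: represent multiplication by $x$ by a matrix over $A$, apply Theorem~\ref{thm:CH}, pass through the generators, and use faithfulness to pull the equality back into $A[x]$. The sign issue you flag does not arise. Over a semiring the characteristic polynomial is $\operatorname{perm}(T+tI)=\sum_k d_kt^k$ with $d_k=\sum_{|\sigma|=n-k}\mu(\sigma)\in A$ and $d_n=1$; signs appear only in the recap of Straubing's ring-theoretic proof. Deleting the leading term gives $x^n+q(x)=q(x)$, i.e.\ $x^n\le q(x)$, which is literally the paper's definition of J-integrality.

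The second direction has a genuine gap. You correctly observe that $x^n\le q(x)$ does not make $A[x]$ finitely generated over $A$. In Example~\ref{ex: J-int-not-semiring}, $x^2\le 1+x$, yet no faithful $\mathbb{B}[x]$-module can be finitely generated over $\mathbb{B}$: by Corollary~\ref{coro: CH-coro2} that would make $x^2$ J-integral, which the example rules out. So passing to a larger element is unavoidable. However, you leave the decisive step as ``replace $x$ by a larger $x'=x+\cdots$ suitably chosen.'' Your other suggestion, adding the $x^n$ term to $q$, produces nothing of lower degree.

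The missing idea is to shift $x$ by an element of $A$. Suppose $x^n\le\sum_{i<n}a_ix^i$, and put $a=1+\sum_i a_i\in A$. Since $a\ge 1$ and, by idempotency, $(x+a)^{n-1}=\sum_{i<n}a^{n-1-i}x^i$, we get $x^n\le a\sum_{i<n}x^i\le a(x+a)^{n-1}$. On the other hand $(x+a)^n=x^n+a(x+a)^{n-1}$, so $(x+a)^n=a(x+a)^{n-1}$; this is Lemma~\ref{lemma:J-int_equivalents}. Thus $x':=x+a$ satisfies the honest equation $x'^n=a\,x'^{n-1}$, i.e.\ it is N-integral. Hence $A[x']$ is generated as an $A$-module by $1,x',\dots,x'^{n-1}$ (Proposition~\ref{prop:IntegralAsAlgebrasFinite}(1)). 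It is a faithful $A[x']$-module simply because it contains $1$, so $x'\in X$, and $y\le x\le x'$. This is how the paper argues, via Proposition~\ref{prop:JintAndNint} and Corollary~\ref{coro:dclIsDownClosureOfNcl}. The faithfulness you expected to be the hard part is automatic.
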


In the above theorem, $X$ is the natural semiring analogue of the more usual faithful module condition in the ring setting: $x\in R$ is integral over $A\susbeteq R$ if and only if there is a faithful $A[x]$-module that is finitely generated as an $A$-module.

%\pinky{We now impose certain conditions on our semirings to obtain stronger results.}
Next we focus on answering the question of when the different notions of integrality agree. Neither of the following two theorems implies the other; we give applications of each that cannot be deduced from the other one.

\begin{theorem}%[Theorem~\ref{thm:intCloSemifields}]
[Proposition~\ref{prop:RCancellativeGivesAllIntClosuresEqual}]
\label{thm: intro-int-semifield}\label{thm: intro-RCancellativeGivesAllIntClosuresEqual}
Let $A\subseteq R$ be an extension of semirings with $R$ cancellative. Then the following sets are all equal: the downward closure of $A$ in $R$, $\jcl{A}$, $\dcl{A}$, $\sqcl{A}$, %$\qcl{A}$, $\wqcl{A}$, 
and $\vcl{A}$.
\end{theorem}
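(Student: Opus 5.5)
The plan is to prove a cycle of inclusions. We show that the downward closure of $A$ (call it $\downarrow A$) is contained in $\jcl{A}$ and that every closure is contained in $\downarrow A$. For the lower bound: if $x\le a$ with $a\in A$, then $x+a=a$. So $x$ satisfies the bend relations of the monic polynomial $T+a$, i.e.\ the leading term never strictly dominates, and hence $\downarrow A\subseteq\jcl{A}$. The general inclusions from Definition~\ref{def: integrals} and the surrounding discussion give $\jcl{A}\subseteq\dcl{A}$, since $\dcl{A}$ is the downward closure of $\jcl{A}$. By Corollary~\ref{coro:D-intAndFaithfulA[x]Mods} and the Cayley--Hamilton theorem (Theorem~\ref{thm:CH}), the quasi-integral closure $\sqcl{A}$ is sandwiched as well: a faithful module is turned into a bend relation via the determinant trick, giving $\jcl{A}\subseteq\sqcl{A}\subseteq\dcl{A}$. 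I would also use the general inclusion $\dcl{A}\subseteq\vcl{A}$: a valuation semiring is downward closed, and a J-integral element satisfies $x^n\le\sum a_ix^i$ after evaluating in a totally ordered quotient, which forces $v(x)\le 0$. It then suffices to show $\vcl{A}\subseteq\downarrow A$.

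The structural input is cancellativity. A multiplicatively cancellative additively idempotent semiring $R$ embeds in its semifield of fractions $K$, which is a lattice-ordered abelian group. Such a group is a subdirect product of totally ordered abelian groups $K\to G_\lambda$, and the order on $R$ is reflected by this family of maps. As a warm-up, this already gives $\jcl{A}\subseteq\downarrow A$ directly. Suppose $x^n+\sum_{i<n}a_ix^i=\sum_{i<n}a_ix^i$. In each totally ordered $G_\lambda$, the sum becomes a maximum, so $x^{n-i}\le a_i$ for some $i$ depending on $\lambda$. If $x\le 1$ in $G_\lambda$, we are done since $1\in A$. Otherwise $x\le x^{n-i}\le a_i$. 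Either way $x\le b:=1+\sum_i a_i\in A$ in every component, hence $x\le b$ in $R$. The same argument handles $\dcl{A}$.

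The main obstacle is $\vcl{A}\subseteq\downarrow A$. Here one must produce, for $x\in R$ not bounded by any element of $A$, a valuation semiring of $R$ containing $A$ but not $x$. My first attempt would proceed as follows.
\begin{enumerate}
\item Consider the convex-order data in $K$. The condition $x\not\le a$ for all $a\in A$ says that, for each $a$, the element $x(x+a)^{-1}$ is nontrivial.
\item Use a Zorn's lemma or compactness argument over the family $\{G_\lambda\}$ of totally ordered quotients. The aim is a single prime (totally ordered) quotient $K\to G$ in which the image of $x$ exceeds the image of every $a\in A$. The finite-intersection property comes from $A$ being closed under $+$: for finitely many $a_j$, apply the hypothesis to $\sum_j a_j$.
\item Pull back the nonpositive cone of $G$ to obtain the desired valuation semiring of $R$ containing $A$ and excluding $x$.
\end{enumerate}
If this direct construction proves awkward, I would instead invoke whatever description of $\vcl{A}$ via prime congruences or totally ordered quotients appears earlier in the paper. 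The crux in either route is the same finite-intersection argument.
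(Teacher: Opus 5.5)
Two steps in your plan do not work as written.

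\textbf{First gap: the placement of $\sqcl{A}$.} The inclusion $\sqcl{A}\subseteq\dcl{A}$ does not follow from Corollary~\ref{coro:D-intAndFaithfulA[x]Mods} or from Cayley--Hamilton. Quasi-integrality only gives an $\aangx{A}{x}$-module $M$ that is finite as a \emph{downward-closed} $A$-module. Corollaries~\ref{coro: CH-coro1} and~\ref{coro: CH-coro2}, by contrast, need a surjection $A^n\to M$. These differ: for instance, the downward-closed $\B$-module $\{z\le 1\}\subseteq\T$ is not finitely generated over $\B$. Bridging exactly this difference is why Theorem~\ref{thm: A+=AJ-can gen} carries its extra finiteness hypothesis, so you cannot quote the inclusion in general.

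In the cancellative case you do not need it. Write $y=y_1+\cdots+y_k$ for the sum of the generators; then $M$ is the downward closure of $Ay$ with $y\neq 0$ (as in Corollary~\ref{coro:contractionAndDCModules}). Since $xy\in M$, we get $xy\le ay$ for some $a\in A$, and Lemma~\ref{lemma:cancelIneqs} gives $x\le a$. Hence $\sqcl{A}\subseteq\wqcl{A}\subseteq\downarrow A$. This is in substance the paper's final step, which it phrases through the contraction: $\ph(x)r\le r$ in $R_{A\le1}$ by Proposition~\ref{prop: QI-contraction}; $R_{A\le 1}$ is cancellative by Lemma~\ref{lemma:CancellativeContraction}; so $\ph(x)\le 1$, i.e.\ $x+1\le a$ by Lemma~\ref{lemma:contractionCongruence}.

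\textbf{Second gap: step 3 of your main argument.} A totally ordered quotient $v$ with $v(x)>v(a)$ for all $a\in A$ need not send $A$ into $\{\le 1\}$. So the pulled-back valuation semiring need not contain $A$. For example, take $R=\T[y^{\pm1}]/\sqrt{\Delta}$, $A=\T$ and $x=y$. The prime with defining matrix $\begin{pmatrix}0&1\\1&0\end{pmatrix}$ has $|y|_P>|a|_P$ for every $a\in A$, yet $|t^1|_P>1$.

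You need an extra step. One option is to pass to the quotient of the value group by the convex subgroup generated by $\{v(a): a\in A,\ v(a)\ge 1\}$. Since $A$ is multiplicatively closed and $v(x)$ dominates all of it, $v(x)$ stays $>1$ there, while all of $A$ becomes $\le 1$.

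It is more economical, and is what the paper does, to separate inside the contraction $R_{A\le1}$ from the start. Its primes are exactly the primes $P$ of $R$ with $|a|_P\le 1$ for all $a\in A$. Because $R_{A\le 1}$ is cancellative, hence reduced, the failure of $\ph(x)\le 1$ is already detected by a single prime, so no compactness argument is needed. The paper packages this as $\vcl{A}=\wqcl{A}$ (Corollary~\ref{coro:VIntIsSQInt}, via Corollary~\ref{coro:Jeff4.14New} applied to $R_{A\le1}$) and then finishes with the cancellation argument above.

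A minor slip: $x\le a$ is just the $n=1$ case of the J-integral inequality. It does not mean $x$ satisfies all bend relations of $T+a$, which would also require $a\le x$.
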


\begin{theorem}[Corollary~\ref{coro:VIntIsSQInt}, Theorem~\ref{thm: A+=AJ-can gen}]\label{thm: intro-int-general}
     Let $A \subseteq R$ be an extension of additively idempotent semirings such that $R$ is cancellatively generated, $\sqrt{\Delta_R}$ cancellative and for every $x \in R$, the \dc\ $A$-module generated by $x$ is finite as an $A$-module then $$\Jcl{A}= \Dcl{A}=\sQcl{A} = \vcl{A}.$$
\end{theorem}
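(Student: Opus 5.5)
We aim to prove the chain of inclusions
\[
\Jcl{A}\subseteq \Dcl{A}\subseteq \sQcl{A}\subseteq \vcl{A}\subseteq \Jcl{A}.
\]
The first inclusion is immediate, since every \J-int element is bounded above by itself. For the second, we will use the tropical Cayley--Hamilton theorem (Theorem~\ref{thm:CH}) in the form of Corollary~\ref{coro:D-intAndFaithfulA[x]Mods}. If $x\le y$ with $y$ \J-int, then the $A$-module $A[y]$ is finitely generated, by the monic bend relation. Its \dc\ hull inside $R$ is then a faithful $A[x]$-submodule, and it is still finite because the \dc\ module generated by each element is assumed finite as an $A$-module. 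This gives $x\in\sQcl{A}$.

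For $\sQcl{A}\subseteq\vcl{A}$, fix a valuation semiring $V\supseteq A$ of $R$ and an element $x\in\sQcl{A}$, witnessed by a faithful finitely generated submodule $M=Am_1+\dots+Am_k$ with $xM\subseteq M$. Suppose $x\notin V$. Take the generator $m_j$ of largest valuation, write $xm_j=\sum a_i m_i$, and compare valuations. Since $v(x)$ lies outside $V$, we get $v(xm_j)>v(\sum a_i m_i)$ unless the $m_j$ are killed. The hypotheses that $R$ is cancellatively generated and $\sqrt{\Delta_R}$ is cancellative are used exactly here. They let us replace each $m_i$ by cancellative elements, so the valuation comparison is legitimate and faithfulness cannot degenerate on annihilating elements. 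This reduction is what we expect Corollary~\ref{coro:VIntIsSQInt} to encode.

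The hard direction is $\vcl{A}\subseteq\Jcl{A}$. Given $x$ that lies in no-valuation-avoiding position, we must produce a monic bend relation. The plan mirrors the classical proof. Suppose $x$ is not \J-int. Consider the subsemiring $A[x^{-1}]$, or rather the congruence-theoretic analogue: the pair consisting of $A$ and the set $\{x^{-1}\}$, working in a localization at cancellative elements, which is available because $R$ is cancellatively generated. We then show that $x^{-1}$ is not a unit-type obstruction, meaning that $1$ does not lie in the ideal generated by $x^{-1}$ in $A[x^{-1}]$ modulo the bend relations. Otherwise, clearing denominators yields a relation $x^n \le \sum a_i x^i$, and the finiteness of \dc\ modules upgrades this inequality to a genuine bend relation, i.e. to \J-integrality. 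This upgrade is the step separating $\Dcl{A}$ from $\Jcl{A}$, and is the content of Theorem~\ref{thm: A+=AJ-can gen}. Next, choose a prime congruence/maximal object containing that ideal and extend it, Zorn style, to a valuation semiring $V$ with $x^{-1}$ in its maximal ideal. Then $x\notin V$, a contradiction.

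We expect the main obstacle to be this upgrade from an inequality to an exact bend relation. We would first try to write $y=\sum a_i x^i+x^n$ and show $y$ is \J-int with $x$ equal to the top term. To do so we would use the finite \dc\ module generated by $x$ as a finite set of $A$-module generators, build a matrix for multiplication by $x$, and apply Theorem~\ref{thm:CH} to it directly.
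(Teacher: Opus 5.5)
\textbf{The gap.} Your chain breaks at the step you flag as hard, $\vcl{A}\subseteq\Jcl{A}$, because the classical-style argument does not transfer.
\begin{itemize}
\item The element $x$ need not be invertible, or even cancellative, in $R$. Localizing at cancellative elements therefore does not produce $x^{-1}$, and $A[x^{-1}]$ is not available.
\item The condition ``$1$ lies in the ideal generated by $x^{-1}$ modulo the bend relations'' has no precise meaning here.
\item You would need an analogue of Chevalley's extension theorem producing a prime congruence $P$ with $|A|_P\le1<|x|_P$, which you neither state nor prove.
\item The ``upgrade'' you describe is confused. The inequality $x^n\le\sum a_ix^i$ already \emph{is} J-integrality, so citing Theorem~\ref{thm: A+=AJ-can gen} for it is circular.
\item Cayley--Hamilton cannot be applied to the downward-closed module generated by $x$, since multiplication by $x$ need not preserve it. For $A=S[x^2,x^3]\subseteq S[x^{\pm1}]$ one has $x^2\not\le ax$ for every $a\in A$.
\end{itemize}

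\textbf{The missing idea} is the $A$-contraction combined with the Positivstellensatz. Primes $P$ of $R$ with $A\subseteq R_P$ correspond to primes of $R_{A\leq1}$. This semiring again has no zero divisors and cancellative $\sqrt{\Delta}$, by Corollary~\ref{coro:ContractionsPreserveNoZeroDivs} and Proposition~\ref{prop: DcancDContrCanc}. Corollary~\ref{coro:Jeff4.14New} then turns $x\in\vcl{A}$ into a nonzero $r\in R_{A\leq1}$ with $\ph(x)r\le r$. By Proposition~\ref{prop: QI-contraction}, this is a nonzero finitely generated downward-closed $\aangx{A}{x}$-module $M\subseteq R$, and $M$ is faithful by Lemma~\ref{lem: dc-strFaith}. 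This gives $\vcl{A}\subseteq\sQcl{A}$.

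Only now does the finiteness hypothesis enter. $M$ is generated as a downward-closed module by a single element, so by hypothesis it is finitely generated as an $A$-module. Corollary~\ref{coro: CH-coro2}, applied to multiplication by $x$ on the faithful $\aangx{A}{x}$-module $M$, then gives $x\in\Jcl{A}$.

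\textbf{The other two steps also need repair.}

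For $\Dcl{A}\subseteq\sQcl{A}$: J-integrality of $y$ does not make $A[y]$ finitely generated as an $A$-module; that is N-integrality. What you need is that $\aangx{A}{y}$ is finite as a downward-closed $A$-module, which is Proposition~\ref{prop:IntegralAsAlgebrasFinite}(2). That is exactly Lemma~\ref{lem:DintImpliesQint}, and it uses neither Cayley--Hamilton nor the finiteness hypothesis.

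For $\sQcl{A}\subseteq\vcl{A}$: your degenerate case is not handled. Cancellative elements can lie in the ideal-kernel of a prime, so passing to them does not stop $P$ from killing $M$. Instead, cancel in $R/\sqrt{\Delta_R}$ rather than in $\kappa(P)$, with quotient map $\pi\colon R\to R/\sqrt{\Delta_R}$:
\begin{itemize}
\item Set $m=\sum m_i\neq0$; then $xm\le bm$ for some $b\in A$.
\item $\pi(m)\neq0$ by Lemmas~\ref{lem: cancNoZeroDiv} and~\ref{lem:noZD-diagNoIK}, so Lemma~\ref{lemma:cancelIneqs} gives $\pi(x)\le\pi(b)$.
\item Every prime contains $\sqrt{\Delta_R}$, hence $|x|_P\le|b|_P\le1$.
\end{itemize}
This is a valid and more elementary proof of that inclusion; it even gives $\wqcl{A}\subseteq\vcl{A}$. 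But the reverse inclusion is where the real content lies, and your proposal does not supply it.
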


% \green{
% The hypotheses of the above theorem are satisfied in many cases of interest, such as the coordinate semirings of tropical toric varieties, as shown in Example~\ref{ex: allIntegral-toric} and the semiring of finitely generated submodules of a B\'{e}zout domain; see Example~\ref{ex: allIntegral-terry}.
% }

The hypotheses of the above theorem are satisfied in many cases of interest, such as when $R$ is the coordinate semiring of a tropical toric variety or when $A$ is the semiring of finitely generated ideals of a B\'{e}zout domain; see Examples~\ref{ex: allIntegral-toric} and \ref{ex: allIntegral-terry}. %\blue{we probably should put here coro \ref{coro:ModBendVIntIsQInt} with the text: In particular, this tells us that the normalization can be viewed as any of these kinds of integral closures.}
%\green{We would like to stress that neither Theorem~\ref{thm: intro-RCancellativeGivesAllIntClosuresEqual} nor Theorem~\ref{thm: intro-int-general} imply the other.}

%We now turn our focus to 
%%%%% After this next Theorem, we use the language "We then turn", which felt too similar to me.
For other examples, we focus on 
the semirings arising from geometry, namely the tropicalizations of coordinate rings of varieties.

\begin{theorem}[Corollary~\ref{coro:ModBendVIntIsQInt}]
Let $K$ be a field with valuation $K\to S$, let $\Mon$ a toric monoid, and let $I$ be an ideal of $K[\Mon]$ whose variety $Y := V(I)$ has no component contained in the toric boundary. 
Let $R$ be any localization of $\base[\Mon]/\Bend(I)$ by a set of cancellative elements, and let $A$ be any sub-semiring of $R$. Then $\vcl{A}=\sqcl{A}$.
\end{theorem}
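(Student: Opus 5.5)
The plan is to deduce this from the general comparison result stated in the introduction (Corollary~\ref{coro:VIntIsSQInt}, which underlies Theorem~\ref{thm: intro-int-general}). That result gives $\vcl{A}=\sqcl{A}$ for any sub-semiring $A$ of an ambient semiring $R$ that is cancellatively generated and has $\sqrt{\Delta_R}$ cancellative. So the work is to check that $R$, a localization of $\base[\Mon]/\Bend(I)$ at cancellative elements, satisfies these two hypotheses. Once they hold, the conclusion holds for every sub-semiring $A$, since they concern only $R$.

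The steps, in order, are as follows.

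\emph{Cancellatively generated.} $\base[\Mon]/\Bend(I)$ is generated as a semiring, even as an $\base$-module, by the images of the monomials $x^m$ with $m\in\Mon$. I will show each such image is cancellative. The hypothesis that no component of $Y$ lies in the toric boundary should be exactly what makes monomials nonzerodivisors in $K[\Mon]/I$. Tropically, I expect it to give cancellativity of $x^m$ modulo $\Bend(I)$. The route is the fact that $\base[\Mon]/\Bend(I)$ embeds, via evaluation, into functions on the tropical points of $\trop(Y)$ meeting the dense torus. For that I would use the known description of $\Bend(I)$ as the congruence of functions agreeing on $\trop(Y)$. On such points a monomial takes finite values, so multiplying by it can be undone pointwise, and it is cancellative. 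Localizing at cancellative elements preserves this, and the inverted elements are units, so $R$ is still generated by cancellative elements.

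\emph{$\sqrt{\Delta_R}$ cancellative.} $\sqrt{\Delta_R}$ is the intersection of the prime congruences of $R$. I expect it to be the kernel of evaluation on the points of $\trop(Y)\cap$ the torus, together with the valuations they induce. In other words, $R/\sqrt{\Delta_R}$ is a semiring of piecewise-linear functions on a pure-dimensional polyhedral complex. Cancellativity there means that $f+g=f+h$ forces $g=h$ whenever $f$ is cancellative. Working pointwise, I would argue that $f$ is a cancellative element of a semiring of functions taking values in the tropical semifield. Where $f$ is finite it cancels pointwise; the issue is the locus where $g$ and $h$ are dominated by $f$. I would handle this with the density of the locus where the relevant max is attained uniquely, which comes from balancing and pure-dimensionality of $\trop(Y)$.

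The main obstacle is this second hypothesis. More precisely, the difficulty is identifying the exact meaning of ``$\sqrt{\Delta_R}$ cancellative'' in the paper's definitions, and showing that the geometric condition (no boundary components) forces it. It may require the earlier identification of $\sqrt{\Delta}$ for bend congruences. I would look for a lemma of the form ``$R$ reduced-cancellative whenever $\base[\Mon]/\Bend(I)$ is, and localization preserves it,'' and prove the base case via the embedding into functions on $\trop(Y)$.
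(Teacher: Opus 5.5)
Your reduction to Corollary~\ref{coro:VIntIsSQInt} is exactly the paper's. Both hypothesis checks, however, have genuine gaps.

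\emph{Cancellative generation.} Your route rests on a false claim. $\Bend(\trop I)$ is not the congruence of polynomials agreeing as functions on $\trop(Y)$, and $\base[\Mon]/\Bend(\trop I)$ does not embed into functions; in general it is not even reduced. Take $I=(1+x+y)\subseteq K[x,y]$. The polynomials $x^2+xy+y^2$ and $x^2+y^2$ agree as functions everywhere. By Proposition~\ref{prop:thisPairInBend?}, they are $\Bend(\trop I)$-equivalent only if $I$ contains a nonzero polynomial supported in $\{x^2,xy,y^2\}$. That is impossible, since no nonzero multiple of $1+x+y$ is homogeneous.

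Moreover, pointwise evaluation could at best show that monomials are cancellative modulo $\sqrt{\Bend(\trop I)}$, and that does not suffice. Corollary~\ref{coro:VIntIsSQInt} needs elements that are cancellative in $R$ itself. Lemma~\ref{lem: dc-strFaith} uses $ac\neq bc$ in $R$ for distinct $a,b\in A$ to get faithfulness, and $\sqcl{A}$ is defined by faithfulness inside $R$, not inside its reduction.

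You did identify the right classical input: monomials are nonzerodivisors modulo $I$, i.e.\ $\chi^u f\in I\Rightarrow f\in I$. But it must be pushed through the combinatorics of the bend congruence itself, which is what Proposition~\ref{prop:QuotientByBendIsCancGend} does. By Lemma~\ref{lemma:strongMR}, a non-diagonal pair $(\chi^u f,\chi^u g)\in\Bend(\trop I)$ is joined by a chain of elementary moves $a\trop(h_i)+H\sim a\trop(h_i)_{\what{\mu}}+H$. The intermediate polynomials use only coefficients occurring in $\chi^u f$ or $\chi^u g$, so every $h_i$ and $H$ is divisible by $\chi^u$. Saturation gives $h_i/\chi^u\in I$, and dividing the chain by $\chi^u$ exhibits $(f,g)\in\Bend(\trop I)$. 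Passing to the localization is then as you say.

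\emph{Cancellativity of the radical.} Here ``$\sqrt{\Delta_R}$ cancellative'' means that $R/\sqrt{\Delta_R}$ is multiplicatively cancellative (Definition~\ref{def: prime_domain}). The property you set out to prove, that $f+g=f+h$ forces $g=h$, is additive cancellation. That fails in every additively idempotent semiring with $0\neq 1$: take $g\neq h$ with $g,h\leq f$. So your discussion of loci where $g,h$ are dominated by $f$, of balancing, and of pure-dimensionality addresses the wrong property.

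With the correct notion, suppose $R/\sqrt{\Delta_R}$ is known to be a semiring of functions on the torus part of $\trop(Y)$. Then cancellation is immediate, because nonzero elements are finite there. The real content is that description of $\sqrt{\Delta_R}$, which you only expect to hold. This is precisely where the boundary hypothesis enters; in the paper it takes the form that every minimal prime congruence has trivial ideal-kernel.

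The paper does not reprove this. It is Theorem~\ref{thm: trop-coordinate-semiring-is-cancellative}, and Corollary~\ref{coro:trop-coordinate-semiring-is-cancellative - localized} transfers it to $R$. The argument there is that primes of $R$ are extensions of primes of $\base[\Mon]/\Bend(\trop I)$, and those with trivial ideal-kernel extend with trivial ideal-kernel. Hence all minimal primes of $R$ have trivial ideal-kernel, which yields cancellativity of $\sqrt{\Delta_R}$. Citing these two results closes your second step.
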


% Note that neither of Theorems~\ref{thm: intro-int-general} and \ref{thm: intro-int-semifield} implies the other. Specifically, the third hypothesis in Theorem~\ref{thm: intro-int-general} is not automatically satisfied by a semifield.

We then turn to computing integral closure of a coordinate semiring $A$ in its total semiring of fractions $\Frac(A)$; this is the analogue of computing a normalization. We note that such $A$ is almost never cancellative, as opposed to semirings of functions - tropical polynomials carry more information than tropical polynomial functions. As such, in order to compute $\Frac(A)$, we need to know which elements of $A$ are cancellative. We investigate this in Section~\ref{sec:cancellativeElements}, and we find that once we have $\Frac(A)$, in practical situations it is relatively easy to compute the integral closure of $A$.

Using results from \cite{FM25a} we are able to give geometric examples of normalization of tropical curves; see Example~\ref{ex:nodal-cubic-CPL}, Example~\ref{ex: cuspidalCubic}, and Example~\ref{ex: nodalCubic-2}, the last of which is dependent on Conjecture~\ref{conj: cancellatives} about cancellative elements in $A$.

In these examples, the normalization of the tropicalization of a curve with unique singular point at the origin looks like the tropicalization of the normalization. We make the following conjecture: 
\begin{conjecture}
    Let $C$ be an irreducible affine algebraic curve with one singular point. Then there is an embedding of $C$ with ideal $I\subseteq K[x_1,\ldots,x_n]$ satisfying the following property. Let $A=\base[x_1,\ldots,x_n]/\Bend(\trop I)$, let $R=\Frac(A)$, and let $\vcl{A}$ be the valuative integral closure of $A$ in $R$. Then the integral closure of $B=K[x_1,\ldots,x_n]/I$ in its field of fractions is obtained by adjoining to $B$ lifts of the elements in $\vcl{A}\sdrop A$.
\end{conjecture}

Of course, one cannot begin addressing this conjecture until one can compute $\Frac(A)$, i.e., identify the cancellative elements of $A$. Construction~\ref{constr: pairsGH} tells us how to find pairs of polynomials certifying that a polynomial is not cancellative and 
%Lemma~\ref{lem: wit-to-M-wit} \red{(fix this reference)} 
Corollary~\ref{coro:SuppConditionForNotCancellative} 
gives a sufficient condition for verifying that $f$ is not cancellative in the quotient by the bend congruence of a 
%\green{tropical} \red{(The strong Maclagan-Rinc\'on lemma is stated for tropicalized ideals. I'm inclined to believe that the proof can be modified to work for tropical ideals, but I haven't checked the details. So for now the new reference is for the tropicalized case.)} 
tropicalized ideal $I$. 
% \green{
% In section~\ref{sec: witness pairs} and Appendix~\ref{app: countingCircuits} we provide an outline how to approach the normalization problem computationally (at least up to a finite degree). In Appendix~\ref{app: countingCircuits}, for a specific curve we enumerate the polynomials whose bend relations need to be avoided by the above pairs. 
% }
In 
% Appendix~\ref{app: countingCircuits} \red{(Consider saying "Appendices~\ref{app: countingCircuits} and \ref{app:ImageOfTrinomial}")} \orange{sure, either is fine} 
Appendices~\ref{app: countingCircuits} and \ref{app:ImageOfTrinomial} 
we outline a method that can be used to computationally check that polynomials of small support are not cancellative modulo the bend congruence of a tropicalized ideal. We illustrate part of this computation for the case of the tropicalization of a principal ideal generated by a general trinomial.

\addtocontents{toc}{\protect\setcounter{tocdepth}{-1}}
\section*{Acknowledgments}
The authors extend their sincere gratitude to Joachim Gr\"ater for clarifying some of the proofs of his results on valuation theory for commutative rings. K.M. acknowledges the support of the Simons Foundation MPS-TSM-00008148.
\addtocontents{toc}{\protect\setcounter{tocdepth}{2}}

\section{Preliminaries}\label{prelims}

A \textit{semiring} is a set $R$ with two binary operations (addition $+$ and multiplication $\cdot$ ) satisfying the same axioms as rings, except the existence of additive inverses, as well as $a\cdot0=0=0\cdot a$ for all $a\in R$. In this paper, a semiring is always assumed to be commutative. A semiring $(R,+,\cdot)$ is \emph{semifield} if $(R\backslash\{0_R\},\cdot)$ is a group. A semiring $R$ is called \emph{additively idempotent} if for all $a\in R$ we have that $a+a =a$.

\textbf{Key assumption:} All semirings will be assumed to be additively idempotent.

Any additively idempotent semiring comes with a partial order defined by $a\leq b\iff a+b=b$. Both addition and multiplication preserve this partial order. One consequence of this is the following well-known fact.

\begin{lemma}\label{lem:zero-sum-free}
    Let $A$ be an additively idempotent semiring, then it is zero-sum free, i.e., if $a, b \in A$ with $a+b = 0$, then $a = b = 0$.
\end{lemma}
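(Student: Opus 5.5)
The plan is to use the partial order $\leq$ defined just above, together with the fact that $0$ is its least element. First I will check that $0\leq c$ for every $c\in A$. By definition this means $0+c=c$, and that holds because $0$ is the additive identity.

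Now suppose $a+b=0$. I will show $a\leq 0$, which unwinds to $a+0=0$, i.e.\ $a=0$. Using additive idempotence, associativity and commutativity,
\[
a+0 \;=\; a+(a+b) \;=\; (a+a)+b \;=\; a+b \;=\; 0 .
\]
The left side equals $a$, so $a=0$. Swapping the roles of $a$ and $b$ in the same computation gives $b+0=b+(a+b)=a+b=0$, so $b=0$ as well.

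I expect no step to be a real obstacle. The one point needing care is to substitute $0=a+b$ before collapsing $a+a$ to $a$. This argument needs no antisymmetry of $\leq$ and does not use multiplication at all; only additive idempotence and the identity property of $0$ enter.
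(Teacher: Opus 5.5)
Your proof is correct and uses the same key computation as the paper: add $a$ to $a+b=0$ and collapse $a+a$ to $a$. The paper then reads $a+b=a$ as $b\leq a$ and invokes symmetry and antisymmetry to get $a=b$ before concluding, whereas you read off $a=a+(a+b)=a+b=0$ directly, which is a slightly shorter finish; your preliminary check that $0\leq c$ is never actually used.
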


\begin{proof}
    If $a+b = 0$, then $a+a+b = a+0$ and so $a+b = a$ implying that $b \leq a$ and by symmetry $a \leq b$. So $a=b$ and $a+ b = a+a = a = 0$.
\end{proof}

We will denote by $\mathbb{B}$ the semifield with two elements $\{1,0\}$, where $1$ is the multiplicative identity, $0$ is the additive identity and $1+1 = 1$. 
The {\it tropical semifield}, denoted $\mathbb{T}$, is the set $\mathbb{R}  \cup \{-\infty\} $ with the $+$ operation to be the maximum and the $\cdot$ operation to be the usual addition, with $-\infty = 0_\mathbb{T}$. 

In order to distinguish when we are considering a real number as being in $\R$ or being in $\T$ we introduce some notation. For a real number $a$, we let $t^a$ denote the corresponding element of $\T$. In the same vein, given $\mathfrak{a}\in\T$, we write $\log(\mathfrak{a})$ for the corresponding element of $\R\cup\{-\infty\}$. This notation is motivated as follows.
Given a non-archimedean valuation $\nu: K \rightarrow \R\cup\{-\infty\}$ on a field and $\lambda \in \R$ with $\lambda>1$, we get a non-archimedean absolute value $|\cdot|_\nu:K\rightarrow[0, \infty)$ by setting $|x|_{\nu}=\lambda^{\nu(x)}$. Since $\T$ is isomorphic to the semifield $\big([0,\infty),\max,\cdot_{\R}\big)$, we use a notation for the correspondence between elements of $\R\cup\{-\infty\}$ and elements of $\T$ that is analogous to the notation for the correspondence between $\nu(x)$ and $|x|_{\nu}$. This notation is also convenient as we get many familiar identities such as $\log(1_{\T})=0_{\R}$ and $t^a t^b=t^{a+_{\R}b}$.

% %\newcommand{\downclosure}[1]{\overline{#1}^{\mathrm{down}}}
% \newcommand{\downclosure}[1]{\overline{#1}^{(\leq)}}
% \newcommand{\downcl}[1]{\downclosure{#1}}

Ideals and modules of semirings are defined analogously to their ring counterparts. 
If a subset $I$ of a semiring or module satisfies the property that $x\leq y\in I\implies x\in I$, we say that $I$ is \emph{\dc}.  
% \pinky{The \emph{downward closure} of a subset $J$ of a semiring or module is $\downclosure{J}:=\{x\,:\, x\leq y\text{ for some } y\in J\}$.\label{down-closure-def} } 
% \red{Other options for the notation would be $\nbar{J}^{\leq}$ or $\nbar{J}^{(\leq)}$} 
%
%%%%% Now that we removed this next sentence, it feels like this paragraph needs another sentence.
We will be interested in downward closures of submodules, ideals, and sub-semirings.

\begin{defi}
Let $A$ be a semiring and let $a \in A\sdrop\{0\}$. We say that $a$ is a \emph{cancellative element} if for all $b, c \in A$, whenever $ab=ac$ then $b=c$. If all elements of $A\sdrop\{0\}$ are cancellative, then we say that $A$ is a \textit{cancellative semiring}.
\end{defi}

Our next lemma shows that cancellative elements can be cancelled from inequalities.

\begin{lemma}\label{lemma:cancelIneqs}
Suppose that $A$ is a semiring and $s,x,y\in A$. If $s$ is cancellative then $sx\leq sy\implies x\leq y$.
\end{lemma}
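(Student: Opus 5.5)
The plan is to translate the inequality into an equation using the definition of the order, $a\leq b\iff a+b=b$, and then cancel $s$ directly. Assume $sx\leq sy$. By definition this means
\[
sx+sy=sy .
\]
Distributivity turns the left-hand side into $s(x+y)$, so we have $s(x+y)=s\cdot y$.

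This is an equality of two products with the common factor $s$. Since $s$ is cancellative, the definition of a cancellative element applies with $b=x+y$ and $c=y$, and yields $x+y=y$. Reading this back through the definition of the partial order gives $x\leq y$, as desired.

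There is no real obstacle here. The only point needing care is to apply cancellation to an honest equality, which is why the inequality is first rewritten as the equation $s(x+y)=sy$ via idempotent addition and distributivity. Note also that cancellative elements are nonzero by definition, so the case $s=0$, where the implication would fail, is excluded automatically.
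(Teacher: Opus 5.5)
Your proof is correct and follows essentially the same route as the paper's: rewrite $sx\leq sy$ as $s(x+y)=sx+sy=sy$ using idempotence and distributivity, then cancel $s$ to get $x+y=y$, i.e., $x\leq y$. Your remark that cancellative elements are nonzero by definition is also consistent with how the paper defines them.
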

\begin{proof}
We have $s(x+y)=sx+sy=sy$. Cancelling $s$ we get $x+y=y$, i.e., $x\leq y$.
\end{proof}

%\red{transition sentence : we are going to congruences}
We now define one of the main components of our theory, congruences, and list some of their key properties.
%\pinky{We now define one of the main ingredients of our theory, congruences, and list some of their key properties.}

\begin{definition}\label{def: prime_domain}
A \textit{congruence} on a semiring $A$ is an equivalence relation on $A$ that respects the operations of $A$. The \emph{trivial congruence} on $A$ is the diagonal $\Delta\subseteq A\times A$, for which $A/\Delta\cong A$. We call a congruence $\mathcal{C}$ on $A$ \emph{cancellative} if $A/\mathcal{C}$ is a cancellative semiring. We call a proper congruence $P$ of an idempotent semiring $A$  {\it prime} if $A/P$ is totally ordered and cancellative (cf.\ Definition 2.3 and Proposition 2.10 in \cite{JM17}).
\end{definition}

The following lemma will be useful later.
\begin{lemma}\label{lemma:MaclaganRincon_weak_congruence_structure_lemma}\label{lemma:MaclaganRincon_weak_lemma}
Let $R$ be an additively idempotent semiring and let $X$ be a subset of $R$ that additively generates $R$. Let $C$ be any set of pairs of elements of $R$. Then the congruence $\calc=\angbra{C}$ generated by $C$ is the union of $\Delta_R$ and the transitive closure of the set of pairs of the form $(m\cdot f + h,\, m\cdot g + h)$ or $(m\cdot g + h,\, m\cdot f + h)$ where $(f,g)\in C$, $m\in X$, and $h\in R$.
\end{lemma}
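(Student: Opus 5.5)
Let $T$ be the set of pairs $(m f + h,\, m g + h)$ and $(m g + h,\, m f + h)$ with $(f,g)\in C$, $m\in X$, $h\in R$. Let $\mathcal{E}=\Delta_R\cup T^{tc}$, where $T^{tc}$ is the transitive closure of $T$. The plan is to prove the two inclusions $\mathcal{E}\subseteq\calc$ and $\calc\subseteq\mathcal{E}$ separately.

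For $\mathcal{E}\subseteq\calc$: every pair in $T$ lies in $\calc$, because $\calc$ contains $(f,g)$, is compatible with multiplication by $m$ and addition of $h$, and is symmetric. Since $\calc$ is transitive and reflexive, $\mathcal{E}\subseteq\calc$.

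For $\calc\subseteq\mathcal{E}$: since $\calc$ is the smallest congruence containing $C$, it suffices to show that $\mathcal{E}$ is a congruence containing $C$.
\begin{itemize}
\item \emph{Containing $C$.} I need $1\in X$, or at least $1$ written as a sum of elements of $X$. Taking $h=0$ and $m=1$ gives $(f,g)\in T$. If $1\notin X$, I instead write $1=\sum m_i$ with $m_i\in X$ and reach $(f,g)$ by a chain: replace $m_i f$ by $m_i g$ one index at a time, using $h=\sum_{j<i}m_jg+\sum_{j>i}m_jf$.
\item \emph{Equivalence relation.} Reflexivity comes from $\Delta_R$. Symmetry holds because $T$ is symmetric, and the transitive closure of a symmetric relation is symmetric. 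Transitivity is built in; mixing with $\Delta_R$ causes no problem.
\end{itemize}

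The substantive step is compatibility with the operations. Since $\mathcal{E}$ is transitive, it suffices to check generators: if $(u,v)\in T$ and $r\in R$, then $(u+r,v+r)\in\mathcal{E}$ and $(ru,rv)\in\mathcal{E}$. Compatibility then propagates along chains, and for a pair of pairs I use $(u+u',v+v')$ obtained via $(u+u',v+u')$ followed by $(v+u',v+v')$.
\begin{itemize}
\item \emph{Addition.} This is immediate: $(mf+h+r,\,mg+h+r)$ is again in $T$, with $h$ replaced by $h+r$.
\item \emph{Multiplication.} Write $r=\sum_{i=1}^k x_i$ with $x_i\in X$, which is possible because $X$ additively generates $R$. (For $r=0$ the pair is $(0,0)\in\Delta_R$.) Then
\[
r(mf+h)=\sum_i x_i m f + rh .
\]
Now $x_im$ need not lie in $X$. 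This is the main obstacle. I would handle it by either:
  \begin{itemize}
  \item assuming, as in the intended application, that $X$ is multiplicatively closed (a monoid of monomials); or
  \item re-expanding $x_i m$ as a sum of elements of $X$.
  \end{itemize}
  In either case $r(mf+h)=\sum_{j=1}^{N} m_j f + rh$ with $m_j\in X$. I then pass to $\sum_j m_j g + rh$ by a chain of $N$ moves. At step $j$ I use the pair with multiplier $m_j$ and
\[
h_j=\sum_{l<j}m_l g+\sum_{l>j}m_l f+rh .
\]
  Each step lies in $T$, so $(ru,rv)\in T^{tc}\subseteq\mathcal{E}$.
\end{itemize}

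Hence $\mathcal{E}$ is a congruence containing $C$, so $\calc\subseteq\mathcal{E}$, and with the first inclusion $\calc=\mathcal{E}$.

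The one point to check carefully is that products of elements of $X$ can be re-expressed as sums of elements of $X$. This is guaranteed because $X$ additively generates $R$. Everything else is bookkeeping.
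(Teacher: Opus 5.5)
Your proof is correct. The paper does not prove this lemma itself. It cites \cite[Lemma 2.4]{MR14}, which is stated for $R=\T[x_1,\ldots,x_n]$ with $X$ the set of terms, and asserts that the proof carries over.

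Your argument is the standard one behind that citation: the candidate relation $\Delta_R\cup T^{tc}$ lies in $\calc$, and it is itself a congruence containing $C$. You also make explicit the one place where the generalization is not verbatim. In the original setting a term times a term is again a term, whereas here $rm$ need not lie in $X$. You handle this by writing $rm$ as a sum of elements of $X$ and passing through a chain of single-replacement moves. The same device gives $(f,g)\in T^{tc}$ when $1\notin X$.

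You should drop the alternative of assuming $X$ is multiplicatively closed. That would add a hypothesis the lemma does not have, and the re-expansion already suffices. It is also slightly cleaner to expand $rm$ directly than to expand $r$ first and then each $x_im$.

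Finally, your argument never uses additive idempotence, so it proves the statement for arbitrary commutative semirings.
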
\begin{proof}
This is proven in \cite[Lemma 2.4]{MR14}. The result is stated there for $R=\T[x_1,\ldots,x_n]$ and $X$ the set of 
%monomials 
terms 
of $\T[x_1,\ldots,x_n]$, but the proof works just as well in this generality.
\end{proof}

Let $\mathcal{C}$ be a congruence on a semiring $A$. The \emph{ideal-kernel} of $\mathcal{C}$ is the set of elements $a \in A$ such that $(a,0_A) \in \mathcal{C}$. It is easy to see that the ideal-kernel of $\mathcal{C}$ is an ideal of $A$. If the ideal-kernel of $\mathcal{C}$ is the zero ideal, then we say that $\mathcal{C}$ has \emph{trivial ideal-kernel}.

Let $\ph:A\to A'$ be a homomorphism of semirings. The \emph{congruence-kernel} of $\ph$, denoted $\ker(\ph)$, is the pullback $\ph^*(\Delta)$ of the trivial congruence on $A'$. If $A'$ is totally ordered and cancellative, then $\ker(\ph)$ is prime. The \emph{ideal-kernel} of $\ph$ is the set of those $a\in A$ such that $\ph(a)=0_{A'}$. Clearly, the ideal-kernel of $\ph$ is the same as the ideal-kernel of $\ker(\ph)$.

\begin{lemma}\label{lemma:ZeroDivsModCongWithNoIdealKernel}
Let $R$ be an additively idempotent semiring with no zero divisors and let $\calc$ be a congruence on $R$ with trivial ideal-kernel. Then $R/\calc$ has no zero divisors.
\end{lemma}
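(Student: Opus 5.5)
We argue directly from the definitions, working with the classes $[a]$ in $R/\calc$. Suppose $[a]\cdot[b]=[0]$ in $R/\calc$, that is, $[ab]=[0]$, so $(ab,0_R)\in\calc$. We must show that $[a]=[0]$ or $[b]=[0]$.

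First, the pair $(ab,0_R)\in\calc$ says precisely that $ab$ lies in the ideal-kernel of $\calc$. By hypothesis this ideal-kernel is the zero ideal, so $ab=0_R$ in $R$ itself.

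Next, since $R$ has no zero divisors, $ab=0$ forces $a=0$ or $b=0$. Then $[a]=[0]$ or $[b]=[0]$, as required.

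One point of interpretation needs care. A zero divisor is a nonzero class $[a]$ with $[a][b]=[0]$ for some nonzero $[b]$. The nonzero classes are the classes of elements not in the ideal-kernel, and by triviality of the ideal-kernel these are exactly the classes of nonzero elements of $R$. So the argument above covers exactly the case needed, and the quotient is also nontrivial whenever $R$ is. There is essentially no obstacle: the only step with content is recognizing that $(ab,0)\in\calc$ is the definition of membership in the ideal-kernel. No use of additive idempotency or Lemma~\ref{lemma:MaclaganRincon_weak_congruence_structure_lemma} is needed.
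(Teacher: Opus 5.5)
Your proof is correct and matches the paper's argument: $(ab,0)\in\calc$ puts $ab$ in the trivial ideal-kernel, so $ab=0$ in $R$, and the absence of zero divisors in $R$ gives $a=0$ or $b=0$. Your added remark that nonzero classes are exactly the classes of nonzero elements is accurate, though the paper leaves it implicit.
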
\begin{proof}
Let $\pi:R\to R/\calc$ be the quotient map. Suppose that $\pi(x)\pi(y)=0$ in $R/\calc$. Then, because $\calc$ has trivial ideal-kernel, $xy=0$ in $R$. Since $R$ has no zero divisors, $x=0$ or $y=0$. 
\end{proof}

\begin{lemma}\label{lemma:ZeroDivsModCongGendByTotallyNonzeroPairs}
Let $R$ be an additively idempotent semiring with no zero divisors and let $C$ be a set of pairs such that, for any $(f,g)\in C$, either $f$ and $g$ are both nonzero or they are both zero. Let $\calc=\angbra{C}$ be the congruence generated by $C$. 
Then $\calc$ has trivial ideal-kernel and so $R/\calc$ has no zero divisors.
\end{lemma}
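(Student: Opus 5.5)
We use Lemma~\ref{lemma:MaclaganRincon_weak_lemma} with $X=R$, which describes $\calc$ explicitly. Once we know $\calc$ has trivial ideal-kernel, Lemma~\ref{lemma:ZeroDivsModCongWithNoIdealKernel} gives the second assertion. So the whole task is to show that $(a,0)\in\calc$ implies $a=0$.

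By Lemma~\ref{lemma:MaclaganRincon_weak_lemma}, $\calc$ is the union of $\Delta_R$ and the transitive closure of the elementary pairs $(mf+h,\,mg+h)$ and $(mg+h,\,mf+h)$, where $(f,g)\in C$ and $m,h\in R$. Suppose $(a,0)\in\calc$ with $a\neq 0$. The case $(a,0)\in\Delta_R$ forces $a=0$, so we may assume there is a chain
\[
a=u_0,\ u_1,\ \ldots,\ u_k=0
\]
in which each consecutive pair $(u_{i-1},u_i)$ is elementary. Let $i$ be minimal with $u_i=0$; then $i\ge 1$ and $u_{i-1}\neq 0$. The pair $(u_{i-1},u_i)$ has the form $(mf+h,\,mg+h)$ up to swapping $f$ and $g$, so $mg+h=0$.

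Since $R$ is additively idempotent, Lemma~\ref{lem:zero-sum-free} applies and gives $mg=0$ and $h=0$. Because $R$ has no zero divisors, $m=0$ or $g=0$.
\begin{itemize}
\item If $m=0$, then $u_{i-1}=mf+h=0$, a contradiction.
\item If $g=0$, the hypothesis on $C$ forces $f=0$, so again $u_{i-1}=mf+h=0$, a contradiction.
\end{itemize}
Hence the chain cannot exist, so $a=0$ and $\calc$ has trivial ideal-kernel.

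The same argument, run from the last nonzero entry of the chain, covers chains starting at $0$. In fact the argument shows more: every element in a chain containing $0$ is $0$, i.e.\ the class of $0$ is $\{0\}$.

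The only step needing care is the application of Lemma~\ref{lemma:MaclaganRincon_weak_lemma}. Its elementary pairs come in both orientations, and we handle this by allowing $f$ and $g$ to be swapped. This is harmless because the hypothesis on $C$ is symmetric in $f$ and $g$. The rest is the zero-sum-free property together with the absence of zero divisors.
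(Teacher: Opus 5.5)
Your proof is correct and is essentially the paper's argument. Both use Lemma~\ref{lemma:MaclaganRincon_weak_lemma} together with zero-sum-freeness, the absence of zero divisors, and the both-zero-or-both-nonzero hypothesis on $C$. The only differences are cosmetic: you pick the first zero in a chain from $a$ to $0$, while the paper inducts forward along a chain starting at $0$; and you take $X=R$, while the paper takes $X=R\sdrop\{0\}$.
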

\begin{proof}
Suppose that $\calc$ has nontrivial ideal-kernel, i.e., there is a nonzero $F\in R$ with $(0,F)\in\calc$. Let $X=R\sdrop\{0\}$; certainly $X$ additively generates $R$. 

Note that, by replacing $C$ with $C\cup\{(g,f)\,:\,(f,g)\in C\}$, we may assume without loss of generality that $C$ is symmetric.

We claim that, if $(f,g)\in C$, $m\in X$, $c\in R$ and $mf+c=0$ then $mg+c=0$. Lemma~\ref{lem:zero-sum-free} applied to $mf+c=0$ gives us that $mf=0$ and $c=0$. Since $R$ has no zero divisors, $m=0$ or $f=0$. If $m=0$ then $mg+c=0\cdot g+0=0$. If $f=0$ then $g=0$ because $(f,g)\in C$, and so $mg+c=m\cdot0+0=0$. This proves the claim.

Since $(0,F)\notin\Delta_R$, Lemma~\ref{lemma:MaclaganRincon_weak_lemma} tells us that there is a sequence of pairs $(m_1\cdot f_1+c_1, m_1\cdot g_1+c_1), (m_2\cdot f_2+c_2, m_2\cdot g_2+c_2),\ldots,(m_n\cdot f_n+c_n, m_n\cdot g_n+c_n)$ where $m_i\in X$ for all $i$, $(f_i,g_i)\in C$ for all $i$, $c_i\in R$ for all $i$,  $m_1\cdot f_1+c_1=0$, $m_i\cdot g_i+c_i=m_{i+1}\cdot f_{i+1}+c_{i+1}$ for $i=1,\ldots,n-1$, and $m_n\cdot g_n + c_n=F$.

We prove by induction on $i$ that $(m_if_i+c_i,m_ig_i+c_i)=(0,0)$. For the base case we have $m_1f_1+c_1=0$ by assumption and then $m_1g_1+c_1=0$ by the claim proven above. For the inductive step, assume $(m_if_i+c_i,m_ig_i+c_i)=(0,0)$; we want to show $(m_{i+1}f_{i+1}+c_{i+1},m_{i+1}g_{i+1}+c_{i+1})=(0,0)$. By assumption we have $m_{i+1}f_{i+1}+c_{i+1}=m_ig_i+c_i=0$ and then the claim proven above tells us that $m_{i+1}g_{i+1}+c_{i+1}=0$, completing the induction.

Thus $F=m_ng_n+c_n=0$, contradicting $F\neq0$.
\end{proof}

\begin{notation}\label{notation:leqP}
Let $R$ be semiring and let $P$ be a prime congruence on $R$. For two elements $r_1, r_2 \in R$ we say that $r_1 \leq_P r_2$ (resp. $r_1 <_P r_2$, resp. $r_1 \equiv_P r_2$) whenever $\overline{r_1} \leq \overline{r_2}$ (resp. $\overline{r_1} <  \overline{r_2}$, resp. $\overline{r_1} = \overline{r_2}$) in $R/P$. Here by $\overline{r}$ we mean the image of $r$ in $R/P$.
\end{notation}

Note that $P$ is determined by the relation $\leq_P$ on $R$. Moreover, if $\mathcal{G}$ is a set of additive generators for $R$, then $P$ is determined by the restriction of $\leq_P$ to $\mathcal{G}$.

\begin{defi}\label{def: semiring-of-fractions}
Let $R$ be a semiring and $A$ the set of all cancellative elements in $R$. The usual operations on fractions make $(A \times R)/\sim$ into a semiring, called the \textit{total semiring of fractions} and denoted by $\Frac(R)$. There is a natural injective homomorphism $\eta:R\to\Frac(R)$ which we consider as an inclusion map.
\end{defi}

This semiring is well-defined and satisfies the usual universal property, namely: if $R$ is a semiring and $\ph:R\to B$ is a morphism of semirings, then $\ph$ factors through $\eta:R\to\Frac(R)$ if and only if $\ph$ maps every cancellative element of $R$ to an invertible element of $B$. Moreover, in this case it factors uniquely. For a detailed study of the semiring of fractions and its properties we refer the reader to \cite{Gol92} or \cite{HW98}. Note that $\Frac(R)$ is a semifield if and only if $R$ is cancellative.

\begin{defi}\label{def:ResidueSemifield}
Let $P$ be a prime congruence on a semiring $A$. The {\it residue semifield of $A$ at $P$}, denoted $\kappa(P)$, is the total semiring of fractions of $A/P$. We denote the canonical homomorphism $A\to A/P\to\kappa(P)$ by $a\mapsto|a|_P$.
\end{defi} 

A large class of semirings that we will be interested in are the monoid $\base$-algebras $\base[\Mon]$, where $\base$ is a totally ordered semifield and $\Mon$ a monoid. 
%Elements of $\base[\Mon]$ are finite sums of expressions of the form $a\chi^u$ with $a\in\base$ and $u\in\Mon$, which are called \emph{terms}, and we call elements of $\base[\Mon]$ \emph{polynomials}. 
Elements of $\base[\Mon]$, called \emph{polynomials}, are finite sums of expressions of the form $a\chi^u$ with $a\in\base$ and $u\in\Mon$. 
We call these $a\chi^u$ \emph{terms} and use \emph{monomials} to refer to the $\chi^u$s. 
For any $f\in\base[\Mon]$, the \emph{support of $f$}, denoted $\supp(f)$ is the set of those monomials that occur in $f$ with nonzero coefficient.
%The semirings $\base[\Mon]$ arise as the tropicalizations of the coordinate rings of toric varieties. In those case, we say that $\Mon$ is a \emph{toric} monoid.
When $\Mon$ is the type of monoid that arises in the study of normal toric varieties, see \cite{Ful93} and \cite{CLS}, we will call $\Mon$ a \emph{toric monoid} and call $\base[\Mon]$ a \emph{toric semiring}.
%We will often assume that $\Mon$ is a toric monoid, that is .....

When $\base\subseteq \TT$ we describe the prime congruences on $\base[\ZZ^n]$ in terms of their \emph{defining matrices}. Every prime congruence on $\base[\Z^n]$ has a defining matrix; see \cite{JM17}, where it is shown that the matrix can be taken to be particularly nice. 

Let $R = \base[\ZZ^n]$ where $\base$ is a sub-semifield of $\TT$. A $k\times (n+1)$ real valued matrix $\mathcal{C}$ such that the first column of $\mathcal{C}$ is lexicographically greater than or equal to the zero vector gives a prime congruence on $R$ as follows:
For any monomial $m=t^a\chi^u\in\base[\Z^n]$ we let $\Phi(m)=\mathcal{C}\begin{pmatrix}a \\ u\end{pmatrix}\in\R^k$, where we view $u\in\Z^n$ as a column vector. We call $\begin{pmatrix}a \\ u\end{pmatrix}$ the \emph{exponent vector} of the monomial $m$. For any nonzero $f\in\base[\Z^n]$, write $f$ as a sum of monomials $m_1,\ldots,m_r$ and set $\Phi(f)=\max_{1\leq i\leq r}\Phi(m_i)$, where the maximum is taken with respect to the lexicographic order. Finally, set $\Phi(0)=-\infty$. We specify the prime congruence $P$ by saying that $f$ and $g$ are equal modulo $P$ if $\Phi(f)=\Phi(g)$. In this case we say that $\mathcal{C}$ is a defining matrix for $P$. The first column of $\mathcal{C}$ is called the column corresponding to the coefficient or the column corresponding to $\base$. For $1\leq i\leq n$, we say that the $(i+1)^{\text{st}}$ column of $\mathcal{C}$ is the column corresponding to $x_i$.

\begin{example} 
    Let $\base [\Z^n] = \T[x_1, x_2]$. The lexicographic order on this semiring with $x_1 \gg x_2$ is given by the matrix $$\begin{pmatrix} 0& 1 & 0\\ 0& 0 & 1 \end{pmatrix}.$$
    Let $\base [\N^n] = \T[x_1, x_2]$. The congruence $\mathcal{C} = \left< (x_1, 1), (x_2, 0)\right>$ is given by the matrix $$\begin{pmatrix} 1& 0 & -\infty\end{pmatrix}.$$ 
    When a congruence has kernel, we note that it has the same quotient as a congruence without kernel on a smaller semiring. In this example, we can consider $\mathcal{C}'=\left< (x_1, 1)\right>$, and so $\T[x_1, x_2]/\mathcal{C} = \T[x_1]/\mathcal{C}'$. Geometrically, the rows of a matrix with infinities in the same columns are in the same stratum of a toric variety.  
    \exEnd
\end{example}

Let $P$ be a prime on $\base [\Mon]$. If $\base [\Mon]/P \cong \base$ we say that $P$
is a \emph{a geometric prime}. The defining matrix of such $P$ has always a single row and can be chosen so that the (1,1) entry is 1. 

\begin{definition}[adapted from Definition 5.1.1 in \cite{GG13}]\label{def:bend_rel}
Let $A$ be an idempotent semiring, let $\Mon$ be a toric monoid, and let $f \in A[\Mon]$. 
For $i$ in the support of $f$ 
we write $f_{\hat \imath}$ for the result of deleting the $i$ term from $f$. Then the \emph{bend relations of $f$} is the set of pairs 
$$\bend(f) = \{(f , f_{\hat \imath})\}_{i\in \supp(f)}.$$
In particular, $\bend(0)=\emptyset$ because $\supp(0)=\emptyset$.
The \emph{bend congruence} is the congruence generated by the bend relations of a set $I$ of polynomials in $A[\Mon]$, denoted $\Bend(I)$. 
\end{definition}

\begin{coro}\label{coro:BendAndNoZeroDivs}
Let $S$ be a 
%totally ordered semifield, 
semiring with no zero divisors,
let $\Mon$ be a toric monoid, and let $I\subseteq S[\Mon]$ be an ideal that does not contain any monomial. Then $\Bend(I)$ has trivial ideal-kernel and $S[\Mon]/\Bend(I)$ has no zero divisors.
\end{coro}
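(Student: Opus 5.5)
This follows from Lemma~\ref{lemma:ZeroDivsModCongGendByTotallyNonzeroPairs}, applied with $R=S[\Mon]$ and $C=\bigcup_{f\in I}\bend(f)$, so that $\calc=\angbra{C}=\Bend(I)$. Two hypotheses need checking: that $S[\Mon]$ has no zero divisors, and that every pair in $C$ has both entries zero or both entries nonzero.

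For the first hypothesis, I would use that a toric monoid $\Mon$ is cancellative and torsion-free. It therefore embeds in $\Z^n$, and we can fix a total order on $\Z^n$ compatible with addition, for instance a lexicographic order. Take nonzero $f,g\in S[\Mon]$, and let $a\chi^u$ and $b\chi^v$ be the terms of $f$ and $g$ with largest exponent in this order. Expanding $fg$, the coefficient of $\chi^{u+v}$ is exactly $ab$, because any other pair of exponents $(u',v')$ with $u'\le u$, $v'\le v$ and $(u',v')\neq(u,v)$ gives $u'+v'<u+v$. Since $S$ has no zero divisors, $ab\neq 0$, so $fg\neq0$. Note that no cancellation can occur among the other contributions either, by zero-sum-freeness (Lemma~\ref{lem:zero-sum-free}), though we only need the leading coefficient here.

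For the second hypothesis, consider a pair $(f,f_{\hat\imath})$ with $f\in I$ and $i\in\supp(f)$. Since $\supp(f)$ is nonempty, $f\neq0$. If $f_{\hat\imath}=0$, then $f$ would be a single term $a\chi^u$, that is, a monomial up to a coefficient, lying in $I$. This is excluded by the hypothesis that $I$ contains no monomial, reading "monomial" as "term" here, which is the natural interpretation. Hence both entries of every pair are nonzero. The zero polynomial contributes no pairs, since $\bend(0)=\emptyset$.

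Lemma~\ref{lemma:ZeroDivsModCongGendByTotallyNonzeroPairs} then gives both conclusions: $\Bend(I)$ has trivial ideal-kernel, and $S[\Mon]/\Bend(I)$ has no zero divisors.

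The main point of care is the first step, namely justifying that $S[\Mon]$ has no zero divisors. This relies on $\Mon$ admitting a compatible total order, which comes from the toric, hence torsion-free and cancellative, property. The remaining steps are routine.
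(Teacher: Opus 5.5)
Your proposal is correct and follows the paper's proof, which is a one-line application of Lemma~\ref{lemma:ZeroDivsModCongGendByTotallyNonzeroPairs} using the fact that a bend relation of $f$ contains $0$ only when $f$ is a single term. You go further than the paper by checking that $S[\Mon]$ has no zero divisors, which the paper leaves implicit and which needs no monomial order: for any $\chi^u\in\supp(f)$ and $\chi^v\in\supp(g)$, the coefficient of $\chi^{u+v}$ in $fg$ is a sum containing a nonzero product, hence nonzero by Lemma~\ref{lem:zero-sum-free}. You are also right that ``monomial'' must be read as ``term'' here, since when $S$ is not a semifield an ideal such as $(t)\subseteq\B[t][\Mon]$ contains the term $t$ but no monomial, and then $(t,0)\in\Bend(I)$.
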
\begin{proof}
This follows immediately from Lemma~\ref{lemma:ZeroDivsModCongGendByTotallyNonzeroPairs} because the only way for a bend relation of $f$ to contain a zero is if $f$ is a monomial.
\end{proof}

\begin{definition}\label{def: rad}
    Let $\mathcal{C}$ be a congruence, the \textit{radical of $\mathcal{C}$}, denoted $\sqrt{\mathcal{C}}$, is the intersection of all prime congruences containing $\mathcal{C}$. If $\mathcal{C} = \sqrt{\mathcal{C}}$, we say that that $\mathcal{C}$ is a \emph{radical} congruence. 
\end{definition}

\begin{thm}[See Theorem~3.9 and Proposition~5.2(i) of \cite{JM17}]\label{thm: radGP}
For any congruence $\mathcal{C}$ of an additively idempotent semiring $A$, we have that 
$$\sqrt{\mathcal{C}} = \{(x, y)\,|\, \text{there are }k\in\Z_{\geq0}\text{ and }c\in A\text{ such that }((x+y)^k+c)(x,y)\in\mathcal{C}\}.$$
\end{thm}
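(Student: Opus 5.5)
The plan is to prove the two inclusions separately, writing $\mathcal{D}$ for the right-hand side set.

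\emph{Inclusion $\mathcal{D}\subseteq\sqrt{\mathcal{C}}$.} Let $((x+y)^k+c)x = ((x+y)^k+c)y$ modulo $\mathcal{C}$, and let $P\supseteq\mathcal{C}$ be a prime congruence. In $A/P$, which is totally ordered and cancellative, the element $s=\overline{(x+y)^k+c}$ satisfies $s\bar x=s\bar y$.
\begin{itemize}
\item If $s\neq 0$, cancel $s$ to get $\bar x=\bar y$.
\item If $s=0$, Lemma~\ref{lem:zero-sum-free} gives $\overline{(x+y)}^k=0$. Since $A/P$ is cancellative, it has no zero divisors when $k\ge1$, so $\bar x+\bar y=0$. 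Hence $\bar x=\bar y=0$. The case $k=0$ cannot occur here, because then $s=1+\bar c\ge 1\neq0$.
\end{itemize}
Thus $(x,y)\in P$ for every such $P$, and so $(x,y)\in\sqrt{\mathcal{C}}$.

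\emph{Inclusion $\sqrt{\mathcal{C}}\subseteq\mathcal{D}$.} This is the real content, and I would argue by contrapositive. Suppose $(x,y)\notin\mathcal{D}$; we must produce a prime $P\supseteq\mathcal{C}$ with $(x,y)\notin P$.

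First pass to $B=A/\mathcal{C}$, so we may assume $\mathcal{C}=\Delta$. The hypothesis then says that $sx\neq sy$ for every $s$ in
\[
T=\{(x+y)^k+c\,:\,k\ge0,\ c\in A\}.
\]
Note that $T$ is a multiplicative set closed under adding arbitrary elements, and $1\in T$.

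Next, consider the congruences $\mathcal{E}$ on $B$ such that $(s x, s y)\notin\mathcal{E}$ for all $s\in T$. Unions of chains of such congruences still have this property, so Zorn's lemma gives a maximal one, $\mathcal{E}$. The goal is to show that $\mathcal{E}$ is prime; then $(x,y)\notin\mathcal{E}$ (take $s=1$), and $\mathcal{E}\supseteq\mathcal{C}$.

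\emph{Cancellativity of $B/\mathcal{E}$.} Suppose $az\equiv aw$ modulo $\mathcal{E}$ with $a\not\equiv 0$. Consider $\mathcal{E}'$, the congruence generated by $\mathcal{E}$ and $(z,w)$. If $\mathcal{E}'\neq\mathcal{E}$, maximality gives some $s\in T$ with $(sx,sy)\in\mathcal{E}'$. Using the chain description of Lemma~\ref{lemma:MaclaganRincon_weak_lemma} and multiplying the chain by $a$, one obtains $(asx,asy)\in\mathcal{E}$. It then remains to absorb $a$ into an element of $T$. This needs a companion step: one also wants the property that the congruence generated by $\mathcal{E}$ and $(a,0)$ stays admissible, and then combines the two conditions via addition, since $T+A\subseteq T$.

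\emph{Total order of $B/\mathcal{E}$.} For any $a,b$, compare the extensions generated by $(a+b,a)$ and by $(a+b,b)$. If both fail admissibility, one gets elements $s,s'\in T$ witnessing failure, and combining them through $ss'$ yields a contradiction.

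The main obstacle is exactly these combination arguments, and the problem is that a semiring has no subtraction. So I would follow the argument of \cite[Theorem~3.9]{JM17} closely: there the key identity is that a pair $(u,v)$ combined with $(u+v)$-multiples controls "twisted products". As an alternative route, I would reduce to the case $B$ cancellative-modulo-radical using Proposition~5.2(i) of \cite{JM17}, which characterizes the radical via the same expression. Since the statement is explicitly cited from \cite{JM17}, I would ultimately present the first inclusion in full and defer to that source for the Zorn-type construction of the separating prime.
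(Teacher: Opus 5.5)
Your proof of $\mathcal{D}\subseteq\sqrt{\mathcal{C}}$ is correct, including the $k=0$ case. Deferring the reverse inclusion to \cite{JM17} is exactly what the paper does: it gives no argument and only cites Theorem~3.9 and Proposition~5.2(i) there. So your final write-up coincides with the paper's treatment. Two caveats, though. Your ``alternative route'' through Proposition~5.2(i) is circular, since that proposition \emph{is} the statement being proved. And the Zorn sketch is not yet a proof of the hard inclusion.

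Work in $B/\mathcal{E}$. The cancellativity step does go through, but your wording is backwards: the point is that $\langle(a,0)\rangle$ is \emph{not} admissible when $a\neq 0$. Non-admissibility gives $s_2\in T$ and $r\in B$ with $s_2x+ra=s_2y+ra$. Multiplying a chain for $(s_1x,s_1y)\in\langle(z,w)\rangle$ by $a$ gives $as_1x=as_1y$. Now set $\sigma=s_1s_2$ and $p=ras_1$; then $s=\sigma(x+y)+p\in T$ satisfies $sx=\sigma xy+px=sy$, the required contradiction.

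The genuine gap is the total-order step. ``Combining through $ss'$'' only shows that $(ss'x,ss'y)$ lies in both $\langle(a+b,a)\rangle$ and $\langle(a+b,b)\rangle$. You still need these two congruences to intersect in the diagonal, and that is not formal; it is the real content. Here is one way to close it, once cancellativity of $B$ is known:
\begin{enumerate}
\item Cancelling $(a+b)^{n-1}$ in Lemma~\ref{lemma:ABinomialTheorem} with $m=n$ gives the identity $(a+b)^n=a^n+b^n$.
\item Using this identity and Lemma~\ref{lemma:cancelIneqs}, one checks that $R_a=\{(u,v): (u^2+v^2)a^n\le uv(a+b)^n \text{ for some } n\}$ is a congruence containing $(a+b,a)$. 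Transitivity comes from cancelling $v^2$; compatibility with addition uses $(u+z)^2=u^2+z^2$. Define $R_b$ the same way with $b^n$; it contains $(a+b,b)$.
\item A pair in $R_a\cap R_b$ satisfies $(u^2+v^2)(a+b)^n\le uv(a+b)^n$, hence $u^2+v^2\le uv$.
\item This forces $u=v$ in a cancellative semiring, since then $u(u+v)=uv=v(u+v)$.
\end{enumerate}
So the order of the steps matters: cancellativity must be established first, and total order then needs this intersection lemma, which your sketch does not supply.
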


\begin{lemma}\label{lem:noZD-diagNoIK}
    Let $A$ be a semiring with no zero divisors. Then the ideal-kernel of $\sqrt{\Delta}$ is trivial.% \orange{and $A/\sqrt{\Delta}$ has no zero divisors}.
\end{lemma}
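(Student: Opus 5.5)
We have to show: if $x\in A$ and $(x,0)\in\sqrt{\Delta}$, then $x=0$. The plan is to apply Theorem~\ref{thm: radGP} with $\mathcal{C}=\Delta$ and then use the absence of zero divisors.

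By Theorem~\ref{thm: radGP}, the condition $(x,0)\in\sqrt{\Delta}$ gives some $k\in\Z_{\geq0}$ and $c\in A$ such that
$$((x+0)^k+c)\,x \;=\; ((x+0)^k+c)\cdot 0.$$
The right-hand side is $0$, so we get $(x^k+c)\,x=0$. Since $A$ has no zero divisors, either $x=0$ (and we are done) or $x^k+c=0$.

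In the second case we apply Lemma~\ref{lem:zero-sum-free} to $x^k+c=0$. This gives $x^k=0$ and $c=0$. If $k\geq 1$, then $x^k=0$ forces $x=0$, by induction on $k$ using again that there are no zero divisors. If $k=0$, then $x^0=1$, so $1=0$ and $A$ is the zero semiring, in which $x=0$ trivially. Hence in every case $x=0$, so the ideal-kernel of $\sqrt{\Delta}$ is trivial.

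There is no real obstacle here. The only points needing care are the degenerate case $k=0$ and the case $1=0$, both handled above, and reading Theorem~\ref{thm: radGP} with the scalar multiplying the pair componentwise, so that $(\ast)(x,0)\in\Delta$ means $(\ast)x=(\ast)\cdot 0$.
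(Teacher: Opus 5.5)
Your proof is correct and follows the paper's argument: apply Theorem~\ref{thm: radGP} to get $(x^k+c)x=0$, use the absence of zero divisors to get $x^k+c=0$, and then use Lemma~\ref{lem:zero-sum-free} to get $x^k=0$. The only differences are cosmetic: you argue directly rather than by contradiction, and you treat the degenerate case $k=0$ explicitly, which the paper covers with a brief remark.
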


\begin{proof}
    Assume that the ideal-kernel of $\sqrt{\Delta}$ is non-trivial, that is, there exists an element $a\in A \setminus \{0\}$ such that $(a, 0) \in \sqrt{\Delta}$. By Theorem~\ref{thm: radGP}
    %\cite[Proposition 5.2]{JM17} \red{(Ideally this will become a reference to a single statement in Section 2, rather than relying on both \cite[Proposition 5.2(i)]{JM17} and the characterization of $\sqrt{\calc}$ in terms of generalized powers.)} 
    for the pair $(a,0)$ there exist $i \in \mathbb{Z}_{\geq 0}$ and $h \in A$ such that $(a^i + h)(a,0)$ is in $\Delta$, or equivalently, $(a^i+h)a = 0$. Since $a \neq 0$ by assumption and $A$ has no zero divisors, we must have $a^i + h = 0$ which in turn implies that 
    %$a^i = h = 0$ 
    $a^i=0$
    by Lemma~\ref{lem:zero-sum-free}, making $a$ a zero divisor. Note that $i$ has to be bigger than 1, because $a$ is not 0 by assumption.
    %
    %\orange{The second claim now follows from Lemma~\ref{lemma:ZeroDivsModCongWithNoIdealKernel}.}
\end{proof}

\begin{definition}\label{def:refuced}
    We call a semiring $A$ \emph{reduced} if $\sqrt{\Delta_A}=\Delta_A$.
\end{definition}

\begin{proposition}\label{prop: canIsRed}
    If $A$ is cancellative then $A$ is reduced.
\end{proposition}
\begin{proof}
% %    By Theorem~\ref{thm: radGP} $$\sqrt{\Delta} = \{(x, y)\,|\, \text{there are }k\in\Z_{\geq0}\text{ and }c\in A\text{ such that }((x+y)^k+c)(x,y)\in\Delta\}.$$ 
% %
% \green{
% Let $(x,y) \in \sqrt{\Delta}$ and assume for contradiction that $(x,y) \not\in {\Delta}$. Since $(x,y) \in \sqrt{\Delta}$, by Theorem~\ref{thm: radGP} $r(x,y) \in \Delta$, where $r = (x+y)^k+c$. Since $A$ is cancellative and $(x,y) \not\in {\Delta}$ we must have that $r = 0$. By Lemma~\ref{lem:zero-sum-free} this means that $(x+y)^k=0$, so $x+y=0$ and thus $x=y=0$ implying that $(x,y)\in \Delta$ which is a contradiction.
% }
This is a restatement of \cite[Proposition 3.10]{JM17}.
%\blue{indeed, that's why it is in the prelims, but we can remove the proof.}
\end{proof}

\begin{proposition}[Proposition 4.9 in \cite{Tol16}]\label{prop:Jef4.9}
    Let $A$ be a semiring, every surjective homomorphism from $A$ to a reduced semiring factors uniquely through $A/\sqrt{\Delta}$.
\end{proposition}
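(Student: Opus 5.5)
Let $\ph:A\to B$ be a surjective homomorphism with $B$ reduced. Since $A\to A/\sqrt{\Delta_A}$ is surjective, uniqueness of a factorization is automatic. So the task is to show existence, and that comes down to the containment of congruences $\sqrt{\Delta_A}\subseteq\ker(\ph)$. Once this holds, the map $A/\sqrt{\Delta_A}\to B$, $\overline{a}\mapsto\ph(a)$, is well defined, and it is a homomorphism because $\ph$ is.

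To prove the containment I will use the description of the radical in Theorem~\ref{thm: radGP}. Take $(x,y)\in\sqrt{\Delta_A}$. Then there are $k\in\Z_{\geq0}$ and $c\in A$ with
\[
((x+y)^k+c)\,x=((x+y)^k+c)\,y .
\]
Applying $\ph$ gives
\[
\bigl((\ph(x)+\ph(y))^k+\ph(c)\bigr)\ph(x)=\bigl((\ph(x)+\ph(y))^k+\ph(c)\bigr)\ph(y).
\]
Applying Theorem~\ref{thm: radGP} in $B$, with the same $k$ and the element $\ph(c)$, this says exactly $(\ph(x),\ph(y))\in\sqrt{\Delta_B}$. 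Since $B$ is reduced, $\sqrt{\Delta_B}=\Delta_B$, so $\ph(x)=\ph(y)$ and $(x,y)\in\ker(\ph)$. Here we did not really need surjectivity to find $c$: we only push an element of $A$ forward, never pull one back.

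An alternative argument uses primes. Each prime congruence $Q$ of $B$ pulls back along $\ph$ to a prime congruence of $A$, since $A/\ph^*(Q)$ embeds into $B/Q$ and so is totally ordered and cancellative. Hence $\ph^*(\sqrt{\Delta_B})=\bigcap_Q\ph^*(Q)\supseteq\sqrt{\Delta_A}$. The only point needing care is that $\ph^*(Q)$ is proper, which holds because $\ph(1)=1$.

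I expect no real obstacle. The one thing to be careful about is that Theorem~\ref{thm: radGP} is applied in $B$, where its hypotheses (additive idempotence) hold because $B$ is a homomorphic image of $A$.
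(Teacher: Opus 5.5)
Your proof is correct. Both arguments are complete: the one via Theorem~\ref{thm: radGP}, and the one pulling back primes. The latter includes the properness check for $\varphi^*(Q)$, which uses $\varphi(0)=0$ and $\varphi(1)=1$.

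The paper gives no argument of its own here, only the citation to Tolliver. So there is no route to compare against. Your proof is self-contained, using this paper's definition of $\sqrt{\Delta}$ as an intersection of prime congruences together with Theorem~\ref{thm: radGP}.

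Your remark that surjectivity is never used is worth keeping. It gives Corollary~\ref{coro:factor-quotient-map} immediately, because cancellative targets are reduced by Proposition~\ref{prop: canIsRed}. By contrast, deducing that corollary from the cited surjective statement tacitly requires an extra step: replacing the target by the image $\varphi(A)$, which is cancellative as a subsemiring of a cancellative semiring.
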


\begin{coro}\label{coro:factor-quotient-map}
    Let $A$ be a semiring. Then every homomorphism from $A$ to a cancellative semiring factors uniquely through $A/\sqrt{\Delta}$.
\end{coro}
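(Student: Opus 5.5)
The corollary should follow from Proposition~\ref{prop:Jef4.9} once we reduce to the surjective case. Let $\ph:A\to B$ be a homomorphism with $B$ cancellative, and let $\pi:A\to A/\sqrt{\Delta_A}$ be the quotient map. I would factor $\ph$ through its image $\ph(A)\subseteq B$.

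The first step is to check that $\ph(A)$ is reduced. Every nonzero element of $\ph(A)$ is cancellative in $B$, so it is cancellative in $\ph(A)$: if $ab=ac$ with $b,c\in\ph(A)$, then cancelling in $B$ gives $b=c$. Hence $\ph(A)$ is a cancellative semiring, and Proposition~\ref{prop: canIsRed} shows that it is reduced. (If $B$ is the zero semiring, or $\ph(A)$ is trivial, the statement is immediate.)

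Next, the corestriction $A\to\ph(A)$ is a surjective homomorphism onto a reduced semiring. By Proposition~\ref{prop:Jef4.9} it factors uniquely as $\psi\circ\pi$ for some $\psi:A/\sqrt{\Delta_A}\to\ph(A)$. Composing $\psi$ with the inclusion $\ph(A)\into B$ gives the desired factorization of $\ph$.

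Finally, uniqueness holds because $\pi$ is surjective. Any two maps $A/\sqrt{\Delta_A}\to B$ that agree after composing with $\pi$ agree on every element of $A/\sqrt{\Delta_A}$, so they are equal.

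There is no substantial obstacle. The only point requiring care is the first step, namely that a subsemiring of a cancellative semiring is again cancellative and hence reduced, which is needed before Proposition~\ref{prop:Jef4.9} applies.
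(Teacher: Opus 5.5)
Your proof is correct and is the paper's argument, which just combines Proposition~\ref{prop: canIsRed} with Proposition~\ref{prop:Jef4.9}. You pass to the image $\ph(A)$ (cancellative, hence reduced) and get uniqueness from the surjectivity of $\pi$; this makes explicit the surjectivity hypothesis of Proposition~\ref{prop:Jef4.9}, which the paper's one-line proof leaves implicit.
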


\begin{proof}
    This follows directly from Proposition~\ref{prop: canIsRed} and Proposition~\ref{prop:Jef4.9}.
 %   This follows directly from Proposition~\ref{prop:Jef4.9} and that cancellative semirings are reduced \orange{by Proposition~\ref{prop: canIsRed}}%\red{(Nati says: we should give a reference for this.)}
\end{proof}

\noindent The following statement is shown in \cite[Remark 5.14]{FM25a} as a consequence of \cite[Theorem 5.13]{FM25a}.
\begin{theorem}\label{thm: trop-coordinate-semiring-is-cancellative}
Let $K$ be a field with valuation $K\to S$ and $\Mon$ a toric monoid. Let $I$ be an ideal of $K[\Mon]$ whose variety $Y := V(I)$ has no component contained in the toric boundary. Then every minimal prime congruence of $\base[\Mon]/\Bend(\trop I)$ has trivial ideal-kernel. In particular, $\sqrt{\Bend(\trop I)}$ is cancellative.
\end{theorem}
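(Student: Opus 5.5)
The plan has two parts: the main claim about minimal primes, then the ``in particular'' deduction. Set $A:=\base[\Mon]/\Bend(\trop I)$. A prime congruence of $A$ is the same thing as a prime congruence $P$ of $\base[\Mon]$ containing $\Bend(\trop I)$. Such a $P$ has a defining matrix (working first over the dense torus chart and then passing to $\Mon$). Its ideal-kernel is nontrivial exactly when some monomial $\chi^u$ is sent to $0$, i.e.\ when some column of the matrix is $-\infty$. Geometrically, such a $P$ lies over a point of the extended tropicalization of $Y$ sitting in a boundary stratum.

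\textbf{Reduction.} It suffices to show the following: every prime $P\supseteq\Bend(\trop I)$ whose ideal-kernel contains some nonzero element of $A$ strictly contains a prime $Q\supseteq\Bend(\trop I)$ with trivial ideal-kernel. Then a minimal prime cannot have nontrivial ideal-kernel. Here ``$Q\subseteq P$'' means that $\leq_P$ is a coarsening of $\leq_Q$, i.e.\ $A/P$ is a quotient of $A/Q$.

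\textbf{Translating membership of the bend congruence.} I would use the standard characterization: $P\supseteq\Bend(\trop I)$ iff for every $f\in\trop I$ the lexicographic maximum $\Phi_P(f)$ is attained at least twice among the terms of $f$. Equivalently, the iterated initial ideal of $I$ cut out by the rows of the matrix, restricted to the surviving coordinates, contains no monomial. The hypothesis that no component of $Y$ lies in the toric boundary means $Y=\nbar{Y\cap T}$. Hence the boundary point $P$ lies over a point of $\nbar{\trop(Y\cap T)}$, and this closure is taken inside the partially compactified tropical toric variety.

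\textbf{Constructing $Q$.} I would produce $Q$ by ``pulling back from infinity''. Since the $-\infty$ entries of $P$'s first row are limits of finite tropical points, choose $w\in\trop(Y\cap T)$ in the relevant direction: the recession direction of the stratum, lying in the tropical variety of the initial degeneration determined by $P$. Then let $Q$ have defining matrix consisting of $P$'s rows, with the $-\infty$ columns handled by a lower-priority row built from $w$, refined further by a generic row. The needed facts are these:
\begin{enumerate}
\item Every $f\in\trop I$ attains its $\Phi_Q$-maximum twice. This should follow from the fundamental theorem applied to the iterated initial ideal $\mathrm{in}_P(I)$, using that $w$ lies in its tropical variety, which in turn comes from the closure statement.
\item $Q$ has no $-\infty$ entries, so its ideal-kernel is trivial.
\item $\leq_P$ is the coarsening obtained by sending the appropriate monomials to $0$, so $Q\subsetneq P$.
\end{enumerate}
This step, and in particular fact (1), is the main obstacle. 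It amounts to a compatibility between initial degenerations at boundary points and tropicalization of the torus part. I would first try to prove it one row at a time, by induction on the number of rows, using the structure lemma (Lemma~\ref{lemma:MaclaganRincon_weak_lemma}).

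\textbf{The ``in particular''.} $\sqrt{\Bend(\trop I)}$ is the intersection of the minimal primes $P$ containing $\Bend(\trop I)$, so $A/\sqrt{\Delta_A}$ embeds in $\prod_P A/P$, with each factor cancellative. Take $a\neq0$ in $A/\sqrt{\Delta_A}$. Then $a$ is nonzero in $A$, so by trivial ideal-kernel $a$ is nonzero in every $A/P$. Therefore $ab\equiv ac$ modulo $\sqrt{\Delta_A}$ gives $b\equiv_P c$ for all $P$, hence $b=c$ in $A/\sqrt{\Delta_A}$.
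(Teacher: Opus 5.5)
There is a genuine gap. Your reduction and your deduction of the ``in particular'' clause are fine. However, the reduction is equivalent to the theorem itself, since by Zorn every prime containing $\Bend(\trop I)$ contains a minimal one. So everything rests on constructing $Q$, which you leave open and which, as written, is mis-specified.

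For $Q\subseteq P$ you need every term $\tau$ killed by $P$ to satisfy $\tau<_Q\sigma$ for every term $\sigma$ surviving in $P$, whatever the coefficients. Otherwise $(\tau+\sigma,\tau)\in Q$, although $|\tau+\sigma|_P=|\sigma|_P\neq0=|\tau|_P$. Keeping $P$'s rows at top priority and filling in the $-\infty$ entries with finite values destroys this in general. For example, on $\T[x]$ let $P$ be given by $(1\ \ {-\infty})$ and let $Q$ have top row $(1\ \ c)$ with $c\in\R$. Then the pair $(1+t^{1-c}x,\,t^{1-c}x)$ lies in $Q$ but not in $P$. The direction $w$ must instead be the top row, with coefficient entry $0$ and $w$ in the relative interior of the cone of the stratum, and $P$'s rows go beneath it; in this example $\begin{pmatrix}0&-1\\1&0\end{pmatrix}$ works. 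That is the configuration your fact (3) describes, but it is not the matrix you proposed.

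With $w$ on top, though, fact (1) is no longer inherited from $P$. Take $g\in I$ with no term on the face $F$ of monomials surviving in $P$. Whether the $\Phi_Q$-maximum of $\trop g$ is attained twice then depends on $w$ and on how $P$'s rows are extended off $F$, and $P$ determines neither. What you need is a choice of $w$ and of these extensions that makes the matrix a (higher-rank) point of the tropicalization of $Y\cap T$ specializing to $P$.
\begin{itemize}
\item The choice of $w$ must depend on $P$. For $I=(y-x^2)$ and $P$ killing $x$ and $y$, the choice $w=(-1,-1)$ makes $y$ the unique top term of $y+x^2$, while $w=(-1,-2)$ works.
\item $P$ may itself have several rows, and minimal primes typically do.
\item The closure statement $\trop(Y)=\nbar{\trop(Y\cap T)}$ only concerns rank-one points.
\item Truncating $P$'s matrix only produces coarser congruences containing $P$, not the finer $Q$ you want.
\item Lemma~\ref{lemma:MaclaganRincon_weak_lemma} describes generated congruences and gives no existence of primes.
\end{itemize}
So the essential input is missing: that every prime containing $\Bend(\trop I)$ with nontrivial ideal-kernel is a specialization of one coming from the torus part. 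The paper does not prove this itself; it imports it from \cite[Theorem 5.13, Remark 5.14]{FM25a}.
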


\begin{coro}\label{coro:trop-coordinate-semiring-is-cancellative - localized}
Let $K$ be a field with valuation $K\to S$ and $\Mon$ a toric monoid. 
Let $I$ be an ideal of $K[\Mon]$ whose variety $Y := V(I)$ has no component contained in the toric boundary. 
%Then every minimal prime congruence of $\base[\Mon]/\Bend(\trop I)$ has trivial ideal-kernel. 
Let $R$ be any localization\footnote{Localization is a standard construction for semirings; see \cite[Section 11]{Gol92} for details.} of $\base[\Mon]/\Bend(\trop I)$ by a set of cancellative elements. Then $\sqrt{\Delta_R}$ is cancellative.
\end{coro}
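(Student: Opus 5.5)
Write $B=\base[\Mon]/\Bend(\trop I)$, let $T$ be the given set of cancellative elements of $B$, and let $R=T^{-1}B$. The plan is to relate the radical of $\Delta_R$ to the radical of $\Delta_B$, which is cancellative by Theorem~\ref{thm: trop-coordinate-semiring-is-cancellative}, and then show that localizing at cancellative elements keeps the quotient cancellative.

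First I will prove the description
\[
\sqrt{\Delta_R}=\{(x/s,\,y/s') : (xs',\,ys)\in\sqrt{\Delta_B}\},
\]
i.e.\ $\sqrt{\Delta_R}$ is the localization of $\sqrt{\Delta_B}$. I will use the generalized-power description of the radical from Theorem~\ref{thm: radGP}.

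One inclusion comes from clearing denominators. Suppose $(x/s,y/s')\in\sqrt{\Delta_R}$. Then $\big((x/s+y/s')^k+c\big)(x/s,y/s')\in\Delta_R$. Multiply through by a suitable product $u$ of elements of $T$. Cancellativity of the elements of $T$ in $B$ means that equalities in $R$ pull back to equalities in $B$ after clearing denominators. This yields an element of the form $\big((xs'+ys)^k+c'\big)(xs',ys)$ lying in $\Delta_B$, after possibly absorbing extra factors of $s$, $s'$ and $u$ into $c'$. Since those extra factors multiply the whole expression, I need care here. I will instead work with the equivalent characterization of the radical as the intersection of primes.

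Concretely, a prime congruence $P$ of $B$ with trivial ideal-kernel on $T$ sends each $t\in T$ to a nonzero element of a cancellative semiring $B/P$. Hence $B\to B/P\to\kappa(P)$ extends to $R$, and its kernel is a prime of $R$. Conversely, every prime of $R$ restricts to a prime of $B$. So the primes of $R$ correspond to the primes of $B$ not killing any element of $T$.

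The next step is the key one. For a minimal prime $P$ of $B$, Theorem~\ref{thm: trop-coordinate-semiring-is-cancellative} says $P$ has trivial ideal-kernel, so $P$ does not kill any nonzero element, in particular any $t\in T$. Hence every minimal prime of $B$ extends to $R$. Since $\sqrt{\Delta_R}$ is the intersection of all primes of $R$, it is contained in the intersection of the extensions of minimal primes of $B$. It also contains that intersection, because every prime of $R$ contains a minimal one. Therefore $R/\sqrt{\Delta_R}$ embeds into $\prod_P \kappa(P)$ over the minimal primes $P$. Equivalently, $R/\sqrt{\Delta_R}\cong T^{-1}\big(B/\sqrt{\Delta_B}\big)$.

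Finally, $B/\sqrt{\Delta_B}$ is cancellative. Its localization at the images of $T$, which are nonzero, is a sub-semiring of the semifield $\Frac(B/\sqrt{\Delta_B})$, and hence is cancellative.

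I expect the main obstacle to be checking carefully that the intersection of the extended primes really equals the localization of $\sqrt{\Delta_B}$. In particular, I must confirm that the images of $T$ are nonzero in $B/\sqrt{\Delta_B}$, which follows from trivial ideal-kernel. I must also confirm that the map $T^{-1}(B/\sqrt{\Delta_B})\to R/\sqrt{\Delta_R}$ is injective. Injectivity reduces to the statement that a pair of fractions equal in every $\kappa(P)$ has cross-products equal in every $B/P$, which is immediate.
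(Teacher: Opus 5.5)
Your proof is correct, but it takes a different route from the paper's. The paper stays at the level of minimal primes. It notes that every prime of $R$ is an extension of a prime of $B=\base[\Mon]/\Bend(\trop I)$, and that primes with trivial ideal-kernel extend to primes of $R$ with trivial ideal-kernel. From Theorem~\ref{thm: trop-coordinate-semiring-is-cancellative} it deduces that every minimal prime of $R$ has trivial ideal-kernel, and then applies the external criterion \cite[Proposition 5.18]{FM25a} directly to $R$.

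You use the minimal-prime part of that theorem only to ensure that no minimal prime of $B$ is lost on localizing at $T$. This gives the explicit identification $R/\sqrt{\Delta_R}\cong T^{-1}(B/\sqrt{\Delta_B})$. You then transport the ``in particular'' part of the theorem, the cancellativity of $B/\sqrt{\Delta_B}$, to $R$. For this you need only the elementary fact that a localization of a cancellative semiring at nonzero elements embeds in its semifield of fractions.

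What each route buys:
\begin{itemize}
\item Your route avoids re-invoking the criterion from \cite{FM25a} at the level of $R$, though that criterion presumably underlies the ``in particular'' clause you rely on. It also yields a concrete description of $R/\sqrt{\Delta_R}$.
\item The paper's argument is shorter and needs only the first part of the theorem.
\end{itemize}

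Two places to tighten:
\begin{itemize}
\item The abandoned generalized-power paragraph should simply be deleted.
\item The inclusion $\{(x/s,y/s') : (xs',ys)\in\sqrt{\Delta_B}\}\subseteq\sqrt{\Delta_R}$ needs no minimal primes. For any prime $Q$ of $R$, its restriction $Q\cap(B\times B)$ is a prime of $B$ containing $\sqrt{\Delta_B}$. Multiplying $xs'\equiv ys$ by $(ss')^{-1}\in R$ then gives $x/s\equiv y/s'$ modulo $Q$.
\end{itemize}
By contrast, your phrase ``every prime of $R$ contains a minimal one'' silently uses two facts. The first is the existence of a minimal prime of $B$ below $Q\cap(B\times B)$, by Zorn's lemma for prime congruences. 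The second is that $P\subseteq Q\cap(B\times B)$ forces the extension of $P$ to lie in $Q$, which follows from the universal property of $T^{-1}B$.
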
\begin{proof}
Let $R_0:=\base[\Mon]/\Bend(\trop I)$. Since $R$ is a localization of $R_0$, every prime congruence on $R$ is the extension of a prime congruence on $R_0$. Moreover, because $R$ is a localization of $R_0$ by cancellative elements, every prime congruence on $R_0$ with trivial ideal-kernel extends to a prime congruence on $R$ and the extension also has trivial ideal-kernel. Thus, Theorem~\ref{thm: trop-coordinate-semiring-is-cancellative} implies that every minimal prime congruence of $R$ has trivial ideal-kernel. So, by \cite[Proposition 5.18]{FM25a}, $\sqrt{\Delta_R}$ is cancellative.
\end{proof}

%\subsection{Tropical ideals}
We finish the section with the definition of a tropical ideal. Tropical ideals have many combinatorial properties which can be described in the language of valuated matroids, so we define those first. For the basics of matroids and valuated matroids we refer the reader to \cite{Ox} and \cite{DW}. %\orange{see also the beginning of Appendix~\ref{app: countingCircuits}.} \red{Nati says: I've now basically removed this from Appendix~\ref{app: countingCircuits}, so we may want to remove this line..}

\begin{definition}[Adapted from Definition 1.1 of \cite{MR18}]\label{def: trop_ideals}
Let $\base$ be a sub-semifield of $\T$, let $\Mon$ be any monoid, and let $I\subseteq\base[\Mon]$ be an ideal. For any $D\subseteq \Mon$ we let $I_D$ denote the set of those $f\in I$ whose support is contained in $D$.

We say that $I$ is a \emph{tropical ideal} if, for every finite set $D\subset\Mon$, $I_{D}$ is a tropical linear space, i.e., $I_{D}$ is the set of vectors of a valuated matroid. To be very concrete, $I$ is a tropical ideal if it satisfies the following monomial elimination axiom:

For any $f, g \in I$ and any monomial ${\pmb x}^{\pmb u}$ which appears in both $f$ and $g$ with the same nonzero coefficient, there exists a polynomial $h \in I$ such that $\pmb u$ is not in the support of $h$ and, for any other $\pmb v\in \Mon$, the coefficient $c_{\pmb v}(h)$ of ${\pmb x}^{\pmb v}$ in $h$ satisfies $c_{\pmb v}(h) \leq \max (c_{\pmb v}(f), c_{\pmb v}(g))$ with equality holding whenever $c_{\pmb v}(f) \neq c_{\pmb v}(g)$. Note that, in this case, we have $\supp(h)\subseteq \Big(\supp(f)\cup\supp(g)\Big)\sdrop\{\pmb u\}$, so if $f,g\in I_D$ then $h\in I_D$.
\end{definition}

\begin{remark}
    One can think of a homogeneous tropical ideal as a ``tower'' of valuated matroids that determine its various homogeneous parts as described in \cite[Definition 2.5]{MR18}.
\end{remark}

\begin{example}
    Let $K$ be a field and $\Mon$ a toric monoid. Given any ideal $I$ of $K[\Mon]$, the ideal $\trop(I) \subseteq \T[x_1, \dots x_n]$ is a tropical ideal.
\end{example}

%\subsection{Different notions of integrality}

%\section{Elementary properties of integral elements.}
\section{Definitions of integrality and elementary properties}

%\red{(Nati asks: Should we have an introductory paragraph to this section reminding the reader of several equivalent definitions of integrality in the ring case and saying that we will adapt these?)}

% In commutative (ring) algebra, the integral closure of a integral domain $R$ with field of fractions $K$ is the intersection of the valuation domains (of $K$) which contain $R$. Motivated by that we consider a semiring extension $A\subseteq R$ the \textit{valuation semirings} defined as $$R_P = \{x \in R : |x|_P \leq 1_{\kappa(P)}\},$$ for each prime congruence $P$. We say $\vcl{A}$ is \textit{the valuative integral closure of $A$}, where $\vcl{A} = \bigcap R_P$ and the intersection is over all $P$ for which $R_P \supseteq A$.  
% We will often be interested in the case $R = \Frac(A)$. 
% \red{(Nati: This paragraph needs some rewording. I should come back to this.)}

In commutative (ring) algebra, one way to express the integral closure of a subring $A$ of a ring $R$ is as the intersection over (equivalence classes) of valuations $v$ on $R$ of the valuation sub-rings $R_v\subseteq R$, where we only include in the intersection those $v$ for which $A\subseteq R_v$. Given that the semiring analogue of an equivalence class of valuations is a prime congruence, for each prime congruence $P$ on a semiring $R$ we define the corresponding \emph{valuation sub-semiring} of $R$ as 
$$R_P := \{x \in R \,:\, |x|_P \leq 1_{\kappa(P)}\}.$$
We then define the \emph{valuative integral closure of $A\subseteq R$} to be $\vcl{A}:=\bigcap R_P$ where the intersection is over all prime congruences $P$ on $R$ for which $A\subseteq R_P$. As in the context of rings, one case that is of particular interest is when $R=\Frac(A)$.

To define other versions of integrality we need a few more definitions.

\begin{definition}
    We call an $R$-module $M$ \emph{annihilator-free faithful}
    \footnote{Tolliver calls 
    %these 
    such modules 
    faithful in \cite{Tol16}.} if $rM \neq 0$ for all $r \in R$. We will say that an $R$-module $M$ is \emph{faithful} if the structure homomorphism of not-necessarily-commutative semirings $R \to \text{End}(M)$ is one-to-one. That is, if $a, b\in R$ are such that $ax = bx$ for all $x\in M$, then $a = b$.
\end{definition}
%\red{specify what we mean by a \emph{finitely generated module} over $A$ and what we mean by a \emph{finitely generated DC-module} over $A$. HERE}

\begin{definition}
    Let $\R$ be an extension of additively idempotent semirings and let $M$ be an $R$-module. We say that $M$ is \emph{finitely generated as an $A$-module} if there exists an $n\in \N$ and a surjective module homomorphism $A^n \to M$.     We say that $M$ is \emph{finitely generated as a \dc\ $A$-module} if there exists a finitely generated $A$-module $L \subseteq M$ such that $M$ is the downwards-closure of $L$.
\end{definition}

\begin{notation}
    For a semiring $R$ and $A$ a sub-semiring of $R$, we will denote by $\aangx{A}{x}$ the smallest \dc{} sub-semiring of $R$ that contains both $A$ and some element $x \in R$. 
\end{notation}

We now present a few different definitions of integral elements. When we work with semirings, we no longer have the well-known equivalent definitions of integral elements.

\begin{definition}\label{def: integrals} 
Let $R$ be an additvely idempotent semiring, $A$ be a sub-semiring of $R$, and $x \in R$. We say that 
\begin{itemize}
    \item $x$ is \n-int over $A$ if, for some positive integer $n$ and $a_0,a_1,\ldots,a_{n-1}\in A$, we have that $x^n=\sum_{i=0}^{n-1}a_ix^i$.
    \item $x$ is \J-int over $A$ if, for some positive integer $n$ and $a_0, a_1, ... , a_{n-1}\in A$, we have that $x^n \leq \sum_{i=0}^{n-1}a_ix^i.$
    \item $x$ is \D-int over $A$ if there 
    %exists an element $y \in R$ 
    is a $y\in R$
    such that $y$ is \J-int over $A$ and $x \leq y$.
    \item $x$ is \SQ-int (resp.\ \JQ-int, resp.\ \WQ-int) over $A$ if there exists an $\aangx{A}{x}$-module $M \subseteq R$ which is finite as %an $A$-module 
    a downward-closed $A$-module and faithful (resp.\ annihilator-free faithful, resp.\ nonzero) as an $\aangx{A}{x}$-module.% \red{(Nati: we may want to consider adding a footnote here saying that what we call \JQ-int is what Tolliver calls quasi-integral, if for no other reason than that we don't really use this notion in this paper, and it is here mostly to point out how our stuff relates to his.)} \blue{There is already a footnote for \J-int. Here is one for \Q-int: }
    \footnote{Our notion of \JQ-int is what Tolliver \cite{Tol16} calls quasi-integral.}
\end{itemize}
We denote the set of all %\J-int (resp.\ \D-int, \Q-int, strongly \Q-int, weakly \Q-int) elements over $A$ by $\Jcl{A}$ (resp. $\Dcl{A}$, $\Qcl{A}$, $\sQcl{A}$, $\wQcl{A}$).
\n-int (resp.\ \J-int, \D-int, \JQ-int, \SQ-int, \WQ-int) elements over $A$ by $\Ncl{A}$ (resp.\ $\Jcl{A}$, $\Dcl{A}$, $\Qcl{A}$, $\sQcl{A}$, $\wQcl{A}$).
\end{definition}

Over the course of this paper, we will be interested in finding the relationships between these various ``integral closures'' as well as whether they are semirings and whether they give closure operations. The proofs in this section are elementary but quite useful and sometimes surprising. 

\begin{prop}\label{prop:IntegralAsAlgebrasFinite}
Let $A\subseteq R$ be an extension of semirings. For any $x\in R$, 
\begin{enumerate}
\item\label{propItem:N-integralAsAlgebraFinite} $x$ is \n-int over $A$ if and only if $A[x]$ is finitely generated as an $A$-module and
\item\label{propItem:J-integralAsAlgebraFinite} $x$ is \J-int over $A$ if and only if $\aangx{A}{x}$ is finitely generated as a \dc\ $A$-module.
\end{enumerate}
\end{prop}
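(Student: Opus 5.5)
For both parts the idea is to pass between a monic (in)equality of degree $n$ and the generating set $1,x,\ldots,x^{n-1}$. The only real difference is that part~\eqref{propItem:J-integralAsAlgebraFinite} works with inequalities and downward closures instead of equalities.

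\emph{Part~\eqref{propItem:N-integralAsAlgebraFinite}.} Suppose $x^n=\sum_{i=0}^{n-1}a_ix^i$. I will show by induction on $m$ that every $x^m$ lies in the $A$-submodule $L$ of $R$ generated by $1,x,\ldots,x^{n-1}$.
\begin{itemize}
\item For $m<n$ this holds by definition of $L$.
\item For $m\ge n$, write $x^m=x^{m-n}x^n=\sum_i a_i x^{m-n+i}$. Each exponent $m-n+i$ is less than $m$, so the induction hypothesis applies to every term.
\end{itemize}
Since every element of $A[x]$ is an $A$-combination of powers of $x$, we get $A[x]=L$, which is finitely generated.

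Conversely, suppose $A[x]$ is generated as an $A$-module by $g_1,\ldots,g_k$ (the images of the standard basis under a surjection $A^k\to A[x]$). Each $g_j$ is a polynomial expression in $x$ with coefficients in $A$; let $d$ bound their degrees. Writing $x^{d+1}=\sum_j b_jg_j$ with $b_j\in A$ and expanding gives $x^{d+1}=\sum_{i=0}^d a_ix^i$ with $a_i\in A$. So $x$ is \n-int.

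\emph{Part~\eqref{propItem:J-integralAsAlgebraFinite}.} The first step, which I expect to be the main (though mild) obstacle, is to identify $\aangx{A}{x}$ with the downward closure $D$ of $A[x]$ in $R$.
\begin{itemize}
\item $D$ is a sub-semiring. If $u\le p$ and $v\le q$ with $p,q\in A[x]$, then $u+v\le p+q$ and $uv\le pq$, because addition and multiplication preserve the order.
\item $D$ is \dc{} by construction and contains $A$ and $x$.
\item Any \dc{} sub-semiring containing $A$ and $x$ contains $A[x]$, hence also $D$.
\end{itemize}
Therefore $\aangx{A}{x}=D$.

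Now suppose $x^n\le\sum_{i<n}a_ix^i$, and let $L$ be the $A$-module generated by $1,\ldots,x^{n-1}$. The same induction as in part~\eqref{propItem:N-integralAsAlgebraFinite}, multiplying the inequality by $x^{m-n}$ and substituting the inductive bounds, shows that each $x^m$ is bounded above by an element of $L$. Summing these bounds, every element of $A[x]$ is bounded above by an element of $L$. Since $L\subseteq A[x]$, the downward closure of $L$ equals $D=\aangx{A}{x}$. Hence $\aangx{A}{x}$ is finitely generated as a \dc{} $A$-module.

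Conversely, suppose $\aangx{A}{x}$ is the downward closure of an $A$-module $L\subseteq\aangx{A}{x}$ generated by $g_1,\ldots,g_k$. Since $\aangx{A}{x}=D$, each $g_j\le p_j$ for some $p_j\in A[x]$; let $d$ bound the degrees of the $p_j$. Because $x^{d+1}\in\aangx{A}{x}$, there are $b_j\in A$ with
\[
x^{d+1}\le\sum_j b_jg_j\le\sum_j b_jp_j .
\]
Expanding the right-hand side gives $x^{d+1}\le\sum_{i=0}^d a_ix^i$ with $a_i\in A$, so $x$ is \J-int.
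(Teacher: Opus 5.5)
Your proof is correct. Part (1) is the paper's argument: the paper only asserts that $1,x,\ldots,x^{n-1}$ generate $A[x]$, your induction supplies that step, and your converse is the same as the paper's.

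For part (2) the paper gives no argument of its own. It simply cites Tolliver's Proposition 5.2. You instead prove it directly by rerunning the part (1) argument with inequalities in place of equalities. The one extra ingredient this needs is that $\aangx{A}{x}$ equals the downward closure of $A[x]$ in $R$, which relies on addition and multiplication being order-preserving. You state and prove this correctly. The paper uses the same identification only implicitly, for example in the proof of Proposition~\ref{prop:QIntIsDC}.

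With that identification in hand, both directions of (2) are handled cleanly:
\begin{itemize}
\item In the forward direction, you bound each power $x^m$ above by an element of the $A$-module $L$ generated by $1,\ldots,x^{n-1}$.
\item In the converse, you dominate each generator $g_j$ by a polynomial $p_j$ in $x$ and compare $x^{d+1}$ against $\sum_j b_j p_j$.
\end{itemize}

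What your route buys is self-containedness and a visibly parallel treatment of the two parts. Part (1) is about expressing powers of $x$ in $L$, and part (2) is about bounding them above by elements of $L$. The citation is shorter, but it rests on an unpublished external source.
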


\begin{proof}

(1) Let $x$ be \n-int, that is, $x^n=\sum_{i=0}^{n-1}a_ix^i$ for a positive integer $n$ and $a_0, a_1, \ldots , a_{n-1}$ elements of $A$. Then $1,x,x^2,\ldots,x^{n-1}$ generate $A[x]$ as an $A$-module. 

Now assume that $f_1,\ldots,f_m$ generate $A[x]$ as an $A$-module. Write each $f_i$ as a polynomial in $x$. Denote the degree of each $f_i$ by $n_i$. Let $\displaystyle n = \max_{1\leq i\leq m}\{n_i\}+1$. Note that $1, x, x^2, \ldots x^{n-1}$ generate $A[x]$, 
%in particular there exist a positive integer $n$ and $a_0, a_1, \ldots , a_{n-1}$ elements of $A$ such that $x^n=\sum_{i=0}^{n-1}a_ix^i$. (2) is the content of \cite[Proposition 5.2]{Tol16}\footnote{Note that \J-int is what the author calls integral in \cite[Definition 5.1]{Tol16}.}. 
so there are $a_0, a_1, \ldots , a_{n-1}$ elements of $A$ such that $x^n=\sum_{i=0}^{n-1}a_ix^i$. 

(2) This is the content of \cite[Proposition 5.2]{Tol16}\footnote{Note that \J-int is what 
%the author 
Tolliver
calls integral in \cite[Definition 5.1]{Tol16}.}. 
\end{proof}

%\red{Prove this. Here is a sketch:
%(1) ($\Rightarrow$) $1,x,x^2,\ldots,x^{n-1}$ generate $A[x]$.
%(1) ($\Leftarrow$) If $f_1,\ldots,f_m$ generate $A[x]$, writing each of them as a polynomial in $x$, we see that the list of all powers of $x$ appearing in these polynomials is also a generating set of $A[x]$. Then let $n$ be $1$ plus the greatest power of $x$ occurring in this list.}

Certain relations between these notions are known. In \cite{Tol16} Tolliver shows that if $R$ is a semiring then \J-int elements are \JQ-int and when $R$ is a semifield, then \J-int and \JQ-int elements are the same. If $R$ is a semiring and $A$ a sub-semiring of $R$, Tolliver shows that when $R$ is unit-generated $A$-algebra, then the set of all elements of $R$ that are \JQ-int over $A$ is the intersection of all valuation semirings of $R$ that contain $A$. It is elementary to show that $\ncl{A}\subseteq\jcl{A}\subseteq\dcl{A}$ and $\sqcl{A}\subseteq\qcl{A}\subseteq\wqcl{A}$.
%\pinky{all \n-int elements are \J-int,} all \J-int elements are \D-int, all strongly \Q-int elements are \Q-int, and all \Q-int elements are weakly \Q-int. \orange{(Alternative that may be easier to refer back to: $\ncl{A}\subseteq\jcl{A}\subseteq\dcl{A}$ and $\sqcl{A}\subseteq\qcl{A}\subseteq\wqcl{A}$.)}
Another relation is shown in the lemma below.

\begin{lemma}\label{lem:DintImpliesQint}
    Let $R$ be an additively idempotent semiring and $A$ a sub-semiring of $R$. If $x$ is \D-int then $x$ is \SQ-int.
\end{lemma}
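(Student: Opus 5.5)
The plan is to take $M:=\aangx{A}{x}$ itself, or more precisely the \dc\ sub-semiring $\aangx{A}{y}$ generated by a \J-int element $y\geq x$, and check that it is a faithful $\aangx{A}{x}$-module that is finite as a \dc\ $A$-module. Since $x$ is \D-int, I would fix $y\in R$ with $y$ \J-int over $A$ and $x\leq y$. By Proposition~\ref{prop:IntegralAsAlgebrasFinite}(\ref{propItem:J-integralAsAlgebraFinite}), $\aangx{A}{y}$ is finitely generated as a \dc\ $A$-module. Concretely, it is the downward closure of the $A$-span of $1,y,\ldots,y^{n-1}$.

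First I would show $\aangx{A}{x}\subseteq\aangx{A}{y}$. The set $\aangx{A}{y}$ is a \dc\ sub-semiring containing $A$, and it contains $x$ because $x\leq y\in\aangx{A}{y}$. By minimality of $\aangx{A}{x}$ the inclusion follows. Consequently $M:=\aangx{A}{y}$ is closed under multiplication by elements of $\aangx{A}{x}$, since it is a sub-semiring containing $\aangx{A}{x}$. So $M\subseteq R$ is an $\aangx{A}{x}$-module.

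Next comes faithfulness. Since $1\in A\subseteq M$, if $a,b\in\aangx{A}{x}$ satisfy $am=bm$ for all $m\in M$, then taking $m=1$ gives $a=b$. Hence $M$ is faithful, which is the strong version required for \SQ-int.

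The only point needing care is matching the finiteness hypothesis: ``finite as a \dc\ $A$-module'' in Definition~\ref{def: integrals} should mean finitely generated as a \dc\ $A$-module, which is exactly what Proposition~\ref{prop:IntegralAsAlgebrasFinite}(\ref{propItem:J-integralAsAlgebraFinite}) supplies for $y$. That proposition is the only nontrivial input.

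If one prefers not to cite it, a direct check is possible. Let $L$ be the $A$-span of $1,y,\ldots,y^{n-1}$. One shows that the downward closure $\nbar{L}$ of $L$ is closed under multiplication, using $y^n\leq\sum a_iy^i$ and induction to bound each $y^k$ by an element of $L$. It is then a \dc\ sub-semiring containing $A$ and $y$, so it equals $\aangx{A}{y}$. I expect no real obstacle beyond this bookkeeping.
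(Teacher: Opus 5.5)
Your proof is correct and matches the paper's argument: the paper also takes a J-integral $y\geq x$ and uses $\aangx{A}{y}$ as the module, which is finite as a downward-closed $A$-module by Proposition~\ref{prop:IntegralAsAlgebrasFinite}. It likewise notes $\aangx{A}{x}\subseteq\aangx{A}{y}$ and gets faithfulness from $1\in\aangx{A}{y}$. Your direct check that the downward closure of $A+Ay+\cdots+Ay^{n-1}$ is closed under multiplication is a valid, optional substitute for citing that proposition.
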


\begin{proof}
    By definition of \D-int, there exists an element $y \in R$ which is \J-int over $A$, such that $x \leq y$. Then $\aangx{A}{y}$ is finite as an $A$-module. Since $\aangx{A}{x} \subseteq \aangx{A}{y}$ and $\aangx{A}{y}$ is a semiring, $\aangx{A}{y}$ is an $\aangx{A}{x}$-module; since $1\in \aangx{A}{y}$, $\aangx{A}{y}$ is a %strongly 
    faithful $\aangx{A}{x}$-module.
\end{proof}

We therefore have $\ncl{A} \subseteq \jcl{A} \subseteq \dcl{A} \subseteq \sqcl{A} \subseteq \qcl{A} \subseteq \wqcl{A}$.

\begin{remark}
Note that $\dcl{A}\subseteq\vcl{A}$, but the relationship of $\vcl{A}$ with the various quasi-integralities are unclear. This is partly because it is not clear whether the intersection of two quasi-integrally closed sub-semirings is again quasi-integrally closed.
\end{remark}

%\section{Elementary properties of integral elements.}

% The proofs in this section are elementary but quite useful and sometimes surprising. \red{(Nati: move this past sentence.)}

For technical reasons, it will also be convenient to consider the notion of \Jinty\ in a slightly more general context. Let $A\subseteq R$ be an extension of semirings and let $L\subseteq R$ be an $A$-submodule. We say that $x\in R$ is \emph{\J-int over $L$} if there are $n\in\Z_{\geq1}$ and $\ell_0, \ell_1,\ldots,\ell_{n-1}\in L$ such that $x^n\leq \dsum_{i=0}^{n-1}\ell_i x^i$. 
Note that, by setting $n=1$ in 
%the definition of \Jinty, 
this definition,
we have that the downward closure of $A$ is contained in $\jcl{A}$.

\begin{lemma}\label{lemma:J-int_equivalents-module}
% Let $A\subseteq R$ be an extensions of semirings, let $L\subseteq R$ be an $A$-submodule, and let $x\in R$. The following are equivalent:
Let $A\subseteq R$ be an extension of semirings and let $L\subseteq R$ be an $A$-submodule. For any $x\in R$ and $n\in\Z_{\geq1}$ the following are equivalent:
\begin{enumerate}
\item\label{lemmaItem:J-int-module} There are $\ell_0,\ldots,\ell_{n-1}\in L$ such that $x^n\leq \ell_0+\ell_1x+\cdots+\ell_{n-1}x^{n-1}$.
\item\label{lemmaItem:SingleCoeff-module} There is an $\ell\in L$ such that $x^n\leq \ell(1+x+\cdots+x^{n-1} )$.
\item\label{lemmaItem:LargeCoeff-module} There is a $b\in L$ such that $x^n\leq \ell(1+x+\cdots+x^{n-1} )$ whenever $\ell\in L$ and $\ell\geq b$.
\end{enumerate}
\end{lemma}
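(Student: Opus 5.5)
We prove the cycle of implications (3)$\Rightarrow$(2)$\Rightarrow$(1)$\Rightarrow$(3), using only that $L$ is an $A$-submodule (so closed under addition) and that addition and multiplication preserve the order $\leq$ on the additively idempotent semiring $R$.

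\emph{(3)$\Rightarrow$(2).} Take $\ell=b$. Since $b\geq b$, hypothesis (3) gives $x^n\leq b(1+x+\cdots+x^{n-1})$, which is (2).

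\emph{(2)$\Rightarrow$(1).} Set $\ell_i=\ell$ for every $0\le i\le n-1$. Then $\ell_0+\ell_1x+\cdots+\ell_{n-1}x^{n-1}=\ell(1+x+\cdots+x^{n-1})$ by distributivity, so (1) holds verbatim.

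\emph{(1)$\Rightarrow$(3).} This is the only direction with content. Given $\ell_0,\ldots,\ell_{n-1}\in L$ as in (1), put
$$b:=\ell_0+\ell_1+\cdots+\ell_{n-1}\in L,$$
which lies in $L$ because $L$ is closed under addition. Each $\ell_i\leq b$, since $\ell_i+b=b$ by idempotency. Now let $\ell\in L$ with $\ell\geq b$. Then $\ell_i\leq\ell$ for each $i$, so $\ell_ix^i\leq \ell x^i$ because multiplication preserves $\leq$. Summing these inequalities, using that addition preserves $\leq$, gives
$$x^n\leq \sum_{i=0}^{n-1}\ell_ix^i\leq \sum_{i=0}^{n-1}\ell x^i=\ell(1+x+\cdots+x^{n-1}).$$
This is (3).

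The only point needing care is the choice $b=\sum_i\ell_i$ together with the monotonicity of $+$ and $\cdot$. No cancellativity or other hypothesis on $R$ is used.
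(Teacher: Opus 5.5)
Your proof is correct and is essentially the paper's own argument: the paper also dismisses (3)$\Rightarrow$(2) and (2)$\Rightarrow$(1) as clear and proves (1)$\Rightarrow$(3) with the same choice $b=\sum_{i=0}^{n-1}\ell_i\in L$, using monotonicity of addition and multiplication. Your remark that no cancellativity is needed matches the paper's generality.
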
\begin{proof}

It is clear that (\ref{lemmaItem:SingleCoeff-module}) implies (\ref{lemmaItem:J-int-module}) and that (\ref{lemmaItem:LargeCoeff-module}) implies (\ref{lemmaItem:SingleCoeff-module}). So it is enough to show that
(\ref{lemmaItem:J-int-module}) implies (\ref{lemmaItem:LargeCoeff-module}). 

Suppose we have $n\in\Z_{\geq1}$ and $\ell_0,\ldots,\ell_{n-1}\in L$ such that $x^n\leq \ell_0+\ell_1x+\cdots+\ell_{n-1}x^{n-1}$. Letting $b=\sum_{i=0}^{n-1}\ell_i\in L$, for any $\ell\geq b$ we have that $\ell_i\leq \ell$. So if $\ell\geq b$ then
$$x^n\leq \ell_0+\ell_1x+\cdots+\ell_{n-1}x^{n-1}
\leq \ell+\ell x+\cdots+\ell x^{n-1}
=\ell(1+x+\cdots+x^{n-1}).$$
\par\nopagebreak\vspace{-1.4\baselineskip}\mbox{}
\end{proof}

\begin{coro}\label{coro:SamePowerAndCoeff}
Let $A\subseteq R$ be an extension of semirings, let $L\subseteq R$ be an $A$-submodule, and suppose that $x,y\in R$ are \J-int over $L$. Then there are $n\in\Z_{\geq1}$ and $\ell\in L$ such that $x^n\leq\ell(1+x+\cdots+x^{n-1})$ and $y^n\leq\ell(1+y+\cdots+y^{n-1})$.
\end{coro}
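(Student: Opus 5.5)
Since $x$ and $y$ are \J-int over $L$, there are exponents $n_x,n_y\in\Z_{\geq1}$ and elements $\ell^{(x)}_i,\ell^{(y)}_j\in L$ with $x^{n_x}\leq\sum_{i<n_x}\ell^{(x)}_ix^i$ and $y^{n_y}\leq\sum_{j<n_y}\ell^{(y)}_jy^j$. The proof needs two reductions: first raise both exponents to a common value, then merge the coefficients into a single $\ell$ using the ``large coefficient'' form (\ref{lemmaItem:LargeCoeff-module}) of Lemma~\ref{lemma:J-int_equivalents-module}.

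\textbf{Step 1: increasing the exponent.} We show that if $z^m\leq\sum_{i=0}^{m-1}\ell_iz^i$ with $\ell_i\in L$, then $z^{m+1}\leq\sum_{i=0}^{m}\ell'_iz^i$ for suitable $\ell'_i\in L$. Set $\ell'_0=0$ and $\ell'_{i}=\ell_{i-1}$ for $1\leq i\leq m$. Multiplying the given inequality by $z$, which preserves $\leq$, gives
$$z^{m+1}\leq\sum_{i=0}^{m-1}\ell_iz^{i+1}=\sum_{i=0}^{m}\ell'_iz^i.$$
Since $0\in L$ and $L$ is an $A$-submodule, this is an inequality of the required shape with exponent $m+1$. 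Iterating, $z$ is \J-int over $L$ with every exponent $\geq m$. Taking $n=\max(n_x,n_y)$, both $x$ and $y$ satisfy condition (\ref{lemmaItem:J-int-module}) of Lemma~\ref{lemma:J-int_equivalents-module} with this common $n$.

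\textbf{Step 2: a common coefficient.} Apply the implication (\ref{lemmaItem:J-int-module})$\Rightarrow$(\ref{lemmaItem:LargeCoeff-module}) of Lemma~\ref{lemma:J-int_equivalents-module} to $x$ and to $y$ with this $n$. This gives $b_x,b_y\in L$ such that $x^n\leq\ell(1+\cdots+x^{n-1})$ whenever $\ell\geq b_x$, and likewise for $y$ with $b_y$. Set $\ell=b_x+b_y\in L$. Then $\ell\geq b_x$ and $\ell\geq b_y$, so both inequalities hold with this single $\ell$.

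The only point requiring care is Step 1. The exponent shift is valid because multiplication preserves the order, and the new constant coefficient is $0\in L$. Everything else is immediate from Lemma~\ref{lemma:J-int_equivalents-module}.
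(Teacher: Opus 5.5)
Your proof is correct and is essentially the paper's argument: you equalize the degrees by multiplying each inequality by a power of the variable, then apply the implication (\ref{lemmaItem:J-int-module})$\Rightarrow$(\ref{lemmaItem:LargeCoeff-module}) of Lemma~\ref{lemma:J-int_equivalents-module} and take $\ell=b_x+b_y$. Your Step~1 simply writes out the degree-raising step, including the zero constant coefficient, that the paper states in one line.
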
\begin{proof}
By multiplying integral ineqalities for $x$ and $y$ by appropriate powers of $x$ and $y$, we may assume that the inequalities for $x$ and $y$ have the same degree $n$. Lemma~\ref{lemma:J-int_equivalents-module} now tells us that there are $b_1,b_2\in L$ such that, for any $\ell\in L$, we have $\ell\geq b_1\implies x^n\leq\ell(1+x+\cdots+x^{n-1})$ and $\ell\geq b_2\implies y^n\leq\ell(1+y+\cdots+y^{n-1})$. So, if we let $\ell=b_1+b_2$, then $n$ and $\ell$ are as desired.
\end{proof}

Before we use this to show that $\Jcl{L}$ has structure, we need one more lemma. While the statement is quite similar to Lemma~5.2 from \cite{JRT20}, we include a very short proof for the reader's convenience.

\begin{lemma}\label{lemma:ABinomialTheorem}
Let $x$ and $y$ be elements of an (additively idempotent) semiring, let $n,m\in\Z_{\geq1}$, and let $N=n+m-1$. Then $(x+y)^N=x^n(x+y)^{m-1}+y^m(x+y)^{n-1}$. 
\end{lemma}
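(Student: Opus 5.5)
We prove the identity by expanding both sides in the commutative semiring and comparing monomials, using additive idempotence to discard multiplicities. By commutativity and the (multiplicity-free) binomial expansion valid in any additively idempotent semiring, we have
$$(x+y)^N=\sum_{i=0}^{N} x^i y^{N-i},$$
since each monomial $x^iy^{N-i}$ occurs in the full expansion some positive number of times, and $a+a=a$ collapses these repetitions to a single copy. In the same way,
$$x^n(x+y)^{m-1}=\sum_{j=0}^{m-1}x^{n+j}y^{m-1-j}=\sum_{i=n}^{N}x^iy^{N-i},$$
$$y^m(x+y)^{n-1}=\sum_{k=0}^{n-1}x^{k}y^{m+n-1-k}=\sum_{i=0}^{n-1}x^iy^{N-i}.$$

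Adding these two sums gives every monomial $x^iy^{N-i}$ with $0\le i\le N$. Indeed, the index ranges $\{0,\dots,n-1\}$ and $\{n,\dots,N\}$ partition $\{0,\dots,N\}$. The sum is therefore exactly $(x+y)^N$.

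The only point needing care is the first step: justifying that $(x+y)^k=\sum_{i=0}^k x^iy^{k-i}$ in an additively idempotent commutative semiring. We argue by induction on $k$. The case $k=0$ is immediate. For the inductive step, write
$$(x+y)^{k+1}=(x+y)\sum_{i=0}^k x^iy^{k-i}.$$
Distributing gives
$$\sum_{i=0}^{k}x^{i+1}y^{k-i}+\sum_{i=0}^{k}x^iy^{k+1-i},$$
and the monomials that appear twice merge under $a+a=a$, leaving $\sum_{i=0}^{k+1}x^iy^{k+1-i}$. This is routine, so we expect no real obstacle; the remaining work is index bookkeeping, which must confirm that the two ranges neither overlap nor leave a gap. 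They do not, because $n+(m-1)=N$.
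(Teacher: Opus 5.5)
Your proof is correct and follows essentially the same route as the paper. Both expand $(x+y)^N$ as the multiplicity-free sum $\sum_{i=0}^{N}x^iy^{N-i}$, split the index range at $n$, and identify the two pieces as $x^n(x+y)^{m-1}$ and $y^m(x+y)^{n-1}$; your inductive justification of the idempotent binomial expansion is a detail the paper leaves implicit.
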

\begin{proof}
We have 
\begin{align*}
(x+y)^{N} &= \sum_{i=0}^{N} x^i y^{N-i} = \sum_{i=n}^{N} x^i y^{N-i} + \sum_{i=0}^{n-1} x^i y^{N-i} 
= \sum_{i=n}^{N} x^i y^{N-i} + \sum_{j=m}^{N} x^{N-j} y^{j} \\
&= \sum_{k=0}^{m-1} x^n x^k y^{m-1-k} + \sum_{l=0}^{n-1} y^{m} x^{n-1-l} y^{l} = x^n(x+y)^{m-1} + y^m(x+y)^{n-1}.
\end{align*}

\par\nopagebreak\vspace{-2\baselineskip}\mbox{}
\end{proof}

\begin{proposition}\label{prop:J-int_closure_submodule_is_submodule}
Let $A\subseteq R$ be an extension of semirings and let $L\subseteq R$ be an $A$-submodule. Then the \J-int closure $\jcl{L}$ of $L$ is also an $A$-submodule of $R$.
%    \green{Let $x$ and $y$ be \J-int elements of $R$, then $x+y$ is also \J-int.}
%    \pinky{
%    Let $x$ and $y$ be elements of $R$ that are \J-int over $A$. Then $x+y$ is also \J-int over $A$.
%    }
%    %[The sum of two \J-int elements is \J-int]
%    %\#notes 77a
\end{proposition}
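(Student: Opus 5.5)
We must show three things about $\jcl{L}$: it contains $0$, it is closed under multiplication by elements of $A$, and it is closed under addition. The first is immediate: $0^1\leq 0\in L$, so $0$ is \J-int over $L$. In fact every $\ell\in L$ satisfies $\ell^1\leq \ell$, so $L\subseteq\jcl{L}$.

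For scalar multiplication, take $a\in A$ and $x\in\jcl{L}$. By Lemma~\ref{lemma:J-int_equivalents-module} there are $n$ and $\ell\in L$ with $x^n\leq \ell\sum_{i=0}^{n-1}x^i$. Multiplying this inequality by $a^n$, which preserves $\leq$, gives
$$(ax)^n\leq \sum_{i=0}^{n-1}(a^{n-i}\ell)(ax)^i.$$
Since $n-i\geq 1$ and $L$ is an $A$-module, each coefficient $a^{n-i}\ell$ lies in $L$. Hence $ax\in\jcl{L}$.

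The main step is closure under addition. Let $x,y\in\jcl{L}$. By Corollary~\ref{coro:SamePowerAndCoeff} we may choose a common $n$ and $\ell\in L$ with
$$x^n\leq \ell(1+x+\cdots+x^{n-1})\quad\text{and}\quad y^n\leq \ell(1+y+\cdots+y^{n-1}).$$
Set $s=x+y$ and $N=2n-1$. Lemma~\ref{lemma:ABinomialTheorem} with $m=n$ gives
$$s^N=x^n s^{n-1}+y^n s^{n-1}.$$
Substituting the two inequalities bounds the right-hand side by
$$\ell\sum_{i=0}^{n-1}(x^i+y^i)\,s^{n-1}.$$
In an additively idempotent semiring $x^i\leq s^i$ and $y^i\leq s^i$, so $x^i+y^i\leq s^i$. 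Therefore
$$s^N\leq \ell\sum_{i=0}^{n-1}s^{i+n-1}=\ell\sum_{j=n-1}^{2n-2}s^j.$$
Since $2n-2=N-1$, every exponent on the right is less than $N$, and all coefficients are $\ell$ or $0$, both in $L$. So $s$ is \J-int over $L$.

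I expect the only delicate point to be arranging a common degree and coefficient so that the binomial identity applies cleanly. Corollary~\ref{coro:SamePowerAndCoeff} handles exactly this, and the remaining bookkeeping of exponents is routine.
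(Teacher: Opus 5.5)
Your proposal is correct and follows essentially the same route as the paper. For addition, both use Corollary~\ref{coro:SamePowerAndCoeff} to get a common degree and coefficient, apply Lemma~\ref{lemma:ABinomialTheorem} with equal exponents, and bound by $\ell\sum_{j=n-1}^{2n-2}(x+y)^j$; for scalars, both multiply by $a^n$. Your explicit check that $0\in\jcl{L}$ is a small addition that the paper leaves implicit.
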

\begin{proof}
Say $x,y\in\jcl{L}$; we will show $x+y\in\jcl{L}$.
By Corollary~\ref{coro:SamePowerAndCoeff} there are $m\in\Z_{\geq1}$ and $\ell\in L$ such that $x^m\leq\ell(1+x+\cdots+x^{m-1})$ and $y^m\leq\ell(1+y+\cdots+y^{m-1})$. Let $n=2m-1$.

By Lemma~\ref{lemma:ABinomialTheorem}, we have 
\begin{align*}
(x+y)^n&=x^m(x+y)^{m-1} + y^m(x+y)^{m-1}=(x^m+y^m)(x+y)^{m-1}\\
&\leq\left( \ell(1+x+x^2+\cdots+x^{m-1}) + \ell(1+y+y^2+\cdots+y^{m-1}) \right) (x+y)^{m-1}\\
&\leq\ell(1+(x+y)+(x+y)^2+\cdots+(x+y)^{m-1})(x+y)^{m-1}\\
&=\ell((x+y)^{m-1}+(x+y)^{m}+(x+y)^{m+1}+\cdots+(x+y)^{2m-2})
\end{align*}
and so, because $2m-2=n-1$, $x+y$ is \J-int over $L$.

Now say $x\in\jcl{L}$ and $a\in A$; we will show that $ax\in\jcl{L}$. By assumption there are $n\in\N$ and  $\ell_0,\ell_1,\ldots,\ell_{n-1}\in L$ such that $x^n\leq \dsum_{i=0}^{n-1}\ell_i x^i$. Multiplying through by $a^n$ we get that $(ax)^n=a^nx^n\leq \dsum_{i=0}^{n-1}a^n\ell_i x^i=\dsum_{i=0}^{n-1}a^{n-i}\ell_i (ax)^i$. Since $a^{n-i}\in A$, $\ell_i\in L$, and $L$ is an $A$-module, we have $a^{n-i}\ell_i\in L$. Thus $ax\in\jcl{L}$.
\end{proof}

We now move on to integral closures of sub-semirings.

\begin{lemma}\label{lemma:J-int_equivalents}
Let $A\subseteq R$ be an extension of semirings, let $x\in R$, and let $n\in\Z_{\geq1}$. The following are equivalent:
\begin{enumerate}
\item\label{lemmaItem:J-int} There are $a_0,\ldots,a_{n-1}\in A$ such that $x^n\leq a_0+a_1x+\cdots+a_{n-1}x^{n-1}$.
\item\label{lemmaItem:SingleCoeff} There is an $a\in A$ such that $x^n\leq a(1+x+\cdots+x^{n-1} )$.
\item\label{lemmaItem:LargeCoeff} There is a $b\in A$ such that $x^n\leq a(1+x+\cdots+x^{n-1} )$ whenever $a\in A$ and $a\geq b$.
\item\label{lemmaItem:BinomialIneq} There is an $a\in A$ such that $x^n\leq a(x+a)^{n-1}$.
\item\label{lemmaItem:BinomialEquation} There is an $a\in A$ such that $(x+a)^n=a(x+a)^{n-1}$.
\end{enumerate}
\end{lemma}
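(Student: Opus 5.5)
The equivalence of (1), (2) and (3) is the special case $L=A$ of Lemma~\ref{lemma:J-int_equivalents-module}, since $A$ is an $A$-submodule of $R$. So I only need to attach (4) and (5) to this chain. I will prove (3)$\Rightarrow$(4)$\Rightarrow$(2) and (4)$\Leftrightarrow$(5).

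\emph{(3)$\Rightarrow$(4).} Take $b$ as in (3) and set $a=b+1\in A$. Then $a\geq b$, so (3) gives $x^n\leq a(1+x+\cdots+x^{n-1})$. Expanding $(x+a)^{n-1}=\sum_{i=0}^{n-1}a^{n-1-i}x^i$, which is valid by idempotency exactly as in the proof of Lemma~\ref{lemma:ABinomialTheorem}, we get
\[
a(x+a)^{n-1}=\sum_{i=0}^{n-1}a^{n-i}x^i .
\]
Since $a\geq 1$, we have $a^{n-i}\geq a$ for $i\leq n-1$. Hence $a(x+a)^{n-1}\geq a(1+x+\cdots+x^{n-1})\geq x^n$, which is (4).

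\emph{(4)$\Rightarrow$(2).} From the same expansion, $a(x+a)^{n-1}=\sum_{i=0}^{n-1}a^{n-i}x^i$ has all its coefficients in $A$. Therefore (4) is already an instance of (1), and (2) follows from the first part.

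\emph{(4)$\Leftrightarrow$(5).} By the binomial expansion again,
\[
(x+a)^n=\sum_{i=0}^n a^{n-i}x^i = x^n+a(x+a)^{n-1}.
\]
If (4) holds, then $x^n\leq a(x+a)^{n-1}$, so the sum collapses to $a(x+a)^{n-1}$, which is (5). Conversely, if (5) holds, then $x^n\leq x^n+a(x+a)^{n-1}=(x+a)^n=a(x+a)^{n-1}$, which is (4). The same $a$ works in both directions.

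The only real obstacle is (3)$\Rightarrow$(4): the coefficient $a$ must dominate the needed bound while its powers $a^{n-i}$ do not fall below $a$. The trick of replacing $b$ by $b+1$, so that $a\geq 1$, handles this. Everything else is the idempotent binomial expansion $(x+a)^k=\sum_i a^{k-i}x^i$.
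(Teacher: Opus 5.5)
Your proof is correct and follows essentially the same route as the paper: you get (1)$\Leftrightarrow$(2)$\Leftrightarrow$(3) from Lemma~\ref{lemma:J-int_equivalents-module}, prove (3)$\Rightarrow$(4) by taking $a=b+1\geq 1$ so that its powers dominate $a$, note that (4) is an instance of (1), and derive (4)$\Leftrightarrow$(5) from $(x+a)^n=x^n+a(x+a)^{n-1}$. The only difference is that you expand the binomial directly, while the paper obtains this identity from Lemma~\ref{lemma:ABinomialTheorem} with $m=1$ and $y=a$.
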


\begin{proof}

Lemma~\ref{lemma:J-int_equivalents-module} tells us that (\ref{lemmaItem:J-int})$\iff$(\ref{lemmaItem:SingleCoeff})$\iff$(\ref{lemmaItem:LargeCoeff}). Since it is clear that (\ref{lemmaItem:BinomialIneq})$\implies$(\ref{lemmaItem:J-int}), it is enough to show that
%(3)$\implies$(4), 
(\ref{lemmaItem:LargeCoeff})$\implies$(\ref{lemmaItem:BinomialIneq}) 
%and (4)$\iff$(5).
and (\ref{lemmaItem:BinomialIneq})$\iff$(\ref{lemmaItem:BinomialEquation}).

To see that (\ref{lemmaItem:LargeCoeff})$\implies$(\ref{lemmaItem:BinomialIneq}), note that, if $b\in A$ is as in (\ref{lemmaItem:LargeCoeff}) and we let $a=1+b$ in, then we have $x^n\leq a(1+x+\cdots+x^{n-1})$. 
Since $1\leq a$ we have $1\leq a^i$ for all $i\geq0$, so 
$$x^n\leq a\dsum_{i=0}^{n-1} x^i \leq a\dsum_{i=0}^{n-1} a^{n-1-i}x^i = a(x+a)^{n-1}.$$

Towards showing that (\ref{lemmaItem:BinomialIneq})$\iff$(\ref{lemmaItem:BinomialEquation}), note that, by definition, $x^n\leq a(x+a)^{n-1}$ means that $x^n + a(x+a)^{n-1} = a(x+a)^{n-1}$. On the other hand, applying Lemma~\ref{lemma:ABinomialTheorem} with $m=1$ and $y=a$ gives us that $(x+a)^n = x^n(x+a)^{0}+a^1(x+a)^{n-1} = x^n + a(x+a)^{n-1}$. So $x^n\leq a(x+a)^{n-1}$ if and only if $(x+a)^n=a(x+a)^{n-1}$.
\end{proof}

\begin{coro}\label{coro:J-int_closure_subsemiring_is_submodule}
Let $A\subseteq R$ be an extension of semirings. Then the \J-int closure $\jcl{A}$ of $A$ is an $A$-submodule of $R$.
\end{coro}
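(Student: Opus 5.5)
This is the special case $L=A$ of Proposition~\ref{prop:J-int_closure_submodule_is_submodule}. We regard $A$ as an $A$-submodule of $R$ over itself: it contains $0$, is closed under addition, and satisfies $aA\subseteq A$ for every $a\in A$.

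First we check that the two notions of \Jinty\ coincide for $L=A$. An element $x\in R$ is \J-int over the submodule $A$ if there are $n\geq 1$ and $\ell_0,\ldots,\ell_{n-1}\in A$ with $x^n\leq\sum_{i=0}^{n-1}\ell_i x^i$. This is word for word the definition of $x$ being \J-int over the sub-semiring $A$ in Definition~\ref{def: integrals}. So the closure $\jcl{L}$ with $L=A$ is exactly $\jcl{A}$.

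Proposition~\ref{prop:J-int_closure_submodule_is_submodule} now says $\jcl{A}$ is an $A$-submodule of $R$. In particular it is closed under sums and under multiplication by elements of $A$. It also contains $0$, since $0^1\leq 0$; more generally it contains every element of $A$ by taking $n=1$.

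I expect no real obstacle here. The only point needing care is matching the two definitions of \Jinty, and that is immediate from the definitions.
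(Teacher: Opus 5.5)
Your proposal is correct and is exactly the paper's argument: the corollary is the special case $L=A$ of Proposition~\ref{prop:J-int_closure_submodule_is_submodule}. Your checks that $A$ is an $A$-submodule of $R$ and that \J-int over the submodule $A$ coincides with \J-int over the sub-semiring $A$ are the only details to verify, and you verify both.
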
\begin{proof}
This is a particular case of Proposition~\ref{prop:J-int_closure_submodule_is_submodule}.
\end{proof}

On the other hand, the product of \J-int elements is not always \J-int as shown in the following example, so $\jcl{A}$ is not always a semiring. It is for this reason that we also considered \J-int closures of $A$-submodules. %In order to be able to verify details of the example, we need the following lemma.

\begin{example}\label{ex: J-int-not-semiring}
Let $\calc=\angbra{1+X+X^2\sim 1+X}$ be the congruence on $\B[X]$ generated by $1+X+X^2\sim 1+X$, 
consider the semiring 
%$\B[X]/\angbra{1+X+X^2\sim 1+X}$
$R=\B[X]/\calc$, 
and let $x$ be the image of $X$ in $R$. So we have $x^2\leq 1+x$, so $x$ is \J-int over $\B$. However, $y=x^2$ is \emph{not} \J-int over $\B$. If it was \J-int over $\B$, then by Lemma~\ref{lemma:J-int_equivalents} there would be some $n\in\Z_{\geq1}$ such that $y^n\leq 1+y+\cdots+y^{n-1}$, i.e., $\dsum_{i=0}^n y^i=\dsum_{i=0}^{n-1}y^i$, which is to say $\dsum_{i=0}^n x^{2i}=\dsum_{i=0}^{n-1}x^{2i}$. So we would have $\left(\dsum_{i=0}^n X^{2i},\dsum_{i=0}^{n-1}X^{2i}\right)\in 
%\angbra{1+X+X^2\sim 1+X}$.
\calc$.
On the other hand, Lemma~\ref{lemma:MaclaganRincon_weak_congruence_structure_lemma} tells us that every non-diagonal pair in 
%$\angbra{1+X+X^2\sim 1+X}$ 
$\calc$ 
is in the transitive closure of the set of pairs of the form $(X^m(1+X+X^2)+h,\, X^m(1+X)+h)$ or $(X^m(1+X)+h,\, X^m(1+X+X^2)+h)$ for $m\in\Z_{\geq0}$ and $h\in \B[X]$. But this implies that, for every non-diagonal pair $(F,G)$ in $\calc$, each of $F$ and $G$ contains two consecutive powers of $X$. But this contradicts $\left(\dsum_{i=0}^n X^{2i},\dsum_{i=0}^{n-1}X^{2i}\right)\in \calc$.\exEnd
\end{example}

\begin{remark}
Example~\ref{ex: J-int-not-semiring} also shows that \J-int closure is \emph{not} a closure operation. We have seen above that $\jcl{\B}$ contains $\B$ and $x$, but not $y:=x^2$. Then $x^2\leq 1+x$ is a \J-int inequality for $y$ over $\jcl{\B}$, so $\jcl{\B}$ is strictly contained in the \J-int closure of $\jcl{\B}$.
\end{remark}

% \red{K:The following statement is true for all types of quasi integral elements. If we want to keep it and is needed, change the hypotheses, otherwise move it to misc.tex in this folder.} \blue{Nati: One way to frame this prop is that all quasi-int closures are downward-closed. Then we could state a corollary of the following form: Let $R$ be additively generated by a set $G$ and let $A$ be any sub-semiring of $R$. Then $\sqcl{A}=\dcl{A}$ ($\qcl{A}=\dcl{A}$, $\wqcl{A}=\dcl{A}$) iff $G\cap\sqcl{A}\subseteq \dcl{A}$ (resp. $G\cap\qcl{A}\subseteq \dcl{A}$, $G\cap\wqcl{A}\subseteq \dcl{A}$)}

\begin{proposition}\label{prop:QIntIsDC}
    % \green{
    % Consider a semiring $R$ and let $x, y$ be two elements of $R$, such that $x+y$ is \JQ-int over $A$, a sub-semiring of $R$. Then both $x$ and $y$ are \JQ-int over $A$.
    % }
    For any extension $A\subseteq R$ of semirings, $\sqcl{A}$, $\qcl{A}$, and $\wqcl{A}$ are downward-closed in $R$.
\end{proposition}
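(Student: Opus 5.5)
Suppose $y\le x$ with $x$ quasi-integral (resp. annihilator-free quasi-integral, weakly quasi-integral) over $A$. The plan is to take the witnessing module $M\subseteq R$ for $x$ and show it also witnesses the corresponding property for $y$. Since $M$ is an $\aangx{A}{x}$-module and $y\le x$, we first check that $\aangx{A}{y}\subseteq\aangx{A}{x}$. The set $\aangx{A}{x}$ is a downward-closed sub-semiring of $R$ containing $A$. It contains $x$, hence contains $y$ because it is downward-closed. By minimality, $\aangx{A}{y}$ is the smallest downward-closed sub-semiring containing $A$ and $y$, so $\aangx{A}{y}\subseteq\aangx{A}{x}$. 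Consequently $M$, being closed under multiplication by $\aangx{A}{x}$ (as a subset of $R$ with the multiplication of $R$), is closed under multiplication by $\aangx{A}{y}$, and is therefore an $\aangx{A}{y}$-module contained in $R$.

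The finiteness condition does not involve $x$ or $y$. $M$ is finite as a downward-closed $A$-module, and this is a statement about $M$ as an $A$-module only, so it transfers unchanged.

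The remaining step is the faithfulness-type condition, and each version passes to sub-semirings:
\begin{itemize}
\item Faithful: if $a,b\in\aangx{A}{y}$ satisfy $am=bm$ for all $m\in M$, then, since $a,b\in\aangx{A}{x}$ and $M$ is faithful over $\aangx{A}{x}$, we get $a=b$.
\item Annihilator-free faithful: if $r\in\aangx{A}{y}$, then $r\in\aangx{A}{x}$, so $rM\ne0$. Here, following the intended meaning of the definition, $r$ ranges over nonzero elements.
\item Nonzero: this condition involves no ring elements at all, so it holds trivially.
\end{itemize}
In each case the structure map $\aangx{A}{y}\to\mathrm{End}(M)$ is the restriction of the map for $\aangx{A}{x}$, so injectivity and annihilator-freeness are inherited.

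The only point needing care is the inclusion $\aangx{A}{y}\subseteq\aangx{A}{x}$, and it rests on $\aangx{A}{x}$ being downward-closed by definition. With that in hand, all three closures $\sqcl{A}$, $\qcl{A}$ and $\wqcl{A}$ are downward-closed.
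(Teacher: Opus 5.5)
Your proof is correct and follows essentially the same approach as the paper. The paper also notes that the smaller element's downward-closed sub-semiring $A\langle y\rangle$ sits inside $A\langle x\rangle$, then restricts scalars on the witnessing module $M$, observing that faithfulness and finiteness as a downward-closed $A$-module are inherited. Your explicit check of all three variants, including reading annihilator-free faithfulness over nonzero $r$, spells out what the paper only calls ``analogous.''
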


\begin{proof}
    % \green{
    % Since $x \leq x+y$, $x$ is contained in the downward-closure of $A[x+y]$ in $R$, i.e., $x\in \aangx{A}{x+y}$, therefore $\aangx{A}{x} \subseteq \aangx{A}{x+y}$. Since $x+y$ is \JQ-int over $A$, then there is an $\aangx{A}{x+y}$-module $M \leq R$, which is finite as an $A$-module and annihilator-free faithful as an $\aangx{A}{x+y}$-module. Since $M$ is an $\aangx{A}{x+y}$-module and $\aangx{A}{x} \subseteq \aangx{A}{x+y}$, $M$ is an $\aangx{A}{x}$-module. Since $M$ is a annihilator-free faithful $\aangx{A}{x+y}$-module and $\aangx{A}{x} \subseteq \aangx{A}{x+y}$, $M$ is a annihilator-free faithful $\aangx{A}{x}$-module, so $x$ is \JQ-int over $A$.
    % }

We give the proof for $\sqcl{A}$; the proofs for $\qcl{A}$ and $\wqcl{A}$ are analogous.

Say $x\leq y$ and $y\in\sqcl{A}$. Note that $x$ is in the downward closure of $A[y]$, i.e., $x\in \aangx{A}{y}$, so $\aangx{A}{x}\subseteq\aangx{A}{y}$. Since $y$ is \SQ-int over $A$, there is a faithful $\aangx{A}{y}$-module $M\subseteq R$ that is finite as \dc{} $A$-module. Since $M$ is a (faithful) $\aangx{A}{y}$-module and $\aangx{A}{x}\subseteq\aangx{A}{y}$, $M$ is a (faithful) $A$-module. Thus $x$ is \SQ-int over $A$.
\end{proof}

The proof of the following corollary uses a small piece of Proposition~\ref{prop:DClosureIsSemiring}, whose proof does not depend on this corollary in any way.

\begin{coro}
Let $R$ be a semiring that is additively generated by a set $G$ and let $A$ be any sub-semiring of $R$. Then $\sqcl{A}=\dcl{A}$ ($\qcl{A}=\dcl{A}$, $\wqcl{A}=\dcl{A}$) if and only if $G\cap\sqcl{A}\subseteq \dcl{A}$ (resp. $G\cap\qcl{A}\subseteq \dcl{A}$, $G\cap\wqcl{A}\subseteq \dcl{A}$).
\end{coro}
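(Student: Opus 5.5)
The forward direction is immediate from $G\cap\dcl{A}\subseteq\dcl{A}$. We treat $\sqcl{A}$; the arguments for $\qcl{A}$ and $\wqcl{A}$ are identical.

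Suppose first that $\sqcl{A}=\dcl{A}$. Then $G\cap\sqcl{A}=G\cap\dcl{A}\subseteq\dcl{A}$, so nothing more is needed.

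For the converse, assume $G\cap\sqcl{A}\subseteq\dcl{A}$. We already know $\dcl{A}\subseteq\sqcl{A}$ by Lemma~\ref{lem:DintImpliesQint}, so it remains to show $\sqcl{A}\subseteq\dcl{A}$. Take $x\in\sqcl{A}$. Since $G$ additively generates $R$, we can write $x=g_1+\cdots+g_k$ with $g_i\in G$; if $x=0$ then $x\in A\subseteq\dcl{A}$ trivially, so assume $k\geq1$.

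Each $g_i$ satisfies $g_i\leq x$. By Proposition~\ref{prop:QIntIsDC}, $\sqcl{A}$ is downward-closed, so $g_i\in\sqcl{A}$. Hence $g_i\in G\cap\sqcl{A}\subseteq\dcl{A}$.

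The only remaining step is that $\dcl{A}$ is closed under finite sums. This is the "small piece of Proposition~\ref{prop:DClosureIsSemiring}" the paper refers to, and it can be checked directly. For each $i$ choose $y_i\in\jcl{A}$ with $g_i\leq y_i$. By Corollary~\ref{coro:J-int_closure_subsemiring_is_submodule}, $\jcl{A}$ is an $A$-submodule, so $y:=y_1+\cdots+y_k\in\jcl{A}$. Then
$$x=\sum_i g_i\leq\sum_i y_i=y,$$
so $x\in\dcl{A}$.

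The only substantive input is this additivity of $\dcl{A}$, which reduces to the additivity of $\jcl{A}$ (Proposition~\ref{prop:J-int_closure_submodule_is_submodule}). Everything else is formal.
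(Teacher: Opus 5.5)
Your proof is correct and is essentially the paper's argument: you reduce to the generators $g_i\leq x$ using the downward-closedness of the quasi-integral closures (Proposition~\ref{prop:QIntIsDC}), and then use that $\dcl{A}$ is closed under addition. The only difference is that you prove that additivity directly from Corollary~\ref{coro:J-int_closure_subsemiring_is_submodule} rather than citing Proposition~\ref{prop:DClosureIsSemiring}, and that is exactly how the paper proves the additive part of that proposition.
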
\begin{proof}
We give the proof for $\sqcl{A}$; the proofs for $\qcl{A}$ and $\wqcl{A}$ are analogous.

($\Rightarrow$) This direction is clear.

($\Leftarrow$) Lemma~\ref{lem:DintImpliesQint} tells us that $\dcl{A}\subseteq\sqcl{A}$, so we just need to show $\sqcl{A}\susbeteq \dcl{A}$. Fix $x\in \sqcl{A}$ and write $x=g_1+\cdots+g_n$ with $g_i\in G$. Then each $g_i\leq x$, so Proposition~\ref{prop:QIntIsDC} tells us that $g_i\in \sqcl{A}$. 
%\red{(Technically we haven't shown that $\dcl{A}$ is closed under addition, so it is possible that we should move this to just after Proposition~\ref{prop:DClosureIsSemiring}.)}
So $g_i\in G\cap\sqcl{A}\subseteq\dcl{A}$, and because Proposition~\ref{prop:DClosureIsSemiring} tells us that $\dcl{A}$ is closed under addition, $x=g_1+\cdots+g_n\in\dcl{A}$.
\end{proof}

%\section{More facts about integral closure in general (obviously to be renamed)}
\section{\D-int closure}

%\red{Nati: Consider putting a sentence here about 4.1-4.3.}

% \orange{
% We proceed to show that for an idempotent semiring extension $A \subseteq R$ the operation assigning $A$ to $\dcl{A}$ is a closure operator. An analogous behavior is observed in \cite{JRT20} when $A$ is an ideal.
% }
%

%We proceed to show that, for any semiring $R$, the map $A\mapsto \dcl{A}$ is a closure operation on the set of sub-semirings of $R$. An analogous behavior is observed in \cite{JRT20} for ideals.

%Let $A\subseteq R$ be an extension of semirings. 
In this section we focus our attention on \D-int closure, i.e., the operation $A\mapsto\dcl{A}$. We start by proving alternative characterizations of $\dcl{A}$ that will be useful later on. We then proceed to show that \D-int closure has two desirable properties that \J-int ``closure'' does not. First we show that $\dcl{A}$ is always a semiring. Then we prove that $A\mapsto\dcl{A}$ is a closure operation, which is analogous to the behavior for ideals observed in \cite{JRT20}. This justifies calling $\dcl{A}$ the \D-int closure of $A$.

Perhaps surprisingly, our first result of this section does not involve \Dinty\ at all. It will, however, allow us to prove an alternate characterization of $\dcl{A}$ in Corollary~\ref{coro:dclIsDownClosureOfNcl}

\begin{prop}\label{prop:JintAndNint}
Let $A\subseteq R$ be an extension of semirings and let $x\in R$. Then $x$ is \J-int over $A$ if and only if there is an element of $x+A=\{x+a\,:\, a\in A\}$ that is \n-int over $A$.
\end{prop}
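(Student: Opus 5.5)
The plan is to use Lemma~\ref{lemma:J-int_equivalents}, specifically the equivalence of (\ref{lemmaItem:J-int}) with (\ref{lemmaItem:BinomialEquation}). That equivalence already packages a \J-int inequality for $x$ as an exact equation for the shifted element $x+a$.

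($\Rightarrow$) Suppose $x$ is \J-int over $A$, so for some $n\in\Z_{\geq1}$ condition (\ref{lemmaItem:J-int}) of Lemma~\ref{lemma:J-int_equivalents} holds. That lemma gives $a\in A$ with $(x+a)^n=a(x+a)^{n-1}$. The right-hand side is of the form $\sum_{i=0}^{n-1}a_i(x+a)^i$ with $a_{n-1}=a$ and all other $a_i=0$, all of which lie in $A$. Hence $x+a\in x+A$ is \n-int over $A$.

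($\Leftarrow$) Suppose $z=x+a$ is \n-int over $A$ for some $a\in A$, so $z^n=\sum_{i=0}^{n-1}a_iz^i$ with $a_i\in A$. Since $x\leq z$, we get $x^n\leq z^n=\sum_{i=0}^{n-1}a_i(x+a)^i$. Expanding each $(x+a)^i=\sum_{j=0}^{i}a^{i-j}x^j$ in the idempotent semiring (no binomial coefficients appear) and collecting terms gives $x^n\leq\sum_{j=0}^{n-1}b_jx^j$ with $b_j=\sum_{i=j}^{n-1}a_ia^{i-j}\in A$. This is exactly a \J-int inequality for $x$.

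Neither direction presents a real obstacle. The only nontrivial input is the step (\ref{lemmaItem:J-int})$\Rightarrow$(\ref{lemmaItem:BinomialEquation}) of Lemma~\ref{lemma:J-int_equivalents}, which is already proven. In the converse, one only has to check that multiplication and addition preserve $\leq$, so that $x\leq z$ implies $x^n\leq z^n$, and that the expansion keeps the degree of $x$ strictly below $n$ on the right-hand side.
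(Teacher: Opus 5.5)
Your proof is correct. The forward direction is the same as the paper's. The converse takes a different and more elementary route.

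\textbf{The paper's converse.} It applies (\ref{lemmaItem:J-int})$\Rightarrow$(\ref{lemmaItem:BinomialEquation}) of Lemma~\ref{lemma:J-int_equivalents} to $y=x+a$. This gives $b\in A$ with $(x+a+b)^n=b(x+a+b)^{n-1}$. It then uses $x+a+b\geq a+b\geq b$ to obtain the sandwich $(x+a+b)^n\geq(a+b)(x+a+b)^{n-1}\geq b(x+a+b)^{n-1}=(x+a+b)^n$. Hence $c=a+b$ satisfies (\ref{lemmaItem:BinomialEquation}) for $x$, and the paper finishes by invoking (\ref{lemmaItem:BinomialEquation})$\Rightarrow$(\ref{lemmaItem:J-int}).

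\textbf{Your converse.} You use $x\leq x+a$ and the idempotent binomial expansion to write down a \J-int inequality for $x$ directly, with explicit coefficients $b_j=\sum_{i=j}^{n-1}a_ia^{i-j}$. So this direction needs no lemma at all.

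\textbf{What each route buys.} Your argument only uses the inequality $z^n\leq\sum_i a_iz^i$, not the equality. It therefore proves the paper's next corollary (if $x+a$ is \J-int then so is $x$) in one step, whereas the paper derives that corollary by applying the proposition twice. The paper's route, in exchange, stays within the normalized form (\ref{lemmaItem:BinomialEquation}) and produces the explicit witness $c=a+b$ there.
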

\begin{proof}
Suppose that $x$ is \J-int over $A$. By parts (\ref{lemmaItem:J-int}) and (\ref{lemmaItem:BinomialEquation}) of Lemma~\ref{lemma:J-int_equivalents}, there is an $a\in A$ and $n\in\Z_{\geq1}$ such that $(x+a)=a(x+a)^{n-1}$. Thus $x+a$ is the desired element of $x+A$.

For the other direction, suppose that $y=x+a\in x+A$ is \n-int over $A$. 
So there are $n\in\Z_{\geq1}$ and $b_0,b_1,\ldots,b_{n-1}\in A$ such that $y^n=b_0+b_1y+\cdots+b_{n-1}y^{n-1}$. 
In particular $y^n\leq b_0+b_1y+\cdots+b_{n-1}y^{n-1}$, so parts (\ref{lemmaItem:J-int}) and (\ref{lemmaItem:BinomialEquation}) of Lemma~\ref{lemma:J-int_equivalents} tell us that there is a $b\in A$ such that $(y+b)^n=b(y+b)^{n-1}$, i.e., $(x+a+b)^{n}=b(x+a+b)^{n-1}$.
Multiplying $x+a+b\geq a+b\geq b$ by $(x+a+b)^{n-1}$ we find that
$$(x+a+b)^n\geq(a+b)(x+a+b)^{n-1}\geq b(x+a+b)^{n-1}=(x+a+b)^n.$$
Thus, we have $c=a+b\in A$ satisfying $(x+c)^n=c(x+c)^{n-1}$, so parts (\ref{lemmaItem:J-int}) and (\ref{lemmaItem:BinomialEquation}) of Lemma~\ref{lemma:J-int_equivalents} tell us that $x$ is \J-int over $A$.
\end{proof}

\begin{coro}
Let $A\subseteq R$ be an extension of semirings and let $x\in R$ and $a\in A$. If $x+a$ is \J-int over $A$ then $x$ is \J-int over $A$.
\end{coro}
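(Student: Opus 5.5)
This follows almost directly from Proposition~\ref{prop:JintAndNint}, applied twice. Suppose $x+a$ is \J-int over $A$, with $a\in A$. Applying the forward direction of Proposition~\ref{prop:JintAndNint} to the element $x+a$, we get some $b\in A$ such that $(x+a)+b$ is \n-int over $A$. Since $A$ is closed under addition, $c:=a+b\in A$, and so $x+c\in x+A$ is \n-int over $A$. The reverse direction of Proposition~\ref{prop:JintAndNint}, now applied to $x$ itself, then gives that $x$ is \J-int over $A$.

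One could also argue directly from Lemma~\ref{lemma:J-int_equivalents}. By part (\ref{lemmaItem:BinomialEquation}) there is $b\in A$ with $(x+a+b)^n=b(x+a+b)^{n-1}$. We then repeat the sandwich argument from the proof of Proposition~\ref{prop:JintAndNint}:
\[(x+a+b)^n\geq (a+b)(x+a+b)^{n-1}\geq b(x+a+b)^{n-1}=(x+a+b)^n.\]
This shows $c=a+b$ satisfies $(x+c)^n=c(x+c)^{n-1}$, so $x$ is \J-int by part (\ref{lemmaItem:BinomialEquation}) again.

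There is no real obstacle. The only point needing care is that the element of $x+A$ produced in the first step has the form $x+(a+b)$ with $a+b\in A$, which is immediate because $A$ is a sub-semiring.
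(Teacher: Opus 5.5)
Your proposal is correct and your first paragraph is exactly the paper's proof: apply the forward direction of Proposition~\ref{prop:JintAndNint} to $x+a$ to get $b\in A$ with $x+(a+b)$ \n-int, then apply the backward direction to $x$. Your alternative argument via Lemma~\ref{lemma:J-int_equivalents}(\ref{lemmaItem:BinomialEquation}) is also valid; it just inlines the sandwich step from the proof of that proposition's backward direction.
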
\begin{proof}
By the forward direction of Proposition~\ref{prop:JintAndNint} since $x+a$ is \J-int there is a $b\in A$ with $(x+a)+b$ \n-int over $A$. Since both $a$ and $b$ are in $A$, we can apply the backward direction of Proposition~\ref{prop:JintAndNint} to $x+(a+b)$ to get that $x$ is \J-int over $A$.
%\red{prove this. Idea: By the prop, there is a $b\in A$ with $(x+a)+b$ \n-int over $A$. So, applying the prop to $x+(a+b)$, we get that $x$ is \J-int over $A$.}
\end{proof}

\begin{coro}\label{coro:dclIsDownClosureOfNcl}
Let $A\subseteq R$ be an extension of semirings. Then $\dcl{A}$ is the downward closure of $\ncl{A}$.
\end{coro}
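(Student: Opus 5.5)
We prove the two inclusions separately, using Proposition~\ref{prop:JintAndNint} and the chain $\ncl{A}\subseteq\jcl{A}\subseteq\dcl{A}$.

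For the inclusion of the downward closure of $\ncl{A}$ in $\dcl{A}$, take $x\le y$ with $y\in\ncl{A}$. An \n-int equation $y^n=\sum a_iy^i$ is in particular an inequality $y^n\le\sum a_iy^i$, so $y\in\jcl{A}$. Hence $x$ is bounded above by a \J-int element, which is exactly the definition of $x\in\dcl{A}$.

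For the reverse inclusion, take $x\in\dcl{A}$. By definition there is a $y\in\jcl{A}$ with $x\le y$. The forward direction of Proposition~\ref{prop:JintAndNint} gives an $a\in A$ such that $y+a$ is \n-int over $A$. Then $x\le y\le y+a$, so $x$ lies in the downward closure of $\ncl{A}$.

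There is no real obstacle here, since the content is carried by Proposition~\ref{prop:JintAndNint}. The only point to check is that the \n-int element it produces still dominates $y$, and it does, because $y\le y+a$ in an additively idempotent semiring.
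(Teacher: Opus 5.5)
Your proposal is correct and follows the paper's proof essentially verbatim: one inclusion comes from $\ncl{A}\subseteq\jcl{A}$, and the other comes from applying Proposition~\ref{prop:JintAndNint} to a \J-int $y\geq x$ and using $x\leq y\leq y+a$.
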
 \begin{proof}
Because $\ncl{A}\subseteq\jcl{A}$, we have that the downward closure of $\ncl{A}$ is contained in the downward closure of $\jcl{A}$ which, by definition, is $\dcl{A}$.

For the other direction, say $x\in\dcl{A}$. So there is some $y\in\jcl{A}$ with $x\leq y$. By Proposition~\ref{prop:JintAndNint}, there is an $a\in A$ with $y+a\in\ncl{A}$. Since $x\leq y\leq y+a$, $x$ is in the downward closure of $\ncl{A}$.
\end{proof}

Before we show that $\dcl{A}$ is a semiring, we provide one more alternative characterization of $\dcl{A}$.

\begin{prop}\label{prop: AJJisAD}
Let $A\subseteq R$ be an extension of semirings. Then $\dcl{A}$ is the \J-int closure of the $A$-module $\jcl{A}$.
\end{prop}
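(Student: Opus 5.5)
We must show $\dcl{A}=\jcl{\jcl{A}}$, where the right side is the \J-int closure of the $A$-module $L:=\jcl{A}$ (an $A$-submodule by Corollary~\ref{coro:J-int_closure_subsemiring_is_submodule}). We prove the two inclusions separately.

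\emph{($\subseteq$).} Let $x\in\dcl{A}$, so $x\leq y$ for some $y\in\jcl{A}=L$. Taking $n=1$ and $\ell_0=y\in L$, we have $x^1\leq \ell_0$. Hence $x$ is \J-int over $L$.

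\emph{($\supseteq$).} Let $x$ be \J-int over $L$. By Lemma~\ref{lemma:J-int_equivalents-module}, there are $n\geq1$ and $\ell\in L$ with
$$x^n\leq \ell(1+x+\cdots+x^{n-1}).$$
We want a \J-int element $y\geq x$. Since $\ell\in\jcl{A}$, Lemma~\ref{lemma:J-int_equivalents}(\ref{lemmaItem:BinomialIneq}) gives $m\geq1$ and $a\in A$ with $\ell^m\leq a(\ell+a)^{m-1}$. Enlarging $a$ (replace $a$ by $1+a$, using part (\ref{lemmaItem:LargeCoeff})) we may assume $1\leq a$. The natural candidate is $y:=x+\ell$. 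It satisfies $x\leq y$, so it suffices to show that $y$ is \J-int over $A$.

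Using the integral inequality for $x$ over $L$,
$$y^n=(x+\ell)^n=\sum_{i=0}^n x^i\ell^{n-i}\leq \ell(1+y+\cdots+y^{n-1}),$$
where the $i=n$ term is bounded by the hypothesis and every other term is $\ell\cdot(\text{a degree}\le n-1\text{ monomial in }x,\ell)$, hence at most $\ell\,y^{n-1}$ or similar. In particular $y^n\leq \ell\,p(y)$ with $p(y)=1+y+\cdots+y^{n-1}$.

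The remaining step, which we expect to be the main obstacle, is to convert this into an inequality over $A$. Write $s=1+y+\cdots+y^{n-1}$; note $\ell\leq y$. Iterating the bound for $\ell$,
$$y^{nm}\leq \ell^m s^m\leq a(\ell+a)^{m-1}s^m\leq a(y+a)^{m-1}s^m.$$
The right side is a polynomial in $y$ with coefficients in $A$ (since $1\leq a$, every coefficient can be bounded by a power of $a$) of degree $m-1+m(n-1)=mn-1<mn$. This is a \J-int inequality for $y$ over $A$, so $y\in\jcl{A}$ and $x\in\dcl{A}$.

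The delicate points are confirming the degree count $mn-1<mn$ and that all coefficients lie in $A$; both hold because $a\in A$ and $s$ has $A$-coefficients $1$. If the direct bound $(x+\ell)^n\leq \ell s$ fails for some terms, we would instead bound each mixed term $x^i\ell^{n-i}$ with $i<n$ by $\ell\cdot y^{n-1}$ directly, using $x,\ell\leq y$.
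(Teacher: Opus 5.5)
Your proof is correct. The inclusion of $\dcl{A}$ in the J-integral closure of $\jcl{A}$ is exactly the paper's $n=1$ observation. For the reverse inclusion the paper writes nothing out. It defers to the corresponding part of the proof of \cite[Proposition 5.5]{JRT20}, substituting the binomial-equation form Lemma~\ref{lemma:J-int_equivalents}(5) for their Proposition 5.1.

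Your argument is a self-contained replacement for that citation:
\begin{itemize}
\item you set $y=x+\ell$ and bound $y^n\le \ell s$ termwise;
\item you raise this to the $m$-th power and substitute $\ell^m\le a(\ell+a)^{m-1}\le a(y+a)^{m-1}$ from Lemma~\ref{lemma:J-int_equivalents}(4);
\item this yields an explicit integral inequality for $y$ over $A$ of degree $mn$.
\end{itemize}
It needs only that multiplication preserves $\le$, and the size of the degree is irrelevant. The paper's version is shorter on the page but rests on an external argument written for ideals. Yours makes the composition of the two integrality relations (of $x$ over $\jcl{A}$, and of the coefficient $\ell$ over $A$) fully visible.

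Two inessential points. First, the normalization $1\le a$ is not needed: the coefficients of $a(y+a)^{m-1}s^m$ are sums of powers of $a$, so they lie in $A$ regardless. Second, your closing hedge is unnecessary. The termwise bound you already gave is a complete argument: $x^n\le \ell s$ by hypothesis, and for $i<n$ we have $x^i\ell^{n-i}=\ell\cdot x^i\ell^{n-1-i}\le \ell y^{n-1}$ because $x,\ell\le y$.
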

\begin{proof}
% We first show that $\dcl{A}$ is contained in the \J-int closure of the $A$-module $\jcl{A}$. 
% %Let $d$ be \D-int over $A$, then there exists a $d'$ which is \J-int over $A$ with $d\leq d'$. 
% Let $d$ be \D-int over $A$, i.e., there is a $d'\in\jcl{A}$ such that $d\leq d'$. 
% %
% By (2) of Lemma~\ref{lemma:J-int_equivalents} we need to show that there exists a positive integer $n$ and $a \in \jcl{A}$  such that $d^n \leq a(d^{n-1}+ \ldots + 1)$. This inequality is satisfied with $n = 1$ and $a = d'$, so $d$ is \J-int over $\jcl{A}$.

Applying the fact that downward closure is contained in \J-int closure % I have now included this statement just before Lemma~\ref{lemma:J-int_equivalents-module}.
to the $A$-module $\jcl{A}$ shows that $\dcl{A}$ is contained in the \J-int closure of $\jcl{A}$.

% The other inclusion follows verbatim the proof of \cite[Proposition 5.5]{JRT20}\footnote{in \cite{JRT20} the authors refer to the \J-int closure of $\jcl{I}$ as $({I^{int}})^{int}$ and to $\dcl{I}$ as $({I^{int}})'$}, where the authors show that the \J-int closure of the $A$-module $\jcl{A}$ is contained in $\dcl{A}$. Since in their case $A$ is an ideal, we use Lemma~\ref{lemma:J-int_equivalents}(5) which is the semiring version of their \cite[Proposition 5.1]{JRT20}.
The proof of the other inclusion follows
the part of the proof of \cite[Proposition 5.5]{JRT20}\footnote{In \cite{JRT20} the authors refer to the \J-int closure of $\jcl{I}$ as $({I^{int}})^{int}$ and to $\dcl{I}$ as $({I^{int}})'$.} where the authors show that the \J-int closure of the \J-int closure of an ideal is contained in the \D-int closure of that ideal.\footnote{This part starts with ``Let $x\in (I^{int})^{int}$.'' on the third line of the proof.} 
The only change that needs to be made is that the implicit use of \cite[Proposition 5.1]{JRT20} for ideals is replaced with  Lemma~\ref{lemma:J-int_equivalents}(5) for semirings.
\end{proof}

We now move into showing that $\dcl{A}$ has the desired properties that $\jcl{A}$ does not.

\begin{prop}\label{prop:DClosureIsSemiring}
Let $A\subseteq R$ be an extension of semirings. Then $\dcl{A}$ is a semiring.
\end{prop}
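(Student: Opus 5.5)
We need to show that $\dcl{A}$ contains $0$ and $1$, and is closed under addition and multiplication. Since $A\subseteq\jcl{A}\subseteq\dcl{A}$, it contains $0$ and $1$.

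\textbf{Addition.} Say $x\leq x'$ and $y\leq y'$ with $x',y'\in\jcl{A}$. By Corollary~\ref{coro:J-int_closure_subsemiring_is_submodule}, $x'+y'\in\jcl{A}$. Since $x+y\leq x'+y'$, we get $x+y\in\dcl{A}$.

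\textbf{Multiplication.} This is the main step. Using Corollary~\ref{coro:dclIsDownClosureOfNcl}, write $x\leq x'$ and $y\leq y'$ with $x',y'\in\ncl{A}$. Since $xy\leq x'y'$ and $\dcl{A}$ is downward closed, it suffices to show $x'y'\in\dcl{A}$. I would actually show $x'y'\in\ncl{A}$. By Proposition~\ref{prop:IntegralAsAlgebrasFinite}(\ref{propItem:N-integralAsAlgebraFinite}), $A[x']$ is generated as an $A$-module by $1,\dots,x'^{n-1}$, and $y'$ satisfies an equation $y'^m=\sum_j b_jy'^j$ with $b_j\in A$. Hence $A[x',y']$ is generated as an $A$-module by the finitely many monomials $x'^iy'^j$ with $i<n$ and $j<m$: any monomial $x'^iy'^j$ reduces, by rewriting the powers of $x'$ and of $y'$ separately, to an $A$-combination of these. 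Then $A[x'y']\subseteq A[x',y']$. The obstacle is that a submodule of a finitely generated module over a semiring need not be finitely generated, so Proposition~\ref{prop:IntegralAsAlgebrasFinite}(\ref{propItem:N-integralAsAlgebraFinite}) cannot be applied directly to $A[x'y']$.

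Instead I would argue directly with inequalities. Set $z=x'y'$ and $N=nm$, and consider the finitely many elements $z^k$ with $k=0,\dots,N$. Each is an $A$-combination of the generators $x'^iy'^j$; since these generators number only $nm$, I would try to extract an equation expressing $z^N$ in terms of lower powers. This is delicate without subtraction, so I would first try a more robust route.

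\textbf{Fallback via Proposition~\ref{prop: AJJisAD}.} Since $\dcl{A}=\jcl{\jcl{A}}$, it suffices to show that $z=x'y'$ is \J-int over the $A$-module $\jcl{A}$. By Lemma~\ref{lemma:J-int_equivalents}, choose $n$ and $a\in A$ with $a\geq1$, $x'^n\leq a(1+\dots+x'^{n-1})$ and $y'^n\leq a(1+\dots+y'^{n-1})$; multiplying by powers of $x'$ and $y'$ makes the degrees agree. Multiplying these two inequalities gives
\[
z^n\leq a^2\sum_{i,j<n}x'^iy'^j .
\]
The terms with $i=j$ are $a^2z^i$. A term with $i\neq j$, say $i<j$, can be bounded by $z^i\,y'^{\,j-i}$, and $y'^{\,j-i}$ lies in $\jcl{A}$ provided powers of \n-int elements are \n-int. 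That holds here, because $A[y'^k]\subseteq A[y']$ and a power of $y'$ of high degree reduces into lower degrees. Each such term is then of the form $\ell_i z^i$ with $\ell_i=a^2y'^{\,j-i}\in\jcl{A}$ (using Corollary~\ref{coro:J-int_closure_subsemiring_is_submodule}), which is a \J-integral inequality for $z$ over $\jcl{A}$.

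The remaining gap is showing $y'^k\in\jcl{A}$ (or better, $\in\ncl{A}$). By Lemma~\ref{lemma:J-int_equivalents} applied to $y'$, we have $y'^n\leq a(1+\dots+y'^{n-1})$, so $y'^{n+r}\leq a'(1+\dots+y'^{n-1})$ for all $r\geq0$ and a suitable $a'\in A$ depending on $r$. The elements $w=y'^k$ then satisfy $w^n\leq a''(1+y'+\dots+y'^{n-1})$, and the right-hand side lies in the module $L$ generated by $1,\dots,y'^{n-1}$. I would check that this suffices, perhaps by working throughout with \J-integrality over an $A$-module and invoking Proposition~\ref{prop:J-int_closure_submodule_is_submodule}. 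I expect this bookkeeping of mixed terms to be the main obstacle.
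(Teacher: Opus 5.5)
Your handling of $0$, $1$ and sums is fine, but the multiplicative step is never proved. The first route stops at the correct observation that $A[x'y']$ need not be finitely generated over $A$. The fallback then reduces everything to the claim that $x'^d,y'^d\in\jcl{A}$ for $1\le d\le n-1$, and this claim is not established.

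Your justification is that $A[y'^k]\subseteq A[y']$ and that high powers of $y'$ reduce to low ones. This is exactly the inference you had just rejected. It writes $(y'^k)^N$ as an $A$-combination of $1,y',\dots,y'^{m-1}$, not of $1,y'^k,\dots,(y'^k)^{N-1}$. Likewise, $w^n\le a''(1+y'+\cdots+y'^{n-1})$ is not a J-integral inequality for $w=y'^k$ over $A$, because the right-hand side is not an $A$-combination of powers of $w$. This step cannot be waved through: Example~\ref{ex: J-int-not-semiring} shows that powers of J-integral elements need not be J-integral. So a genuine argument using N-integrality is needed at exactly this point.

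Either of the following supplies the missing idea.

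(i) Your observation that $A[x',y']$ is generated over $A$ by the $x'^iy'^j$ already finishes the job if you use Cayley--Hamilton instead of module-finiteness of $A[x'y']$. The semiring $A[x',y']$ is a faithful $A[x',y']$-module, since it contains $1$, and it is finitely generated over $A$. So Corollary~\ref{coro: CH-coro2} makes every element of $A[x',y']$ J-integral over $A$, in particular $x'y'$. There is no circularity, since the Cayley--Hamilton section does not use this proposition.

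(ii) Elementarily: for $u,v\in\jcl{A}$, Lemma~\ref{lemma:J-int_equivalents}(5), after equalizing degrees, gives $a,b\in A$ and $n$ with $(u+a)^n=a(u+a)^{n-1}$ and $(v+b)^n=b(v+b)^{n-1}$. Multiplying these yields $w^n=ab\,w^{n-1}$ for $w=(u+a)(v+b)$, so $w$ is N-integral and $uv\le w$. The same relation also gives $(w^k)^n=a^k(w^k)^{n-1}$ for $w=u+a$, which is exactly what your mixed-term bookkeeping would have needed.

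For comparison, the paper avoids both routes. It shows directly, by a minimal-degree argument, that $z=(1+u+\cdots+u^{n-1})(1+v+\cdots+v^{n-1})\ge uv$ satisfies $z^2\le ez$ for some $e\in A$.
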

\begin{proof}

%Let $a, b \in \dcl{A}$, that is, there exist $u, v \in \jcl{A}$ with $a \leq u$ and $b \leq v$. Since $a+b \leq u+v$, by Corollary~\ref{coro:J-int_closure_subsemiring_is_submodule} $u+v$ is \J-int over $A$ and so $a+b$ is \D-int over $A$. Remains to show that the product $ab$ is also \D-int. Since $u$ and $v$ are \J-int there exist $c$  and $n\in \Z_{\geq 1}$ large enough such that $u \leq c(1+u+\ldots+u^{n-1})$ and $v \leq c(1+v+\ldots+v^{n-1})$. Since $ab \leq uv$ it is enough to show that there exists a $z \in \jcl{A}$ with $uv \leq z$.
Let $a, b \in \dcl{A}$, that is, there exist $u, v \in \jcl{A}$ with $a \leq u$ and $b \leq v$. By Corollary~\ref{coro:J-int_closure_subsemiring_is_submodule} $u+v$ is \J-int over $A$ and so, because $a+b \leq u+v$, we have $a+b\in\dcl{A}$. 

It remains only to show that the product $ab$ is also \D-int. 
By Corollary~\ref{coro:SamePowerAndCoeff}, there are $c\in A$ and an integer $n\geq2$ 
such that $u^{n} \leq c(1+u+\ldots+u^{n-1})$ and $v^{n} \leq c(1+v+\ldots+v^{n-1})$. Since $ab \leq uv$ it is enough to show that there exists a $z \in \jcl{A}$ with $uv \leq z$.
%

%We claim that $z = (1+u+\ldots+u^{n-1})(1+v+\ldots+v^{n-1}) $ and $z^2 \leq ez$, for $e\in A$. 
We claim that $z = (1+u+\ldots+u^{n-1})(1+v+\ldots+v^{n-1})$ works. Certainly $uv\leq z$; we will show that $z^2 \leq ez$ for some $e\in A$. 
Note that the condition that there exists $e\in A$ with $z^2 \leq ez$ is equivalent to the existence of a polynomial $f(x,y) \in A[x,y]$ with both $x$-degree and $y$-degree 
%at most $n$ such that $z^2 \leq f(x,y)$. 
less than $n$ such that $z^2\leq f(u,v)$.
%
%Let $f(x,y)\in A[x,y]$, such that $z^2\leq f(x,y)$ 
Fix $f(x,y)\in A[x,y]$ such that $z^2\leq f(u,v)$ 
and $m =\max\{x\text{-degree of } f, y\text{-degree of } f\}$ is as small as possible. 
Note that 
%$m \leq 2n$, since 
there is such an $f$ because
$z^2 \leq F(u,v)$, where $F(x,y) = (1+x+\ldots+ x^{2n})(1+y+\ldots+ y^{2n})$. 
If 
%$m \leq n$ 
$m<n$
we are done, so assume for contradiction that 
%$m > n$.
$m\geq n$.
%

% We can assume without loss of generality that there exists a $d \in A$ such that 
% \begin{align*}
%     f(x,y) &= d(1+x+\ldots+ x^{n})(1+y+\ldots+ y^{n})\\
%     f(x,y) &= \wt{f}(x,y) + d(x^my^m + \sum_{0\leq j\leq n} x^m y^j + \sum_{0\leq i\leq n} x^i y^m), \text{where}\\
%     \wt{f}(x,y) &= d(1+x+\ldots+ x^{m-1})(1+y+\ldots+ y^{m-1}).
% \end{align*}

%By increasing 
By increasing the coefficients of
% \red{scaling?}\blue{No, we need more than just scaling. For example, if $f$ started out as $1+t^2x^2+ty^5+t^9xy^2$ then the new $f$ would be $t^9(1+x+\cdots+x^5)(1+y+\cdots+y^5)$.} \red{I see but we should still explain it somehow, since increasing is not necessarily clear.} \blue{Does this new version work better?}
$f$ if necessary, we can assume without loss of generality that $f$ is of the form $f(x,y) = d(1+x+\ldots+ x^{m})(1+y+\ldots+ y^{m})$ for some $d\in A$.
So we can write 
\begin{align*}
    f(x,y) &= \wt{f}(x,y) + d\left( x^my^m + \sum_{0\leq j< m} x^m y^j + \sum_{0\leq i< m} x^i y^m \right),
    %\text{where}
    \\
    \intertext{where}
    \wt{f}(x,y) &= d(1+x+\ldots+ x^{m-1})(1+y+\ldots+ y^{m-1}).
\end{align*}
%
% Again since $m>n$ from $$u^{n+1} \leq c(1+u+\ldots+ u^{n}) \text{ and } v^{n+1} \leq c(1+v+\ldots+ v^{n})$$ we get 
% \begin{align*}
%     u^{m} &\leq c(u^{m-n-1}+\ldots+ u^{m-1}) \leq c(1+u+\ldots+ u^{m-1}), \text{ and }\\ 
%     v^{m} &\leq c(v^{m-n-1}+\ldots+ v^{m-1}) \leq c(1+v+\ldots+ v^{m-1}).
% \end{align*}
Since $m\geq n$, we can multiply each of 
$$u^{n} \leq c(1+u+\ldots+ u^{n-1}) \text{ and } v^{n} \leq c(1+v+\ldots+ v^{n-1})$$
by $u^{m-n}$ and $v^{m-n}$, respectively, to get
\begin{align*}
    u^{m} &\leq c(u^{m-n}+\ldots+ u^{m-1}) \leq %c(1+u+\ldots+ u^{m-1}) \text{ and }\\ 
    c\dsum_{0\leq i<m}u^i \text{ and }\\ 
    v^{m} &\leq c(v^{m-n}+\ldots+ v^{m-1}) \leq %c(1+v+\ldots+ v^{m-1}).
    c\dsum_{0\leq j<m}v^j.
\end{align*}
%
% Now using the presentation of $f(x,y)$ above, we obtain 
% \begin{align*}
% f(u,v) = &\wt{f}(u,v) + d(u^mv^m + \sum_{0\leq j\leq n} u^m v^j + \sum_{0\leq i\leq n} u^i v^m) \leq g(u,v) :=\\
%         & \wt{f}(u,v) + dc^2(1+u+\ldots+u^{m-1})(1+v+\ldots+v^{m-1}) + \\
%         &dc \big(\sum_{0\leq j\leq m} v^j(1+u+\ldots+u^{m-1})\big) + dc \big(\sum_{0\leq i\leq m} u^i(1+v+\ldots+v^{m-1})\big) 
% \end{align*}
Now using the presentation of $f(x,y)$ above, we obtain 
\begin{align*}
f(u,v) =&\ \wt{f}(u,v) + d\left( u^mv^m + \sum_{0\leq j< m} u^m v^j + \sum_{0\leq i<m} u^i v^m \right)\\%\leq g(u,v) :=\\
=&\ \wt{f}(u,v) + d\left( u^mv^m + u^m\sum_{0\leq j< m} v^j + v^m\sum_{0\leq i<m} u^i \right) \\
% &\leq
% \wt{f}(u,v) + dc^2(1+u+\ldots+u^{m-1})(1+v+\ldots+v^{m-1}) + \\
% &dc \big(\sum_{0\leq j\leq m} v^j(1+u+\ldots+u^{m-1})\big) + dc \big(\sum_{0\leq i\leq m} u^i(1+v+\ldots+v^{m-1})\big) 
\leq&\ \wt{f}(u,v) + dc^2\left(\dsum_{0\leq i<m}u^i\right)\left(\dsum_{0\leq i<m}v^j\right)\\
&\ + dc \left(\dsum_{0\leq i<m} u^i\right)\left(\sum_{0\leq j< m} v^j\right) + dc \left(\dsum_{0\leq j<m}v^j\right)\left(\sum_{0\leq i< m} u^i\right)\\
&\!\!=:g(u,v).
\end{align*}
%
%What we have obtained is $z^2 \leq f(u,v) \leq g(u,v)$, however the degree of the polynomial $g(x,y)$ in both $x$ and $y$ is less than $m$ which is a contradiction to the choice of $f(x,y).$ 
%Since both the $x$-degree and $y$-degree of $g(x,y)$ are strictly less than $m$, the fact that $z^2\leq f(u,v)\leq g(u,v)$ contradicts the choice of $f(x,y)$.
So $x^2\leq f(u,v)\leq g(u,v)$, but both the $x$-degree and $y$-degree of $g(x,y)$ 
are strictly less than $m$, 
%%%%% Weird spacing
% are \green{$<m$,} 
% \orange{less than $m$,}
%
contradicting the choice of $f(x,y)$.
% end of pinky
%
\end{proof}

%\red{(Nati asks: Should we consider putting a remark here (or somewhere else) to the effect of: We know that $\dcl{A}$ is a semiring and that $\jcl{A}$ is not necessarily a semiring. It is not clear whether $\ncl{A}$, $\sqcl{A}$, $\qcl{A}$, and $\wqcl{A}$ are always semirings or not. It isn't even clear whether these are always additively closed.)}

\begin{remark}
Let $A\subseteq R$ be an extension of semirings. 
We now know that $\vcl{A}$  and $\dcl{A}$ are semirings (by definition and Proposition~\ref{prop:DClosureIsSemiring}) and that $\jcl{A}$ is not necessarily a semiring (by Example~\ref{ex: J-int-not-semiring}).
However, it is not clear whether $\ncl{A}$, $\sqcl{A}$, $\qcl{A}$, and $\wqcl{A}$ are always semirings or not. It is not even clear whether these are always additively closed.
\end{remark}

The following lemma is the semiring analogue of \cite[Lemma 5.4]{JRT20}. 
Its proof follows from the proof of that lemma, except that the implicit use of \cite[Proposition 5.1]{JRT20}\footnote{Used in the proof where it says ``by definition''.} for ideals is replaced by Lemma~\ref{lemma:J-int_equivalents}(5) for semirings.
%% \blue{if you will use the $[X]_{\leq}$ notation it probably should start here.}\red{I was trying to keep it just within a single proof and not make a global notation out of it.} \orange{yes bu we have a somewhat different notation for the same thing in the next lemma.} \red{Giving something a notation and giving it a name seem different to me. Saying that $B$ is the downward closure of $A$ is something that is seems across to me as local and can change from one result to the next. Also, I don't like the notation $[X]_{\leq}$, I just got desperate because the proof of Proposition~\ref{prop:DclIsAClosure} is hard to read without some notation for the downward closure. As such, I'd rather keep it as contained as possible.}

\begin{lemma}\label{lemm: JclofDclIsAJ}
Let $A\subseteq R$ be an extension of semirings. Let $B$ be the downward closure of $A$ in $R$. Then $\jcl{B}=\jcl{A}$. %$\jcl{[A]_{\leq}}$
\end{lemma}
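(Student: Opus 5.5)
The inclusion $\jcl{A}\subseteq\jcl{B}$ is immediate, since $A\subseteq B$: any \J-int inequality for $x$ with coefficients in $A$ is also one with coefficients in $B$. The content is therefore the reverse inclusion $\jcl{B}\subseteq\jcl{A}$. Note that $B$ is a sub-semiring of $R$: it contains $0$ and $1$, and if $b\le a$, $b'\le a'$ with $a,a'\in A$, then $b+b'\le a+a'$ and $bb'\le aa'$. So Lemma~\ref{lemma:J-int_equivalents} applies to the extension $B\subseteq R$ as well as to $A\subseteq R$.

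To prove $\jcl{B}\subseteq\jcl{A}$, let $x\in\jcl{B}$. By Lemma~\ref{lemma:J-int_equivalents}(\ref{lemmaItem:J-int})$\Rightarrow$(\ref{lemmaItem:BinomialEquation}) applied to $B\subseteq R$, there are $n\ge1$ and $b\in B$ with $(x+b)^n=b(x+b)^{n-1}$. Choose $a\in A$ with $b\le a$. The aim is to upgrade this equation to $(x+a)^n=a(x+a)^{n-1}$, which by Lemma~\ref{lemma:J-int_equivalents}(\ref{lemmaItem:BinomialEquation})$\Rightarrow$(\ref{lemmaItem:J-int}) for $A\subseteq R$ gives $x\in\jcl{A}$. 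By the equivalence (\ref{lemmaItem:BinomialIneq})$\iff$(\ref{lemmaItem:BinomialEquation}), it suffices to show the inequality $x^n\le a(x+a)^{n-1}$.

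This follows from monotonicity. The equation for $b$ is equivalent to $x^n\le b(x+b)^{n-1}$. Since $b\le a$, we have $x+b\le x+a$, and multiplication preserves the order, so
\[
x^n\le b(x+b)^{n-1}\le a(x+a)^{n-1}.
\]
This is exactly condition (\ref{lemmaItem:BinomialIneq}) for $A$.

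An even shorter route avoids the binomial forms entirely. Start from any inequality $x^n\le\sum_{i<n} b_ix^i$ with $b_i\in B$, pick $a_i\in A$ with $b_i\le a_i$, and conclude $x^n\le\sum_{i<n}a_ix^i$. No step is a real obstacle. The only points needing care are checking that $B$ is a semiring, so that the lemma can be invoked over $B$ if one uses the binomial form, and noting that choosing upper bounds in $A$ termwise is legitimate because $B$ is, by definition, the set of elements lying below some element of $A$.
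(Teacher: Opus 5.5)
Your proof is correct, and your main argument is essentially the paper's. Like the paper, you pass over $B$ to the single-coefficient form of Lemma~\ref{lemma:J-int_equivalents}(\ref{lemmaItem:BinomialEquation}), dominate $b$ by some $a\in A$, and use monotonicity to get $x^n\le a(x+a)^{n-1}$. Your termwise alternative is also valid and shows the lemma follows directly from the definition of \J-int; the paper's route through the binomial form is simply inherited from the ideal version of this lemma.
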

% \begin{proof}
% The proof follows verbatim the proof of \cite[Lemma 5.4]{JRT20}. Since in their case $A$ is an ideal, we use Lemma~\ref{lemma:J-int_equivalents}(5) which is the semiring version of their \cite[Proposition 5.1]{JRT20}.
% \end{proof}

\begin{prop}\label{prop:DclIsAClosure}
Let $R$ be a semiring. The operation $A\mapsto\dcl{A}$ on sub-semirings of $R$ is a closure operation.
\end{prop}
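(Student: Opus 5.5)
We must verify the three closure axioms for $A\mapsto\dcl{A}$ on sub-semirings of $R$. By Proposition~\ref{prop:DClosureIsSemiring}, $\dcl{A}$ is again a sub-semiring, so the operation is well defined.

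\emph{Extensivity.} Taking $n=1$ in the definition of \Jinty\ shows that the downward closure of $A$ lies in $\jcl{A}$. Hence $A\subseteq\jcl{A}\subseteq\dcl{A}$.

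\emph{Monotonicity.} Suppose $A\subseteq A'$. Any inequality $x^n\leq\sum a_ix^i$ with $a_i\in A$ is also an inequality with coefficients in $A'$, so $\jcl{A}\subseteq\jcl{A'}$. Taking downward closures gives $\dcl{A}\subseteq\dcl{A'}$.

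\emph{Idempotence.} This is the substantive step: we must show $\dcl{\dcl{A}}\subseteq\dcl{A}$. Let $D=\dcl{A}$. By definition $D$ is the downward closure of $\jcl{A}$. Since $A\subseteq D$ and $D$ is a semiring closed under multiplication by $A$, $D$ is also an $A$-submodule. Moreover, $D$ is the downward closure of the sub-semiring $D$ itself, since $D$ is already downward closed. Applying Lemma~\ref{lemm: JclofDclIsAJ} to the extension $D\subseteq R$ does not immediately help, so I would argue through Proposition~\ref{prop: AJJisAD} instead.

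First I would show $\jcl{D}\subseteq$ the \J-int closure of the $A$-module $\jcl{A}$. Take $x\in\jcl{D}$. Then $x^n\leq\sum_i d_ix^i$ with $d_i\in D$. Each $d_i\le y_i$ for some $y_i\in\jcl{A}$. By Corollary~\ref{coro:J-int_closure_subsemiring_is_submodule}, $\jcl{A}$ is an $A$-module, so replacing each $d_i$ by $y_i$ gives $x^n\leq\sum_i y_ix^i$ with coefficients in $\jcl{A}$. Thus $x$ is \J-int over the $A$-module $\jcl{A}$. By Proposition~\ref{prop: AJJisAD}, this gives $x\in\dcl{A}$, so $\jcl{D}\subseteq\dcl{A}$.

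Finally, $\dcl{D}$ is the downward closure of $\jcl{D}$. That downward closure is contained in the downward closure of $\dcl{A}$, which is $\dcl{A}$ itself. Hence $\dcl{\dcl{A}}\subseteq\dcl{A}$, and idempotence holds.

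The main obstacle is this idempotence step, and it is carried entirely by Proposition~\ref{prop: AJJisAD}. The only care needed is the observation that enlarging coefficients preserves the integrality inequality, which is what lets us pass from coefficients in $D$ to coefficients in $\jcl{A}$. That replacement is monotone because both addition and multiplication preserve the order.
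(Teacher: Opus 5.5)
Your proof is correct and takes essentially the same route as the paper: idempotence rests on Proposition~\ref{prop: AJJisAD}. Your coefficient-enlargement step is exactly the inclusion from Lemma~\ref{lemm: JclofDclIsAJ} that the paper cites, applied to the $A$-module $\jcl{A}$, whose downward closure is $\dcl{A}$. Proving it inline is slightly cleaner, since that lemma is stated for sub-semirings while the paper applies it to a module.
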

\begin{proof}
%This statement follows immediately by Proposition~\ref{prop: AJJisAD} and Lemma~\ref{lemm: JclofDclIsAJ}, which give us that the \J-int closure of $\dcl{A}$ equals the \J-int closure of ${\jcl{A}}$ which is just $\dcl{A}$, which in turn implies that the \D-int closure of $\dcl{A}$ is just $\dcl{A}$.
It is routine to check that $A\subseteq\dcl{A}$ and that $A\subseteq B\implies \dcl{A}\subseteq\dcl{B}$.

For the remainder of this proof, we denote the downward closure of a set $X\subseteq R$ by $[X]_{\leq}$.

%For idempotence, 
Towards proving that the operation $A\mapsto\dcl{A}$ is idempotent,
%\red{$\leftarrow$ not sure I understand}\blue{ one of the axioms for a closure operation is idempotence: that when you apply the operation twice it is the same as applying it once.} \red{since at a first glace we are not looking at $A^{D^D}$ it might not be obvious what is happening; I don't know probably anyone with a little brain will understand} \blue{Does this new version work better?} 
note that the \J-int closure of $\dcl{A}=[\jcl{A}]_{\leq}$ is the same as the \J-int closure of $\jcl{A}$ by Lemma~\ref{lemm: JclofDclIsAJ}. By Proposition~\ref{prop: AJJisAD}, this is $\dcl{A}$. Thus, the \D-int closure of $\dcl{A}$, being the downward closure of the \J-int closure of $\dcl{A}$, is $[\dcl{A}]_{\leq}=[[\jcl{A}]_{\leq}]_{\leq}=[\jcl{A}]_{\leq}=\dcl{A}$.
\end{proof}

\section{The additively idempotent Cayley-Hamilton Theorem}

\newcommand{\aMatrix}{\mathcal{A}}
\newcommand{\bMatrix}{\mathcal{B}}

\begin{definition}
    Let $\aMatrix$ be an $n\times n$ matrix over an additively idempotent semiring $S$. The \textit{characteristic polynomial} of $\aMatrix$ is $p_\aMatrix(x) := \det(\aMatrix + xI_n) = \operatorname{perm}(\aMatrix + xI_n).$
\end{definition}

The goal of this section is to prove the following result.

\begin{thm}[Tropical Cayley-Hamilton Theorem]
\label{thm:CH}
%\label{thm:tropCH}
Any square matrix over an additively idempotent semiring satisfies all of the bend relations of its characteristic polynomial.
\end{thm}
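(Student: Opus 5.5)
The plan is to reduce the statement to an entrywise term-by-term comparison and then settle each term with a pigeonhole/splicing argument on walks and cycle covers in the weighted digraph of $\aMatrix$. Write $p_\aMatrix(x)=\sum_{k=0}^n c_kx^k$. Since addition is idempotent, $p(\aMatrix)$ with the $k$-th term deleted is always $\leq p(\aMatrix)$. So the bend relation $(p(\aMatrix),p_{\hat k}(\aMatrix))$ holds if and only if
\[
c_k\aMatrix^k\leq \sum_{l\neq k}c_l\aMatrix^l ,
\]
and this can be checked entry by entry. First I will expand the coefficients. Expanding $\operatorname{perm}(\aMatrix+xI_n)$ gives
\[
c_k=\sum_{|S|=n-k}\operatorname{perm}(\aMatrix_S)=\sum_{|S|=n-k}\ \sum_{\sigma\in\mathrm{Sym}(S)}\prod_{s\in S}a_{s\sigma(s)},
\]
where $\aMatrix_S$ is the principal submatrix on the index set $S$. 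Likewise $(\aMatrix^k)_{ij}$ is the sum, over walks $i=v_0\to v_1\to\cdots\to v_k=j$ in the complete digraph on $\{1,\dots,n\}$, of $\prod_t a_{v_{t-1}v_t}$.

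Thus $c_k(\aMatrix^k)_{ij}$ is a sum of monomials indexed by pairs $(\sigma,w)$, where $\sigma$ is a permutation of some $S$ with $|S|=n-k$ and $w$ is a walk of length $k$ from $i$ to $j$. Each such monomial is the product of the entries $a_{uv}$ over the multiset of edges of $\sigma$ and $w$. Because $\leq$ is compatible with addition, it suffices to show the following: for every such pair there is an $l\neq k$ and a pair $(\sigma',w')$ of the analogous kind for $l$, with the same edge multiset. Then that monomial literally occurs as a summand of $c_l(\aMatrix^l)_{ij}$, hence is $\leq$ the right-hand side, and summing over all pairs gives the inequality.

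The key step is the combinatorial exchange. The walk $w$ has $k+1$ vertex visits and $\sigma$ occupies $n-k$ distinct vertices, for a total of $n+1>n$. By pigeonhole, either (a) $w$ visits some vertex of $S$, or (b) $w$ avoids $S$ but visits some vertex twice.

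In case (a), let $v\in S$ be visited by $w$, and let $C$ be the cycle of $\sigma$ through $v$. Splice $C$ into $w$ at (the first) visit of $v$ and delete $C$ from $\sigma$. This produces a walk from $i$ to $j$ of length $l=k+|C|>k$, together with a permutation of $S\setminus C$, whose size is $n-l$.

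In case (b), $w$ contains a closed subwalk, and hence a simple cycle $C$ on vertices disjoint from $S$. Cut $C$ out of $w$ and adjoin it to $\sigma$ as a new cycle. This gives a walk of length $l=k-|C|<k$ and a permutation of $S\sqcup C$, whose size is $n-l$. Loops ($|C|=1$) are handled the same way.

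In both cases the edge multiset is unchanged, and commutativity of $S$ makes the monomials equal. The boundary cases are covered automatically: $k=n$ forces (b), and $k=0$ forces $i=j\in S$, hence (a).

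I expect the main obstacle to be bookkeeping rather than ideas. One must state the permanent expansion of $\operatorname{perm}(\aMatrix+xI_n)$ carefully, noting that no signs appear and that the diagonal $x$'s pick out the complement of $S$. One must also check that the cut/splice operations really produce an $i$-to-$j$ walk and a genuine permutation on a set of the correct size. Finally, idempotence must be invoked so that a summand reappearing inside a larger sum yields $\leq$ without any multiplicity issues.
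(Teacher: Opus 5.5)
Your proposal is correct and is essentially the paper's argument. The paper also writes each entry of $p(\mathcal{A})$ as a sum over pairs (partial permutation, path), and uses Straubing's map $\eta$, which moves one directed cycle between the permutation and the path, preserving the weight while changing the path length, to get $d_k(\mathcal{A}^k)_{i,j}\le (p_{\hat{k}}(\mathcal{A}))_{i,j}$. The only difference is that you choose the cycle by your own rule (splice when the walk meets $S$, otherwise cut out a simple cycle) and correctly note that idempotence makes injectivity unnecessary, whereas the paper cites Straubing's involution and its injectivity.
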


Our proof of Theorem~\ref{thm:CH} builds on 
%the 
Straubing's
combinatorial proof of the classical Cayley-Hamilton theorem, presented in \cite{Str82}. We refer the reader to that work for an excellent presentation with helpful figures. We begin by introducing notation and outlining the proof given in \cite{Str82}.

Let $\bMatrix=(b_{i,j})$ be an $n\times n$ matrix over a commutative ring and let $p_\bMatrix(x)=\det(xI-\bMatrix)$ be the characteristic polynomial of $\bMatrix$.

%%%%% \wsgn is the "weird" sign that is used here.

%For any set $I$, let $\frakS_I$ denote the symmetric group on $I$. A \emph{partial permutation} of $\{1,\ldots,n\}$ is a $\sigma\in\frakS_I$ for some $I\subseteq\{1,\ldots,n\}$. \red{(Nati says: I don't think we actually use $\frakS_I$ anywhere else, so it is possible that we should remove this notation. I'll wait until I'm done writing this section to be sure.)} 
A \emph{partial permutation} of $\{1,\ldots,n\}$ is a permutation of some $I\subseteq\{1,\ldots,n\}$.
We let $\dom\sigma=I$ denote the domain of $\sigma$ and let $|\sigma|$ denote the cardinality of $I$, which we will call the \emph{degree} of $\sigma$. Then the coefficient of $x^{k}$ in $p_B(x)$ is 
$$c_k:=\dsum_{|\sigma|=n-k}\wsgn\sigma\dprod_{\ell\in\dom\sigma}b_{\ell,\sigma(\ell)}$$
where $\wsgn\sigma$ is $\pm1$ but is not generally the usual sign of the permutation $\sigma$. 
%Straubing shows that $\wsgn\sigma$ is $(-1)^r$ where $r$ is the number of cycles in the disjoint cycle decomposition of $\sigma$. \red{(This previous sentence can almost surely be removed.)}
Note that there is exactly one partial permutation of degree $0$, being the empty permutation; with the convention that the empty product is $1$ we get $c_n=1$.
%The \emph{weight} of $\sigma$ is $\mu(\sigma)=\prod_{\ell\in\dom\sigma}b_{\ell,\sigma(\ell)}$.
Defining the \emph{weight} of $\sigma$ to be  $\displaystyle \mu(\sigma)=\prod_{\ell\in\dom\sigma}b_{\ell,\sigma(\ell)}$, we have $c_k=\dsum_{|\sigma|=n-k}(\wsgn\sigma)\mu(\sigma)$.

\newcommand{\aPath}{\frakp}
\newcommand{\apath}{\aPath}

%A \emph{path} $\aPath$ in $\{1,\ldots,n\}$ is a 
A sequence $\aPath=((\ell_0,\ell_1), (\ell_1,\ell_2), \ldots, (\ell_{k-1},\ell_k))$ with each $\ell_\bullet\in\{1,\ldots,n\}$ is called a \emph{path} in $\{1,\ldots,n\}$. The \emph{length} of $\apath$ is $|\apath|:=k$. The \emph{vertices} of $\apath$ are the $\ell_\bullet$s and its \emph{edges} are the $(\ell_{\bullet-1},\ell_\bullet)$s. 
The \emph{weight} of $\apath$ is $\mu(\apath)=\dprod_{  \substack{ (i,j)\text{ an} \\ \text{edge of }\apath }  }  b_{i,j}$. 
The \emph{start} of $\apath$ is $\alpha(\apath):=\ell_0$ and its \emph{end} is $\omega(\apath):=\ell_k$. 
By convention, for each $i\in\{1,\ldots,n\}$ there is a unique path $\apath_i$ of length $0$ that starts and ends at $\alpha(\apath_i)=\omega(\apath_i)=\ell_0=i$; we set $\mu(\apath_i)=1$. 
With this setup we have that, for any $k\geq0$, the $(i,j)$-entry of $\bMatrix^k$ is $(\bMatrix^k)_{i,j}= \dsum_{\substack{  \apath \text{ a path of} \\ \text{length }k\\\text{from } i\text{ to }j  }} \mu(\apath)
%=  \dsum_{\substack{ \alpha(\apath)=i,\\ \omega(\apath)=j, \\ |\apath|=q }}  \mu(\apath)
$.

We let $T_{i,j}:= \left\{ (\sigma,\apath)\,:\,\substack{\textstyle\sigma\text{ is a partial permutation,}\\ \textstyle |\sigma|+|\apath|=n,\ \alpha(\apath)=i,\ \omega(\apath)=j}\right\}$ and, for any pair $(\sigma,\apath)\in T_{i,j}$, we say that its \emph{weight} is $\mu(\sigma,\apath):=\mu(\sigma)\mu(\apath)$\footnote{This notation is not used in \cite{Str82}, but will be convenient for us.}. So the $(i,j)$-entry of $p_\bMatrix(\bMatrix)$ is $(p_\bMatrix(\bMatrix))_{i,j}=\dsum_{(\sigma,\apath)\in T_{i,j}}(\wsgn\sigma) \mu(\sigma,\apath)$.

Straubing proves that $\dsum_{(\sigma,\apath)\in T_{i,j}}(\wsgn\sigma) \mu(\sigma,\apath)=0$ by constructing an involution $\eta=\eta_{i,j}:T_{i,j}\to T_{i,j}$ such that, if $\eta(\sigma,\apath)=(\sigma',\apath')$ then $\wsgn\sigma'=-\wsgn\sigma$ and $\mu(\sigma',\apath')=\mu(\sigma,\apath)$.

To facilitate understanding $\eta$, Straubing views each pair $(\sigma,\apath)$ as% (or, more accurately, associates to each such pair) 
\footnote{or, more accurately, associates to each such pair}
a two-colored directed (multi)graph (with loops) with vertex set $\{1,\ldots,n\}$. 
The edges of the first color are $(\ell,\sigma(\ell))$ for $\ell\in\dom\sigma$ and the edges of the second color are the edges $(\ell_{0},\ell_1), (\ell_1,\ell_2),\ldots,(\ell_{k-1},\ell_k)$ of $\apath$. 
Note that $\mu(\sigma,\apath)$ only depends on the underlying graph, not on the coloring of the edges.

For $0\leq r\leq k$, consider the following two properties:
\begin{equation}
    \text{There is a }q<r\text{ such that }\ell_q=\ell_r.
    \tag{P1}\label{property:pathCycle}
\end{equation}
\begin{equation}
    %\ell_r\in\dom\sigma.
    \text{The vertex }\ell_r\text{ is in }\dom\sigma.
    \tag{P2}\label{property:permCycle}
\end{equation}
Straubing proves that there is an $r$ satisfying \eqref{property:pathCycle} or \eqref{property:permCycle}. 
He then considers the smallest such $r$ and shows that it satisfies exactly one of \eqref{property:pathCycle} or \eqref{property:permCycle}. 
In either case, there is an associated directed cycle in the graph, given as follows.
If $r$ satisfies \eqref{property:pathCycle} then the directed cycle is $(\ell_{q},\ell_{q+1})\cdots(\ell_{r-1},\ell_r)$. If $r$ satisfies \eqref{property:permCycle}, then the directed cycle consists of the edges $(\ell,\sigma(\ell))$ for $\ell$ in the cycle (in the permutation sense) of $\sigma$ containing $\ell_r$. In either case, $\eta$ acts by switching the color of all edges in the directed cycle.

Since the underlying graph has not changed, $\mu(\sigma,\apath)$ is also unchanged. Straubing also verifies the sign property, completing his proof. %\red{(Should we take out this last line? It is not relevant to the things we need, but it kind of wraps up this part.)}

We are now ready to proceed to the proof of the Tropical Cayley-Hamilton Theorem.

\begin{proof}[Proof of 
%Theorem~\ref{thm:tropCH}]
Theorem~\ref{thm:CH}]
Let $\aMatrix=(a_{i,j})$ be an $n\times n$ matrix over an additively idempotent semiring $S$, and let $p(x)=p_\aMatrix(x)$ be its characteristic polynomial. Write $p(x)=%d_0+d_1x+\cdots+d_nx^n
\dsum_{m=0}^n d_mx^m
$ and, for each $0\leq k\leq n$, define $p_{\hat{k}}(x):=\dsum_{m\neq k}d_mx^m$. Our goal is to show that $p_{\hat{k}}(\aMatrix)=p(\aMatrix)$ for all $k$; we will prove this entry-wise.

We retain all earlier notations except that (1) $\wsgn\sigma$ (and so also $c_k$) is meaningless because $-1_S$ does not make sense in $S$\footnote{Recall that, when $S=\T$, we write elements of $S$ as $t^a$ with $a\in\R$, so $t^{-1}\in S$ but $-1\notin S$.} and (2) in defining $\mu(\sigma)$ and $\mu(\apath)$ we use $a_{\bullet,\bullet}$ instead of $b_{\bullet,\bullet}$ and so the operations are in a semiring rather than a ring.

Expanding out the permanent $p(x)=\operatorname{perm}(\aMatrix+xI)$ we find that, for all $k$, $d_k=\dsum_{|\sigma|=n-k}\mu(\sigma)$. 
So $(p(\aMatrix))_{i,j}=\dsum_{(\sigma,\apath)\in T_{i,j}}\mu(\sigma,\apath)$. 
Moreover, if we let $T_{i,j}(k):=\{(\sigma,\apath)\in T_{i,j}\,:\,|\apath|=k\}$ 
then $d_k(\aMatrix^k)_{i,j}=\dsum_{(\sigma,\apath)\in T_{i,j}(k)}\mu(\sigma,\apath)$ and 
$\left(p_{\hat{k}}(\aMatrix)\right)_{i,j}=\dsum_{(\sigma,\apath)\in T_{i,j}\sdrop T_{i,j}(k)}\mu(\sigma,\apath)$.

Suppose $(\sigma,\apath)\in T_{i,j}(k)$. If $\eta(\sigma,\apath)=(\sigma',\apath')$ then $|\apath'|\neq|\apath|=k$ because $\apath'$ is obtained from $\apath$ by adding or removing edges, but not both. That is, $\eta(\sigma,\apath)\in T_{i,j}\sdrop T_{i,j}(k)$. Thus, we have shown $\eta\left(T_{i,j}(k)\right)\subseteq T_{i,j}\sdrop T_{i,j}(k)$.

We thus have $$d_k(\aMatrix^k)_{i,j} = \dsum_{(\sigma,\apath)\in T_{i,j}(k)} \mu(\sigma,\apath) 
= \dsum_{(\sigma,\apath)\in \eta\left(T_{i,j}(k)\right)} \mu(\sigma,\apath) 
\leq \dsum_{(\sigma,\apath)\in T_{i,j\sdrop }T_{i,j}(k)} \mu(\sigma,\apath) 
= \left(p_{\hat{k}}(\aMatrix)\right)_{i,j},$$
where the second equality holds because $\eta$ is injective and preserves weights of pairs. 
Expanding out the definition of $\leq$, we find that 
$\left(p_{\hat{k}}(\aMatrix)\right)_{i,j} = \left(p_{\hat{k}}(\aMatrix)\right)_{i,j} +d_k(\aMatrix^k)_{i,j}=(p(\aMatrix))_{i,j}$,
as desired.
\end{proof}

%\red{Should we be making a distinction between finitely generated modules and finite modules? Or maybe write something like finitely generated DC-module?}

\begin{coro}\label{coro: CH-coro1}
%\label{coro: tropCH-coro1}
    Let $S$ be an additively idempotent semiring and let $N$ be a finitely generated $S$-module. If $\varphi: N \to N$ is an $S$-module homomorphism then there exists a monic polynomial $p(x)$ in $S[x]$ such that $\varphi$ satisfies the bend relations of $p(x)$.
\end{coro}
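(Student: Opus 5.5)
Fix generators $g_1,\ldots,g_n$ of $N$ and let $\pi:S^n\to N$, $e_i\mapsto g_i$, be the corresponding surjection. The idea is to lift $\varphi$ to a matrix on the free module $S^n$, apply Theorem~\ref{thm:CH} there, and push the resulting identities down to $N$ along $\pi$.

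Since $\pi$ is surjective, each $\varphi(g_j)$ can be written as $\varphi(g_j)=\sum_{i=1}^n a_{i,j}g_i$ with $a_{i,j}\in S$. Let $\aMatrix=(a_{i,j})$ and let $\psi:S^n\to S^n$ be the $S$-linear map given by $\aMatrix$, so $\psi(e_j)=\sum_i a_{i,j}e_i$. By construction $\pi\circ\psi=\varphi\circ\pi$ on the basis vectors $e_j$, hence on all of $S^n$ by linearity. Induction on $k$ then gives $\pi\circ\psi^k=\varphi^k\circ\pi$. Taking $S$-linear combinations, for any polynomial $q(x)=\sum_m s_mx^m\in S[x]$ we get $\pi\circ q(\psi)=q(\varphi)\circ\pi$, where $q(\varphi)=\sum_m s_m\varphi^m$ in the (additively idempotent) endomorphism semiring of $N$. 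This uses only that $\pi$ is a module homomorphism, so it respects sums and scalar multiples.

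Now take $p(x)=p_\aMatrix(x)=\operatorname{perm}(\aMatrix+xI_n)$. It is monic: the coefficient $d_n$ of $x^n$ is $\mu$ of the empty partial permutation, which is $1$. By Theorem~\ref{thm:CH}, $p(\aMatrix)=p_{\hat k}(\aMatrix)$ as matrices for every $k$, and therefore $p(\psi)=p_{\hat k}(\psi)$ as endomorphisms of $S^n$. For any $y\in N$, choose $z\in S^n$ with $\pi(z)=y$. Then
\[
p(\varphi)(y)=\pi(p(\psi)z)=\pi(p_{\hat k}(\psi)z)=p_{\hat k}(\varphi)(y).
\]
Hence $p(\varphi)=p_{\hat k}(\varphi)$ for all $k$, i.e.\ $\varphi$ satisfies every bend relation of $p$.

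The only points needing care are the well-definedness of the lift and the compatibility of $\pi$ with polynomial evaluation. The lift is not canonical, since the coefficients $a_{i,j}$ need not be unique, but any choice works. Compatibility holds because module homomorphisms over a semiring still commute with sums and scalar actions. I do not expect a real obstacle beyond these checks.
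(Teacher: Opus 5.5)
Your proof is correct and follows essentially the same route as the paper. You lift $\varphi$ along a presentation $\pi\colon S^n\to N$ to a matrix $\aMatrix$, apply Theorem~\ref{thm:CH} to $\aMatrix$, and push the identities $p(\aMatrix)=p_{\hat k}(\aMatrix)$ down to $N$ using $\pi\circ q(\aMatrix)=q(\varphi)\circ\pi$. The only differences are cosmetic: the paper checks the final equality on the generators $\pi(e_i)$ rather than on arbitrary preimages, and you additionally verify explicitly that $p_\aMatrix$ is monic.
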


\begin{proof}
Pick a presentation $\pi: S^n \to N$. Then there exists a matrix $\aMatrix$ over $S$ that makes the diagram 
\comd{
S^n\ar[r]^{\aMatrix}\surj[d]_{\pi}&S^n\surj[d]^{\pi}\\
N\ar[r]^{\ph}&N
}
commute.
%\green{We will show that if $f,g \in S[x]$ are such that $f(\aMatrix) = g(\aMatrix)$, then $f(\varphi) = g(\varphi)$. Applying this to the bend relations of $p_\aMatrix(x)$ will then prove the result.} 
%\blue{do we want to explicitly reference the CH theorem?} 
We claim that if $f,g \in S[x]$ are such that $f(\aMatrix) = g(\aMatrix)$, then $f(\varphi) = g(\varphi)$.
By Theorem~\ref{thm:CH}, $\aMatrix$ satisfies the bend relations of $p_{\aMatrix}(x)$, so applying our claim to these relations will then prove the result.

Let $e_1, \dots, e_n$ be the generators of $S^n$ so the set of images $\{\pi(e_i)\}$ generates $N$. So it suffices to show that $f(\varphi)(\pi(e_i)) = g(\varphi)(\pi(e_i))$ which is true since $f(\varphi)(\pi(e_i)) = \pi(f(\aMatrix)(e_i)) = \pi(g(\aMatrix)(e_i)) = g(\varphi)(\pi(e_i)).$
\end{proof}

\begin{coro}\label{coro: CH-coro2}
%\label{coro: tropCH-coro2}
    Let $A \subseteq R$ be an extension of additively idempotent semirings. Let $N$ be a %strongly 
    faithful $R$-module that is finitely generated as an $A$-module. Then every $x\in R$ is \J-int over $A$.
\end{coro}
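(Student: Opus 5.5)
Fix $x\in R$ and let $\ph:N\to N$ be multiplication by $x$, $\ph(m)=xm$. Since $N$ is an $R$-module and $A\subseteq R$, the map $\ph$ is $A$-linear. As $N$ is finitely generated as an $A$-module, Corollary~\ref{coro: CH-coro1} (with $S=A$) supplies a monic polynomial $p(t)=t^n+d_{n-1}t^{n-1}+\cdots+d_0\in A[t]$ such that $\ph$ satisfies the bend relations of $p$. To be precise, $\ph$ is a priori only an $A$-module map, so the corollary is applied over $A$; this is fine because the characteristic polynomial of a matrix over $A$ has coefficients in $A$ and is monic, its top coefficient being the empty-permutation term $1$.

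The bend relation obtained by deleting the leading term gives, as endomorphisms of $N$,
$$p(\ph)=\sum_{m=0}^{n-1}d_m\ph^m .$$
Here $\ph^m$ is multiplication by $x^m$ and $d_m\ph^m$ is multiplication by $d_mx^m$. Hence the element $y:=p(x)\in R$ acts on $N$ exactly as the element $z:=\sum_{m<n}d_mx^m\in R$ does. Since $N$ is a faithful $R$-module, the map $R\to\operatorname{End}(N)$ is injective, so $p(x)=\sum_{m<n}d_mx^m$ holds in $R$.

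Written out, this says $x^n+\sum_{m<n}d_mx^m=\sum_{m<n}d_mx^m$, which by definition of the order means $x^n\le\sum_{m<n}d_mx^m$. So $x$ is \J-int over $A$ (indeed \n-int, since we have an equality of the form $x^n+\text{lower}=\text{lower}$).

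The only real point to check is that the relation transfers from $\operatorname{End}(N)$ back to $R$. This requires faithfulness in the strong sense: equal actions imply equal elements. Annihilator-free faithfulness would not suffice. The rest is bookkeeping, namely that the polynomial identity of Corollary~\ref{coro: CH-coro1} evaluated at $\ph$ coincides with multiplication by the corresponding polynomial in $x$. This holds because the $A$-algebra map $A[t]\to\operatorname{End}_A(N)$, $t\mapsto\ph$, factors through $R\to\operatorname{End}(N)$.
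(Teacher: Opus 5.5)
Your proof is correct and is essentially the paper's argument. You apply Corollary~\ref{coro: CH-coro1} to multiplication by $x$, take the bend relation that deletes the leading term, and transport $p(\ph_x)=q(\ph_x)$ back to $p(x)=q(x)$ in $R$ using injectivity of $R\to\operatorname{End}(N)$.

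One aside is wrong, though it does not affect the statement. Your parenthetical claim that $x$ is even N-integral does not follow. The identity $x^n+q(x)=q(x)$ only says $x^n\le q(x)$, whereas N-integrality requires $x^n=q(x)$, and this can fail under the corollary's hypotheses.

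For a counterexample, take $A=\T$ embedded diagonally in $R=N=\T\times\T$. This $N$ is faithful and is generated over $A$ by $(1_\T,0_\T)$ and $(0_\T,1_\T)$. Let $x=(t,t^2)$. An equation $x^n=\sum_{i<n}a_ix^i$ would force $a_j=t^{2n-2j}$ for some $j<n$, by comparing second coordinates. The first coordinate of the right-hand side would then be at least $t^{2n-j}>t^n$, a contradiction.
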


\begin{proof}
    Fix $x\in R$. Multiplication by $x$ gives rise to an $A$-module homomorphism $\varphi_x: N\to N$. Since $N$ is a finitely generated $A$-module, by Corollary~\ref{coro: CH-coro1} there exists a monic polynomial $p(x)$ such that $\varphi_x$ satisfies all bend relations of $p(x)$. Let $q(x)$ be the polynomial obtained from $p(x)$ by removing the leading term (highest degree term). Then $(p(x), q(x))$ is a pair in the bend relations of $p(x)$ and by Corollary~\ref{coro: CH-coro1} $p(\varphi_x) = q(\varphi_x)$. 
    Since $N$ is a faithful $R$ module, the homomorphism $\Phi: R \to \operatorname{End}(N)$, which sends $a$ to $\varphi_a$ is injective. 
    %In particular, we have $\phi(p(x)) = \varphi_{p(x)} = p(\varphi_x) = q(\varphi_x) = \varphi_{q(x)} = \phi(q(x))$, showing $p(x) = q(x)$ and so that $x$ is \J-int. 
    In particular, we have $\Phi(p(x)) =  p(\Phi(x)) = p(\ph_x) = q(\ph_x) = q(\Phi(x)) = \Phi(q(x))$, showing $p(x) = q(x)$ and so that $x$ is \J-int. 
\end{proof}

\begin{coro}\label{coro:D-intAndFaithfulA[x]Mods}
Let $A\subseteq R$ be an extension of additively idempotent semirings. Let 
$$X:=\{x\in R\,:\,\text{there is a %strongly 
faithful }A[x]\text{-module that is finitely generated as an }A\text{-module}\}.$$ 
Then $\dcl{A}$ is the downward closure of $X$.
\end{coro}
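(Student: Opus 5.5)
We show the two inclusions $X\subseteq\dcl{A}$ and $\dcl{A}\subseteq[X]_{\leq}$, where $[X]_{\leq}$ denotes the downward closure of $X$. Since $\dcl{A}$ is downward-closed by definition, the first inclusion gives $[X]_{\leq}\subseteq\dcl{A}$.

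For $X\subseteq\dcl{A}$, fix $x\in X$ and a faithful $A[x]$-module $N$ that is finitely generated as an $A$-module. We apply Corollary~\ref{coro: CH-coro2} to the extension $A\subseteq A[x]$, with $A[x]$ playing the role of $R$. It says that every element of $A[x]$ is \J-int over $A$ inside $A[x]$, and in particular $x$ is. The integrality inequality $x^n\leq\sum a_ix^i$ is an inequality in $A[x]\subseteq R$, and the order on $A[x]$ is the restriction of the order on $R$. So $x\in\jcl{A}\subseteq\dcl{A}$.

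For $\dcl{A}\subseteq[X]_{\leq}$, it suffices to show $\jcl{A}\subseteq[X]_{\leq}$. Indeed, $\dcl{A}$ is the downward closure of $\jcl{A}$, and downward closure is idempotent. The natural candidate is to show $\ncl{A}\subseteq X$, since Corollary~\ref{coro:dclIsDownClosureOfNcl} says $\dcl{A}$ is the downward closure of $\ncl{A}$.

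So let $y\in\ncl{A}$. By Proposition~\ref{prop:IntegralAsAlgebrasFinite}(\ref{propItem:N-integralAsAlgebraFinite}), $A[y]$ is finitely generated as an $A$-module. Take $N=A[y]$ as a module over itself. It is faithful, since $1\in A[y]$ and so $ax=bx$ for all $x\in N$ forces $a=b$. Hence $y\in X$. This gives $\dcl{A}=[\ncl{A}]_{\leq}\subseteq[X]_{\leq}$.

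The main point to watch is the first inclusion: one must check that Corollary~\ref{coro: CH-coro2} really applies with $R$ replaced by the sub-semiring $A[x]$. The faithfulness hypothesis there is for the module over the larger semiring, which here is $A[x]$, exactly as assumed in the definition of $X$. The remaining steps are direct citations of earlier results.
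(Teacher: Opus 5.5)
Your proof is correct and follows the paper's argument. You get $\dcl{A}\subseteq[X]_{\leq}$ from Corollary~\ref{coro:dclIsDownClosureOfNcl} and Proposition~\ref{prop:IntegralAsAlgebrasFinite}(\ref{propItem:N-integralAsAlgebraFinite}), using that $A[y]$ is faithful over itself, and the reverse inclusion from Corollary~\ref{coro: CH-coro2}. Your explicit remark that Corollary~\ref{coro: CH-coro2} is applied to the extension $A\subseteq A[x]$, and that J-integrality there carries over to $R$, spells out a step the paper leaves implicit.
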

\begin{proof}
Given $y\in \dcl{A}$, Corollary~\ref{coro:dclIsDownClosureOfNcl} tells us that there is an $x\geq y$ that is \n-int over $A$. Then part (\ref{propItem:N-integralAsAlgebraFinite}) of Proposition~\ref{prop:IntegralAsAlgebrasFinite} tells us that $A[x]$ is finitely generated as an $A$-module. Since $A[x]$ is a %strongly 
faithful $A[x]$-module, this shows that $x\in X$, so $y$ is in the downward closure of $X$.

Now suppose that $y$ is in the downward closure of $X$, i.e., there is an $x\in X$ with $y\leq x$. Then, by Corollary~\ref{coro: CH-coro2}, $x$ is \J-int over $A$ so, by definition, $y$ is \D-int over $A$.
\end{proof}
% \red{(It could be that we should emphasize Corollary~\ref{coro:D-intAndFaithfulA[x]Mods} more. This is because $X$ is the natural semiring analogue of the more usual faithful module condition in the ring setting, (that $x\in R$ is integral over $A\susbeteq R$ if and only if there is a faithful $A[x]$-module that is finitely generated as an $A$-module) and so this statement gives a form of equation/inequality integrality is the same as module integrality that is true in an arbitrary semiring.)}

%%%%%

\section{Integral closure in  semirings with no zero divisors}\label{section: A+ for cancel gen}

%\red{(Nati says: We might consider separating out something like ``applications and examples'' into a separate small section or subsection. This would include Lemma~\ref{lemma:strongMR} through Corollary~\ref{coro:ModBendVIntIsQInt} and Examples~\ref{ex: allIntegral-toric} and \ref{ex: allIntegral-terry}.)}\blue{yes, let's do that and add the examples from current sect. 5 }
%%%%% Done!

In this section we extend the results of \cite[Section 4]{Tol16} by significantly relaxing the assumptions on both $A$ and $R$ when taking integral closures of $A$ in $R$.
% In this section we extend the results of \cite[Section 4]{Tol16} from 
% \green{
% ``simple'' semirings to 
% %finitely 
% cancellatively 
% generated $\mathbb{B}$-algebras. 
% }
% \pinky{``simple'' semirings, unit-generated semirings, and semifields to semirings with no zero-divisors, cancellatively-generated semirings, and cancellative semirings.} \red{(Nati says: I'm not 100\% sure I got this right; we should double-check it.)} \blue{I think for someone who doesn't know what is coming or is familiar with Jeff's paper this is very confusing since these adjectives point to A or R but not necessarily both. I suggest we either be more clear or super vague.}
We then prove that the different notions of integral closure of a semiring $A$ in a cancellatively generated semiring $R$ agree under some hypotheses.

The following proposition is \cite[Proposition 4.11]{Tol16} with the assumption on the ``simplicity'' of the semiring dropped.

\begin{proposition}\label{prop:Jeff4.11}
    Let A be an additively idempotent semiring. Let $\pi: A \to A/\sqrt{\Delta}$ be the quotient map. 
    %Then $\pi(x) = \pi(y)$ if and only if every homomorphism $v : A\to S$ 
    % %into a totally ordered additively idempotent semifield
    % with $S$ a totally ordered additively idempotent semifield 
    % %
    % satisfies $v(x) = v(y)$, equivalently, if and only if for every prime congruence $P$ on $A$ we have $|x|_P = |y|_P$.
    Then $\pi(x) = \pi(y)$ if and only if $v(x)=v(y)$ for every homomorphism $v : A\to S$ with $S$ a totally ordered additively idempotent semifield.
    Equivalently, this occurs if and only if $|x|_P=|y|_P$ for every prime congruence $P$ on $A$.
    
\end{proposition}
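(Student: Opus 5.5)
The plan is to prove the second equivalence first, since it essentially rests on the definition of $\sqrt{\Delta}$, and then to reduce the first equivalence to it. By Definition~\ref{def: rad}, $\sqrt{\Delta}$ is the intersection of all prime congruences $P$ on $A$. So $\pi(x)=\pi(y)$, i.e.\ $(x,y)\in\sqrt{\Delta}$, holds if and only if $(x,y)\in P$ for every prime $P$, i.e.\ $x\equiv_P y$ for every $P$. It remains to note that $x\equiv_P y$ if and only if $|x|_P=|y|_P$. This holds because $A/P$ is cancellative, so the natural map $A/P\to\Frac(A/P)=\kappa(P)$ is injective, and $|\cdot|_P$ is the composite $A\to A/P\to\kappa(P)$. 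This gives the equivalence between $\pi(x)=\pi(y)$ and the condition on prime congruences. Nothing about ``simplicity'' is used here, which is why that assumption can be dropped.

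Next I show that the condition on homomorphisms $v$ is equivalent to the condition on primes.

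For one direction, let $P$ be a prime congruence. Then $A/P$ is totally ordered and cancellative. I claim $\kappa(P)$ is a totally ordered additively idempotent semifield. It is a semifield because $A/P$ is cancellative. It is totally ordered because, given fractions $a/s$ and $b/t$, we compare $at$ and $bs$ in $A/P$ and use that the order is preserved under multiplication by the unit $1/(st)$. Thus $v=|\cdot|_P$ is one of the admissible homomorphisms. Hence the $v$-condition implies the $P$-condition.

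For the other direction, let $v:A\to S$ be a homomorphism into a totally ordered additively idempotent semifield. Consider $P=\ker(v)$. As noted in the preliminaries, since the image $v(A)\subseteq S$ is totally ordered and cancellative (a sub-semiring of a semifield is cancellative), $\ker(v)$ is prime, provided it is proper; it is, since $v(1)=1\neq 0$. Then $A/P\cong v(A)$, so $|x|_P=|y|_P$ implies $x\equiv_P y$, which implies $v(x)=v(y)$. Hence the $P$-condition implies the $v$-condition.

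Alternatively, the direction $\pi(x)=\pi(y)\Rightarrow v(x)=v(y)$ follows directly from Corollary~\ref{coro:factor-quotient-map}, since $S$ is cancellative and so $v$ factors through $\pi$.

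I expect the only potentially delicate point to be verifying that $\kappa(P)$ is totally ordered, and that $\ker(v)$ is a proper prime when $S$ is a semifield. Both are routine checks, so the whole proof should be short bookkeeping once Definition~\ref{def: rad} is unwound.
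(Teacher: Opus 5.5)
Your argument is correct, but it takes a different route from the paper. The paper gives no proof of its own; it only observes that Tolliver's proof of \cite[Proposition 4.11]{Tol16} never uses the simplicity hypothesis. You instead derive the statement directly from this paper's Definition~\ref{def: rad}. Since $\sqrt{\Delta}$ is by definition the intersection of all prime congruences on $A$, the first equivalence reduces to the injectivity of $A/P\to\kappa(P)=\Frac(A/P)$. The equivalence with the condition on homomorphisms $v$ then reduces to two routine checks: that $\kappa(P)$ is a totally ordered additively idempotent semifield, and that $\ker v$ is a proper prime with $A/\ker v\cong v(A)$. You carry out both correctly, including properness via $v(1)=1\neq 0$. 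Your route buys self-containment and transparency: with the radical defined as an intersection of primes, the proposition is essentially definitional, so the irrelevance of simplicity is evident rather than something the reader must check against another paper. The citation buys brevity and consistency with Tolliver's framework, in which $A/\sqrt{\Delta}$ is also governed by its universal property (Proposition~\ref{prop:Jef4.9}). Your alternative remark via Corollary~\ref{coro:factor-quotient-map} recovers one direction from that viewpoint, but your main argument does not need it.
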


\begin{proof}
    The proof is the same as the one of \cite[Proposition 4.11]{Tol16}, since the assumption that the semiring is ``simple'' is not used in the original proof. 
\end{proof}

% \begin{lemma}\label{lem:noZD-diagNoIK}
%     Let $A$ be a semiring with no zero divisors. Then the ideal-kernel of $\sqrt{\Delta}$ is trivial.
% \end{lemma}

% \begin{proof}
%     Assume that the ideal-kernel of $\sqrt{\Delta}$ is non-trivial, that is, there exists an element $a\in A \setminus \{0\}$ such that $(a, 0) \in \sqrt{\Delta}$. By Theorem~\ref{thm: radGP}
%     %\cite[Proposition 5.2]{JM17} \red{(Ideally this will become a reference to a single statement in Section 2, rather than relying on both \cite[Proposition 5.2(i)]{JM17} and the characterization of $\sqrt{\calc}$ in terms of generalized powers.)} 
%     for the pair $(a,0)$ there exist $i \in \mathbb{Z}_{\geq 1}$ and $h \in A$ such that $(a^i + h)(a,0)$ is in $\Delta$, or equivalently, $(a^i+h)a = 0$. Since $a \neq 0$ by assumption and $A$ has no zero divisors, we must have $a^i + h = 0$ which in turn implies that 
%     %$a^i = h = 0$ 
%     $a^i=0$
%     by Lemma~\ref{lem:zero-sum-free}, making $a$ a zero divisor. Note that $i$ has to be bigger than 1, because $a$ is not 0 by assumption.
% \end{proof}

%this is Jeff's 4.12 in our context.
\begin{proposition}\label{prop:Jeff4.12New}
    Let $A$ be a semiring with no zero-divisors and $A/\sqrt{\Delta}$ cancellative. Let $\pi: A \to A/\sqrt{\Delta}$ be the quotient map. Then $\pi(x) = \pi (y)$ if and only if there exists $s \in A\setminus \{0\}$ such that $sx = sy$.
\end{proposition}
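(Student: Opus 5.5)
For the backward direction, suppose $s\in A\setminus\{0\}$ satisfies $sx=sy$. Applying $\pi$ gives $\pi(s)\pi(x)=\pi(s)\pi(y)$ in $A/\sqrt{\Delta}$. By Lemma~\ref{lem:noZD-diagNoIK}, since $A$ has no zero divisors, the ideal-kernel of $\sqrt{\Delta}$ is trivial. Hence $\pi(s)\neq 0$. Because $A/\sqrt{\Delta}$ is cancellative, every nonzero element is cancellative, so we may cancel $\pi(s)$ and obtain $\pi(x)=\pi(y)$.

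For the forward direction, suppose $\pi(x)=\pi(y)$, i.e.\ $(x,y)\in\sqrt{\Delta}$. By Theorem~\ref{thm: radGP} (with $\mathcal{C}=\Delta$) there are $k\in\Z_{\geq0}$ and $c\in A$ with $((x+y)^k+c)x=((x+y)^k+c)y$. Set $s:=(x+y)^k+c$. If $s\neq0$, this $s$ is the required witness.

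The only obstacle is the case $s=0$, and I will handle it by a short case split. By Lemma~\ref{lem:zero-sum-free}, $s=0$ forces $(x+y)^k=0$ and $c=0$. If $k=0$ then $(x+y)^0=1$, so $1=0$ and $A$ is the zero semiring. This case is degenerate: I will note that the statement is only meaningful when $A\neq 0$, or treat it separately. If $k\geq1$, then since $A$ has no zero divisors, $(x+y)^k=0$ gives $x+y=0$, and a second application of Lemma~\ref{lem:zero-sum-free} gives $x=y=0$. In that case any nonzero $s$, for instance $s=1$, satisfies $sx=sy$. This completes both directions.
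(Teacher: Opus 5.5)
Your proof is correct, and your backward direction is the same as the paper's. The forward direction takes a different route from the paper.

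The paper localizes at the multiplicative set $A\setminus\{0\}$ to obtain a semifield $K=A_{(0)}$. It then uses Corollary~\ref{coro:factor-quotient-map} to factor the localization map $g\colon A\to K$ through $\pi$, and reads off $s$ from the equality $x/1=y/1$ in $K$. You instead use the generalized-power description of $\sqrt{\Delta}$ in Theorem~\ref{thm: radGP} to write down the witness $s=(x+y)^k+c$ explicitly. You then rule out $s=0$ using Lemma~\ref{lem:zero-sum-free} and the absence of zero divisors.

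Your version is more direct and gives an explicit $s$, at the cost of a small case analysis. It avoids both localization and the factorization property of $A\to A/\sqrt{\Delta}$, which in the paper rests on Propositions~\ref{prop: canIsRed} and~\ref{prop:Jef4.9}. The paper's version is more conceptual: together with the backward direction, it identifies $\sqrt{\Delta}$ with the congruence-kernel of $A\to A_{(0)}$. Neither forward argument uses the cancellativity of $A/\sqrt{\Delta}$.

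One small correction concerns the zero semiring. There the hypotheses hold vacuously, but the forward implication is false, since there is no $s\in A\setminus\{0\}$. So that case cannot be ``treated separately''; it must simply be excluded. The paper's proof tacitly excludes it too, since the zero ideal of the zero semiring is not prime.
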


\begin{proof}
    Assume that $\pi(x) = \pi(y)$. Since $A$ has no zero divisors the zero ideal is prime. Let $K = A_{(0)}$ be the localization of $A$ by all non-zero elements of $A$ and let $g:A \to K$ be the localization map. $K$ is a semifield and thus cancellative, so by Corollary~\ref{coro:factor-quotient-map} %\red{ add more; (Right now this is a self-reference. I assume this is supposed to be to a statement that says that, if $\sqrt{\Delta}$ is QC, then $A\to A/\sqrt{\Delta}$ is universal for maps from $A$ to a cancellative semiring.)} 
    the map $g$ factors uniquely as $g = f \circ \pi$, where $f: A/\sqrt{\Delta} \to K$. Then 
    $$\frac{x}{1} = g(x) = f(\pi(x)) = f(\pi(y)) = g(y) = \frac{y}{1},$$
    so there is some $s \in A\setminus\{0\}$ with $sx=sy$.
    
    For the other direction, assume there is some $s \in A\setminus\{0\}$ with $sx=sy$. Since $\pi$ is a homomorphism we have $\pi(s)\pi(x) = \pi(s)\pi(y)$. Since $A/\sqrt{\Delta}$ is cancellative and the ideal-kernel of $\pi$ is $\{0\}$ by Lemma~\ref{lem:noZD-diagNoIK}, we have that $\pi(x) = \pi(y)$.  
\end{proof}

\begin{theorem}\label{thm:Jeff4.13New}
    Let $A$ be a semiring with no zero divisors and with $A/\sqrt{\Delta}$ cancellative. Fix elements $x, y \in A$. Then $sx=sy$ for some $s\in A\sdrop\{0\}$ if and only if for every prime congruence $P$ on $A$ we have $|x|_P = |y|_P$.
\end{theorem}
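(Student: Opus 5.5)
This follows by chaining the two preceding propositions through the intermediate condition $\pi(x)=\pi(y)$, where $\pi: A\to A/\sqrt{\Delta}$ is the quotient map. First, since $A$ has no zero divisors and $A/\sqrt{\Delta}$ is cancellative, Proposition~\ref{prop:Jeff4.12New} applies directly. It tells us that $\pi(x)=\pi(y)$ holds if and only if there is some $s\in A\sdrop\{0\}$ with $sx=sy$.

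Second, Proposition~\ref{prop:Jeff4.11} needs no hypotheses on $A$ beyond additive idempotence. It tells us that $\pi(x)=\pi(y)$ holds if and only if $|x|_P=|y|_P$ for every prime congruence $P$ on $A$. Combining the two equivalences gives the statement.

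The only point to double-check is that the notion $|x|_P$ used in Proposition~\ref{prop:Jeff4.11} is the same one used in the theorem. By Definition~\ref{def:ResidueSemifield}, $|x|_P$ is the image of $x$ in $\kappa(P)=\Frac(A/P)$, and $A/P\to\kappa(P)$ is injective, so $|x|_P=|y|_P$ if and only if $x\equiv_P y$. Moreover, $\kappa(P)$ is a totally ordered additively idempotent semifield, and every homomorphism $v: A\to S$ into such a semifield has prime kernel. So the two formulations inside Proposition~\ref{prop:Jeff4.11} agree, and nothing further is needed.

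If one prefers not to route through $\pi$ in one direction, the implication from $sx=sy$ to $|x|_P=|y|_P$ could also be checked directly for primes $P$ in which $s$ is not sent to zero: there $|s|_P$ can be cancelled in the semifield $\kappa(P)$. The subtle case would be primes with $|s|_P=0$. This is exactly what Proposition~\ref{prop:Jeff4.12New} handles, via the trivial ideal-kernel of $\sqrt{\Delta}$ (Lemma~\ref{lem:noZD-diagNoIK}) together with cancellativity of $A/\sqrt{\Delta}$. So I will simply cite it rather than redo that argument.
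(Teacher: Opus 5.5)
Your proposal is correct and is exactly the paper's argument: the theorem follows by chaining Proposition~\ref{prop:Jeff4.12New} and Proposition~\ref{prop:Jeff4.11} through the intermediate condition $\pi(x)=\pi(y)$. Your extra checks are sound but not needed for the proof: that $|x|_P$ agrees with the kernel formulation, and that primes with $|s|_P=0$ are where a direct cancellation would fail.
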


\begin{proof}
    This immediately follows from Proposition~\ref{prop:Jeff4.11} and  Proposition~\ref{prop:Jeff4.12New}
\end{proof}

\begin{coro}\label{coro:Jeff4.14New}
    Let $A$ be a semiring with no zero divisors and with $A/\sqrt{\Delta}$ cancellative. Fix $x$ in $A$. Then $sx \leq s$ for some $s\in A\sdrop\{0\}$ if and only if for every prime congruence $P$ on $A$ we have $|x|_P \leq 1_{\kappa(P)}$.
\end{coro}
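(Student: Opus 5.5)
The plan is to reduce the inequality statement to the equality statement of Theorem~\ref{thm:Jeff4.13New}, using that in an additively idempotent semiring $u\leq v$ means exactly $u+v=v$. In both directions I would apply Theorem~\ref{thm:Jeff4.13New} to the pair of elements $x+1$ and $1$ of $A$.

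For the forward direction, suppose $s\in A\sdrop\{0\}$ satisfies $sx\leq s$. By definition of the order, $sx+s=s$, which is $s(x+1)=s\cdot 1$. Theorem~\ref{thm:Jeff4.13New} then gives $|x+1|_P=|1|_P$ for every prime congruence $P$ on $A$. Since $a\mapsto|a|_P$ is a semiring homomorphism into $\kappa(P)$, this says $|x|_P+1_{\kappa(P)}=1_{\kappa(P)}$, i.e.\ $|x|_P\leq 1_{\kappa(P)}$.

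For the converse, suppose $|x|_P\leq 1_{\kappa(P)}$ for every prime $P$. Then $|x+1|_P=|x|_P+1_{\kappa(P)}=1_{\kappa(P)}=|1|_P$ for all $P$. Theorem~\ref{thm:Jeff4.13New} yields $s\in A\sdrop\{0\}$ with $s(x+1)=s$, that is $sx+s=s$, which means $sx\leq s$.

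There is no real obstacle beyond Theorem~\ref{thm:Jeff4.13New}. The only points to verify are that $|\cdot|_P$ preserves addition and the unit, which holds because it is the composite of the quotient map and the inclusion into the semiring of fractions, and that the order on $\kappa(P)$ is the one defined by its idempotent addition.
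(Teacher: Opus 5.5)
Your proposal is correct and follows essentially the same route as the paper: both rewrite $sx\leq s$ as $s(x+1)=s\cdot 1$ and apply Theorem~\ref{thm:Jeff4.13New} to the pair $x+1$ and $1$. Both then use $|x+1|_P=|x|_P+1_{\kappa(P)}$ to translate equality at every prime into $|x|_P\leq 1_{\kappa(P)}$.
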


\begin{proof}
For any $s$, $sx\leq s$ is equivalent to $sx+s=s$, i.e., $s(x+1)=s\cdot1$. But we know that there is an $s\in A\sdrop\{0\}$ with $s(x+1)=s\cdot 1$ if and only if for all prime congruences $P$ on $A$ we have $|x|_P+|1|_P=|x+1|_P=|1|_P$, i.e., $|x|_P\leq|1|_P=1_{\kappa(P)}$.
\end{proof}

We need one more piece of machinery before we can make a statement about the different versions of integral closure of semirings in this setting. This is the definition given in \cite[Definition 6.2]{Tol16}.

% \green{
% \begin{definition}
%     Let A be an additively idempotent semiring and let $M$ be an $A$-module. The \emph{$A$-contraction of $M$}, denoted $M_{A\leq1}$, is defined by the following universal property: there is a homomorphism $\varphi : M \to M_{A\leq1}$ such that $\varphi(ax) \leq \varphi(x)$ for all $a \in A$ and $x\in M$, and any other homomorphism $\psi : R \to R'$ which satisfies $\psi(ax) \leq \psi(x)$ for all $a\in A$ and $x\in M$ factors uniquely through $\varphi$.
% \end{definition}
% }

\begin{defi}
Let $A$ be an additively idempotent semiring and let $M$ be an $A$-module. The \emph{$A$-contraction of $M$}, denoted $M_{A\leq1}$, is the quotient of $M$ by the relations imposing that $ax\leq x$ for all $a\in A$ and $x\in M$. That is, $M_{A\leq 1}:=M/\calc$ where $\calc=\angbra{ax+x\sim x\,:\, a\in A, x\in M}$. Thus $M_{A\leq1}$ satisfies the following universal property. There is an $A$-module homomorphism $\ph:M\to M_{A\leq 1}$ (the quotient map) such that $\ph(ax)\leq\ph(x)$ for all $a\in A$ and $x\in M$ and, for any $A$-module homomorphism $\psi:M\to M'$ satisfying $\psi(ax)\leq \psi(x)$ for all $x\in M$, $\psi$ factors uniquely through $\ph$.
\end{defi}

% \green{
% \begin{remark}\label{rmk: contraction}
%     A more congruence theoretic way to think of the contraction $R_{A\leq 1}$ is the following: Let $\mathcal{C}$ be the congruence on $R$ generated by pairs of the form $(1, a+1)$, for all $a\in A$, then $R_{A\leq 1}$ is just the quotient $R/\mathcal{C}$. Equivalently, the congruence $\mathcal{C}$, can be written as
%     $$\mathcal{C} = \{(x, y) : \exists a, b\in A \text{ such that } x \leq ay \text{ and } y \leq bx \}.$$    
%     Moreover, the universal property states that for any $\mathcal{C}' \supset \mathcal{C}$ a homomorphism $\psi: R \to R/\mathcal{C}'$ factors through $R/\mathcal{C}$.
% \end{remark}
% }

\begin{lemma}\label{lemma:contractionCongruence}
Let $A$ be an additively idempotent semiring and let $M$ be an $A$-module. Let $\calC=\angbra{ax+x\sim x\,:\, a\in A, x\in M}$ be the congruence on $M$ generated by the relations imposing that $ax\leq x$ for all $a\in A$ and $x\in M$. Then $$\calc=\{(x, y) : \text{there are } a, b\in A \text{ such that } x \leq ay \text{ and } y \leq bx \}.$$
\end{lemma}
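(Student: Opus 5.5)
Let $\mathcal{D}:=\{(x, y) : \text{there are } a, b\in A \text{ such that } x \leq ay \text{ and } y \leq bx \}$. I will prove $\calc=\mathcal{D}$ by showing two things. First, $\mathcal{D}$ is a congruence containing the generators, which gives $\calc\subseteq\mathcal{D}$. Second, every pair of $\mathcal{D}$ lies in $\calc$.

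For the first inclusion, reflexivity holds with $a=b=1$, and symmetry is built into the definition. For transitivity, if $x\le ay$, $y\le bx$, $y\le a'z$ and $z\le b'y$, then $x\le aa'z$ and $z\le b'bx$, using that multiplication preserves $\le$. For compatibility with addition, if $(x,y)\in\mathcal{D}$ via $a,b$ and $m\in M$, replace $a,b$ by $a+1$ and $b+1$. This is still valid since $ay\le(a+1)y$. Then $x+m\le (a+1)y+(a+1)m=(a+1)(y+m)$, and symmetrically for $y+m$. For compatibility with the $A$-action, $cx\le a(cy)$ follows directly. Finally, each generator $(ax+x,\,x)$ lies in $\mathcal{D}$: we have $ax+x\le(a+1)x$ (an equality) and $x\le 1\cdot(ax+x)$.

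For the reverse inclusion, take $(x,y)\in\mathcal{D}$ with $x\le ay$ and $y\le bx$. Modulo $\calc$ we have $\ph(ay)\le\ph(y)$, so $\ph(x)\le\ph(ay)\le\ph(y)$, because the quotient map $\ph$ preserves order (an inequality $u\le v$ means $u+v=v$, and $\ph$ is additive). Symmetrically $\ph(y)\le\ph(bx)\le\ph(x)$. Antisymmetry of the order on the idempotent module $M/\calc$ then gives $\ph(x)=\ph(y)$, so $(x,y)\in\calc$.

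The only mildly delicate point is the compatibility with addition, where one must enlarge the scalars to $a+1$ so that the extra summand $m$ is absorbed. The remaining steps are routine checks of order-preservation.
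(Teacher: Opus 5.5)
Your proof is correct and complete. You show that $\mathcal{D}$ is an $A$-module congruence containing every generator $(ax+x,x)$. Enlarging the scalars to $a+1$ and $b+1$ is what makes compatibility with translation by $m$ work, and translation together with transitivity gives full additivity. For the reverse inclusion you use $\varphi(x)\le\varphi(ay)\le\varphi(y)$, its mirror image, and antisymmetry of the natural order on the idempotent quotient $M/\mathcal{C}$.

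The paper gives no argument of its own here. It simply cites Lemma~6.3 of \cite{Tol16}, remarking that the statement there contains typos. Your double-inclusion argument is therefore a self-contained verification in place of that citation. It costs a paragraph, but it removes the dependence on an unpublished source whose statement needs correcting. It also shows that only the module axioms and commutativity of $A$ are used.
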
\begin{proof}
This is Lemma~6.3 in \cite{Tol16} (modulo typos).
\end{proof}

\begin{coro}\label{coro:contractionOfAlgebraIsAlgebra}
Let $A\subseteq R$ be an extension of additively idempotent semirings. Let 
$$\calc=\angbra{ax+x\sim x\,:\, a\in A, x\in R}_{A-\mathrm{mod-cong}}$$
be the $A$-module congruence on $R$ generated by the relations imposing that $ax\leq x$ for all $a\in A$ and $x\in R$. Then $\calc$ is a semiring congruence on $R$, so $R_{A\leq 1}$ is a semiring.
In particular, $\calc$ is equal to the semiring congruence on $R$ generated by the relations $a+1\sim 1$ for $a\in A$.
\end{coro}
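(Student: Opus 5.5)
We will use the explicit description of $\calc$ from Lemma~\ref{lemma:contractionCongruence}, applied with $M=R$ viewed as an $A$-module:
$$\calc=\{(x,y)\,:\,\text{there are } a,b\in A \text{ with } x\leq ay \text{ and } y\leq bx\}.$$
Since $\calc$ is already an $A$-module congruence, it is an equivalence relation compatible with addition and with multiplication by elements of $A$. To show it is a semiring congruence, it therefore suffices to check compatibility with multiplication by arbitrary elements of $R$.

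\emph{Compatibility with multiplication.} Suppose $(x,y)\in\calc$, with $x\leq ay$ and $y\leq bx$ for some $a,b\in A$, and let $r\in R$. Multiplication preserves the order, so $rx\leq a(ry)$ and $ry\leq b(rx)$. Hence $(rx,ry)\in\calc$. Combined with additivity and transitivity, this gives $(xz,yw)\in\calc$ whenever $(x,y),(z,w)\in\calc$: we have $xz\sim yz\sim yw$. So $\calc$ is a semiring congruence, and $R_{A\leq1}=R/\calc$ inherits a semiring structure.

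\emph{Identification with the congruence generated by $a+1\sim 1$.} Let $\calc'$ be the semiring congruence generated by the pairs $(a+1,1)$ for $a\in A$. We show the two inclusions.
\begin{itemize}
\item $\calc'\subseteq\calc$. The generators lie in $\calc$, since $a\cdot 1+1\sim 1$ is among the defining relations of $\calc$ (take $x=1$). We showed above that $\calc$ is a semiring congruence, so it contains $\calc'$.
\item $\calc\subseteq\calc'$. Multiply the generator $(a+1,1)$ by $x\in R$; since $\calc'$ is a semiring congruence, this gives $(ax+x,x)\in\calc'$. Thus $\calc'$ contains every generating relation of $\calc$. Moreover $\calc'$ is in particular an $A$-module congruence, so $\calc\subseteq\calc'$.
\end{itemize}

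There is no real obstacle here. The only point requiring care is that Lemma~\ref{lemma:contractionCongruence} describes the $A$-module congruence generated by the relations, which is exactly the $\calc$ in the statement. The order-theoretic description then makes closure under multiplication by $R$ immediate.
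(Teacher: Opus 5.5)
Your proof is correct and follows the paper's approach: the paper likewise uses the order-theoretic description of $\calc$ from Lemma~\ref{lemma:contractionCongruence} to check that $(x,y)\in\calc$ implies $(rx,ry)\in\calc$. Your two-inclusion argument for the ``in particular'' clause spells out what the paper leaves implicit, and it is also correct.
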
\begin{proof}
Using the description of $\calc$ from Lemma~\ref{lemma:contractionCongruence}, it is routine to check that, if $(x,y)\in\calc$ and $r\in R$ then $(rx,ry)\in\calc$.
\end{proof}

\begin{coro}\label{coro:ContractionsPreserveNoZeroDivs}
%Let $A\subseteq R$ be an extension of additively idempotent semirings and suppose that $R$ has no zero divisors. Then $R_{A\leq 1}$ has no zero divisors.
Let $A$, $R$, and $\calc$ be as in Corollary~\ref{coro:contractionOfAlgebraIsAlgebra}. If $R$ has no zero divisors then $\calc$ has trivial ideal-kernel and $R_{A\leq1}$ has no zero divisors.
\end{coro}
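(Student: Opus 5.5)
The claim to prove is that if $(x,0)\in\calc$ then $x=0$. Once we know $\calc$ has trivial ideal-kernel, Lemma~\ref{lemma:ZeroDivsModCongWithNoIdealKernel} immediately gives that $R_{A\leq1}=R/\calc$ has no zero divisors, because $R$ has none and $\calc$ is a semiring congruence by Corollary~\ref{coro:contractionOfAlgebraIsAlgebra}.

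For the ideal-kernel I would use the explicit description of $\calc$ from Lemma~\ref{lemma:contractionCongruence}:
$$\calc=\{(x,y) : \text{there are } a,b\in A \text{ with } x\leq ay \text{ and } y\leq bx\}.$$
Suppose $(x,0)\in\calc$. Then there is $a\in A$ with $x\leq a\cdot 0=0$, which means $x+0=0$, i.e.\ $x=0$. So the ideal-kernel is trivial.

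Alternatively, one can avoid Lemma~\ref{lemma:contractionCongruence} and apply Lemma~\ref{lemma:ZeroDivsModCongGendByTotallyNonzeroPairs} directly. By the last sentence of Corollary~\ref{coro:contractionOfAlgebraIsAlgebra}, $\calc$ is generated as a semiring congruence by the pairs $(a+1,1)$ with $a\in A$. In each such pair both entries are nonzero: $1\neq0$ (assuming $R$ is not the zero ring), and $a+1=0$ would force $1=0$ by Lemma~\ref{lem:zero-sum-free}. That lemma then gives both conclusions at once.

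Neither route has a real obstacle. The only point needing care is to make sure the description of $\calc$ being used is the semiring congruence, not merely the $A$-module congruence. This is exactly what Corollary~\ref{coro:contractionOfAlgebraIsAlgebra} guarantees, so Lemma~\ref{lemma:ZeroDivsModCongWithNoIdealKernel} applies.
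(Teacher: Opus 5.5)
Your main argument is correct and is exactly the paper's proof. You use the description of $\calc$ from Lemma~\ref{lemma:contractionCongruence} to get $x\leq a\cdot 0=0$, which gives trivial ideal-kernel, and then invoke Lemma~\ref{lemma:ZeroDivsModCongWithNoIdealKernel}. Your alternative through Lemma~\ref{lemma:ZeroDivsModCongGendByTotallyNonzeroPairs} with the generators $(a+1,1)$ is also valid, and your caveat that $R$ be nonzero is unnecessary, since that lemma also allows pairs whose entries are both zero.
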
\begin{proof}
Suppose $(0,y)\in\calc$. By Lemma~\ref{lemma:contractionCongruence}, there are $a,b\in A$ such that $0\leq ay$ and $y\leq b\cdot0=0$, so $y=0$. So $\calc$ has trivial ideal-kernel.
%
%Let $\ph:R\to R_{A\leq1}$ be the quotient map. Suppose that $\ph(x)\ph(y)=0$ in $R_{A\leq 1}$. Then, because $\calc$ has trivial ideal-kernel, $xy=0$ in $R$. Since $R$ has no zero divisors, $x=0$ or $y=0$. 
The fact that $R_{A\leq1}$ has no zero divisors now follows from Lemma~\ref{lemma:ZeroDivsModCongWithNoIdealKernel}.
%
%In fact, we'll be able to get this whole corollary as a lemma just after the definition of contraction by using what will be the next lemma after Lemma~\ref{lemma:ZeroDivsModCongWithNoIdealKernel} - Lemma~\ref{lemma:ZeroDivsModCongGendByTotallyNonzeroPairs}. Except that it actually has to be here because this is the first place we know that $R_{A\leq1}$ is a semiring.
%
%The previous proof and the following proof seem to be equally short. I don't have a strong opinion as to which one we use.
%
% Alternative proof: Note that, for any $a\in A$ and $x\in R$, $ax+x$ and $x$ are either both zero or both nonzero; this follows from Lemma~\ref{lem:zero-sum-free} which tells us $ax+x=0\implies ax=0$ and $x=0$. The result now follows from Lemma~\ref{lemma:ZeroDivsModCongGendByTotallyNonzeroPairs}.
\end{proof}

% \begin{remark}
%     \green{
%     The contraction $R_{A\leq 1}$ is the same as the set of finitely generated \dc\ $A$-modules of $R$.
%     }
%     % \pinky{
%     % The contraction $M_{A\leq 1}$ is the same as the set of finitely generated \dc\ $A$-modules of $M$.
%     % }
% \end{remark}

%\newcommand{\dcmod}[2]{\operatorname{DCMod}^{\mathrm{fg}}_{#1}#2}
\newcommand{\dcmod}[2]{\operatorname{DCMod}_{#1}#2}

\begin{coro}\label{coro:contractionAndDCModules}
Let $A$ be an additively idempotent semiring and let $M$ be an $A$-module. Let $\dcmod{A}{M}$ denote the set of finitely generated \dc\ $A$-submodules of $M$ with the usual operations. Then the map $\Phi:M\to\dcmod{A}{M}$ sending $x$ to the \dc\ module it generates induces an $A$-module isomorphism $\iota:M_{A\leq 1}\cong\dcmod{A}{M}$.

Similarly, if $A\subseteq R$ is an extension of additively idempotent semirings then the map $\Phi:R\to\dcmod{A}{R}$ induces an $A$-algebra isomorphism $\iota:R_{A\leq 1}\cong\dcmod{A}{R}$.

\end{coro}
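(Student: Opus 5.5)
We compare kernels: $\Phi$ induces an injective map on $M_{A\leq 1}$ exactly when its congruence-kernel equals the contraction congruence $\calc$, and Lemma~\ref{lemma:contractionCongruence} describes $\calc$ explicitly. For $x\in M$ write $\Phi(x)$ for the \dc{} $A$-submodule generated by $x$. Since $A$ contains $1$, every element of the $A$-submodule $Ax$ lies below $a x$ for a single $a\in A$. Hence $\Phi(x)=\{z\in M\,:\, z\leq ax\text{ for some }a\in A\}$. The operations on $\dcmod{A}{M}$ are the usual ones: the sum of two such modules is the \dc{} module generated by their sum, and $a\cdot N$ is the \dc{} module generated by $aN$.

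First I check that $\Phi$ is an $A$-module homomorphism onto $\dcmod{A}{M}$. Additivity holds because $\Phi(x)+\Phi(y)$ (downward closed) contains $x+y$, and conversely $z\leq ax$, $w\leq by$ gives $z+w\leq (a+b)(x+y)$. So both sides equal $\{z\,:\,z\leq c(x+y)\}$. Similarly $\Phi(ax)=a\Phi(x)$. Zero goes to $\{0\}$. For surjectivity, a finitely generated \dc{} module $N$ is the downward closure of some $L=\sum_i Ax_i$, so $N=\Phi(x_1)+\cdots+\Phi(x_n)=\Phi(x_1+\cdots+x_n)$.

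Next I compute the congruence-kernel. We have $\Phi(x)=\Phi(y)$ if and only if $x\in\Phi(y)$ and $y\in\Phi(x)$, which holds if and only if there are $a,b\in A$ with $x\leq ay$ and $y\leq bx$. By Lemma~\ref{lemma:contractionCongruence} this is exactly $(x,y)\in\calc$. Therefore $\Phi$ factors through the quotient $M\to M/\calc=M_{A\leq1}$, and the induced map $\iota$ is injective. Being surjective, $\iota$ is an $A$-module isomorphism. Alternatively, the factorization follows from the universal property, since $\Phi(ax)=a\Phi(x)\subseteq\Phi(x)$, i.e.\ $\Phi(ax)\leq\Phi(x)$.

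For the algebra statement with $A\subseteq R$, I take $M=R$. By Corollary~\ref{coro:contractionOfAlgebraIsAlgebra}, $R_{A\leq1}$ is a semiring quotient of $R$. It remains to see that $\Phi$ is multiplicative, where the product of \dc{} modules $N,N'$ is the \dc{} module generated by all $nn'$. This gives $\Phi(xy)=\Phi(x)\Phi(y)$: products $z w$ with $z\leq ax$ and $w\leq by$ satisfy $zw\leq ab\,xy$, and conversely $xy$ lies in the product. We also have $\Phi(1)=\{z\,:\,z\leq a\text{ for some } a\in A\}$, the downward closure of $A$, which is the identity of $\dcmod{A}{R}$. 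So $\iota$ is a semiring map, hence an $A$-algebra isomorphism. The only delicate point is making the operations on $\dcmod{A}{-}$ precise: sums and products must be re-downward-closed, and one must check these stay finitely generated, which the formula $N=\Phi(x)$ makes automatic. Beyond that, the whole argument rests on the kernel computation supplied by Lemma~\ref{lemma:contractionCongruence}.
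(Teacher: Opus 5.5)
Your proof is correct and follows essentially the same route as the paper: you identify the congruence-kernel of $\Phi$ with $\calc$ via Lemma~\ref{lemma:contractionCongruence}, and you get surjectivity from the observation that the \dc\ module generated by $x_1,\ldots,x_n$ equals $\Phi(x_1+\cdots+x_n)$. The rest of your write-up supplies details the paper leaves as routine, namely the explicit description $\Phi(x)=\{z : z\leq ax \text{ for some } a\in A\}$ and the checks that $\Phi$ is additive, $A$-linear and multiplicative.
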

\begin{proof}
It is routine to check that $\Phi$ is a homomorphism. In light of Lemma~\ref{lemma:contractionCongruence}, the congruence-kernel of $\Phi$ is equal to the congruence $\calc$ whose quotient is $M_{A\leq1}$. Thus there is an induced injective homomorphism $\iota: M_{A\leq 1}\into\dcmod{A}{M}$. The map $\iota$ is surjective because $\Phi$ is, i.e., every finitely generated \dc\ $A$-module is generated as a \dc\ $A$-module by a single element. This is because, if $f_1,\ldots,f_n$ generate $L$ as a \dc\ $A$-module, then so does $f_1+\cdots+f_n$.
\end{proof}

\begin{remark}
In particular, Corollary~\ref{coro:contractionAndDCModules} tells us that $\dcmod{A}{M}$ is an $A$-module and $\dcmod{A}{R}$ is an $A$-algebra. On the other hand, the set of (finitely generated) $A$-submodules of $M$ with the usual operations is generally \emph{not} an $A$-module - it need not satisfy the distributive property.
\end{remark}

\begin{coro}\label{coro:thisDCModuleAndContraction}
Let $A$, $M$, $\Phi$, and $\iota$ be as in Corollary~\ref{coro:contractionAndDCModules}. Let $\ph:M\to M_{A\leq 1}$ denote the quotient map. Then $\iota$ sends $y\in M_{A\leq1}$ to $\{x\in M\,:\, \ph(x)\leq y\}$.
\end{coro}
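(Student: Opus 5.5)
Fix $y\in M_{A\leq1}$ and pick $z\in M$ with $\ph(z)=y$, which is possible since $\ph$ is surjective. By construction $\iota(y)=\Phi(z)$, the \dc\ $A$-module generated by $z$. The plan is to describe this module explicitly and then translate membership into the inequality $\ph(x)\leq y$ via Lemma~\ref{lemma:contractionCongruence}.

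\emph{Describing $\Phi(z)$.} The $A$-submodule generated by $z$ is $Az=\{az\,:\,a\in A\}$. Since $A$ contains $1$ and is closed under addition and multiplication, this set is closed under addition and under the $A$-action. Its downward closure is again an $A$-submodule, because both operations preserve $\leq$ and $x\leq az$, $x'\leq a'z$ give $x+x'\leq(a+a')z$. Hence
\[
\Phi(z)=\{x\in M\,:\,x\leq az\text{ for some }a\in A\}.
\]

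\emph{Translating the condition.} Using $\ph(x)\leq y$ means $\ph(x)+\ph(z)=\ph(z)$, i.e.\ $\ph(x+z)=\ph(z)$, so $(x+z,z)\in\calc$. By Lemma~\ref{lemma:contractionCongruence} this is equivalent to the existence of $a,b\in A$ with $x+z\leq az$ and $z\leq b(x+z)$.

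The second inequality holds automatically with $b=1$. So the condition reduces to the existence of $a\in A$ with $x+z\leq az$.

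This is equivalent to $x\leq a'z$ for some $a'\in A$:
\begin{itemize}
\item[($\Rightarrow$)] If $x+z\leq az$, then $x\leq x+z\leq az$.
\item[($\Leftarrow$)] If $x\leq a'z$, then $x+z\leq a'z+z=(a'+1)z$, and $a'+1\in A$.
\end{itemize}
Combining this with the description of $\Phi(z)$ gives $\iota(y)=\{x\in M\,:\,\ph(x)\leq y\}$.

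The only step needing care is the direction bookkeeping in Lemma~\ref{lemma:contractionCongruence}. In particular, one needs the coefficient absorbing $z$ itself, which is why we pass to $a'+1$ and use $1\in A$. Everything else is formal.
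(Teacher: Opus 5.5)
Your proof is correct, but it takes a more computational route than the paper. The paper argues structurally. It notes that the order on $\operatorname{DCMod}_A M$ is inclusion, that $x\in\Phi(z)$ if and only if $\Phi(x)\subseteq\Phi(z)$, and that $\iota$, being an isomorphism, preserves and reflects order. Chaining these gives $x\in\iota(\ph(z))$ iff $\iota(\ph(x))\le\iota(\ph(z))$ iff $\ph(x)\le\ph(z)$.

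You instead write $\Phi(z)=\{x\in M : x\le az \text{ for some } a\in A\}$ explicitly. You then unwind $\ph(x)\le\ph(z)$, i.e.\ $(x+z,z)\in\calc$, by hand via Lemma~\ref{lemma:contractionCongruence}, using $b=1$ and passing from $a'$ to $a'+1$. The only property of $\iota$ you use is $\iota(\ph(z))=\Phi(z)$, not its bijectivity.

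Both arguments ultimately rest on Lemma~\ref{lemma:contractionCongruence}; the paper reaches it through the proof of Corollary~\ref{coro:contractionAndDCModules}. So your proof is essentially a self-contained check of the order-reflection property that the paper imports from that corollary. The paper's version is shorter once the isomorphism is available, while yours does not depend on it and shows the explicit form of $\Phi(z)$.
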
\begin{proof}
Note that the order on $\dcmod{A}{M}$ is given by $L_1\leq L_2$ if and only if $L_1\subseteq L_2$. 
Writing $y=\ph(z)$, we have
%$x\in\iota(y)=\iota(\pi(z))=\Phi(z)$ iff $\Phi(x)\subseteq \Phi(z)$ iff $\iota(\pi(x))=\Phi(x)\leq\Phi(z)=\iota(\pi(z))=\iota(y)$ iff $\pi(x)\leq y$ because $\iota$ is an isomorphism.
$\iota(y)=\iota(\ph(z))=\Phi(z)$. So, for any $x\in M$, we have  
$x\in\iota(y)=\Phi(z)$ if and only if $\Phi(x)\subseteq \Phi(z)$, i.e., $\Phi(x)\leq\Phi(z)$. 
Since $\Phi(x)=\iota(\ph(x))$ and $\Phi(z)=\iota(y)$ and $\iota$ is an isomorphism, this occurs if and only if $\ph(x)\leq y$.
\end{proof}

\begin{proposition}\label{prop: QI-contraction}
    % \green{
    % Let $A \subseteq R$ be and extension of additively idempotent semirings. Let $x\in R$ and $\bar{x}$ be the its image in $R_{A\leq 1}$. Then there exists a non-zero element $r \in R_{A\leq 1}$ such that $\bar{x}r \leq r$ if and only if there exists a non-zero \dc\ $A\left<x\right>$-module $M\subseteq R$ that is finite as a \dc\ $A$-module.
    % }

    Let $A \subseteq R$ be an extension of additively idempotent semirings, let $\ph:R\to R_{A\leq 1}$ be the quotient map, and let $x\in R$. Then there exists a non-zero element $r \in R_{A\leq 1}$ such that $\ph(x)r \leq r$ if and only if 
    %there exists a non-zero \dc\ $A\left<x\right>$-module $M\subseteq R$ that is finite as a \dc\ $A$-module.
    $x$ is \WQ-int over $A$.
    
\end{proposition}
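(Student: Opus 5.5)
The plan is to translate both sides through the isomorphism $\iota:R_{A\leq1}\cong\dcmod{A}{R}$ of Corollary~\ref{coro:contractionAndDCModules}, together with the description of $\iota$ in Corollary~\ref{coro:thisDCModuleAndContraction}. By definition, $x$ is \WQ-int over $A$ exactly when there is a nonzero $\aangx{A}{x}$-module $M\subseteq R$ that is finite as a \dc\ $A$-module. Such an $M$ is downward closed, since $A\subseteq\aangx{A}{x}$ makes it an $A$-module and it is the downward closure of a finitely generated $A$-module. So $M$ is an element of $\dcmod{A}{R}$, and the condition on $M$ should correspond to the inequality $\ph(x)r\leq r$ for $r=\iota^{-1}(M)$.

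($\Rightarrow$) Suppose $r\neq0$ with $\ph(x)r\leq r$, and put $M:=\iota(r)=\{y\in R:\ph(y)\leq r\}$. This is a finitely generated \dc\ $A$-submodule of $R$, and it is nonzero because $\iota$ is an isomorphism and $r\neq0$. Write $r=\ph(z)$, so that $M=\Phi(z)$, the \dc\ module generated by $z$.
\begin{itemize}
\item For $y\in M$ we get $\ph(xy)=\ph(x)\ph(y)\leq\ph(x)r\leq r$, so $xM\subseteq M$.
\item Likewise, for $a\in A$ we have $\ph(a)\leq 1$ in $R_{A\leq1}$, so $aM\subseteq M$.
\item Hence $M$ is closed under multiplication by the semiring $A[x]$.
\item Any $w\leq f\in A[x]$ satisfies $wy\leq fy\in M$, and since $M$ is downward closed, $wy\in M$.
\end{itemize}
Thus $M$ is a nonzero $\aangx{A}{x}$-module that is finite as a \dc\ $A$-module, as required.

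($\Leftarrow$) Given such an $M$, it lies in $\dcmod{A}{R}$, and by the proof of Corollary~\ref{coro:contractionAndDCModules} it is generated as a \dc\ $A$-module by a single element $z$, namely the sum of its finitely many generators. Set $r:=\ph(z)$, so $\iota(r)=M$, and $r\neq0$ because $M\neq0$. Since $xz\in M=\Phi(z)$, we get $\Phi(xz)\subseteq\Phi(z)$. That is, $\iota(\ph(x)r)=\iota(\ph(xz))\leq\iota(r)$, and since $\iota$ is an order isomorphism, $\ph(x)r\leq r$.

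The main point to check is that $\iota$ is multiplicative. We need $\iota(\ph(x)\ph(z))$ to equal $\Phi(xz)$, and this holds because $\Phi$ is a semiring homomorphism, as stated in Corollary~\ref{coro:contractionAndDCModules}. The other delicate point is that $M$ being nonzero matches $r$ being nonzero. Here the zero of $\dcmod{A}{R}$ is $\{0\}$, and $M\neq\{0\}$ forces $r\neq0$, since $\iota$ is a bijection sending $0$ to $\{0\}$. Everything else is routine unwinding of definitions.
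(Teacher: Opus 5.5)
Your proposal is correct and takes essentially the same route as the paper. Both use $\iota$ to match nonzero $r\in R_{A\leq1}$ with nonzero finitely generated \dc\ $A$-submodules $M\subseteq R$, and use $\iota(r)=\{y\in R\,:\,\ph(y)\leq r\}$ to see that $M$ is an $\aangx{A}{x}$-module exactly when $\ph(x)r\leq r$; you just spell out the verification the paper compresses into ``thus we see.'' One small remark: multiplicativity of $\iota$ is not actually the crux. The identity $\iota(\ph(x)r)=\iota(\ph(xz))=\Phi(xz)$ follows directly from $\ph$ being a semiring map together with $\iota\circ\ph=\Phi$, which holds by construction.
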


\begin{proof}
    % This statement follows from the proof of \cite[Proposition 6.5]{Tol16}, but we include it here for the reader's convenience as it is not too long.
    
    % Suppose $r\bar{x} \leq r$. Let $M = \{m \in R \ : \ \bar{m} \leq r\}$. By \cite[Lemma 6.4]{Tol16}, $M$ is a
    % \dc\ $A$-submodule of $R$ and is finite as an $A$-module. If $m \in M$, then $xm = \bar{x}\bar{m} \leq \bar{x}r \leq r$ so $xm \in M$ implying that $M$ is an $A\left<x\right>$-submodule of $R$. 
    
    % For the other direction: choose a \dc\ $A\left<x\right>$-submodule $M \subseteq R$ which is finite as an $A$-module. There is some $s \in R_{A\leq1}$ such that $M = \{m \in R \ : \ \bar{m} \leq s\}$. Write $s = \bar{r}$. Then $r \in M$ and so $xr \in M$. Then $\bar{x}s = \overline{xr} \leq s$.

    This statement follows from the proof of \cite[Proposition 6.5]{Tol16}, but can also be quickly deduced from Corollary~\ref{coro:thisDCModuleAndContraction} as follows.

    By Corollary~\ref{coro:contractionAndDCModules} we have that $\iota$ gives a bijection between nonzero elements $r\in R_{A\leq1}$ and nonzero finitely generated \dc\ $A$-modules $M\subseteq R$. By Corollary~\ref{coro:thisDCModuleAndContraction} this bijection sends $r\in R_{A\leq 1}$ to $M=\{y\in R\,:\, \ph(y)\leq r\}$. Thus we see that $M$ is an $\aangx{A}{x}$-module if and only if $r\ph(x)\leq r$.
\end{proof}

We would like to use Corollary~\ref{coro:Jeff4.14New} and Proposition~\ref{prop: QI-contraction} to obtain a different characterization of the valuative integral closure of $A$ in $R$, i.e. $\vcl{A}$, and relate it to $\Jcl{A}$ (resp. $\Dcl{A}$, $\Qcl{A}$, $\sQcl{A}$, $\wQcl{A}$). 

In order to do so we need to show that the contraction satisfies the hypotheses of Corollary~\ref{coro:Jeff4.14New}. In Corollary~\ref{coro:ContractionsPreserveNoZeroDivs} we showed that the property of having no zero divisors is preserved upon taking a contraction. 
So we just need to show that the radical of the diagonal of $R_{A\leq1}$ is cancellative whenever the radical of the diagonal of $R$ is; 

this is the statement of our next proposition.

\begin{proposition}\label{prop: DcancDContrCanc}
    Let $A \subseteq R$ be an extension of additively idempotent semirings. %\pinky{and suppose that $R$ has no zero divisors}. 
    If radical of the diagonal of $R$ %$\sqrt{\Delta_R}$, 
    is cancellative then so is the radical of the diagonal of the contraction $R_{A\leq1}$. %$\sqrt{\Delta_{R_{A\leq1}}}$. 
\end{proposition}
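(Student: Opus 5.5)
The plan is to identify $R_{A\leq1}/\sqrt{\Delta}$ with a contraction of the cancellative semiring $B:=R/\sqrt{\Delta_R}$, and then show directly that contractions of cancellative semirings are cancellative. Write $\pi:R\to B$ for the quotient map. We regard $B$ as an $A$-algebra via $\pi$, so that $B_{A\leq1}$ means $B$ modulo the relations $\pi(a)+1\sim1$ for $a\in A$ (Corollary~\ref{coro:contractionOfAlgebraIsAlgebra}).

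\emph{Step 1: $B_{A\leq1}$ is cancellative.} By Lemma~\ref{lemma:contractionCongruence}, two elements $x,y\in B$ have the same image in $B_{A\leq1}$ if and only if $x\leq ay$ and $y\leq bx$ for some $a,b\in A$ (acting through $\pi$). As in the proof of Corollary~\ref{coro:ContractionsPreserveNoZeroDivs}, this congruence has trivial ideal-kernel: if the image of $y$ is $0$, then $y\leq b\cdot0=0$. Hence an element $s\in B$ is nonzero in $B_{A\leq1}$ exactly when $s\neq0$ in $B$, and then $s$ is cancellative in $B$. Now suppose $sx$ and $sy$ have the same image in $B_{A\leq1}$. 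Then $sx\leq a\,sy=s(ay)$ and $sy\leq s(bx)$. Cancelling $s$ from these inequalities by Lemma~\ref{lemma:cancelIneqs} gives $x\leq ay$ and $y\leq bx$, so $x$ and $y$ agree in $B_{A\leq1}$. In particular $B_{A\leq1}$ is reduced, by Proposition~\ref{prop: canIsRed}.

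\emph{Step 2: $R_{A\leq1}/\sqrt{\Delta_{R_{A\leq1}}}\cong B_{A\leq1}$.} We construct maps in both directions using universal properties.
\begin{itemize}
\item The composite $R\to B\to B_{A\leq1}$ sends every $a+1$ to the image of $1$, so it factors through $R_{A\leq1}$. The resulting map $R_{A\leq1}\to B_{A\leq1}$ is surjective onto a cancellative semiring, so by Corollary~\ref{coro:factor-quotient-map} it factors through $R_{A\leq1}/\sqrt{\Delta}$.
\item The composite $R\to R_{A\leq1}\to R_{A\leq1}/\sqrt{\Delta}$ is surjective onto a reduced semiring. By Proposition~\ref{prop:Jef4.9} it therefore factors through $B$. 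The induced map from $B$ still sends $\pi(a)+1$ to $1$, so it factors further through $B_{A\leq1}$.
\end{itemize}
Both maps are compatible with the quotient maps from $R$, and those quotient maps are surjective. Hence the two maps are mutually inverse, and Step 1 then gives the proposition.

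The main obstacle is bookkeeping in Step 2. One must check that Proposition~\ref{prop:Jef4.9} applies: it needs surjectivity and reducedness of $R_{A\leq1}/\sqrt{\Delta}$, and the latter requires knowing that the radical of a radical is itself. One must also check that the universal property of the contraction is used for semiring maps and not only module maps, which is guaranteed by Corollary~\ref{coro:contractionOfAlgebraIsAlgebra}. The degenerate case $B=0$ is trivial.
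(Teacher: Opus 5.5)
Your argument is correct, but it is organized differently from the paper's.

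\textbf{The paper's route.} It stays on $R$. With $\mathcal{C}$ the congruence generated by the pairs $(a+1,1)$, it computes the pushforward $\pi_*(\mathcal{C})$ on $R/\sqrt{\Delta_R}$ as the contraction congruence for $\pi(A)$. It then argues by contradiction using the generalized-power description of the radical (Theorem~\ref{thm: radGP}):
\begin{itemize}
\item from $(hf,hg)\in\sqrt{\mathcal{C}}$ it gets $r(hf,hg)\in\mathcal{C}$ with $r=(fh+gh)^k+\delta$;
\item it converts this into two inequalities via Lemma~\ref{lemma:contractionCongruence};
\item it checks that $\pi(hr)\neq0$ and cancels it in $R/\sqrt{\Delta_R}$;
\item it lands in $\pi^*\pi_*(\mathcal{C})\subseteq\sqrt{\mathcal{C}}$.
\end{itemize}

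\textbf{Your route.} You never use Theorem~\ref{thm: radGP}. Instead you isolate the cancellation step as the general fact that a contraction of a cancellative semiring is cancellative. (The paper proves this separately, later, as Lemma~\ref{lemma:CancellativeContraction}.) You then transport it along the identification $R_{A\leq1}/\sqrt{\Delta}\cong(R/\sqrt{\Delta_R})_{A\leq1}$, obtained from Proposition~\ref{prop:Jef4.9} and Corollary~\ref{coro:factor-quotient-map}.

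\textbf{Comparison.} The underlying computation is the same in both: cancelling a nonzero element of $R/\sqrt{\Delta_R}$ from the inequalities of Lemma~\ref{lemma:contractionCongruence}.
\begin{itemize}
\item Your route is more modular and proves slightly more: reducing commutes with contracting, i.e.\ $\sqrt{\mathcal{C}}=\pi^*\pi_*(\mathcal{C})$. The cost is the bookkeeping you flagged, namely reducedness of $R_{A\leq1}/\sqrt{\Delta}$ and factoring through the contraction as a semiring quotient via Corollary~\ref{coro:contractionOfAlgebraIsAlgebra}. All of it goes through.
\item The paper's element-level argument avoids those universal-property checks. In exchange, it has to verify $\pi(r)\neq0$ by hand using zero-sum-freeness.
\end{itemize}
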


\begin{proof}
    By Corollary~\ref{coro:contractionOfAlgebraIsAlgebra} we can write $R_{A\leq1}=R/\calc$, where $\mathcal{C}=\angbra{(a+1,1)\,:\,a\in A}$ is the semiring congruence on $R$ generated by the pairs $(a+1,1)$ for $a\in A$.
    We need to prove that $\sqrt{\mathcal{C}}$ is cancellative. 

    Let $\pi:R\to R/\sqrt{\Delta_R}$ denote the quotient map and let $\pi_*(\calc)$ denote the pushforward of $\calc$ along $\pi$, i.e., the semiring congruence on $R/\sqrt{\Delta_R}$ generated by the pairs $(\pi(f),\pi(g))$ where $(f,g)\in\calc$. We claim that 
    $$\pi_*(\calc)=\left\{(x,y)\in\left(R/\sqrt{\Delta_R}\right)^2\,:\,\text{there are }a,b\in A\text{ such that }x\leq \pi(a)y\text{ and }y\leq \pi(b)x\right\}.$$
    Note that $\pi_*(\calc)$ is generated as a semiring congruence on $R/\sqrt{\Delta_R}$ by the pairs $(\pi(a)+1,1)$ for $a\in A$. 
    So, if we let $B=\pi(A)$, then $\pi_*(\calc)=\angbra{(b+1,1)\,:\,b\in B}_{\mathrm{semiring}}$ is the semiring congruence on $R/\sqrt{\Delta_R}$ generated by $(b+1,1)$ for $b\in B$, 
    so Corollary~\ref{coro:contractionOfAlgebraIsAlgebra} tells us that $(R/\sqrt{\Delta_R})/\pi_*(\calc)=(R/\sqrt{\Delta_R})_{B\leq 1}$. 
    Thus Lemma~\ref{lemma:contractionCongruence} tells us that 
    \begin{align*}
    \pi_*(\calc)&=\left\{(x,y)\in\left(R/\sqrt{\Delta_R}\right)^2\,:\,\text{there are }c,d\in B\text{ such that }x\leq cy\text{ and }y\leq dx\right\}\\
    &=\left\{(x,y)\in\left(R/\sqrt{\Delta_R}\right)^2\,:\,\text{there are }a,b\in A\text{ such that }x\leq \pi(a)y\text{ and }y\leq \pi(b)x\right\},
    \end{align*}
    proving the claim.
    
    Now assume for contradiction that $\sqrt{\calc}$ is not cancellative, i.e., there is a pair $(hf,hg) \in \sqrt{\mathcal{C}}$ such that neither $(f,g)$ nor $(h,0)$ are in $\sqrt{\mathcal{C}}$. 
    Since $(hf,hg) \in \sqrt{\mathcal{C}}$, Theorem~\ref{thm: radGP} tells us that there exist $k \in \Z_{\geq0}$ and $\delta \in R$ such that $r = (fh+gh)^k+\delta$ and $r(fh, gh) \in \mathcal{C}$.
    By Lemma~\ref{lemma:contractionCongruence} we have $a,b\in A$ 
    such that 
    \begin{equation}\label{eq: ineqs}
        fhr \leq aghr \text{ and } ghr \leq bfhr.
    \end{equation}
    Because $(f,g)$ and $(h,0)$ are not in $\sqrt{\calC}\supseteq\sqrt{\Delta_R}$, we have that $\pi(h)\neq0$ and $\pi(fh+gh)\neq0$ in $R/\sqrt{\Delta_R}$. Since $\sqrt{\Delta_R}$ is cancellative we also get $\pi(r)=\pi(fh+gh)^k+\pi(\delta)\neq0$ and so applying $\pi$ to the inequalities in (\ref{eq: ineqs}) gives us 
    \begin{equation*}
        \pi(f) \leq \pi(a)\pi(g) \text{ and } \pi(g) \leq \pi(b)\pi(f).
    \end{equation*}
    Thus the claim proven above tells us that $(\pi(f),\pi(g))\in\pi_*(\calc)$, so $(f,g)$ is in $\pi^*(\pi_*(\calc))$, i.e., the congruence $\mathcal{E}$ generated by $\calc$ and $\sqrt{\Delta_R}$.
    Since $\sqrt{\mathcal{C}}$ contains $\mathcal{E}$, this shows that $(f,g) \in \sqrt{\mathcal{C}}$, which is a contradiction. 
\end{proof}

We now put all of the parts together to get a characterization of $\vcl{A}$ with minimal assumptions.

\begin{prop}\label{prop:VIntIsWQInt}
Let $A\subseteq R$ be an extension of additively idempotent semirings such that $R$ has no zero divisors and $\sqrt{\Delta_R}$ is cancellative. Then $\vcl{A}=\wqcl{A}$.
\end{prop}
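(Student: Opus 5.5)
The plan is to pass to the contraction $R':=R_{A\leq1}$ with quotient map $\ph:R\to R'$. On one side we characterize \WQ-int elements by Proposition~\ref{prop: QI-contraction}; on the other we characterize $\vcl{A}$ by Corollary~\ref{coro:Jeff4.14New} applied to $R'$.

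\textbf{Step 1: $R'$ satisfies the hypotheses of Corollary~\ref{coro:Jeff4.14New}.} By Corollary~\ref{coro:ContractionsPreserveNoZeroDivs}, $R'$ has no zero divisors. By Proposition~\ref{prop: DcancDContrCanc}, $\sqrt{\Delta_{R'}}$ is cancellative. So Corollary~\ref{coro:Jeff4.14New} applies: for $x\in R$, there is a nonzero $s\in R'$ with $s\ph(x)\leq s$ if and only if $|\ph(x)|_Q\leq 1$ for every prime congruence $Q$ on $R'$. By Proposition~\ref{prop: QI-contraction}, the left-hand condition is exactly $x\in\wqcl{A}$.

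\textbf{Step 2: identify primes of $R'$ with the primes $P$ of $R$ such that $A\subseteq R_P$.} The prime congruences $Q$ on $R'=R/\calc$ correspond to prime congruences $P=\ph^*(Q)$ on $R$ containing $\calc$. Since $R/P\cong R'/Q$, we have $|x|_P=|\ph(x)|_Q$ in the same residue semifield. By Corollary~\ref{coro:contractionOfAlgebraIsAlgebra}, $\calc$ is generated as a semiring congruence by the pairs $(a+1,1)$ with $a\in A$. Hence a prime $P$ contains $\calc$ if and only if $a+1\equiv_P 1$ for all $a\in A$, that is, $|a|_P\leq 1_{\kappa(P)}$ for all $a\in A$. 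This says precisely that $A\subseteq R_P$. The correspondence is therefore a bijection between primes $Q$ of $R'$ and primes $P$ of $R$ with $A\subseteq R_P$. Under it, the condition ``$|\ph(x)|_Q\leq1$ for all $Q$'' becomes ``$x\in R_P$ for all $P$ with $A\subseteq R_P$'', which is the definition of $x\in\vcl{A}$.

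Combining the two steps gives $\vcl{A}=\wqcl{A}$.

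\textbf{Main obstacle.} The delicate point is the bookkeeping in Step 2, namely checking that $\kappa(P)$ and $\kappa(Q)$ coincide so that the inequalities transfer. This should be fine because the quotients are literally isomorphic. A second point to check is the degenerate case where no prime of $R'$ exists. This occurs when $\calc$ is trivial in the sense of being everything, but that cannot happen, since $\calc$ has trivial ideal-kernel and $R\neq0$. In any case the equivalence of Corollary~\ref{coro:Jeff4.14New} handles this case uniformly.
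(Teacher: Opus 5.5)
Your proposal is correct and follows essentially the same route as the paper. You pass to $R_{A\leq1}$, verify the hypotheses of Corollary~\ref{coro:Jeff4.14New} via Corollary~\ref{coro:ContractionsPreserveNoZeroDivs} and Proposition~\ref{prop: DcancDContrCanc}, and combine this with Proposition~\ref{prop: QI-contraction}. Your Step~2 makes explicit, using the generators $(a+1,1)$ from Corollary~\ref{coro:contractionOfAlgebraIsAlgebra}, the correspondence of primes that the paper invokes simply as ``the universal property of $R_{A\leq 1}$.''
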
\begin{proof}
Let $\ph:R\to R_{A\leq1}$ be the quotient map. 
By Corollary~\ref{coro:ContractionsPreserveNoZeroDivs}, $R_{A\leq 1}$ has no zero divisors. By Proposition~\ref{prop: DcancDContrCanc}, the radical of the diagonal of $R_{A\leq 1}$ is cancellative.
Thus we can apply Corollary~\ref{coro:Jeff4.14New} To $R_{A\leq 1}$.

Fix $x\in R$.
By definition, $x\in\vcl{A}$ if and only if, for every prime congruence $P$ on $R$ such that $|A|_P\leq1$, $|x|_P\leq1$. By the universal property of $R_{A\leq 1}$, this occurs if and only if, for every prime congruence $Q$ on $R_{A\leq 1}$, $|\ph(x)|_{Q}\leq 1$. By Corollary~\ref{coro:Jeff4.14New} this occurs if and only if there is a nonzero $r\in R_{A\leq 1}$ such that $\ph(x)r\leq r$ and this, in turn, 
%occurs if and only if there is a non-zero \dc\ $A\left<x\right>$-module $M\subseteq R$ that is finite as a \dc\ $A$-module, by Proposition~\ref{prop: QI-contraction}. But this is exactly what it means to have $x\in\wqcl{A}$.
occurs if and only if $x\in\wqcl{A}$, by Proposition~\ref{prop: QI-contraction}. But this is exactly what it means to have $x\in\wqcl{A}$.
\end{proof}

% \green{
% \begin{coro}
% \red{(There is now a stronger result than this: Corollary~\ref{coro:ModBendVIntIsQInt}.)}Let $K$ be a field with valuation $K\to S$, let $\Mon$ a toric monoid, and let $I$ be an ideal of $K[\Mon]$ whose variety $Y := V(I)$ has no component contained in the toric boundary. 
% %Then every minimal prime congruence of $\base[\Mon]/\Bend(\trop I)$ has trivial ideal-kernel. 
% %In particular, $\sqrt{\Bend(\trop I)}$ is cancellative.
% Let $R$ be any localization of $\base[\Mon]/\Bend(I)$ by a set of cancellative elements, and let $A$ be any sub-semiring of $R$. Then $\vcl{A}=\wqcl{A}$.
% \end{coro}\begin{proof}
% By Corollary~\ref{coro:trop-coordinate-semiring-is-cancellative - localized}, $\sqrt{\Delta_R}$ is cancellative. 
% Note that $I$ cannot contain a monomial, for if it did, then $Y$ would be contained in the toric boundary. Thus $\trop(I)$ does not contain a monomial. So Corollary~\ref{coro:BendAndNoZeroDivs} tells us that $R_0=\base[\Mon]/\Bend(I)$ has no zero divisors. Since $R$ is a localization of $R_0$ by cancellative elements, $R$ has no zero divisors, either. 
% Thus, Proposition~\ref{prop:VIntIsWQInt} tells us that $\vcl{A}=\wqcl{A}$.
% \end{proof}
% }

We now work to extend Proposition~\ref{prop:VIntIsWQInt} to cases where we can say that the various notions of integral closure all agree. Our first extension will be to cancellatively generated semirings.
Note that, in specifying that $R$ is cancellatively generated, it does not matter whether we talk about $R$ being additively generated by cancellative elements or $R$ being generated as a semiring by cancellative elements because the set of cancellative elements is a multiplicative submonoid of $R$.%\red{Nati: Move this to where we actually start talking about cancellatively generated things.} %%% Done!

\begin{lemma}\label{lem: dc-strFaith}
    Let $A \subseteq R$ be an extension of additively idempotent semirings. Suppose that $R$ is cancellatively generated. Then every non-zero \dc\ $A$-submodule of $R$ is %strongly 
    faithful as an $A$-module. 
\end{lemma}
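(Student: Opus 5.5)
The plan is to show directly that if $a,b\in A$ satisfy $am=bm$ for every $m$ in a nonzero \dc\ $A$-submodule $M\subseteq R$, then $a=b$. The idea is to find a single cancellative element of $R$ lying in $M$; multiplying by it and cancelling then gives $a=b$.

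First, since $M\neq 0$, pick a nonzero $m\in M$. Because $R$ is cancellatively generated, we can write $m=g_1+\cdots+g_k$ with each $g_i$ cancellative in $R$. Since $m\neq 0$, at least one summand is nonzero, so $k\geq1$ and we may discard zero summands. Recall that cancellative elements are nonzero by definition. Each $g_i\leq m$, and $M$ is downward-closed, so every $g_i$ lies in $M$. Fix one of them, say $g=g_1$.

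The hypothesis $am=bm$ for all $m\in M$, applied to $g\in M$, gives $ag=bg$, i.e., $ga=gb$ in $R$. Since $g$ is cancellative in $R$, we conclude $a=b$. Hence the structure map $A\to\operatorname{End}(M)$ is injective, so $M$ is faithful as an $A$-module.

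The same argument works verbatim for elements $a,b$ of any sub-semiring of $R$ acting on $M$, for instance $\aangx{A}{x}$ when $M$ is an $\aangx{A}{x}$-module, since the only inputs are $g\in M$ and cancellativity of $g$ in $R$. This is presumably how the lemma will be used later.

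The only real point is the first step, extracting a cancellative element from $M$. It rests on downward-closure together with the observation that $R$ being cancellatively generated means every element is a finite sum of cancellative elements. Idempotency gives $g_i\leq m$ because $g_i+m=m$. I expect no genuine obstacle beyond checking that the empty-sum case is excluded by $m\neq0$.
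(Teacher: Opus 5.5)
Your proposal is correct and is essentially the paper's own proof: pick a nonzero element of the module, write it as a sum of cancellative elements, use downward-closure to put one cancellative summand $c$ in the module, and then cancel $c$ in $ac=bc$. Your remark that the same argument applies to any sub-semiring acting on $M$, such as $\aangx{A}{x}$, is also right. That is exactly the form needed where the paper uses this lemma to get $\wqcl{A}=\sqcl{A}$ in Corollary~\ref{coro:VIntIsSQInt}.
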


\begin{proof}
    Let $L\subseteq R$ be non-zero \dc\ $A$-submodule of $R$, so we can fix a nonzero $f\in L$. Writing $f=\dsum_{i=1}^n c_i$ with each $c_i$ cancellative, we get $c_1\leq f$; since $L$ is \dc, $c_1\in L$. Now, if $a,b\in A$ are distinct then $ac_1\neq bc_1$ because $c_1$ is cancellative, so $a$ and $b$ act differently on $L$. Thus, $L$ is %strongly 
    faithful as an $A$-module.
\end{proof}

\begin{remark}
    Note that the proof of the previous lemma did not use the full strength of the assumption that $R$ is cancellatively generated - it only used that, for every nonzero $f\in R$ there is a cancellative $c\in R$ with $c\leq f$. The same applies to the proof of the following lemma and therefore also to the statements of Corollary~\ref{coro:VIntIsSQInt} and Theorem~\ref{thm: A+=AJ-can gen}.
\end{remark}

%\blue{(Trying for the form of the lemma that would be needed for weakening the hypothesis in the next lemma).
\begin{lemma}\label{lem: cancNoZeroDiv}
%Let $R$ be an additively idempotent semiring such that, for every nonzero $x\in R$,  there is a cancellative $c\in R$ satisfying $c\leq x$. 
Let $R$ be a cancellatively generated, additively idempotent semiring. 
Then $R$ has no zero divisors.
\end{lemma}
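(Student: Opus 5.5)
The plan is to argue directly from the definitions, using only the fact that every nonzero element of $R$ dominates a cancellative element, together with the monotonicity of multiplication with respect to the order $\leq$.

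Suppose $x,y\in R$ are nonzero with $xy=0$; we derive a contradiction. Since $R$ is cancellatively generated, we write $x=\sum_{i=1}^n c_i$ and $y=\sum_{j=1}^m d_j$ with all $c_i,d_j$ cancellative. Both sums are nonempty, since an empty sum would be $0$. We then pick $c:=c_1\leq x$ and $d:=d_1\leq y$, both cancellative and, by definition, nonzero. Because multiplication preserves the partial order, $cd\leq xy=0$. Since $0$ is the least element ($0+z=z$ for all $z$), it follows that $cd=0$.

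It remains to see that a product of cancellative elements is nonzero. This is where we use that the cancellative elements form a multiplicative submonoid, as noted before Lemma~\ref{lem: dc-strFaith}. Concretely, if $cd=0=c\cdot 0$, cancelling $c$ gives $d=0$, contradicting that $d$ is cancellative and hence nonzero. So $xy\neq 0$, and $R$ has no zero divisors.

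The only point needing care is that cancellative elements are nonzero by definition, and that $z\leq 0$ forces $z=0$. Both are immediate, the second from Lemma~\ref{lem:zero-sum-free} or directly from $z+0=0$. I do not expect any real obstacle. The argument also uses only the weaker hypothesis mentioned in the preceding remark: every nonzero element lies above some cancellative element.
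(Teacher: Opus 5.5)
Your proof is correct and follows the paper's argument essentially verbatim: choose cancellative $c\leq x$ and $d\leq y$, deduce $cd\leq xy=0$, and cancel $c$ in $cd=c\cdot 0$ to get the contradiction $d=0$. The submonoid remark is not actually needed, since the direct cancellation you carry out already finishes the proof.
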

\begin{proof}
Suppose that there are nonzero $x,y\in R$ with $xy=0$. As in the previous proof, there are cancellative elements $c,d\in R$ with $c\leq x$ and $d\leq y$. Then $cd\leq xy=0$ so $cd=0=c\cdot 0$. Since $c$ is cancellative we get $d=0$, contradicting the cancellativity of $d$.
\end{proof}

\begin{coro}\label{coro:VIntIsSQInt}
Let $A\subseteq R$ be an extension of additively idempotent semirings such that $R$ is cancellatively generated and $\sqrt{\Delta_R}$ is cancellative. 
%\green{Then $\vcl{A}=\sqcl{A}$.}
Then $\vcl{A}=\sqcl{A}=\qcl{A}=\wqcl{A}$.
\end{coro}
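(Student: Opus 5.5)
We chain known inclusions with Proposition~\ref{prop:VIntIsWQInt}. First we check its hypotheses. By Lemma~\ref{lem: cancNoZeroDiv}, a cancellatively generated semiring $R$ has no zero divisors. Together with the assumption that $\sqrt{\Delta_R}$ is cancellative, Proposition~\ref{prop:VIntIsWQInt} then gives $\vcl{A}=\wqcl{A}$. We already know from the discussion after Definition~\ref{def: integrals} that $\sqcl{A}\subseteq\qcl{A}\subseteq\wqcl{A}$. So it suffices to prove the reverse inclusion $\wqcl{A}\subseteq\sqcl{A}$, which collapses the whole chain.

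For that inclusion, fix $x\in\wqcl{A}$. By definition there is a nonzero $\aangx{A}{x}$-module $M\subseteq R$ that is finite as a downward-closed $A$-module, i.e., $M$ is the downward closure of a finitely generated $A$-module. In particular $M$ is a nonzero downward-closed submodule of $R$. We want $M$ to be faithful as an $\aangx{A}{x}$-module, not merely as an $A$-module, so we adapt the argument of Lemma~\ref{lem: dc-strFaith} with $\aangx{A}{x}$ in place of $A$.

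Pick a nonzero $f\in M$ and write $f$ as a sum of cancellative elements. Let $c$ be one summand, so $c\le f$ and hence $c\in M$ because $M$ is downward closed. Suppose $s,t\in\aangx{A}{x}\subseteq R$ act identically on $M$. Then $sc=tc$, and cancellativity of $c$ in $R$ gives $s=t$. Thus $M$ is faithful as an $\aangx{A}{x}$-module and $x\in\sqcl{A}$. The proof of Lemma~\ref{lem: dc-strFaith} uses only that the acting elements lie in $R$, so it transfers verbatim.

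I expect no serious obstacle. The points to watch are (i) that ``nonzero'' in the definition of \WQ-int means $M\neq 0$, which is what lets us choose $f$, and (ii) that $M$ really is downward-closed. The latter holds because being finite as a downward-closed $A$-module means $M$ is the downward closure of a finitely generated $A$-module.
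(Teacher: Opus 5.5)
Your proposal is correct and is essentially the paper's proof: Lemma~\ref{lem: cancNoZeroDiv} and Proposition~\ref{prop:VIntIsWQInt} give $\vcl{A}=\wqcl{A}$, and the argument of Lemma~\ref{lem: dc-strFaith} collapses the chain $\sqcl{A}\subseteq\qcl{A}\subseteq\wqcl{A}$. You also note that this lemma must be applied with $\aangx{A}{x}$ in place of $A$ (i.e., to the extension $\aangx{A}{x}\subseteq R$) to get faithfulness over $\aangx{A}{x}$; this is exactly the point the paper leaves implicit.
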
\begin{proof}
%Since $R$ is cancellatively generated, it has no zero divisors Lemma~\ref{lem: cancNoZeroDiv}. 
By Lemma~\ref{lem: cancNoZeroDiv}, $R$ has no zero divisors. 
Thus Proposition~\ref{prop:VIntIsWQInt} tells us that $\vcl{A}=\wqcl{A}$.  
%\green{Lemma~\ref{lem: dc-strFaith} tells us that $\wqcl{A}=\sqcl{A}$, completing the proof.}
Since $\sqcl{A}\subseteq \qcl{A}\subseteq\wqcl{A}$, Lemma~\ref{lem: dc-strFaith}, which tells us that $\wqcl{A}=\sqcl{A}$, completes the proof.
\end{proof}

\begin{lemma}\label{lemma:CancellativeContraction} 
Let $A\subseteq R$ be an extension of semirings. 
If $R$ is cancellative, so is $R_{A\leq 1}$.
\end{lemma}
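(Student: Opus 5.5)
We work with the explicit description of the contraction congruence. By Corollary~\ref{coro:contractionOfAlgebraIsAlgebra}, $R_{A\leq1}=R/\calc$ where, by Lemma~\ref{lemma:contractionCongruence},
$$\calc=\{(x,y)\,:\,\text{there are }a,b\in A\text{ with }x\leq ay\text{ and }y\leq bx\}.$$
Write $\ph:R\to R_{A\leq1}$ for the quotient map. Since $\ph$ is surjective, every nonzero element of $R_{A\leq 1}$ has the form $\ph(s)$ with $s\in R$. Moreover $\ph(s)\neq 0$ forces $s\neq0$. Indeed, $R$ cancellative means every nonzero element is cancellative, so $R$ has no zero divisors, and Corollary~\ref{coro:ContractionsPreserveNoZeroDivs} then shows that $\calc$ has trivial ideal-kernel. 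This gives us two facts at once: $\ph(s)\neq0$ if and only if $s\neq0$, and $R_{A\leq1}$ has no zero divisors.

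For cancellativity, take $s\in R$ with $\ph(s)\neq 0$, so $s\neq0$ and $s$ is cancellative in $R$. Suppose $\ph(s)\ph(x)=\ph(s)\ph(y)$, that is, $(sx,sy)\in\calc$. By the description above there are $a,b\in A$ with $sx\leq a\,sy=s(ay)$ and $sy\leq b\,sx=s(bx)$. Lemma~\ref{lemma:cancelIneqs} lets us cancel $s$ from these inequalities, giving $x\leq ay$ and $y\leq bx$. Hence $(x,y)\in\calc$, so $\ph(x)=\ph(y)$, and $\ph(s)$ is cancellative.

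There is no real obstacle once the explicit form of $\calc$ is in hand. The only point needing care is that the definition of a cancellative semiring requires every nonzero element to be cancellative. This is why the trivial ideal-kernel statement is needed: it guarantees that a nonzero class $\ph(s)$ has a nonzero, hence cancellative, representative $s$. Everything else is the cancellation of inequalities from Lemma~\ref{lemma:cancelIneqs}.
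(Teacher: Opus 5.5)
Your proof is correct and follows the same route as the paper, whose proof is the one-line remark that this is an elementary application of Lemma~\ref{lemma:contractionCongruence} together with Lemma~\ref{lemma:cancelIneqs}: cancel $s$ from $sx\leq asy$ and $sy\leq bsx$. Your extra step, using the trivial ideal-kernel from Corollary~\ref{coro:ContractionsPreserveNoZeroDivs} so that every nonzero class $\ph(s)$ has a nonzero (hence cancellative) representative, is a detail the paper leaves implicit, and you handle it correctly.
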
\begin{proof}
% Let $\ph:R\to R_{A\leq1}$ be the quotient map. We want to show that, for all $s,x,y\in R$ with $\ph(s)$, if $\ph(sx)=\ph(sy)$ then $\ph(x)=\ph(y)$. 
%
% So suppose that $\ph(sx)=\ph(sy)$. By Lemma~\ref{lemma:contractionCongruence}, there are $a,b\in A$ such that $sx\leq asy$ and $sy\leq bsx$. Since $s\in R\sdrop\{0\}$ is cancellative, Lemma~\ref{lemma:cancelIneqs} gives us that $x\leq ay$ and $y\leq bx$. Applying Lemma~\ref{lemma:contractionCongruence} again gives us that $\ph(x)=\ph(y)$.
This is an elementary application of Lemma~\ref{lemma:contractionCongruence} using Lemma~\ref{lemma:cancelIneqs}.
\end{proof}

\begin{prop}\label{prop:RCancellativeGivesAllIntClosuresEqual}
Let $A\subseteq R$ be an extension of semirings with $R$ cancellative. 
%Then the downward closure of $A$ in $R$, $\jcl{A}$, $\dcl{A}$, $\sqcl{A}$, $\qcl{A}$, $\wqcl{A}$, and $\vcl{A}$ are all equal.
Then the following sets are all equal: the downward closure of $A$ in $R$, $\jcl{A}$, $\dcl{A}$, $\sqcl{A}$, $\qcl{A}$, $\wqcl{A}$, and $\vcl{A}$.
\end{prop}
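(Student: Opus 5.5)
Write $[A]_{\leq}$ for the downward closure of $A$ in $R$. We already have the chain
$$[A]_{\leq}\subseteq\jcl{A}\subseteq\dcl{A}\subseteq\sqcl{A}\subseteq\qcl{A}\subseteq\wqcl{A}.$$
The first inclusion is the $n=1$ case of \Jinty. The next two are the elementary inclusions together with Lemma~\ref{lem:DintImpliesQint}. So the plan is to close the loop in two steps: show $\wqcl{A}=\vcl{A}$, and then show $\wqcl{A}\subseteq[A]_{\leq}$.

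For the first step we apply Proposition~\ref{prop:VIntIsWQInt}, whose hypotheses we check directly. A cancellative $R$ has no zero divisors: if $xy=0=x\cdot 0$ with $x\neq0$, cancelling $x$ gives $y=0$. A cancellative $R$ is also reduced by Proposition~\ref{prop: canIsRed}, so $\sqrt{\Delta_R}=\Delta_R$ and $R/\sqrt{\Delta_R}\cong R$ is cancellative. Hence $\vcl{A}=\wqcl{A}$.

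For the second step, fix $x\in\wqcl{A}$ and let $\ph:R\to R_{A\leq1}$ be the quotient map. By Proposition~\ref{prop: QI-contraction} there is a nonzero $r\in R_{A\leq1}$ with $\ph(x)r\leq r$. Lemma~\ref{lemma:CancellativeContraction} says $R_{A\leq1}$ is cancellative, so Lemma~\ref{lemma:cancelIneqs} lets us cancel $r$ and obtain $\ph(x)\leq 1=\ph(1)$, i.e.\ $\ph(x+1)=\ph(1)$. By Lemma~\ref{lemma:contractionCongruence} there is then some $a\in A$ with $x+1\leq a\cdot1=a$, so $x\leq a$ and $x\in[A]_{\leq}$. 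This closes the chain and proves that all the sets are equal.

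The argument is mostly bookkeeping. The step needing the most care is the passage from $\ph(x)\leq1$ to an actual element $a\in A$ dominating $x$: one must use the explicit description of the contraction congruence in Lemma~\ref{lemma:contractionCongruence} rather than the universal property. One should also check that $r\neq0$ in $R_{A\leq1}$, so that cancellation is legitimate; this is exactly what Proposition~\ref{prop: QI-contraction} supplies.
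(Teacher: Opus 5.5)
Your proof is correct and is essentially the paper's argument. The decisive step $\wqcl{A}\subseteq[A]_{\leq}$ is exactly what the paper does: cancel the nonzero $r$ in the cancellative contraction $R_{A\leq1}$, then use Lemma~\ref{lemma:contractionCongruence} to obtain $a\in A$ with $x+1\leq a$. The only difference is bookkeeping: you get $\vcl{A}=\wqcl{A}$ directly from Proposition~\ref{prop:VIntIsWQInt} and let the inclusion chain absorb $\sqcl{A}$ and $\qcl{A}$, whereas the paper cites Corollary~\ref{coro:VIntIsSQInt} (and hence implicitly Lemma~\ref{lem: dc-strFaith}) together with $\dcl{A}\subseteq\vcl{A}$.
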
\begin{proof}
Corollary~\ref{coro:VIntIsSQInt} tells us that $\vcl{A}=\sqcl{A}=\qcl{A}=\wqcl{A}$. 
Since we always have $\jcl{A}\subseteq\dcl{A}\subseteq\vcl{A}$ and the downward closure of $A$ is always contained in $\jcl{A}$, it suffices to show that $\wqcl{A}$ is contained in the downward closure of $A$. 
%This is more or less shown in the proof of \cite[Proposition~6.9]{Tol16} modulo some changed references \blue{K: this sounds a little weird, esp. since 6.9 doesn't have a proof. Perhaps we can say that "the same idea that establishes 6.9 also shows this more general statement"?} \violet{Nati: It seems to me that 6.9 does have a proof. The statement is at the bottom of page 11 and the proof is at the top of page 12. Let me know if this does not line up with what you are seeing; it is possible that I'm looking at some past or faulty version.} \orange{hahaha, I need to update my acrobat reader; still I think the sentence needs to be rewritten as suggested in the blue}; we include a short proof for the reader's convenience.
%%%%% Done!
The same idea that establishes \cite[Proposition~6.9]{Tol16} also shows this more stronger and more general statement; we include a short proof for the reader's convenience.

%By Lemma~\ref{lemma:CancellativeContraction}
Say $x\in\wqcl{A}$ and let $\ph:R\to R_{A\leq 1}$ be the quotient map. By Proposition~\ref{prop: QI-contraction}, there is a non-zero $r\in R_{A\leq 1}$ such that $\ph(x)r\leq r$. Lemma~\ref{lemma:CancellativeContraction} tells us that $R_{A\leq 1}$ is cancellative, so $\ph(x)\leq 1$, i.e., $\ph(x+1)=1=\ph(1)$. Lemma~\ref{lemma:contractionCongruence} now gives us that there is an $a\in A$ such that $x+1\leq a\cdot 1=a$, so $x\leq a$. Thus, $x$ is in the downward closure of $A$.
\end{proof}

%\red{(Insert verbiage to say why we want the next theorem even though we have the previous one.)}
Neither Proposition~\ref{prop:RCancellativeGivesAllIntClosuresEqual} nor the following Theorem~\ref{thm: A+=AJ-can gen} imply the other. In Section~\ref{sec:AppsAndExs} we use each of these results to compute different interesting examples.

\begin{theorem}\label{thm: A+=AJ-can gen}
     Let $A \subseteq R$ be and extension of additively idempotent semirings such that $R$ is cancellatively generated, $\sqrt{\Delta_R}$ cancellative, and, for every $x \in R$, the \dc\ $A$-module generated by $x$ is finite as an $A$-module. Then $\Jcl{A} =\dcl{A}=\sqcl{A}=\qcl{A}=\wqcl{A}= \vcl{A}$.
\end{theorem}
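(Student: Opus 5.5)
By Corollary~\ref{coro:VIntIsSQInt}, the hypotheses that $R$ is cancellatively generated and $\sqrt{\Delta_R}$ is cancellative already give $\vcl{A}=\sqcl{A}=\qcl{A}=\wqcl{A}$. We also always have the chain $\jcl{A}\subseteq\dcl{A}\subseteq\sqcl{A}$ (the second inclusion is Lemma~\ref{lem:DintImpliesQint}). So it suffices to prove $\wqcl{A}\subseteq\jcl{A}$, and this is the only place the finiteness hypothesis on \dc\ modules will be used.

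Fix $x\in\wqcl{A}$. By Proposition~\ref{prop: QI-contraction} there is a nonzero $r\in R_{A\leq1}$ with $\ph(x)r\leq r$. By Corollary~\ref{coro:contractionAndDCModules} and Corollary~\ref{coro:thisDCModuleAndContraction}, $r$ corresponds to a nonzero finitely generated \dc\ $A$-submodule $M=\{y\in R\,:\,\ph(y)\leq r\}$ of $R$, and the inequality says $xM\subseteq M$. We may write $r=\ph(z)$, so $M$ is the \dc\ $A$-module generated by the single element $z$. By the third hypothesis, $M$ is finitely generated as an honest $A$-module, say by $m_1,\ldots,m_k$.

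Now apply the Cayley--Hamilton machinery. Multiplication by $x$ gives an $A$-module endomorphism $\ph_x$ of $M$. By Corollary~\ref{coro: CH-coro1} there is a monic $p\in A[t]$ of degree $n$ such that $\ph_x$ satisfies the bend relations of $p$. In particular, with $q$ equal to $p$ minus its leading term, we get $x^n m=q(x)m$ for all $m\in M$.

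To pass from an identity on $M$ to an identity in $R$, use cancellativity. As in the proof of Lemma~\ref{lem: dc-strFaith}, since $R$ is cancellatively generated and $M$ is nonzero and \dc, $M$ contains a cancellative element $c$. Then $cx^n=cq(x)$, and cancelling $c$ gives $x^n=q(x)$. Hence $x$ is \n-int, so certainly $x\in\jcl{A}$. This is in fact the argument of Corollary~\ref{coro: CH-coro2}, with faithfulness supplied by a single cancellative element rather than by all of $R$.

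The main obstacle is the identification of the module: one must check that the \dc\ module attached to $r$ really is generated, as a \dc\ module, by one element of $R$, so that the finiteness hypothesis applies. This follows from the surjectivity part of Corollary~\ref{coro:contractionAndDCModules}. One must also check that $M$ is stable under $x$, which is exactly the content of Proposition~\ref{prop: QI-contraction}. Everything else is a direct chain of earlier results.
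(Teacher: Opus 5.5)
Your reduction to $\wqcl{A}\subseteq\jcl{A}$ and your construction of $M$ are correct and essentially match the paper. The paper starts from $x\in\sqcl{A}$ and applies Corollary~\ref{coro: CH-coro2} to $A\subseteq\aangx{A}{x}$, using that $M$ is faithful. You start from $x\in\wqcl{A}$ and obtain faithfulness from a cancellative $c\in M$, as in Lemma~\ref{lem: dc-strFaith}. Both routes work.

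The error is in the Cayley--Hamilton step. The bend relation $(p,p_{\hat n})$, with $q=p_{\hat n}$, does not say $\ph_x^n=q(\ph_x)$. It says $p(\ph_x)=q(\ph_x)$, that is,
$$x^n m+q(x)m=q(x)m\quad\text{for all } m\in M,$$
which is only the inequality $x^n m\le q(x)m$. Nothing in the bend relations of $p$ forces equality. So your conclusion $x^n=q(x)$, and with it the claim that $x$ is N-integral, does not follow. If that step were valid, you would have shown that $\ncl{A}$ coincides with all the other closures under these hypotheses, a stronger statement that the theorem deliberately does not make.

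The repair is immediate and is all you need. With the cancellative $c\in M$ you get $c\,(x^n+q(x))=c\,q(x)$. Cancelling $c$ gives $x^n+q(x)=q(x)$, i.e. $x^n\le q(x)$; equivalently, apply Lemma~\ref{lemma:cancelIneqs} to $cx^n\le c\,q(x)$. Since $q$ has coefficients in $A$, this is exactly J-integrality of $x$. With that correction your argument is a valid proof.
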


\begin{proof}
%    \green{In view of Corollary~\ref{coro:VIntIsSQInt} we need to show that $\Jcl{A}  = \sQcl{A}$ and we always have $\Jcl{A}  \subseteq \sQcl{A}$.     For the other direction, say}
    In view of Corollary~\ref{coro:VIntIsSQInt} and the fact that $\jcl{A}\subseteq\dcl{A}\subseteq\sqcl{A}$, we just need to show $\sqcl{A}\subseteq\jcl{A}$. Say $x\in\sqcl{A}$, i.e., there is an $\aangx{A}{x}$-module $M \subseteq R$, which is finite as a downward-closed $A$-module and %strongly 
    faithful as an $\aangx{A}{x}$-module. By Proposition~\ref{prop: QI-contraction} (or its proof), $M$ is generated by a single $y$ as a \dc\ $A$-module and so, by assumption, $M$ is finitely generated as an $A$-module. %So Corollary~\ref{coro: CH-coro2} tells us that $x\in\aangx{A}{x}$ is \J-int over $A$.
    So by Corollary~\ref{coro: CH-coro2} we get that $x\in\aangx{A}{x}$ is \J-int over $A$.
\end{proof}

\begin{remark}
Without any assumptions on $A$, it is not possible to guarantee $\jcl{A}=\vcl{A}$ because $\vcl{A}$ is always a semiring and $\jcl{A}$ need not be; see Example~\ref{ex: J-int-not-semiring}. Thus the best we can hope for is $\dcl{A}=\vcl{A}$.
\end{remark}

\begin{question}
Is it always the case that $\dcl{A}=\vcl{A}$? If not, under what conditions does this occur?
\end{question}

%------------------------------------------------------------------------------

\section{Applications and examples}\label{sec:AppsAndExs}

In this section we aim to compute the integral closure of semirings that arise from geometry. We compute the integral closure of a few cancellative semirings in their semifields of fractions and then give examples of semirings that are not cancelllative, but for which the notions of integrality agree.  %We start with a few auxiliary statements
We start by working to show that, under mild hypotheses, we can apply Corollary~\ref{coro:VIntIsSQInt} to the quotient of $\base[\Mon]$ by the bend congruence of a tropicalized ideal. First, we require a lemma.

\begin{lemma}\label{lemma:strongMR}
%\red{(strong Maclagan Rinc\'{o}n $+\eps$)}
%
Let $\base$ be a subsemifield of $\T$, let $v:K\to\base$ be a valuation on a field $K$, let $\Mon$ be a toric monoid, and let $I$ be an ideal of $K[\Mon]$.
If $(f,g)\in\Bend(\trop I)$ (is not in $\Delta_{\base[\Mon]}$) then there is a chain $f=F_0\sim F_1\sim\cdots\sim F_n=g$ such that 
\begin{enumerate}
\item\label{item:bend} each $F_i\sim F_{i+1}$ is of the form $a\trop(h_i)+H\sim a(\trop(h_i))_{\what{u}}+H$ or the reverse for some $h_i\in I$, $a\in\base^\times$, $u\in\supp(h_i)$, and $H\in\base[\Mon]$,
\item\label{item:coeff} for every monomial $m$, the coefficient of $m$ in $F_i$ is either the coefficient of $m$ in $f$ or the coefficient of $m$ in $g$, and
%
%\item\label{item:unimodal} there is an $\ell$ between $0$ and $n-1$ such that $(F_i,F_{i+1})$ is increasing if $i<\ell$ and decreasing if $i\geq\ell$, and
%
\item\label{item:uniqueChange} for each monomial $m$, there is at most one $i$ for which the coefficients of $m$ in $F_i$ and $F_{i+1}$ are different.
\end{enumerate}
\end{lemma}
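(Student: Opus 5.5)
The plan is to route every chain through $f+g$, so that coefficients only ever go up and then only ever go down. Since $f\sim g$, we have $f=f+f\sim f+g$ and likewise $g\sim f+g$. So it suffices to build two chains.

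\begin{enumerate}
\item The first chain runs from $f$ up to $f+g$. It only raises the coefficients of monomials $m$ with $c_m(g)>c_m(f)$, raising each from $c_m(f)$ to $c_m(g)$.
\item The second chain runs from $g$ up to $f+g$. It only raises monomials with $c_m(f)>c_m(g)$.
\end{enumerate}

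Reversing the second chain and concatenating gives $f\sim\cdots\sim f+g\sim\cdots\sim g$. In this combined chain each coefficient is always either $c_m(f)$ or $c_m(g)$, which is property~(\ref{item:coeff}). Each monomial changes in at most one of the two chains. If moreover each elementary step changes a single monomial exactly once, we also get property~(\ref{item:uniqueChange}). This reduces the lemma to the following claim.

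\emph{Claim.} Suppose $F\le G$ with $(F,G)\in\Bend(\trop I)$, and let $m$ be a monomial whose coefficient in $G$ exceeds its coefficient in $F$. Then there are $h\in I$, $a\in\base^\times$ and $m\in\supp(h)$ with the following properties: $c_m(a\trop(h))=c_m(G)$, and $a\trop(h)_{\what m}\le F$.

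Granting the claim, take $H=F$ in an elementary step. Then $a\trop(h)_{\what m}+F=F$, while $a\trop(h)+F$ is $F$ with only the coefficient of $m$ raised to $c_m(G)$. This new polynomial $F'$ still satisfies $F'\le G$ and $(F',G)\in\Bend(\trop I)$. Inducting on the number of monomials where $F$ and $G$ differ, which is finite, produces the required chains.

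To prove the claim, I would first use Lemma~\ref{lemma:MaclaganRincon_weak_lemma} to obtain some chain from $F$ to $G$ whose elementary steps are $b\trop(h_i)+H_i\sim b\trop(h_i)_{\what u}+H_i$. Everything then happens inside a single finite support set $D$, namely the union of all supports appearing in that chain. Since $\trop I$ is a tropical ideal, $(\trop I)_D$ is the set of vectors of a valuated matroid. I would then follow the coefficient of $m$ along the chain. At some step it must be raised to at least its final value by a polynomial $b\trop(h_i)$ whose $m$-term dominates.

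At that step the other terms of $b\trop(h_i)$ are only bounded by the intermediate $F_j$, not by $F$. To fix this, I would repeatedly apply the monomial elimination axiom. The idea is to eliminate, from $b\trop(h_i)$, every monomial $u\neq m$ whose coefficient exceeds $c_u(F)$. Such an excess was itself produced by an earlier step of the chain, and the polynomial from that earlier step can be used to cancel it. Each elimination only lowers coefficients or replaces them by maxima that are already controlled. Working with tropicalizations of actual elements of $I$ (lifting to $K[\Mon]$ and taking suitable $K$-linear combinations) keeps the result of the form $a\trop(h)$ with $h\in I$.

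The main obstacle is making this elimination terminate without destroying the $m$-coefficient. I would try to keep control through an induction on the position in the chain, combined with a valuated-matroid circuit argument in $(\trop I)_D$: a vector of $(\trop I)_D$ attaining $c_m(G)$ at $m$ and bounded by $F$ elsewhere should exist because $(F,G)$ lies in the congruence, and minimality of support would be used to guarantee that $m$ survives.
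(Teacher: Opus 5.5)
Your reduction is sound. The chain $f\sim f+g\sim g$ works. Raising one monomial at a time with $H=F$ gives a step of the form in (1). The sandwich $F\le F'\le G$ keeps $(F',G)$ in the congruence. Concatenating the two monotone chains then yields (2) and (3).

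\textbf{The gap.} Your Claim is not a routine auxiliary fact. It is essentially the one-coefficient case of the lemma itself: compare the ($\Rightarrow$) direction of Proposition~\ref{prop:thisPairInBend?}, which the paper deduces from this very lemma. So all of the content lives in the Claim, and the proposal does not prove it. (The paper does not reprove the lemma either; it cites the proof of \cite[Proposition 2.6]{MR14}.)

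Your strategy is to take a chain from Lemma~\ref{lemma:MaclaganRincon_weak_lemma} and repeatedly eliminate the coordinates $u\neq m$ where $b\trop(h_i)$ exceeds $F$. This has no termination mechanism. The polynomial that created the excess at $u$ was itself only bounded by an earlier intermediate $F_j$, not by $F$. So eliminating with it can create new excess elsewhere, and nothing controls the $m$-coefficient through this cascade. Your last sentence (``a vector \dots\ should exist because $(F,G)$ lies in the congruence'') restates the Claim rather than arguing for it.

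\textbf{What would close it: an invariant of single elementary steps.} This removes the need to track a chain at all. Define:
\begin{itemize}
\item $L=\{a\trop(h)\,:\,a\in\base^\times,\ h\in I\}$;
\item $x_z$ for the coefficient of $\chi^z$ in $x$;
\item for each monomial $w$, the set $\Lambda_w(x)\subseteq\base$ consisting of all $c\le x_w$ together with all $\ell_w$ for $\ell\in L$ with $\ell_z\le x_z$ for every $z\neq w$.
\end{itemize}
By Lemma~\ref{lemma:MaclaganRincon_weak_lemma}, every non-diagonal generating step of $\Bend(\trop I)$ is $(x,x+\ell)$ or its reverse, with $\ell\in L$, $u\in\supp\ell$, and $\ell_z\le x_z$ for all $z\neq u$.

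One checks that $\Lambda_w(x+\ell)=\Lambda_w(x)$. The inclusion $\supseteq$ is monotonicity. For $\subseteq$, the case $w=u$ and values $c\le(x+\ell)_u$ are handled by rescaling $\ell$. The only nontrivial case is $w\neq u$ and $\ell'\in L$ with $\ell'_z\le(x+\ell)_z$ for $z\neq w$, $\ell'_u>x_u$, and $\ell'_w>x_w$. Then $\ell'_u\le\ell_u$. Rescale $\ell$ to $\tilde\ell$ with $\tilde\ell_u=\ell'_u$, and eliminate $u$ once between $\ell'$ and $\tilde\ell$ (ultrametric inequality on lifts in $I$). This gives $\ell''\in L$ with $\ell''_w=\ell'_w$, because $\tilde\ell_w\le\ell_w\le x_w<\ell'_w$. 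It also gives $\ell''_z\le x_z$ for all $z\neq w$.

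Hence each $\Lambda_w$ is constant on congruence classes. Now take $F\le G$ in one class with $G_m>F_m$. Then $G_m\in\Lambda_m(G)=\Lambda_m(F)$, and $G_m>F_m$ forces $G_m=\ell_m$ for some $\ell\in L$ bounded by $F$ off $m$. That is exactly your Claim, and with it inserted your reduction completes the proof.
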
\begin{proof}
This is proven in the proof of \cite[Proposition 2.6]{MR14} which is stated for $\T[x_1^{\pm1},\ldots,x_n^{\pm1}]$ but works in this greater generality. Parts \eqref{item:bend} and \eqref{item:coeff} are in the statement of \cite[Proposition 2.6]{MR14}. For part \eqref{item:uniqueChange}, see the third paragraph of the proof of \cite[Proposition 2.6]{MR14}.
\end{proof}

\begin{remark}\label{rmk:lengthOfTransitiveChain}
Note that, in light of \eqref{item:uniqueChange}, %$n=\delta(f,g)$ 
$n$
is the number of monomials whose coefficients in $f$ and $g$ are different. %\red{(change or move this wording.)}
\end{remark}

\begin{prop}\label{prop:QuotientByBendIsCancGend}
Let $\base$ be a subsemifield of $\T$, let $v:K\to\base$ be a valuation on a field $K$, and let $\Mon$ be a toric monoid. If $I\subseteq K[\Mon]$ 
%is a prime ideal such that $V(I)$ is not contained in the toric boundary
is an ideal whose variety has no component contained in the toric boundary,
then the image of every monomial of $\base[\Mon]$ in $\base[\Mon]/\Bend(\trop I)$ is cancellative. In particular, $\base[\Mon]/\Bend(\trop I)$ is cancellatively generated.
\end{prop}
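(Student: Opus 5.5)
Since every nonzero element of $\base[\Mon]$ is a sum of terms $a\chi^u$ with $a\in\base^\times$, and $a$ is invertible, the second sentence follows from the first. So the plan is to show that the image of each monomial $\chi^u$ in $R_0:=\base[\Mon]/\Bend(\trop I)$ is cancellative. Concretely: if $(\chi^u f,\chi^u g)\in\Bend(\trop I)$, then $(f,g)\in\Bend(\trop I)$.

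We may assume $(f,g)\notin\Delta$. Since $\chi^u$ is cancellative in $\base[\Mon]$ (the monoid $\Mon$ is cancellative), $(\chi^u f,\chi^u g)\notin\Delta$ either. Apply Lemma~\ref{lemma:strongMR} to the pair $(\chi^u f,\chi^u g)$. This gives a chain $\chi^uf=F_0\sim F_1\sim\cdots\sim F_n=\chi^ug$ in which each step is
\[
a\trop(h_i)+H_i \sim a(\trop(h_i))_{\what{w_i}}+H_i
\]
or its reverse, with $h_i\in I$. By part \eqref{item:coeff} of that lemma, every monomial occurring in any $F_i$ occurs in $\chi^uf$ or in $\chi^ug$, so it lies in $\chi^u\Mon$. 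Hence each $F_i=\chi^u G_i$ for a unique $G_i\in\base[\Mon]$, with $G_0=f$ and $G_n=g$.

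The key step is to show that each consecutive pair $(G_i,G_{i+1})$ lies in $\Bend(\trop I)$. This needs a "division by $\chi^u$" of the bend step, and I expect it to be the main obstacle. The issue is that $a\trop(h_i)$ and $H_i$ are a priori only sums whose totals lie in $\chi^u\Mon$; individual monomials of $\trop(h_i)$ could lie outside $\chi^u\Mon$ and be absorbed into $H_i$.

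The plan is to modify the summands so that everything visibly lies in $\chi^u\Mon$. The total $F_i$ is supported in $\chi^u\Mon$, and since addition is max, no cancellation occurs. Therefore every monomial of $a\trop(h_i)$ and of $H_i$ occurs in $F_i$ (or in $F_{i+1}$, for the deleted term). Thus $\supp(h_i)\subseteq\chi^u\Mon$, so we can write $h_i=\chi^u h_i'$ with $h_i'\in K[\Mon]$. One caveat: in a general toric monoid divisibility by $\chi^u$ might fail even though the monomial lies in $\chi^u\Mon$. It does not fail here, because $\chi^u\Mon$ is exactly the set of multiples of $\chi^u$, and $\Mon$ is cancellative, so the quotient $\chi^{-u}\cdot$ is well defined on it.

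The remaining point is that $h_i'\in I$, and this is where the hypothesis on the boundary enters. Since $V(I)$ has no component in the toric boundary, $\chi^u$ is a nonzerodivisor modulo $I$: otherwise it would vanish on an associated component, which would then lie in the boundary. Embedded components need care; if necessary, I would instead pass through the localization $K[\Mon][\chi^{-u}]$ and use $I=I[\chi^{-u}]\cap K[\Mon]$. So $\chi^uh_i'\in I$ gives $h_i'\in I$. Also $\trop(\chi^u h_i')=\chi^u\trop(h_i')$, and similarly after deleting a term. Writing $H_i=\chi^uH_i'$, we get $G_i=a\trop(h_i')+H_i'$ and $G_{i+1}=a(\trop(h_i'))_{\what{w_i'}}+H_i'$, which is a bend step. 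Chaining these steps gives $(f,g)\in\Bend(\trop I)$, as required.
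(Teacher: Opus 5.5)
Your proof is the paper's proof. You apply Lemma~\ref{lemma:strongMR} to $(\chi^uf,\chi^ug)$ and use part~\eqref{item:coeff} to see that every $F_i$, and hence the $h_i$ and $H_i$ in each bend step, is divisible by $\chi^u$. You then divide through using that $I$ is monomially saturated, which the paper simply asserts follows from the hypothesis.

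Your primary justification of that saturation (no associated prime of $I$ contains a monomial) is the right one. Your fallback is not an independent route, though: $I=I[\chi^{-u}]\cap K[\Mon]$ is exactly the saturation you need. It genuinely fails if embedded components may lie in the boundary. For $I=((x-1)^2,(x-1)y)\subseteq K[x,y]$, the relation $xy+y=\trop((x-1)y)$ gives $y(x+1)\equiv yx$, yet $(x+1,x)\notin\Bend(\trop I)$ by Proposition~\ref{prop:thisPairInBend?}, so $y$ is not cancellative. The hypothesis therefore has to be read as covering embedded components too.
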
\begin{proof}
Note that the hypothesis on $I$ guarantees that $I$ is \emph{monomially saturated}: 
%if $m\in K[\Mon]$ is a monomial and $mf\in I$ then $f\in I$.
for any $u\in\Mon$, if $\chi^u f\in I$ then $f\in I$.

Fix a $u\in\Mon$ and suppose that $(\chi^u f, \chi^u g)\in\Bend(\trop I)$; we want to show $(f,g)\in\Bend(\trop I)$. If $(\chi^u f, \chi^u g)\in\Delta_{\base[\Mon]}$ then so is $(f,g)$ so we are done. So assume that $(\chi^u f, \chi^u g) \notin \Delta_{\base[\Mon]}$.

Let $\chi^u f=F_0\sim F_1\sim\cdots\sim F_n=\chi^u g$ be a chain of the form we are guaranteed exists by Lemma~\ref{lemma:strongMR}. By part~\eqref{item:coeff} of Lemma~\ref{lemma:strongMR}, 
%\red{of Lemma 7.20?} %% Indeed!
the support of each $F_i$ is contained in $\supp(\chi^u f)\cup \supp( \chi^u g)$, so $F_i$ is divisible by $\chi^u$. 

Lemma~\ref{lemma:strongMR}\eqref{item:bend}
%Part~\eqref{item:bend} \red{of Lemma 7.20?} %% Indeed!
and symmetry now give us that it is enough to show that, if $F_i\sim F_{i+1}$ is of the form $a\trop(h_i)+H\sim a(\trop(h_i))_{\what{\mu}}+H$ for some $h_i\in I$, $a\in\base^\times$, $\mu\in\supp(h_i)$, and $H\in\base[\Mon]$, then $\dfrac{F_i}{\chi^u}\sim\dfrac{F_{i+1}}{\chi^u}$ is in $\Bend(\trop I)$. 
Towards proving this, note that $\supp(F_i)=\supp(h_i)\cup\supp(H)$.
In particular, since $F_i$ is divisible by $\chi^u$, so are $h_i$ and $H$, so we have $\nbar{h}_i=\dfrac{h_i}{\chi^u}\in K[\Mon]$ and $\nbar{H}\in\base[\Mon]$. Moreover, because $I$ is monomially saturated, $\nbar{h}_i\in I$. 
Thus, we have $\nbar{h}_i\in I$, $a\in\base^\times$, $\mu-u\in\supp(\nbar{h}_i)$, and $\nbar{H}\in\base[\Mon]$ such that $\dfrac{F_i}{\chi^u}=a\trop(\nbar{h}_i)+\nbar{H}$ and $\dfrac{F_{i+1}}{\chi^u}=a\trop(\nbar{h}_i)_{\what{\mu-u}}+\nbar{H}$. So the relation $\dfrac{F_i}{\chi^u}\sim \dfrac{F_{i+1}}{\chi^u}$ is in $\Bend(\trop I)$.
\end{proof}

\begin{coro}\label{coro:ModBendVIntIsQInt}
Let $K$ be a field with valuation $K\to S$, let $\Mon$ a toric monoid, and let $I$ be an ideal of $K[\Mon]$ whose variety $Y := V(I)$ has no component contained in the toric boundary. 
Let $R$ be any localization of $\base[\Mon]/\Bend(\trop I)$ by a set of cancellative elements, and let $A$ be any sub-semiring of $R$. 
%Then $\vcl{A}=\wqcl{A}$.
Then $\vcl{A}=\sqcl{A}=\qcl{A}=\wqcl{A}$.
\end{coro}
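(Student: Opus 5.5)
The plan is to apply Corollary~\ref{coro:VIntIsSQInt} directly to $A\subseteq R$. This requires two facts: $\sqrt{\Delta_R}$ is cancellative, and $R$ is cancellatively generated.

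The first hypothesis is exactly the content of Corollary~\ref{coro:trop-coordinate-semiring-is-cancellative - localized}, which applies verbatim to any localization $R$ of $R_0:=\base[\Mon]/\Bend(\trop I)$ by cancellative elements. One small point needs care: the statement writes $\Bend(I)$ where the preceding results use $\Bend(\trop I)$. I will read it as the latter, since that is the only meaningful interpretation for an ideal $I$ of $K[\Mon]$. If $\base$ is not literally a sub-semifield of $\T$, I will also note that we may replace $\base$ by the image of the valuation, so as to match the setting of Lemma~\ref{lemma:strongMR}.

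For the second hypothesis, I start from Proposition~\ref{prop:QuotientByBendIsCancGend}. It says the images of the monomials of $\base[\Mon]$ in $R_0$ are cancellative and additively generate $R_0$. In the localization $R=S^{-1}R_0$, with $S$ a set of cancellative elements, every element has the form $\sum_i m_i/s$ where the $m_i$ are images of terms and $s$ is a product of elements of $S$. So it suffices to show that each $m/s$ is cancellative in $R$. Since $1/s$ is a unit, I only need the image of a cancellative element $m$ of $R_0$ to stay cancellative in $R$. Suppose $m\cdot (x/t)=m\cdot(y/t')$. Then there is some $s''$ in the multiplicative monoid generated by $S$ with $s'' m x t' = s'' m y t$ in $R_0$. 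Because $s''$ and $m$ are cancellative, this gives $xt'=yt$, i.e.\ $x/t=y/t'$. Here I use that products of cancellative elements are cancellative, and the paper notes that the cancellative elements form a multiplicative submonoid. I should also check that $m/s\neq 0$ in $R$. This follows because $m\neq0$ and the localization map from $R_0$ is injective when one localizes at cancellative elements.

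With both hypotheses verified, Corollary~\ref{coro:VIntIsSQInt} gives $\vcl{A}=\sqcl{A}=\qcl{A}=\wqcl{A}$ for every sub-semiring $A$ of $R$. The main obstacle is therefore just the bookkeeping in the localization step: confirming that cancellativity and generation pass to $S^{-1}R_0$. No new idea is needed there.
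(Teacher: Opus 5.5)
Your proposal is correct and follows the paper's proof: Proposition~\ref{prop:QuotientByBendIsCancGend} gives cancellative generation, Corollary~\ref{coro:trop-coordinate-semiring-is-cancellative - localized} gives cancellativity of $\sqrt{\Delta_R}$, and then Corollary~\ref{coro:VIntIsSQInt} applies. Your explicit check that cancellative generation survives localization (images of terms stay cancellative in $R$, and $1/s$ is a unit) is a step the paper leaves implicit. Your aside about replacing $\base$ by the image of the valuation is unnecessary and not harmless, since it changes $\base[\Mon]$ and the image need not lie in $\T$ for higher-rank valuations; like the paper, simply work with $\base$ a sub-semifield of $\T$, which is the setting of Proposition~\ref{prop:QuotientByBendIsCancGend} and Lemma~\ref{lemma:strongMR}.
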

\begin{proof}
Proposition~\ref{prop:QuotientByBendIsCancGend} gives us that $R$ is cancellatively generated, and Corollary~\ref{coro:trop-coordinate-semiring-is-cancellative - localized} tells us that $\sqrt{\Delta_R}$ is cancellative. Thus Corollary~\ref{coro:VIntIsSQInt} applies and gives the result.
\end{proof}

%\red{(transition verbiage needed.)}

We now give a few examples of computing the integral closures of cancellative semirings in their semifields of fractions. %We use the machinery and results developed in \cite{FM25a}.

\begin{example}
    Let $A = \T[x]/\sqrt{\Delta}$ be the semiring of tropical polynomial functions on $\T$ and let $R := \Frac(A)$. The semiring $R$ consists of all continuous piecewise-linear functions $\R\to\R$; see, for example, \cite{TW24}. Since $A$ is cancellative $R$ is a semifield and so, by 
    Proposition~\ref{prop:RCancellativeGivesAllIntClosuresEqual},
    all notions of integral closure coincide and agree with the downward closure of $A$. By the definition of the radical of a congruence, the primes on $A$ are in canonical bijection with the primes on $\T[x]$.
    Using \cite[Lemma 2.14]{FM23}, we have that each prime is given by one of the following four types of matrices:
    \begin{align*}
        %%%%%\mathcal{C} is reserved for congruences
        % \mathcal{C}_1 = \begin{pmatrix} 0 & a \end{pmatrix}, \quad \mathcal{C}_2 = \begin{pmatrix} 1 & a \end{pmatrix}, \quad 
        % \mathcal{C}_3 = \begin{pmatrix} 0 & a \\ 1 & b \end{pmatrix}, \quad \mathcal{C}_4 = \begin{pmatrix} 1 & a \\ b & c \end{pmatrix}
        C_1 = \begin{pmatrix} 0 & a \end{pmatrix}, \quad C_2 = \begin{pmatrix} 1 & b \end{pmatrix}, \quad 
        C_3 = \begin{pmatrix} 0 & a \\ 1 & b \end{pmatrix}, \quad C_4 = \begin{pmatrix} 1 & b \\ 0 & a \end{pmatrix},
    \end{align*}
    where $a=\pm1$ (or, in the case of $C_1$, $0$) and $b\in\R$. 
    We will denote by $P_i$ the prime congruence on $A$ given by any matrix of the form $C_i$.
    Note that $R_{P_2}$, $R_{P_3}$, and $R_{P_4}$ do not contain $A$, since they do not contain $\T$. More precisely, for any $c\geq 1$ we have that $|t^d|_{P_j} > 1_{\kappa(P_j)}$ for $j = 2, 3, 4$. We now focus on $P_1$. When $C_1 = \begin{pmatrix} 0 & a \end{pmatrix}$% with $a> 0$, 
    with $a=1$,
    then $R_{P_1}$ does not contain $A$ because $|x|_{P_1} > 1_{\kappa(P_1)}$. When $C_1 = \begin{pmatrix} 0 & 0 \end{pmatrix}$, then $R_{P_1} = R$. So we just need to 
    consider the prime $P$ corresponding to $\begin{pmatrix} 0 & -1 \end{pmatrix}$. The valuative integral closure of $A$ in $R$ is $R_{P} \cap R = R_{P}$.

    Note that, on $A=\T[x]/\sqrt{\Delta}$, $P$ is the congruence kernel of the semiring morphism $\T[x]/\sqrt{\Delta}\to\T$ given by $a\chi^u\mapsto t^{-u}$. %, which sends $f$ to $t^{-\mathrm{min.deg} f}$, where by $\text{min.deg}(f)$ we mean the minimal power to which $x$ appears in $f$.
    This morphism sends an arbitrary polynomial $f$ to $t^{-\mathrm{min.deg}(f)}$, where by $\mathrm{min.deg}(f)$ we mean the minimal power to which $x$ appears in $f$.
    %$t^{-\mathrm{min.deg} f+\mathrm{min.deg} g}\leq 1$\\
    %$\mathrm{min.deg} g\leq \mathrm{min.deg} f$
    We therefore have $R_P = \left\{ \frac{f}{g} \in R : \text{min.deg}(g) \leq \text{min.deg}(f) \right\}$.
    Thus a tropical rational function is in $R_P$ if and only if it has a representation as $\dfrac{f}{g}$ where $f,g\in\T[x]$ and $g$ has a nontrivial constant term. These are exactly the continuous piecewise-linear functions $\R\to\R$ that extend continuously to $\R\cup\{-\infty\}\to\R\cup\{-\infty\}$. These can be reasonably understood as the continuous piecewise-linear functions $\T\to\T$, i.e., tropical rational functions $\T\to\T$.
    %
    % First we observe that $R_P \neq A$ since $\frac{x}{x+1}$ is a piecewise linear function that is not a polynomial. More generally, $R_P = \{ \frac{f}{g} \in R : \text{min.deg}(f) < \text{min.deg}(g) \}$, where by $\text{min.deg}(f)$ we mean the minimal power to which $x$ appears in $f$. In particular, $R_P$ and so the valuative integral closure of $A$ in $R$ consists of piece-wise linear functions on $\T[x]$. 
%
    % We also note that in this case the valuative integral closure of $A$ in $R$ is the downwards closure of $A$ in $R$. %don't if I should write this, but on each interval a PL linear function from the integral closure is bounded by a monomial (look at the slope) with some coefficent, adjusted accodringly. The sum of all monomials with gives a polynomial (i.e. something in A) that is bigger than f. This also shows \D-int = \J-int closure.
    \exEnd
\end{example}

\begin{example}\label{ex:nodal-cubic-CPL}
     Let $f = x^2+x^3+y^2$ and $A = \T[x, y]/\sqrt{\Bend(f)}$ be and let $R := \Frac(A)$. Since $A$ is cancellative by Theorem~\ref{thm: trop-coordinate-semiring-is-cancellative} then $R$ is a semifield and so, by %Theorem~\ref{thm:intCloSemifields} 
     Proposition~\ref{prop:RCancellativeGivesAllIntClosuresEqual},
     all notions of integral closure coincide. 
    %  \green{
    %  A prime congruence $P$ of $\T[x,y]$ which contains $\Bend(f)$ and has the property that $R_P \supseteq A$ will have a defining matrix one of the following (up to rescaling):
    % \begin{align*}
    %     %%%%%\mathcal{C} is reserved for congruences
    %     % \mathcal{C}_0 = \begin{pmatrix} 0 & 0 & 0 \end{pmatrix}, \quad \mathcal{C}_1 = \begin{pmatrix}  0 & 0 & -1 \end{pmatrix}, \quad \mathcal{C}_2 = \begin{pmatrix}  0 & -1 & -1 \end{pmatrix}.
    %     C_0 = \begin{pmatrix} 0 & 0 & 0 \end{pmatrix}, \quad C_1 = \begin{pmatrix}  0 & 0 & -1 \end{pmatrix}, \quad C_2 = \begin{pmatrix}  0 & -1 & -1 \end{pmatrix}.
    % \end{align*}
    % %\red{ we do not include C = (0, 2, 3) because $R_P$ does not contain $A$}
    % In view of \cite[Theorem 3.22]{JM25} the only primes containing $\Bend(f)$ are defined by matrices of rank at most 1. 
    % We will denote by $P_i$ the prime congruence on $A$ that matrix $C_i$ gives rise to.
    % So the valuative integral closure of $A$ in $R$ is just $R_{P_1} \cap R_{P_2}$, since $R_{P_0} = R$.
    % }
    In view of \cite[Theorem 3.22]{JM25} the only primes on $\T[x,y]$ containing $\Bend(f)$ are defined by matrices with at most one row. Since the relevant primes must contain the relations $x^2+x^3+y^2 \sim x^2+x^3 \sim x^2+y^2 \sim x^3+y^2 $, this leaves only the primes defined by
    $$
    C_0 = \begin{pmatrix} 0 & 0 & 0 \end{pmatrix}, \quad C_1 = \begin{pmatrix}  0 & 0 & -1 \end{pmatrix}, \quad C_2 = \begin{pmatrix}  0 & -1 & -1 \end{pmatrix}, \text{ and } C_3=\begin{pmatrix}  0 & 2 & 3 \end{pmatrix}.
    $$
    Let $P_i$ denote the prime congruence on $A$ defined by the matrix $C_i$. Since $R_{P_3}\not\supseteq A$ and $R_{P_0}=R$, we have that $\vcl{A}=R_{P_1}\cap R_{P_2}$.
    By computing as in the previous example, we find that $R_{P_1}=\left\{\frac{f}{g}\,:\,\operatorname{min.deg}_{y}(g)\leq \operatorname{min.deg}_{y}(f)\right\}$ and $R_{P_2}=\left\{\frac{f}{g}\,:\,\operatorname{min.deg}(g)\leq \operatorname{min.deg}(f)\right\}$.
    Here $\operatorname{min.deg}_{y}(f)$ means the smallest degree to which $y$ is raised in a term of $f$. 
    Thus $\vcl{A}=\left\{\frac{f}{g}\,:\,\operatorname{min.deg}_{y}(g)\leq \operatorname{min.deg}_{y}(f)\ \&\ \operatorname{min.deg}(g)\leq \operatorname{min.deg}(f)\right\}$, which can be viewed as the set of all continuous piecewise-linear functions on $V(f)\subseteq\T^2$ with values in $\T$.
    %\red{(Unfortunately, I don't easily see that this is isomorphic to the version for the tropical bat signal.)}\blue{this should be Frac A (tropical rational functions on V(f)), with y/x adjoint}\red{The full Frac(A) doesn't see the points at infinity. I'm not sure what you mean by "with y/x adjoint".}
    So in $\vcl{A}\sdrop A$ we have elements such as $\frac{y}{x}$, $\frac{y}{x+y^2}$, and $\frac{x}{x^2+y}$.
    %
    %\red{Should we note that we get exactly the continuous PL functions on the tropical variety including points at infinity? \blue{yes} We could also relate this to the downward closure. \blue{no}}
    %Explicit computation shows $\vcl{A} = \{ \frac{f}{g} \in R : \text{min.deg of $y$ in } f \leq \text{min.deg of $y$ in } g\}$, where by $ \text{min.deg of $y$ in } f$ we mean the minimal power to which the variable $y$ appears in $f$. We can see that $\frac{x+y^2}{y}$ in $\vcl{A}\setminus A$. %\red{So the int closure is just PL linear functions on $A'[y]$, where $A' = \T[x]$ and has norm 0. }
    \exEnd
\end{example}

We now give examples of {non-cancellative pairs of} semirings that satisfy the hypotheses of 
%\green{
%Theorem~\ref{thm: A+=AJ-can gen}, that is, semirings for which the different notions of integral closure align. 
%}
% \orangutan{
% Theorem~\ref{thm: A+=AJ-can gen}. Thus, for these pairs of semirings, the different notions of integral closure all agree. 
% }
{
Theorem~\ref{thm: A+=AJ-can gen}. Thus, for these pairs of semirings, $\jcl{A}=\dcl{A}=\sqcl{A}=\qcl{A}=\wqcl{A}=\vcl{A}$. 
}
%\red{(Nati says: Technically this isn't true, because $\ncl{A}$ does not equal the others. One way to deal with this would be to say, just after defining the different types of integralities, that \n-int is not one that we are really interested, it just ends up being a useful concept in proving relationships between the other ones.)}\blue{sure, but it didn't exist when this was written -- just reword to make explicit what notions agree}

\begin{example}\label{ex: allIntegral-toric}
    Let $\base$ be a sub-semifield of $\T$ and let $\Mon$ be a toric monoid. 
    Then the hypotheses of Theorem~\ref{thm: A+=AJ-can gen} hold for $R = \base[\Mon]$ and any \dc\ $A\subseteq R$. To see this, first note that $R$ is cancellatively generated. The fact that $\sqrt{\Delta_R}$ is cancellative is the generalization of \cite[Theorem 4.9(vi), Theorem 4.14 (v)]{JM17} from polynomial semirings to toric semirings and can be deduced by replacing the matrices in the proof thereof with invoking Hahn's embedding theorem as in \cite[Theorem A.5]{FM23} and also follows immediately from \cite[Theorem 1.4]{FM25a}. Towards verifying the last condition of Theorem~\ref{thm: A+=AJ-can gen}, note that the \dc\ $A$-module generated by $f=\dsum_{i=1}^n s_ix^{u_i}$ with $s_i\in \base$ and $u_i\in\Mon$ is the module sum of the \dc\ modules generated by each of the $s_ix^{u_i}$'s. It is routine to check that the $A$-module generated by $s_ix^{u_i}$ is \dc, and so the \dc\ $A$-module generated by $f$ is generated as an $A$-module by $s_1x^{u_1},\ldots,s_nx^{u_n}$. \exEnd
\end{example}

\begin{example}\label{ex: allIntegral-terry}
Let $B$ be a principal ideal domain or, more generally, an integral domain in which every finitely generated ideal is principal\footnote{Such domains are known as \emph{B\'{e}zout domains}; see \cite{Kap71}, page 32.}. Let $K$ be the field of fractions of $B$ and let $C$ be a ring with $B\subseteq C\subseteq K$. 
%%%%% This imlies that every finitely generated $B$-submodule of $C$ is principal.
Let $A=\TerryPair{B}{B}$ denote the semiring of finitely generated 
%\green{$B$-submodules of $B$ }
ideals of $B$ 
and let $R=\TerryPair{C}{B}$ be the semiring of finitely generated $B$-submodules of $C$. We refer the reader to \cite[Definition 3.6]{Mac20} and the discussion following it for the fact that $A$ and $R$ are semirings.

We first show that $R$ is cancellative. 
%\red{Or we could say "it is routine to show that $R$ is cancellative."}
Indeed, say $r,s,t\in R$ with $r\neq 0$ are such that $rs=rt$. So there are $\rho,\sigma,\tau\in C$ with $\rho\neq0$ and $r=(\rho)_B$, $s=(\sigma)_B$ and $t=(\tau)_B$ where $(\bullet)_B$ denotes the $B$-module generated by $\bullet$. Since $(\rho\sigma)_B=rs=rt=(\rho\tau)_B$ and $C$ is an integral domain, there is a unit $u$ of $B$ such that $\rho\sigma=u\rho\tau$. Again using that $C$ is a domain we get $\sigma=u\tau$, so $s=(\sigma)_B=(\tau)_B=t$.

In particular, $R$ is cancellatively generated and $\sqrt{\Delta_R}=\Delta_R$ is cancellative.

Note that, for any $(\rho)_B,(\sigma)_B\in R$, $(\rho)_B\leq(\sigma)_B$ if and only if $(\rho)_B\subseteq(\sigma)_B$ and this, in turn, this happens if and only if $(\rho)_B$ is a multiple of $(\sigma)_B$ by something in $A$. Thus the downward $A$-module generated by $x\in R$ is also generated by $x$ as an $A$-module.
\exEnd
\end{example}

\section{Cancellative elements and the total semiring of fractions}\label{sec:cancellativeElements} %\red{we might want to combine this chapter with witness pairs}

In this section we specialize to computing the valuative integral closure $\vcl{A}$ of a semiring $A$ in its total semiring of fractions. Our main interest will be the case when $A$ is a quotient of the polynomial semiring by the bend congruence of a tropicalized (principal) ideal. Note that such $A$ is almost never cancellative and so $\Frac{A}$ is not a semifield. In order to form the total semiring of fractions of $A$, we need to first describe the set of cancellative elements in $A$; see Definition~\ref{def: semiring-of-fractions}.

Our first result is to show that when $\base $ is a sub-semifield of $\T$ and $\Mon$ is a toric monoid the only cancellative polynomials in $\base[\Mon]$ are monomials. While this may be intuitively obvious to experts (and possibly known to some experts), no proof is recorded in the literature, so we prove it here. We first need to introduce some notation.

\begin{definition}
Let $\base$ be a semifield. Let $\Mon$ be a toric monoid and let $M$ be the finitely generated free abelian group that $\Mon$ lives in; $M$ is naturally identified with the groupification of $\Mon$. Let $M_{\R}:=M\otimes_{\Z}\R$. The Newton polytope of $f \in \base[\Mon]$, denoted $\newt(f)$, is the convex hull in $M_{\R}$ of the exponent vectors of the monomials of $f$ in $\Mon$.

\end{definition}

\begin{lemma}\label{lem: f2notcancelThenfnotcan}
    % \green{
    % Let $\base$ be a sub-semifield of $\T$ and let $f \in \base[\Mon]$. If $f^2$ is not cancellative in $\base[\Mon]$ then neither is $f$. 
    % }

    Let $A$ be a semiring and let $f\in A$. If $f^2$ is not cancellative in $A$ then neither is $f$.
    
\end{lemma}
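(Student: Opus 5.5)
We argue by contrapositive: assuming $f$ is cancellative, we show that $f^2$ is cancellative. The key observation is that the set of cancellative elements of a semiring is closed under multiplication.

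First, the requirement $f^2\neq 0$ from the definition. Suppose $f$ is cancellative, so $f\neq 0$. If $f^2=0$, then $f\cdot f=f\cdot 0$, and cancelling $f$ gives $f=0$, a contradiction. Hence $f^2\in A\sdrop\{0\}$.

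Next, cancellation itself. Let $b,c\in A$ with $f^2b=f^2c$. Rewrite this as $f(fb)=f(fc)$. Cancelling $f$ once gives $fb=fc$, and cancelling $f$ again gives $b=c$. So $f^2$ is cancellative.

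This proves the contrapositive: if $f^2$ is not cancellative, then $f$ is not cancellative. No real obstacle arises. The only point needing care is the nonzero condition built into the definition of a cancellative element, and the first step handles it.
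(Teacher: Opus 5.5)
Your proof is correct and is essentially the paper's argument in contrapositive form: the paper takes $g\neq h$ with $f^2g=f^2h$ and notes that either $(g,h)$ or $(fg,fh)$ shows $f$ is not cancellative, which is the same double cancellation through $f(fb)=f(fc)$ that you use. You also check explicitly that $f^2\neq 0$, which handles the nonzero clause in the definition of a cancellative element; the paper leaves this case implicit.
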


\begin{proof}
    Let $g, h \in A$ such that $g\neq h$ and $f^2g = f^2h$. If $fg = fh$ we are done. Now assume that is not the case, i.e., $fg \neq fh$. 
    %\green{By multiplying both sides by $f$ we get $f^2g \neq f^2h$ which is a contradiction, so we must have $fg = fh$.} 
    Then multiplying both sides by $f$ gives us $f(fg)=f^2g=f^2h=f(fh)$, so $f$ is not cancellative in this case either.
\end{proof}

\begin{lemma}\label{lem: specialfNotCancelS}
    Let $\base$ be a semifield. Let $f \in \base[\Mon]$ be such that there are three distinct monomials in the support of $f$ whose exponent vectors lay on a line. Then $f$ is not cancellative.
\end{lemma}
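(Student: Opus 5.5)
The plan is to produce explicit $g\neq h$ in $\base[\Mon]$ with $fg=fh$, modelled on the one-variable identity
$$(1+x+x^2)(1+x^2)=1+x+x^2+x^3+x^4=(1+x+x^2)(1+x+x^2)\quad\text{in }\B[x].$$
Write the three collinear exponents as $u_0,u_1,u_2$, with $u_1$ strictly between $u_0$ and $u_2$, and let the corresponding coefficients of $f$ be $a_0,a_1,a_2\in\base^\times$. Since $\Mon$ lives in a lattice, there are a primitive $e\in M$ and positive integers $s,r$ with $u_1-u_0=se$ and $u_2-u_1=re$. Set $n=r+s$, so that $nu_1=ru_0+su_2$.

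\textbf{Step 1.} First I would treat the model situation, where the only obstruction is visible. Compare $f\cdot f$ with $f\cdot f_{\what{u_1}}$. Every term of $a_1\chi^{u_1}\cdot f$ other than $a_1^2\chi^{2u_1}$ already occurs in $f\cdot f_{\what{u_1}}$. So $f^2=f f_{\what{u_1}}$ as soon as $a_1^2\chi^{2u_1}\le f f_{\what{u_1}}$, and in the equally spaced case this is witnessed by $a_0a_2\chi^{u_0+u_2}$, provided $a_1^2\le a_0a_2$. Since $f_{\what{u_1}}\neq f$, this would show $f$ is not cancellative.

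\textbf{Step 2.} For general spacing I would pass to powers. By Lemma~\ref{lem: f2notcancelThenfnotcan} and its obvious iteration, it suffices to show that some power $f^k$ is not cancellative. I would take $g=f^{k-1}f_{\what{u_1}}$ and $h=f^{k}$, or a variant in which only the $\chi^{u_1}$-heavy terms are removed from $f^{k-1}$. The same bookkeeping as in Step~1 reduces $fg=fh$ to showing that the finitely many "pure middle" terms, such as $a_1^{k}\chi^{ku_1}$, are dominated in $fg$. For $k$ a multiple of $n$ these terms are hit exponent-wise by products of $a_0\chi^{u_0}$ and $a_2\chi^{u_2}$, for instance $a_0^ra_2^s\chi^{ru_0+su_2}=a_0^ra_2^s\chi^{nu_1}$. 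I would also check $g\neq h$ by exhibiting a monomial, on the line through $u_0,u_2$ and near $ku_1$, whose coefficient strictly drops.

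\textbf{Main obstacle.} The hard part is the coefficient comparison: we need $a_1^{n}\le a_0^ra_2^s$ (and similar inequalities), and these can fail. Rescaling by a linear automorphism $\chi^u\mapsto\lambda^{\varphi(u)}\chi^u$ does not help, because $nu_1=ru_0+su_2$ makes the comparison invariant. My first attempt would therefore be to choose $g$ and $h$ adapted to which coefficient "wins".

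If $a_1$ is small relative to the outer pair, I remove the middle terms as above. Otherwise I keep the middle and remove an outer term, comparing $f\cdot f^{k-1}$ with $f\cdot f^{k-1}_{\what{ku_0}}$, where $f^{k-1}_{\what{ku_0}}$ means $f^{k-1}$ with its $\chi^{ku_0}$-type terms removed; the offending products are then dominated via $a_1$.

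If $\base$ is not totally ordered, neither inequality need hold. In that case I would instead take $g$ to have the sum $a_1^{n}+a_0^ra_2^s$ as its coefficient at the relevant monomial, using that sums dominate both summands. I would also need to handle terms of $f$ off the line, which I expect to be absorbed by the symmetric-product argument of Step~1, since they always pair with a term of the other factor.
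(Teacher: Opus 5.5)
You have correctly isolated the only obstruction, but you have not removed it, and it cannot be removed within the family of pairs you are using. This is a genuine gap. In Steps~1 and~2 the pair is always $h=f^{k}$, $g=f^{k-1}f_{\widehat{u_1}}$, so $fg=fh$ holds exactly when the pure middle term $a_1^{k+1}\chi^{(k+1)u_1}$ is dominated by $f^{k}f_{\widehat{u_1}}$. That is a condition on the coefficients of $f$, and it genuinely fails for every $k$. For example, take $f=1+\lambda x+x^2\in\T[x]$ with $\lambda>1_{\T}$. The coefficient of $x^j$ in $f^k$ is $\lambda^{\min(j,2k-j)}$, so the coefficient of $x^{k+1}$ in $f^k(1+x^2)$ is $\lambda^{k-1}$, while in $f^{k+1}$ it is $\lambda^{k+1}$.

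Everything therefore rests on your ``Main obstacle'' paragraph, which is not yet an argument. First, $f^{k-1}_{\widehat{ku_0}}$ is not well defined, since $f^{k-1}$ has no $\chi^{ku_0}$ term. Second, suppose you mean comparing $g=f^{k-1}$ with $h$ obtained by deleting its extreme term $\chi^{(k-1)u_0}$. This never works when $u_0$ is a vertex of $\newt(f)$: then $ku_0$ is a vertex of $\newt(f^k)$ reached only by $(a_0\chi^{u_0})^k$, so $fg$ has coefficient $a_0^k$ there while $fh$ has $0$. Third, the terms of $f$ off the line are not controlled in that variant. Fourth, when $\base$ is not totally ordered there is no ``$a_1$ small / $a_1$ large'' dichotomy to split on. Finally, enlarging a coefficient of $g$ alone, as in your last paragraph, also enlarges $fg$, so by itself it does not preserve $fg=fh$.

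The missing idea is to stop building $g$ and $h$ out of powers of $f$. Instead, put a freely chosen large term into \emph{both} of them, at the reflection of the nearer outer point through $u_1$.
\begin{itemize}
\item Label the outer points so that $s\le r$, and set $w=2u_1-u_0=u_1+se$.
\item Then $w$ lies on the segment from $u_1$ to $u_2$, hence in $\newt(f)$. So $w\in\Mon$, because a toric monoid is saturated.
\item Let $c=a_1^2/a_0$, $h=f_{\widehat{u_1}}+c\chi^{w}$, and $g=h+a_1\chi^{u_1}=f+c\chi^{w}$. Then $g\neq h$ since $w\neq u_1$.
\item We have $fg=fh+a_1\chi^{u_1}f_{\widehat{u_1}}+a_1^2\chi^{2u_1}$.
\item The first extra summand satisfies $a_1\chi^{u_1}f_{\widehat{u_1}}\le fh$, because $a_1\chi^{u_1}$ is a term of $f$ and $f_{\widehat{u_1}}\le h$.
\item The second satisfies $a_1^2\chi^{2u_1}=(a_0\chi^{u_0})(c\chi^{w})\le fh$.
\end{itemize}
This works for arbitrary coefficients, arbitrary spacing, any off-line terms, and any additively idempotent semifield, and it needs no powers of $f$. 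It is essentially the paper's proof; the paper also enlarges the remaining coefficients of $g$, which is harmless.
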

%Note: S does not need to be totally ordered, because we can always take in the inequalities a_m = b_m..
%And, if $n$ is an instance of $\mu$, $b_n=a_n+\frac{a_mb_m}{a_{n'}}$.

\begin{proof}
    We write monomials as $n,m,$ and $\mu$ and write their corresponding exponent vectors as $n_p,m_p,$ and $\mu_p$. By assumption there are three monomials $m,n',n''\in\supp(f)$ such that $m_p,n_p',$ and $n_p''$ lay on a line and $m_p$ is between $n_p'$ and $n_p''$. Without loss of generality, we may assume that the distance between $n_p'$ and $m_p$ is less than or equal to the distance between $m_p$ and $n_p''$. Let $n_p=2m_p-n_p'$ be the other point on this line such that the lattice distance between $m_p$ and $n_p$ is the same as the one between $m_p$ and $n_p'$. While $n$ may not be in $\supp(f)$, we do have $n_p\in\newt(f)$.
    
    Write $f=\dsum_{\mu\in\supp(f)}a_\mu\mu$ and let $g=\dsum_{\mu_p\in\newt(f)}b_\mu\mu$ be any polynomial (with support contained in $\newt(f)$) such that
    \begin{itemize}
    \item $b_\mu\geq a_\mu$ for all $\mu\in\supp(f)\sdrop\{m\}$,
    \item $b_m\leq a_m$, and
    \item $b_n\geq\dfrac{a_mb_m}{a_{n'}}$.
    \end{itemize}
    Note that this is possible by taking $b_m=a_m$ and $b_\mu=a_\mu+\dfrac{a_mb_m}{a_{n'}}$ for $\mu\in\{n\}\cup\supp(f)\sdrop\{m\}$.
    Define $h=\dsum_{\mu\in\supp(g)\sdrop\{m\}}b_\mu\mu$, so $h\neq g$ and $g=h+b_mm$. We will show that $fg=fh$ and so $f$ is not cancellative. Since $fg=f(h+b_mm)=fh+b_mmf$, it suffices to show that $b_mmf\leq fh$.

    Let $f'=\dsum_{\mu\in\supp(f)\sdrop\{m\}}a_\mu\mu$. Since $b_\mu\geq a_\mu$ for all $\mu\in\supp(f)\sdrop\{m\}$, we can write $h=f'+h'$ for some $h'$. Now $b_mmf=b_mm(f'+a_mm)=b_mmf'+a_mb_mm^2$ and 
    $$b_mmf'\leq a_mmf'\leq(f'+a_mm)(f'+h')=fh,$$ 
    so it suffices to show that $a_mb_mm^2\leq fh$. But $$a_mb_mm^2=a_mb_mn'n\leq a_{n'}b_nn'n=(a_n'n')(b_nn)\leq fh,$$ completing the proof.
\end{proof}

Recall that we use \emph{term} to refer to expressions of the form $a\chi^u$ while \emph{monomial} is reserved for the $\chi^u$s.

\begin{proposition}\label{prop: generalNotCancelS}
    Let $\base$ be a semifield and let $\Mon$ be a toric monoid. Then the cancellative elements of $\base[\Mon]$ are exactly the individual terms. 
\end{proposition}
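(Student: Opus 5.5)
The plan has two directions. First, every nonzero term $a\chi^u$ is cancellative. Second, any $f$ with at least two monomials in its support is not cancellative.

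For the first direction, multiplication by $a\chi^u$ with $a\in\base^\times$ sends $\sum b_v\chi^v$ to $\sum ab_v\chi^{u+v}$. The map $v\mapsto u+v$ is injective on $\Mon$, because $\Mon$ embeds in the group $M$. Multiplication by the unit $a$ is injective on $\base$. Hence $a\chi^u g=a\chi^u h$ forces equality of coefficients, so $g=h$.

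For the second direction, suppose $|\supp(f)|\geq 2$. The idea is to reduce to Lemma~\ref{lem: specialfNotCancelS} by taking powers, then pull back using Lemma~\ref{lem: f2notcancelThenfnotcan}.
\begin{enumerate}
\item Induction on Lemma~\ref{lem: f2notcancelThenfnotcan} shows it suffices to find some $k\geq 1$ with $f^{2^k}$ not cancellative. Indeed, if $f^{2^k}$ is not cancellative, then neither is $f^{2^{k-1}}$, and so on down to $f$.
\item Pick two distinct monomials $\chi^{u},\chi^{w}\in\supp(f)$. The square $f^2=\sum_{\mu,\nu}a_\mu a_\nu\mu\nu$ contains the terms $a_u^2\chi^{2u}$, $a_w^2\chi^{2w}$ and $a_ua_w\chi^{u+w}$.
\item The key point is that the three exponents $2u$, $u+w$, $2w$ are distinct and collinear. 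So we want all three to lie in $\supp(f^2)$.
\end{enumerate}

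The obstacle is item 3: a sum of several products could in principle vanish or be absorbed. Nothing vanishes, because $\base$ is a semifield and products of nonzero elements are nonzero. Additive idempotence gives zero-sum-freeness (Lemma~\ref{lem:zero-sum-free}), so the coefficient of $\chi^{u+w}$ in $f^2$ is a sum of nonzero elements and hence nonzero. The same holds for the coefficients of $\chi^{2u}$ and $\chi^{2w}$.

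Thus $\supp(f^2)$ contains three distinct monomials with collinear exponent vectors. Lemma~\ref{lem: specialfNotCancelS} then shows $f^2$ is not cancellative, and Lemma~\ref{lem: f2notcancelThenfnotcan} gives that $f$ is not cancellative. So only $k=1$ is needed.

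Finally, $0$ is excluded by definition. Therefore the cancellative elements are exactly the nonzero terms.
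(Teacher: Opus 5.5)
Your proof is correct. It differs from the paper's in how it guarantees three collinear monomials in $\supp(f^2)$.

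The paper splits into cases. If $f$ already has three collinear support points, it applies Lemma~\ref{lem: specialfNotCancelS} directly. Otherwise it picks an edge $E$ of $\newt(f)$ with endpoints $v_p, v_p'$. Since no other support point lies on $E$, the products $v^2$, $vv'$, $(v')^2$ can each arise only one way, so they occur in $\supp(f^2)$. This is the sort of extremality argument you need when coefficients of a product might cancel.

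You instead observe that in an additively idempotent semifield no such cancellation can occur. Products of nonzero coefficients are nonzero, and by Lemma~\ref{lem:zero-sum-free} a finite sum of nonzero elements is nonzero. So every pairwise product of support monomials of $f$ lies in $\supp(f^2)$. Any two distinct $\chi^u, \chi^w \in \supp(f)$ then give the collinear triple $2u,\ u+w,\ 2w$, which is distinct because $M$ is torsion-free.

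This removes both the case split and all Newton polytope geometry, so your argument is shorter and uniform. The paper's edge argument buys nothing extra here, since Lemma~\ref{lem: specialfNotCancelS} already relies on the idempotent order structure that gives zero-sum-freeness.

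Your first direction, that nonzero terms are cancellative, matches the paper. The reduction through powers $f^{2^k}$ is superfluous, as you note, since $k=1$ suffices.
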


\begin{proof}
    Let $m$ be a term and consider polynomials $g \neq h$. %If $g, h$ are monomials \red{terms?} and $mg = mh$ then $g = h$. %\green{By induction on the number of monomials of $g, h$ we can observe} 
    Straightforward expansion of $g$ and $h$ as sums of terms shows that if $mg = mh$ then $g = h$. 
    
    Now suppose $f$ is not a term. If $f \in \base[\Mon]$ meets the hypotheses of Lemma~\ref{lem: specialfNotCancelS} then we are done. %\green{Otherwise, we will replace $f$ with $f^2$. Note that $f^2$ meets all hypotheses of Lemma~\ref{lem: specialfNotCancelS}. In particular, there exists a monomial in $f^2$ whose exponent vector lies between two vertices of the newton polytope of $f^2$, namely the one obtained from the product of these two vertices. Now Lemma~\ref{lem: specialfNotCancelS} applies to $f^2$ showing that $f^2$ is not cancellative and by Lemma~\ref{lem: f2notcancelThenfnotcan} we see that $f$ is also not cancellative. }
    Otherwise, let $E$ be an edge of $\newt(f)$ and let $v_p$ and $v_p'$ be the two vertices of $E$. By assumption, there is no $u\in\supp(f)$ with $u_p$ on $E$ other than $v$ and $v'$. From this, it follows that $v^2$, $vv'$, and $(v')^2$ all occur in $\supp(f^2)$.  Since $2v_p$, $v_p+v_p'$, and $2v_p'$ are collinear, Lemma~\ref{lem: specialfNotCancelS} tells us that $f^2$ is not cancellative, so $f$ is not cancellative by Lemma~\ref{lem: f2notcancelThenfnotcan}.
\end{proof}

Proposition~\ref{prop: generalNotCancelS} immediately gives us the following corollary.
\begin{coro}\label{coro:fracOfToricSemiring}
Let $\base$ be a semifield and let $\Mon$ be a toric monoid whose torus has character lattice $M$. Then $\Frac(\base[\Mon])=\base[M]$.
\end{coro}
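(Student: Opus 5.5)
We need $\Frac(\base[\Mon])=\base[M]$. By Proposition~\ref{prop: generalNotCancelS}, the set of cancellative elements of $\base[\Mon]$ is exactly the set $T$ of nonzero terms $a\chi^u$ with $a\in\base^\times$ and $u\in\Mon$. So $\Frac(\base[\Mon])$ is the localization $T^{-1}\base[\Mon]$, and the plan is to identify this localization with $\base[M]$ through the universal property recalled after Definition~\ref{def: semiring-of-fractions}.

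First, the natural inclusion $\base[\Mon]\into\base[M]$ sends every element of $T$ to a unit of $\base[M]$, since $(a\chi^u)^{-1}=a^{-1}\chi^{-u}$. The universal property therefore gives a unique homomorphism $\psi:\Frac(\base[\Mon])\to\base[M]$, namely $\psi(f/(a\chi^u))=a^{-1}\chi^{-u}f$.

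Surjectivity of $\psi$ uses that $M$ is the groupification of $\Mon$. Every $w\in M$ can be written as $w=u-v$ with $u,v\in\Mon$. For a finite sum $\sum_i b_i\chi^{w_i}$ in $\base[M]$, I will choose a single $v\in\Mon$ with $w_i+v\in\Mon$ for all $i$; summing the individual choices works because $\Mon$ is a monoid. Then the element equals $\psi\big((\sum_i b_i\chi^{w_i+v})/\chi^v\big)$.

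For injectivity, suppose $\psi(f/s)=\psi(g/s')$ with $s,s'\in T$. Clearing denominators inside $\base[M]$ gives $s'f=sg$, and this holds in $\base[\Mon]$ because $\base[\Mon]\subseteq\base[M]$ is a sub-semiring. Hence $f/s=g/s'$ in $\Frac(\base[\Mon])$. (This relies on the equivalence relation for fractions over cancellative denominators being exactly $s'f=sg$.)

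None of these steps is a real obstacle. The substantive input is Proposition~\ref{prop: generalNotCancelS}, and the only bookkeeping is the common denominator in the surjectivity step.
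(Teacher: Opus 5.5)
Your proof is correct and follows the paper's route. The paper simply notes that the corollary follows immediately from Proposition~\ref{prop: generalNotCancelS}. Your universal-property argument fills in the routine identification of the localization at terms with $\base[M]$: the common denominator handles surjectivity, and clearing cancellative denominators inside $\base[\Mon]\subseteq\base[M]$ handles injectivity.
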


%\red{Question: can we compute A+ by only looking at 1-line primes? }
The results above already allow us to compute some examples of $\vcl{A}$.
Recall that, for $R$ a semiring and $P$ a prime congruence on $R$, we let $R_P:=\{x\in R\,:\,|x|_P\leq 1_{\kappa(P)}\}=\{x\in R\,:\,(x+1,1)\in P\}$.
%
%\red{Showing that $\base[\Mon]$ is normal, i.e., integrally closed in its total semiring of fractions.} %% Done!!!

The following example shows that when $\base$ is a sub-semifield of $\T$ and $\Mon$ is a toric monoid $\base[\Mon]$ then $\base[\Mon]$ is normal, i.e., integrally closed in its total semiring of fractions.

\begin{example}
Let $\base$ be a sub-semifield of $\T$ and let $\Mon$ be a toric monoid corresponding to a cone $\sigma$ in $N_{\R}$.\footnote{See \cite[Definition~2.13]{FM25a} for the specific definition and \cite{CLS} and \cite{Ful93} for more information about toric monoids, $M$, $N$, and $\sigma$.}
%\red{(We haven't formally introduced the correspondence between cones and toric monoids. But the details of that will be essential here.)}\blue{could we add a footnote or something to go read the details in our previous paper?} %% Done!
Let $A=\base[\Mon]$ and let $R=\Frac(\base[\Mon])$, which, by Corollary~\ref{coro:fracOfToricSemiring}, is $\base[M]$ where $M$ is the groupification of $\Mon$. Since $M$ itself is also a toric monoid, Example~\ref{ex: allIntegral-toric} tells us that the assumptions of Theorem~\ref{thm: A+=AJ-can gen} hold in this case, so $\Jcl{A} =\dcl{A}=\sqcl{A}=\qcl{A}=\wqcl{A}= \vcl{A}$.

%We now compute these integral closures. \red{(I don't love the wording of this past sentence, but I just wanted to put something on the page.)}\orange{how about "We compute the valuative integral closure, since it is the most straightforward."}
We compute the valuative integral closure, since it is the most straightforward. 
Given a point $v\in\sigma$, let $P^v$ be the congruence given by the matrix $\begin{pmatrix}0&v\end{pmatrix}$. That is, $P^v$ is the congruence kernel of the semiring morphism $\base[\Mon]\to\T$ that sends $a\chi^u$ to $t^{\angbra{v,u}}$. Since $\Mon=\{u\in M\,:\,\angbra{w,u}\leq 0_{\R}\text{ for all }w\in\sigma\}$, we have $\base[\Mon]\subseteq R_{P^v}=\base[\{u\in M\,:\,\angbra{v,u}\leq 0_{\R}\}]$. This also shows that $\dcap_{v\in\sigma}R_{P^v}=\base[\Mon]$. Thus, we have $\Jcl{A} =\dcl{A}=\sqcl{A}=\qcl{A}=\wqcl{A}= \vcl{A}=\base[\Mon]$.
\exEnd
\end{example}

\begin{example}\label{ex: cuspidalCubic} % A_1, A are now A and B
    %Let $\CR$ be the coordinate semiring of the cuspidal cubic, $\CR=S[x^2, x^3]$ and let $\FCR = \Frac(\CR)= S[x^{\pm 1}]$. 
    Let $S$ be a sub-semifield of $\T$, let $\CR=S[x^2,x^3]$ be the coordinate semiring of the cuspidal cubic, and let $\FCR := \Frac(\CR)= S[x^{\pm 1}]$.
    Every prime congruence on $\FCR$ is defined by a $1\times2$ or $2\times 2$ matrix. 
    %Up to re-scaling \red{perhaps also in view of downwards row reduction? \cite[Lemma 2.12]{FM23}} we have the following options for defining matrix of $P$:
    By using the downward row reduction of \cite[Lemma 2.12]{FM23}, we see that every $P$ is defined by one of the matrices
    \begin{center}
        $\begin{pmatrix}0 & 0 \end{pmatrix}$, $\begin{pmatrix}0 & 1 \end{pmatrix}$, $\begin{pmatrix}0 & -1 \end{pmatrix}$, $\begin{pmatrix}1 & a \end{pmatrix}$, $\begin{pmatrix}0 & \pm{1} \\ 1 & 0 \end{pmatrix}$, $\begin{pmatrix}1 & a \\ 0 & a' \end{pmatrix}$
    \end{center}
    for $a,a'\in\R$. For each of the cases we will determine $\FCR_P$ if $\CR \subseteq \FCR_P$.
%\end{example}

% \noindent (1) If the defining matrix for $P$ is $\begin{pmatrix}0 & 0 \end{pmatrix}$, then we have $A_P = \FCR$.

% \noindent (2) If the defining matrix for $P$ is $\begin{pmatrix}0 & 1 \end{pmatrix}$, then $|x^2|_P = 2 > 1_{\kappa(P)}$ so we have that $A_P \not\supseteq \CR$.

% \noindent (3) If the defining matrix for $P$ is $\begin{pmatrix}0 & -1 \end{pmatrix}$, then $|x^n|_P = -n \leq 1_{\kappa(P)}$ and $|t^c|_P = 1_{\kappa(P)}$, for all $t^c\in S$, so $\CR \subseteq A_P$. To compute $A_P$, 
% \begin{align*}
%     A_P &= \{ f = \sum_{n \in \mathbb{Z}} f_n x^n : |f|_P \leq 1_{\kappa(P)}\} \\
%         &= \{ f = \sum_{n \in \mathbb{Z}} f_n x^n : |f_n x^n|_P \leq 1_{\kappa(P)}, \forall n\} \\
%         &= \{ f = \sum_{n \in \mathbb{Z}} f_n x^n : |f_n|_P = 1_{\kappa(P)} \text{ or } n\geq 0\} \\
%         &= S[x].
% \end{align*}

% \noindent (4) If the defining matrix for $P$ is $\begin{pmatrix}1 & a \end{pmatrix}$, then $|t^c|_P = c$ for each $t^c\in S$ we have that $A_P \not\supseteq \CR$.

% \noindent (5) If the defining matrix for $P$ is $\begin{pmatrix}0 & \pm{1} \\ 1 & 0 \end{pmatrix}$, then $|t^c|_P = \begin{pmatrix} 0 \\ c\end{pmatrix}$ we have that $t^c\in S$ and so $A_P \not\supseteq \CR$.

% \noindent (6) If the defining matrix for $P$ is $\begin{pmatrix}1 & a \\ 0 & a' \end{pmatrix}$ then similarly to the previous case $A_P \not\supseteq \CR$.
% Thus $\vcl{\CR} = B \cap S[x] = S[x]$.\exEnd
%
\begin{enumerate}
%\noindent(1) 
\item
If $P$ is defined by $\begin{pmatrix}0 & 0 \end{pmatrix}$, then we have $\FCR_P = \FCR$.

%\noindent (2) 
\item
If $P$ is defined by $\begin{pmatrix}0 & 1 \end{pmatrix}$, then 
%$|x^2|_P = 2 > 1_{\kappa(P)}$ 
$|x^2|_P$ corresponds to $\begin{pmatrix}0 & 1 \end{pmatrix}\begin{pmatrix}
0\\2
\end{pmatrix}
=\begin{pmatrix}
2
\end{pmatrix}>_{\text{lex}}\begin{pmatrix} 0 \end{pmatrix}$, so $|x^2|_P>1_{\kappa(P)}$. Thus, $\FCR_P \not\supseteq \CR$.

%\noindent (3) 
\item
If $P$ is defined by $\begin{pmatrix}0 & -1 \end{pmatrix}$, then %$|x^n|_P = -n \leq 1_{\kappa(P)}$ and $|t^c|_P = 1_{\kappa(P)}$, for all $t^c\in S$, so $\CR \subseteq A_P$. To compute $A_P$, 
\begin{align*}
    \FCR_P &= \{ f = \sum_{n \in \mathbb{Z}} f_n x^n \,:\, |f|_P \leq 1_{\kappa(P)}\} \\
        &= \{ f = \sum_{n \in \mathbb{Z}} f_n x^n \,:\, |f_n x^n|_P \leq 1_{\kappa(P)}, \forall n\} \\
        &= \{ f = \sum_{n \in \mathbb{Z}} f_n x^n \,:\, f_n = 0_S \text{ or } n\geq 0_{\Z}\} \\
        &= S[x].
\end{align*}

%\noindent (4) 
\item
If $P$ is defined by $\begin{pmatrix}1 & a \end{pmatrix}$, then $|t^c|_P$ is given by $\begin{pmatrix}c\end{pmatrix}$ for each $t^c\in S$. So we have that $\FCR_P \not\supseteq \CR$.

%\noindent (5) 
\item
If $P$ is defined by $\begin{pmatrix}0 & \pm{1} \\ 1 & 0 \end{pmatrix}$, then $|t^c|_P$ is given by $\begin{pmatrix} 0 \\ c\end{pmatrix}$ for all $t^c\in S$ and so $R_P \not\supseteq \CR$.

%\noindent (6) 
\item
If $P$ is defined by $\begin{pmatrix}1 & a \\ 0 & a' \end{pmatrix}$ then similarly to the previous case $R_P \not\supseteq \CR$.
\end{enumerate}
Thus $\vcl{\CR} = \FCR \cap S[x] = S[x]$. In fact, using Example~\ref{ex: allIntegral-toric} and Theorem~\ref{thm: A+=AJ-can gen}, we get that $\jcl{A}=\dcl{A}=\sqcl{A}=\qcl{A}=\wqcl{A}=\vcl{A}=S[x]$.
\exEnd

\end{example}
%
%\smallskip 

% \green{
% Our next goal is to investigate the cancellative elements in $A = \base[x_1, \dots, x_n]/\Bend (J)$. More specifically, we will focus on the "principal" case, that is, let $K$ be a field and $\Mon$ a toric monoid. Given any principal ideal $I = (f')$ of $K[\Mon]$, let $\trop f' = f$ and let $J = \trop I$. 
% }

Our next goal is to investigate the cancellative elements in $A = \base[\Mon]/\Bend (J)$. We restrict our attention to the case of the tropicalization of a principal ideal. That is, we let $K$ be a field with a valuation $K\to\base$ and let $\Mon$ a toric monoid. Given any principal ideal $I = (f')$ of $K[\Mon]$, let $\trop f' = f$ and let $J = \trop I$. 

% \green{
% By Proposition~\ref{prop: generalNotCancelS} we know that there exists a non-diagonal pair of polynomials $(g,h)$, such that $f(g,h) \in \Delta$, however this pair can easily be contained in $\Bend(J)$ as shown in the next example.}

By Proposition~\ref{prop: generalNotCancelS} we know that there exists a pair $(g,h)\notin\Delta_{\base[\Mon]}$, such that $f(g,h) \in \Delta_{\base[\Mon]}$. However $(g,h)$ can easily be contained in $\Bend(J)$ as shown in the next example.

\begin{example}
    % \green{
    % Let $f=x^2+xy+y^2$, then $g = t^2x^2+t^2y^2 + xy$ and $h = t^2x^2+t^2y^2$. We can see that $(g,h) \in \Bend(f)$ since
    % %$$g = t^2x^2+t^2y^2+t^{-2}(f) \sim t^2x^2+t^2y^2+t^{-2}(x^2+y^2) = h+t^{-2}(x^2+y^2)=h.$$ 
    % $$g = t^2x^2+t^2y^2+f \sim t^2x^2+t^2y^2+(x^2+y^2) = t^2x^2+t^2y^2 = h.$$
    % \red{Nati is up to here} \blue{the previous paragraph already introduces this example}
    % This is true for all coefficients of $xy$ and $x^2, y^2$ that meet the conditions of Lemma~\ref{lem: specialfNotCancelS}. %This phenomenon happens when $f \geq F$, as then $g = h + g_mF \sim hF_{\hat{v}} + g_mF\subseteq Bend(F)$. \exEnd
    % }
    Let $f=x^2+xy+y^2$. As in the proof of Lemma~\ref{lem: specialfNotCancelS}, if we let $g=ax^2+bxy+cy^2$ and $h=ax^2+cy^2$ with $a\geq1$, $c\geq 1$, and $b\leq 1$ then $fg=fh$. That said, all such pairs $(g,h)$ are in $\Bend(f)$ because 
    $$g = ax^2+cy^2+bf \sim ax^2+cy^2+b(x^2+y^2) = ax^2+cy^2 = h.$$
    \par\nopagebreak\vspace{-1.5\baselineskip}\mbox{}
    \exEnd 
\end{example}

The pairs $(g,h)$ constructed in Lemma~\ref{lem: specialfNotCancelS} and Proposition~\ref{prop: generalNotCancelS} are not the only non-diagonal pairs with the property $fg=fh$. We illustrate this in the following two examples.

\begin{example}\label{ex: allWitnessPairs}%\blue{this example has been checked with M2 as well}
\ 
    \begin{enumerate}
        \item Let $f = 1+x+y$, $h = 1 + x^2 + xy$, and $g = 1 + x + x^2 + xy$. Direct computation shows that $fh = fg$.
        \item Let $f = 1+x+x^2+y^2$, $h_1 = 1 + x^3 + y^2$, and $g_1 = 1 + x + x^3 + y^2$. Then $fh_1 = fg_1$. Alternatively, we can consider $h_2 = 1 + x^3 + xy^2$, and $g_2 = 1 + x + x^3 + xy^2$ for which we have $fh_2=fg_2$.
%        \item Let $f = x^2+xy+y^2$, then $f = (x+y)^2$. The taking polynomials $h = x$ and $g=y$  -- \red{this is not what we want, what should it be? Probably we should not worry about these, because the quotient by the bend of binomial is some S[M] and that's not cancellative.}
    \item Let $f = x^2+x^3+y^2$. We can consider $f^2$ and obtain a pair $(g,h)$ which multiplies $f^2$ into the diagonal.  Lemma~\ref{lem: f2notcancelThenfnotcan} ensures that $f$ is not cancellative either. Take $h = x^4 + x^6 + y^4$ and $g = h + x^5$. Direct computation shows that $hf^2 = gf^2$. 
    % Notably, none of the pairs $(g, h)$ above are in the congruence $\Bend(f)$. \red{is this true? do we mean $\bend(f)?$} 
    Now let $K$ be a trivially valued field, $f' = x^2 + x^3 - y^2 \in K[x,y]$ and let $I = (f')\subseteq K[x,y]$. Note that $f = \trop f'$ and that $(g, h) \in \Bend (\trop I)$ because $g = \trop(f'(x^2 + x^3 + y^2))$. However, if we take $h = x^4 + x^6 + x^3y^2$ and $g = h + x^5$ we get a pair that is not in $\Bend (\trop I)$. %- see Nati's notes showing this is not in the bend congruence.
    %However, note that $f = \trop (x^2 + x^3 - y^2)$, then $(g, h) \in \Bend (\trop \left< x^2 + x^3 - y^2\right>)$ because $g = \trop((x^2 + x^3 - y^2)(x^2 + x^3 + y^2))$. Still, if we take $h = x^4 + x^6 + x^3y^2$ and $g = h + x^5$ we get a pair that is not in $\Bend (\trop \left< x^2 + x^3 - y^2\right>)$.  %- see Nati's notes showing this is not in the bend congruence.
    \item We can also find a pair that works directly for $f = x^2+x^3+y^2$, without having to consider $f^2$. Take $h = x^5+xy^2+y^4$ and $g = h+x^2y^2$ we see that $gf=hf$. \exEnd
    \end{enumerate} 
\end{example}

For a fixed polynomial $f \in A$ we present a general algorithm for constructing non-diagonal pairs $(g,h)$, that is, pairs $(g,h)\notin\Delta_{\base[\Mon]}$, such that $fg=fh$ in Section~\ref{sec: witness pairs}.

%Based on the examples we have computed above we make the following conjectures.
Based on computing examples, including the ones above, we make the following conjecture.

% \green{
% \begin{conjecture}%\label{conj: cancellatives}
% % \green{
% % Let $K$ be a field and $\Mon$ a toric monoid. Given any principal ideal $I \subseteq K[\Mon]$ let $J = \trop I \subseteq \base[\Mon]$, where $\base $ is a sub-semifield of $\T$. We conjecture that the only cancellative polynomials in $\base[x_1, \dots, x_n]/Bend (J)$ are monomials.
% % }

% Let $K$ be a field with valuation $K\to\base$ where $\base$ is a sub-semifield of $\T$ and let $\Mon$ a toric monoid. Given any principal (prime) ideal $I \subseteq K[\Mon]$ that does not contain a monomial or binomial%
% %
% \footnote{
% If $I$ is prime and contains a monomial $m$ then $V(I)$ is contained in the closure of a toric boundary stratum. If $I$ contains a binomial then $V(I)$ is contained in the closure of (a translate of) a proper subtorus. Either way, $V(I)$ is best viewed as being embedded in a different toric variety and so $\base[\Mon]/\Bend(\trop I)$ is best given by a different presentation.
% }%
% \footnote{
% From a congruence perspective, these assumptions mean that terms are not being set equal to $0$ or polynomials with more than one term. Thus one can still sensibly talk about terms in the quotient $\base[\Mon]/\Bend(J)$.
% },
% %
% let $J = \trop I \subseteq \base[\Mon]$. Then any element of $\base[\Mon]/\Bend(J)$ that is cancellative is the image of a term of $\base[\Mon]$.

% \end{conjecture}
% }% end of green

\begin{conjecture}\label{conj: cancellatives}

Let $K$ be a field with valuation $K\to\base$, where $\base$ is a sub-semifield of $\T$ and let $\Mon$ a toric monoid. Given any principal prime ideal $I \subseteq K[\Mon]$ that does not contain a monomial or binomial%
\footnote{
If $I$ is prime and contains a monomial $m$ then $V(I)$ is contained in the closure of a toric boundary stratum. If $I$ contains a binomial then $V(I)$ is contained in the closure of (a translate of) a proper subtorus. Either way, $V(I)$ is best viewed as being embedded in a different toric variety and so $\base[\Mon]/\Bend(\trop I)$ is best given by a different presentation.
}%
\footnote{
From a congruence perspective, these assumptions mean that terms are not being set equal to $0$ or polynomials with more than one term. Thus one can still sensibly talk about terms in the quotient $\base[\Mon]/\Bend(J)$.
},
let $J = \trop I \subseteq \base[\Mon]$. Then an element of $\base[\Mon]/\Bend(J)$ is cancellative if and only if it is the image of a term of $\base[\Mon]$.
\end{conjecture}

The ($\Leftarrow$) direction follows from Corollary~\ref{coro:ModBendVIntIsQInt}. So the open content of the conjecture is the ($\Rightarrow$) direction.
% end of pinky

\begin{remark}\label{rmk:varietyNormalization}
Let $K$, $\base$, $\Mon$, and $I$ be as in Conjecture~\ref{conj: cancellatives}. Since $I$ is prime $V(I)$ has only one component, and since $I$ does not contain a monomial, that component is not contained in the toric boundary. Thus Corollary~\ref{coro:ModBendVIntIsQInt} tells us that, if we let $A=\base[\Mon]/\Bend(\trop I)$ and $R=\Frac(A)$, then $\vcl{A}=\sqcl{A}=\qcl{A}=\wqcl{A}$.
\end{remark}

If we assume Conjecture~\ref{conj: cancellatives} then we can compute $\vcl{A}$ in concrete cases.

%\red{(Corollary~\ref{coro:ModBendVIntIsQInt} tells us that the normalization can be viewed as any of these kinds of integral closures. We should probably note this later, maybe just before Example~\ref{ex: nodalCubic-2}? Nati says: I've attempted this above in Remark~\ref{rmk:varietyNormalization}. Let me know what you think.)}\blue{looks great!}

\begin{example}\label{ex: nodalCubic-2}
    % \green{If we assume Conjecture~\ref{conj: cancellatives} then we can compute $\vcl{A}$.}
    % \green{Let $K$ be a field, $f' = x^2 + x^3 - y^2 \in K[x,y]$ which generates an ideal $I = (f')\subseteq K[x,y]$, and let $J = \trop I$. }
    Let $K$ be a field, let $f' = x^2 + x^3 - y^2 \in K[x,y]$, let $I:= (f')\subseteq K[x,y]$, and let $J = \trop I$.
    %\green{Then $f = x^2+x^3+y^2$, $A = \T[x, y]/\Bend(J)$, and $R = \Frac(A) = \T[x^{\pm}, y^{\pm}]/\Bend(J)$. }
    If we let $A = \T[x, y]/\Bend(J)$, then Conjecture~\ref{conj: cancellatives} would give us that $R = \Frac(A)$ is $ = \T[x^{\pm}, y^{\pm}]/\Bend(J)$.
%    \green{Using the computations in the Example~\ref{ex:nodal-cubic-CPL} we conclude that $$\vcl{A} = R_{P_1} \cap R_{P_2} = \T[x^n y^m : m\geq 0, m+n \geq 0]/\Bend(J) = A\big[\frac{y}{x}\big].$$}
    The same computations as in Example~\ref{ex:nodal-cubic-CPL} would then show that $$\vcl{A} = R_{P_1} \cap R_{P_2} = \T[x^n y^m : m\geq 0, m+n \geq 0]/\Bend(J) = A\big[\frac{y}{x}\big].$$
%    \green{Recall that the nodal cubic with equation $x^2-x^3+y^2$ and coordinate ring $R$ has normalization with coordinate ring $R[\frac{y}{x}]$.} 
    Recall that the nodal cubic with equation $x^2-x^3+y^2=0$ and coordinate ring $B$ has normalization with coordinate ring $B[\frac{y}{x}]$.  
    \exEnd
\end{example}

% \green{
% Based on the examples above we arrive to our second conjecture, which can be verified only after Conjecture~\ref{conj: cancellatives} is addressed. 

% \begin{conjecture}\label{conj: normalization}
%     Let $C$ be an (embedded) algebraic curve with coordinate ring $B = k[x_1, \dots, x_n]/I$ and let $A = S[x_1, \dots, x_n]/\Bend(trop(I))$ be the tropicalization of $R$. If $R=\Frac(A)$, then $\vcl{A}$, the valuative closure of $A$ in $R$, is the tropicalization of the normalization of $B$ (after choosing an appropriate embedding).
% \end{conjecture}
% }

Based on Examples~\ref{ex: cuspidalCubic} and \ref{ex: nodalCubic-2} we have the following conjecture.

\begin{conj}
Let $C$ be an irreducible affine algebraic curve with one singular point. Then there is an embedding of $C$ with ideal $I\subseteq K[x_1,\ldots,x_n]$ satisfying the following property. Let $A=\base[x_1,\ldots,x_n]/\Bend(\trop I)$, let $R=\Frac(A)$, and let $\vcl{A}$ be the valuative integral closure of $A$ in $R$. Then the integral closure of $B=K[x_1,\ldots,x_n]/I$ in its field of fractions is obtained by adjoining to $B$ lifts of the elements in $\vcl{A}\sdrop A$.
\end{conj}

If this conjecture is true, it would naturally bring up the following question.

\begin{question}
Let $C$ be an arbitrary algebraic curve. Is there an embedding of $C$ in a toric variety such that the normalization of $C$ can be seen at a tropical level by looking in each torus-invariant open affine?
\end{question}

\section{Witness Pairs}\label{sec: witness pairs}

%\blue{comments sent by email, sorry!!!}

Let $\base$ be a totally ordered semifield and let $\Mon$ be a cancellative monoid. For this section we fix an $f\in\base[\Mon]$ with more than one term. 

\begin{defi}
A pair $(g,h)$ with $g,h\in\base[\Mon]$ is a \emph{witness pair} for $f$ if $g\neq h$ but $fg=fh$.
\end{defi}

In this section, we give an algorithm that can be used to construct all witness pairs for $f$. We start by giving several reductions.

A witness pair $(g,h)$ for $f$ is \emph{increasing} if $g\leq h$ and is \emph{decreasing} if $g\geq h$.

\begin{lemma}\label{lemma:ReduceToMonotone}
Let $(g,h)$ be a witness pair for $f$. Then either $(g,h)$ is increasing, $(g,h)$ is decreasing, or there is a $\ph\in\base[\Mon]$ such that $(g,\ph)$ and $(\ph,h)$ are witness pairs for $f$ with $(g,\ph)$ increasing and $(\ph,h)$ decreasing.
\end{lemma}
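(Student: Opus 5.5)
The natural candidate is $\ph:=g+h$. It sits above both $g$ and $h$ in the natural order, so $(g,\ph)$ is increasing and $(\ph,h)$ is decreasing.

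First I check that $f$ multiplies both new pairs into the diagonal. Distributivity and idempotence give
\[
f\ph=fg+fh=fg+fg=fg,
\]
and in the same way $f\ph=fh$. Hence $f\ph=fg=fh$.

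It remains to decide when these pairs are genuine witness pairs, which requires $g\neq\ph$ and $\ph\neq h$. I split into cases.
\begin{itemize}
\item If $\ph=g$, then $h\leq g$, so $(g,h)$ is decreasing.
\item If $\ph=h$, then $g\leq h$, so $(g,h)$ is increasing.
\item Otherwise $g\neq\ph\neq h$. Then both $(g,\ph)$ and $(\ph,h)$ are witness pairs, increasing and decreasing respectively, as required.
\end{itemize}
The cases are not exclusive, which is harmless for a disjunctive statement.

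There is no real obstacle. The only subtlety is remembering that the case $\ph=g$ or $\ph=h$ must be handled separately, because a witness pair requires $g\neq h$.
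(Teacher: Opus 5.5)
Your proposal is correct and is essentially the paper's proof: both take $\ph=g+h$, use $f\ph=fg+fh=fg=fh$, and note that if $(g,h)$ is neither increasing nor decreasing then $g$, $h$, and $g+h$ are pairwise distinct. Your case split on whether $\ph=g$ or $\ph=h$ is just the contrapositive form of that same distinctness observation.
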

\begin{proof}
Suppose that $(g,h)$ is neither increasing nor decreasing. 
%Then $\ph:=g+h$, $g$, and $h$ are all distinct. 
Then $g$, $h$, and $\ph:=g+h$, are all distinct. 
Since $fg=f\ph=fh$ and $g\leq \ph\geq h$, the pairs $(g,\ph)$ and $(\ph,h)$ are as desired.
\end{proof}

%In light of Lemma~\ref{lemma:ReduceToMonotone}, it suffices to construct all decreasing witness pairs for $f$. 
Lemma~\ref{lemma:ReduceToMonotone} tells us that an arbitrary witness pair has a transitive decomposition into an increasing witness pair, a decreasing witness pair, or both. Thus, due to symmetry, it suffices to construct all decreasing witness pairs for $f$. 
For our next reduction, we introduce the following notation. Given $g=\dsum_{u\in\Mon}g_u\chi^u$ and $h=\dsum_{u\in\Mon}h_u\chi^u$ in $\base[\Mon]$, we let $\delta(g,h)$ be the number of $u\in\Mon$ for which $g_u\neq h_u$.

\begin{lemma}\label{lemma:reduceToOneCoeff}
Let $(G,H)$ be a decreasing witness pair for $f$. Then there are decreasing witness pairs $(g_1,h_1), (g_2,h_2), \ldots, (g_n,h_n)$ for $f$ with $G=g_1$, $h_{i}=g_{i+1}$ for $1\leq i\leq n-1$, and $h_n=H$, where $\delta(g_i,h_i)=1$ for $1\leq i\leq n$ and $n=\delta(G,H)$
\end{lemma}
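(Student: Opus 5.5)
Since $(G,H)$ is decreasing, we have $G\geq H$, i.e.\ $G_u\geq H_u$ for every $u\in\Mon$. Let $u_1,\ldots,u_n$ be the $n=\delta(G,H)$ exponents at which $G_u\neq H_u$. We lower these coefficients one at a time. For $0\leq i\leq n$, let $\Phi_i$ be the polynomial whose coefficient at $u_1,\ldots,u_i$ is $H_{u_j}$ and whose coefficient at every other $u$ is $G_u$. Then $\Phi_0=G$ and $\Phi_n=H$, and we set $g_i=\Phi_{i-1}$ and $h_i=\Phi_i$. By construction $G=\Phi_0\geq\Phi_1\geq\cdots\geq\Phi_n=H$ coefficientwise, since we only ever lower a coefficient from $G_u$ to $H_u\leq G_u$. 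Moreover $\Phi_{i-1}$ and $\Phi_i$ differ exactly at $u_i$, so $\delta(g_i,h_i)=1$ and in particular $g_i\neq h_i$.

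It remains to check that $fg_i=fh_i$ for each $i$, which is the only substantive step. Multiplication by $f$ is order-preserving, so $fG\geq f\Phi_{i-1}\geq f\Phi_i\geq fH$. Since $fG=fH$, all these products are equal, and in particular $f\Phi_{i-1}=f\Phi_i$. Hence each $(g_i,h_i)$ is a decreasing witness pair.

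The only potential obstacle is the sandwich argument, and it reduces to monotonicity of multiplication in an additively idempotent semiring. Here $\leq$ is the order $a\leq b\iff a+b=b$ noted in the preliminaries, which is preserved by multiplication. On $\base[\Mon]$ this order is the coefficientwise order, because addition is coefficientwise and $\base$ is totally ordered. Antisymmetry of $\leq$ then forces the chain of products to collapse to equalities. No use of total ordering beyond identifying $\leq$ with the coefficientwise order is needed, and the ordering of the $u_i$ is arbitrary.
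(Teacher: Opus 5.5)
Your proposal is correct and matches the paper's approach: the paper writes $G=H+\sum_{k=1}^n a_k\chi^{u_k}$ and takes the intermediate polynomials to be $H$ plus partial sums of these extra terms, which is your chain $\Phi_i$ up to the order in which coefficients are lowered. The witness property then follows from the same sandwich $fG\geq fg_i\geq fh_i\geq fH=fG$. As a minor remark, identifying $\leq$ on $\base[\Mon]$ with the coefficientwise order does not actually need $\base$ to be totally ordered, since addition is coefficientwise in any additively idempotent semiring.
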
\begin{proof}
Since $G\geq H$ and $\delta(G,H)=n$, we can write $G=H+\dsum_{k=1}^n a_k\chi^{u_k}$. For $1\leq i\leq n$, let $g_i=H+\dsum_{k=1}^{n+1-i}a_k\chi^{u_k}$ and $h_i=H+\dsum_{k=1}^{n-i}a_k\chi^{u_k}$. It is then routine to check that the pairs $(g_i,h_i)$ are as claimed.
%For the fact that they are witness pairs, we have $G\geq g_i\geq h_i\geq H$; mutiplying by $f$ gives $fG\geq fg_i\geq fh_i\geq fH=fG$.
\end{proof}

Note that, if $(g,h)$ is a decreasing witness pair with $\delta(g,h)=1$ then there is a $\ph\in\base[\Mon]$, a monomial $m$ not in the support of $\ph$, and $a>b$ in $\base$ such that $g=\ph+am$ and $h=\ph+bm$. As such, the following lemma gives us a further reduction.

% \blue{We do define the support of $f\in\base[\Mon]$ and this notation in the preliminaries and use it a few times throughout the paper. I have now made it a little bit more clear; see the watermelon in the preliminaries.} \red{K: looks good!}

\begin{lemma}\label{lemma:ReduceToWitnessPairs}
Let $h\in\base[\Mon]$, let $m$ be a monomial not in $\supp h$, and let $a>b$ in $\base$. Then $(h+am,h+bm)$ is a witness pair for $f$ if and only if $(h+am,h)$ is.
\end{lemma}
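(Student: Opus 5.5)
The plan is to reduce both directions to the order structure, using $h\le h+bm\le h+am$ together with the facts that multiplication by $f$ preserves $\le$ and that the order is antisymmetric. First I note that both pairs are genuinely unequal. Since $m\notin\supp h$, the coefficient of $m$ is $a$ in $h+am$, $b$ in $h+bm$, and $0_\base$ in $h$. Because $a>b\geq 0_\base$, we have $a\neq b$ and $a\neq 0$, so $h+am\neq h+bm$ and $h+am\neq h$. Hence in each direction only the equality of products needs checking.

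For ($\Leftarrow$), suppose $f(h+am)=fh$. Since $0\le b\le a$, we have $h\le h+bm\le h+am$. Multiplying by $f$ gives
$$fh\le f(h+bm)\le f(h+am)=fh,$$
so by antisymmetry $f(h+bm)=f(h+am)$. This shows that $(h+am,h+bm)$ is a witness pair.

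For ($\Rightarrow$), suppose $f(h+am)=f(h+bm)$, which expands to $fh+amf=fh+bmf$. The goal is $fh=fh+amf$, i.e.\ $amf\le fh$. Here I would exploit that $\base$ is a totally ordered semifield and $a>b$. Write $c=b/a$ when $a\neq 0$; then $c<1$, i.e.\ $c\le 1$ and $c\neq 1$. Multiplying the hypothesis by $c$ gives $cfh+bmf=cfh+cbmf$.

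The main obstacle is the coefficientwise argument that follows. The natural approach is to compare coefficients monomial by monomial, using that $\base[\Mon]$ with $\Mon$ cancellative has coefficients given by maxima in the totally ordered $\base$. Fix a monomial $\mu$ and let $F=\max_{\nu}f_\nu h_{\mu-\nu}$ be the coefficient of $\mu$ in $fh$, and $M=\max_\nu f_\nu$ taken over those $\nu$ with $\nu+m=\mu$, so that $aM$ and $bM$ are the coefficients of $\mu$ in $amf$ and $bmf$. The hypothesis reads $\max(F,aM)=\max(F,bM)$. If $M=0$ there is nothing to prove. Otherwise $aM>bM$, because $\base$ is a semifield and multiplication by the nonzero element $M$ is an order-automorphism. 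Then $\max(F,aM)=\max(F,bM)$ forces $aM\le F$, since if $aM>F$ the left side would be $aM$ while the right side is at most $\max(F,bM)<aM$. Hence $aM\le F$ for every $\mu$, which gives $amf\le fh$ and so $f(h+am)=fh$. Combined with the nondegeneracy noted above, this shows that $(h+am,h)$ is a witness pair.
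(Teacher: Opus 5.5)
Your proposal is correct and follows essentially the same route as the paper: the backward direction uses monotonicity of multiplication by $f$ (the paper instead computes $f(h+bm)=f(h+am)+fbm=f(h+am)$ directly), and the forward direction is the same monomial-by-monomial comparison, using that $\base$ is totally ordered and that $aM>bM$ for nonzero $M$. The detour through $c=b/a$ is never used and can be deleted, while your explicit check that $h+am$ differs from both $h$ and $h+bm$ is a small point the paper leaves implicit.
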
\begin{proof}
If $(h+am,h)$ is a witness pair for $f$ then $f(h+bm)=fh+fbm=f(h+am)+fbm=f(h+am+bm)=f(h+am)$, so $(h+am,h+bm)$ is also a witness pair.

For the remainder of this proof, given a polynomial $\ph$ and $u\in\Mon$ we let $\ph_u$ denote the coefficient of $\chi^u$ in $\ph$.

Now suppose that $(h+am,h+bm)$ is a witness pair for $f$. 
Then $fam\leq f(h+am)=f(h+bm)$ so, for each $u\in\Mon$, $(fam)_u$ is at most the sum of $(fh)_u$ and $(fbm)_u$. 
Because $\base$ is totally ordered, this means that $(fam)_u$ is less than or equal to one of $(fh)_u$ or $(fbm)_u$
Since $b<a$ and $m$ is a monomial, $(fam)_u$ is either $0_S$ or strictly greater than $(fbm)_u$. 
Hence $(fh)_u$ must be at least $(fam)_u$. 
This being true for all $u\in\Mon$ exactly says that $fh\geq fam$, i.e., $fh+fam=fh$.
Thus, $(h+am,h)$ is a witness pair for $f$.
\end{proof}

Since scaling by a nonzero $a\in\base$ doesn't affect whether a pair is a witness pair, we now only need to consider witness pairs of the form $(h+m,h)$ where $m$ is a monomial not in the support of $h$.

\begin{defi}
Let $m$ be a monomial of $\base[\Mon]$. An \emph{$m$-witness pair for $f$} is a witness pair $(g,h)$ 
%where 
such that 
$g=h+m$ and $m\not\in\supp h$. 
\end{defi}

\begin{example}
   In Example~\ref{ex: allWitnessPairs} (1) the pair $(g,h)$ is a $m$-witness pair for $f$ with $m = x$.% \red{(Should we consider leaving out this example?)} \blue{I am fine either way, I thought it might be nice to relate these definitions to the multiple examples we gave in the previous section}
   \exEnd
\end{example}

We now give a construction of $m$-witness pairs.

\begin{construction}\label{constr: pairsGH}
    %Let 
    Fix a monomial $m$ and let 
    $f_1,\ldots,f_k$ be the terms of $f$, so $f = \dsum_{i =1}^k f_i$. For each $i$, suppose that there is a $j_{i}\neq i$ and a term $h_i$ such that $mf_i\leq f_{j_i}h_i$
    Then let $h=\dsum_{i=1}^k h_i$ and let $g=h+m$. The output of this construction is the pair $(g,h)$.
    
    If this can be done then we say that $m$ is a \emph{\MissMon} for $f$. 
\end{construction}

The following lemma is elementary.

\begin{lemma}\label{lemma:ConstructionWitnesses}
Any pair $(g,h)$ that arises from Construction~\ref{constr: pairsGH} is an $m$-witness for $f$.
\end{lemma}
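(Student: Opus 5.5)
We need to check two things: that $g\neq h$ with $m\notin\supp h$, and that $fg=fh$.

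For $fg=fh$, since $g=h+m$ we have $fg=fh+fm$. It therefore suffices to show $fm\leq fh$, i.e.\ $fh+fm=fh$. Write $fm=\sum_{i=1}^k mf_i$. By hypothesis each $mf_i\leq f_{j_i}h_i$. The product $f_{j_i}h_i$ is one of the summands of $fh=\sum_{j,l}f_jh_l$, so $f_{j_i}h_i\leq fh$. Summing over $i$ and using that addition preserves order, together with idempotence, gives $fm\leq fh$. Hence $fg=fh$.

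For the other requirements, note first that the hypotheses must force $m\notin\supp h$. This is where I expect the only real subtlety.

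Suppose some $h_i$ has monomial $m$. Compare the monomials on the two sides of $mf_i\leq f_{j_i}h_i$: in $\base[\Mon]$ an inequality between two single terms forces equal monomials, unless the left side is $0$, which it is not. So $\operatorname{mon}(f_i)\cdot m=\operatorname{mon}(f_{j_i})\cdot\operatorname{mon}(h_i)$. If $\operatorname{mon}(h_i)=m$, cancellativity of $\Mon$ would give $\operatorname{mon}(f_i)=\operatorname{mon}(f_{j_i})$. This contradicts that $f_i$ and $f_{j_i}$ are distinct terms of $f$ with $i\neq j_i$, since distinct terms of a polynomial have distinct monomials. So $m\notin\supp h$.

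Consequently $g=h+m\neq h$, because $g$ has $m$ in its support and $h$ does not. Thus $(g,h)$ is a witness pair with $g=h+m$ and $m\notin\supp h$, i.e.\ an $m$-witness pair for $f$.
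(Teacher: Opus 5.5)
Your proof is correct and follows essentially the same route as the paper's elementary argument: $fm=\sum_i mf_i\le\sum_i f_{j_i}h_i\le fh$ gives $fg=fh$, and $j_i\neq i$ together with cancellativity of $\Mon$ rules out any $h_i$ having monomial $m$, so $m\notin\supp h$ and $g\neq h$. Your version spells out the term-comparison step that the paper only states briefly.
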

% \begin{proof}
% We have $fm=\dsum_{i=1}^k mf_i\leq\dsum_{i=1}^k f_{j_i}h_i\leq\dsum_{i=1}^k fh=fh$ so $fh=fh+fm=fg$. Since $j_i\neq i$, $h_i$ cannot be a constant multiple of $m$, so $m\notin\supp h$. \red{(Or we could say "The following lemma is elementary." and not give a proof.)} \blue{sure, we can omit the proof (just comment it out in case we need to bring it back)}
% \end{proof}

Our next lemma provides a partial converse to Lemma~\ref{lemma:ConstructionWitnesses}.

%\newcommand{\diagEl}{\delta}
%(\diagEl,\diagEl) is an element of the diagonal. Macoring this because I started using $\delta$ to mean a different thing.
\newcommand{\diagEl}{\eta}
\newcommand{\diagel}{\diagEl}
\newcommand{\dgel}{\diagEl}
\begin{lemma}\label{lem: allm-witness}
A pair is an $m$-witness for $f$ if and only if it can be written in the form $(g,h)+(\dgel,\dgel)$, where $(g,h)$ arises from Construction~\ref{constr: pairsGH} and $\dgel\in\base[\Mon]$ with $m\notin\supp(\dgel)$.
\end{lemma}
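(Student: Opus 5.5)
We prove the two directions separately. Both rest on one observation: $(G,H)$ with $G=H+m$ and $m\notin\supp H$ is an $m$-witness for $f$ if and only if $fm\le fH$. Indeed, $fG=fH+fm$, so $fG=fH$ exactly when $fm\le fH$.

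For the ($\Leftarrow$) direction, let $(g,h)$ come from Construction~\ref{constr: pairsGH}. Let $\dgel\in\base[\Mon]$ with $m\notin\supp(\dgel)$, and set $G=g+\dgel$ and $H=h+\dgel$, so that $G=H+m$.
\begin{itemize}
\item We need $m\notin\supp H$. Since $m\notin\supp(\dgel)$, it suffices that $m\notin\supp h$. Each $h_i$ satisfies $mf_i\le f_{j_i}h_i$ with $j_i\neq i$. Since $f_i$ and $f_{j_i}$ are distinct terms of $f$, they have distinct monomials, so the single term $mf_i$ lies below the single term $f_{j_i}h_i$ only if the two have the same monomial. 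Hence the monomial of $h_i$ is $m\cdot(\text{monomial of }f_i)/(\text{monomial of }f_{j_i})\neq m$, using cancellativity of $\Mon$.
\item We need $fm\le fH$. This follows from
\[
fm=\sum_i mf_i\le\sum_i f_{j_i}h_i\le fh\le fH,
\]
so $(G,H)$ is an $m$-witness.
\end{itemize}

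For the ($\Rightarrow$) direction, let $(G,H)$ be an $m$-witness, so $fm\le fH$ and $m\notin\supp H$.
\begin{itemize}
\item Fix $i$. The coefficient of the monomial of $mf_i$ in $fH$ is a finite sum of products $f_jH_\mu$ of terms. Because $\base$ is totally ordered, the coefficient of $mf_i$ is bounded by a single product, so $mf_i\le f_jH_\mu$ for some term $f_j$ of $f$ and some term $H_\mu$ of $H$.
\item We claim $j\neq i$. If $j=i$, then the monomials force $H_\mu$ to have monomial $m$, contradicting $m\notin\supp H$.
\item Set $h_i:=H_\mu$ and $j_i:=j$. These are valid choices in Construction~\ref{constr: pairsGH}, and they produce $h=\sum_i h_i$ and $g=h+m$. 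Since each $h_i$ is a term of $H$, we have $h\le H$.
\item Take $\dgel:=H$. Then $m\notin\supp(\dgel)$, $h+\dgel=H$ because $h\le H$, and $g+\dgel=H+m=G$. So $(G,H)=(g,h)+(\dgel,\dgel)$.
\end{itemize}

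The main obstacle is the coefficientwise step in the ($\Rightarrow$) direction. We must extract a single term $f_jH_\mu$ dominating $mf_i$ and rule out $j=i$; this is where total order on $\base$ and cancellativity of $\Mon$ are used, in the same way as in the proof of Lemma~\ref{lemma:ReduceToWitnessPairs}. One further point needs care: if several terms of $h$ share a monomial they simply add, and they stay at most the corresponding coefficient of $H$.
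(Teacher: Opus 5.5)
Your proposal is correct and follows the paper's proof essentially step for step. In the ($\Rightarrow$) direction you use that $\base$ is totally ordered and $\Mon$ is cancellative to extract, for each $i$, a term $h_i$ of $H$ and an index $j_i\neq i$ with $mf_i\le f_{j_i}h_i$. The only differences are cosmetic: you take $\eta=H$, whereas the paper takes $\eta$ to be the sum of the terms of $H$ not used as some $h_i$ (both work by idempotence, since $h\le H$), and you verify the ($\Leftarrow$) direction by hand, including $m\notin\supp h$, rather than citing Lemma~\ref{lemma:ConstructionWitnesses}.
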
\begin{proof}
If $(g,h)$ arises from Construction~\ref{constr: pairsGH}, then Lemma~\ref{lemma:ConstructionWitnesses} tells us that $fg=fh$ so $fg+f\dgel=fh+f\dgel$. Since $m\not\in\supp\dgel$, $g+\dgel\neq h+\dgel$ so $(g,h)+(\dgel,\dgel)$ is an $m$-witness pair.

For the other direction, let $(G, H)$ be an $m$-witness pair for $f$ %, i.e., $Gf=Hf$ and $G = H+m$. 
and let $f_1,\ldots,f_k$ be the terms of $f$ so $f=\dsum_{i=1}^k f_i$. 
Since $Hf=Gf=Hf+mf$, we have $mf \leq Hf$. 
In particular, for each term $f_i$ of $f$, there are terms $f_{j_i}$ of $f$ and $h_{i}$ of $h$ such that $mf_i\leq h_if_{j_i}$.
Note that, because $m\notin\supp H$, we must have $j_i\neq i$, for $mf_i\leq h_if_i$ and the cancellativity of monomials would give $m\leq h_i\leq H$.
Let $\mathfrak{T}$ be the set of those terms of $H$ that do not occur as an $h_i$, let $h=\dsum_{i=1}^k h_i$, and let $\diagel=\dsum_{\tau\in\mathfrak{T}}\tau$.
Letting $g=h+m$ we have that $(G,H)=(g,h)+(\diagel,\diagel)$ and that $(g,h)$ arose from Construction~\ref{constr: pairsGH}.
Since $\diagel\leq H$, $m\notin\supp\diagel$.
\end{proof}

As an immediate consequence of Lemma~\ref{lem: allm-witness}, we have the following.

\begin{coro}
A monomial $m$ is a \missmon\ for $f$ if and only if there is an $m$-witness pair for $f$.
\end{coro}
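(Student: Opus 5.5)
The corollary follows by combining the two directions already established, so the plan is to read each implication off from Lemma~\ref{lemma:ConstructionWitnesses} and Lemma~\ref{lem: allm-witness}.

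\textbf{($\Rightarrow$)} Suppose $m$ is a \missmon\ for $f$. By definition, Construction~\ref{constr: pairsGH} can be carried out for $m$: for each term $f_i$ there are $j_i\neq i$ and a term $h_i$ with $mf_i\leq f_{j_i}h_i$. Let $(g,h)$ be the output, so $h=\sum_i h_i$ and $g=h+m$. Lemma~\ref{lemma:ConstructionWitnesses} then says directly that $(g,h)$ is an $m$-witness pair for $f$. If one wants to see this by hand, two facts are needed.
\begin{enumerate}
\item The products agree: $fm=\sum_i mf_i\leq\sum_i f_{j_i}h_i\leq fh$, hence $fg=fh+fm=fh$.
\item The monomial $m$ does not lie in $\supp h$. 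Indeed, if some $h_i$ were a scalar multiple of $m$, then $mf_i\leq f_{j_i}h_i$ would compare two terms with distinct monomials $mf_i$ and $mf_{j_i}$, which is impossible unless $mf_i$ is $0$. Since $\Mon$ is cancellative, $f_i\neq f_{j_i}$ as monomials give distinct products with $m$.
\end{enumerate}

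\textbf{($\Leftarrow$)} Suppose there is an $m$-witness pair $(G,H)$ for $f$. By the forward direction of Lemma~\ref{lem: allm-witness}, we can write $(G,H)=(g,h)+(\eta,\eta)$, where $(g,h)$ arises from Construction~\ref{constr: pairsGH} applied to $m$. In particular the construction can be carried out for $m$, so $m$ is a \missmon\ for $f$ by definition.

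There is no real obstacle here. The only point that needs care is that ``arises from Construction~\ref{constr: pairsGH}'' matches exactly the condition defining a \missmon; since that definition is phrased precisely as ``this can be done'', the two notions coincide.
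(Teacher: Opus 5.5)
Your proof is correct and matches the paper's approach: the paper derives this corollary as an immediate consequence of Lemma~\ref{lem: allm-witness}. That lemma's ``if'' direction with $\eta=0$ is exactly the appeal to Lemma~\ref{lemma:ConstructionWitnesses} that you make for ($\Rightarrow$), and its ``only if'' direction is what you use for ($\Leftarrow$).
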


% \green{
% A natural question is whether there are many monomials that are missing monomials for $f$. This is the subject of the first part of our next result. First, we need a definition.

% \begin{defi}
% We say that a property $\calP$ of monomials holds for $m$ \emph{sufficiently divisible} if there is an monomial $m_0$ such that, whenever $m_0|m$, $\calP$ holds for $m$.
% \end{defi}

% \begin{lemma}\label{lemma:sufficientlyDivisibleMonomials}
% Any sufficiently divisible monomial $m$ is a \MissMon\  for $f$. Moreover, if $m$ and $m'$ are sufficiently divisible monomials then the pairs $(g',h')$ that arise from Construction~\ref{constr: pairsGH} for $m'$ are exactly those pairs of the form $\dfrac{m'}{m}(g,h)$ where $(g,h)$ is a pair that arises from Construction~\ref{constr: pairsGH} for $m$.
% \end{lemma}\begin{proof}
% Let $f_1,\ldots,f_k$ be the terms of $f$ and write $f_i=a_i\chi^{u_i}$. Let $m_0=\dprod_{i=1}^k \chi^{u_i}$ and consider any monomial $m$ that is a multiple of $m_0$. 
% Then for any $j_i\neq i$ we have $f_{j_i}|mf_i$. So we can let $h_i=\dfrac{mf_i}{f_{j_i}}$, showing that $m$ is a \missmon\ for $f$. Moreover, the terms that work as $h_i$ are exactly those of the form $a\dfrac{mf_i}{f_{j_i}}$ where $a\geq 1$ is in $\base$, which implies the second claim.
% %\red{Prove this!} If $m$ is sufficiently divisible, then there exists $f_j $ which divides $m$ and so $f_j | mf_i$ and $ mf_i \leq f_j h_i$ for some $h_i$.
% \end{proof}
% }

%%%%%%

It might seem that (running this computation) would have to be done for every monomial and so would not be computationally effective. Our next proposition tells us that we only need to do a finite computation. Recall that we have fixed $f\in\base[\Mon]$ with more than one term.

\newcommand{\Gp}{M}
\begin{prop}
Let $m_0$ be the product of the monomials that occur in $f$. Let $\Gp$ be the groupification of $\Mon$. A pair $(g,h)\in (\base[\Mon])^2$ arises from Construction~\ref{constr: pairsGH} (for some monomial) if and only if there is a monomial $\mu\in\base[\Gp]$ such that $\mu(g,h)$ arises from Construction~\ref{constr: pairsGH} for $m_0$.
\end{prop}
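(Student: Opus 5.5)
The plan is to prove the proposition by transporting Construction~\ref{constr: pairsGH} along multiplication by a Laurent monomial in $\base[\Gp]$. The Construction only involves inequalities between individual terms, and such inequalities behave rigidly. First I will record an elementary observation about terms. For nonzero terms $a\chi^u$ and $b\chi^v$ of $\base[\Gp]$, we have $a\chi^u\leq b\chi^v$ if and only if $u=v$ and $a\leq b$.

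Two consequences follow. Since $f_i\neq 0$ and $m\neq 0$, an inequality $mf_i\leq f_{j_i}h_i$ forces $h_i\neq 0$. Moreover, the monomial of $h_i$ is then determined as $m\cdot\frac{\mathrm{mon}(f_i)}{\mathrm{mon}(f_{j_i})}$. In addition, every monomial $\mu$ is cancellative in $\base[\Gp]$, so by Lemma~\ref{lemma:cancelIneqs} we get $x\leq y\iff \mu x\leq\mu y$ for $x,y\in\base[\Gp]$. Hence all the defining inequalities of the Construction are preserved and reflected by multiplication by $\mu$, where the Construction is read inside $\base[\Gp]$.

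For ($\Rightarrow$), suppose $(g,h)$ arises from Construction~\ref{constr: pairsGH} for a monomial $m$. This means $g=h+m$ with $h=\sum_i h_i$ and $mf_i\leq f_{j_i}h_i$, where $j_i\neq i$. Set $\mu:=m_0/m\in\base[\Gp]$. Then $\mu g=\mu h+m_0$ and $\mu h=\sum_i \mu h_i$, and multiplying the inequalities by $\mu$ gives $m_0f_i\leq f_{j_i}(\mu h_i)$ with the same indices $j_i$. The one point to check is that the $\mu h_i$ are genuine terms of $\base[\Mon]$, since Construction~\ref{constr: pairsGH} for $m_0$ lives in $\base[\Mon]$. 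By the observation above, the monomial of $\mu h_i$ is $m_0\,\mathrm{mon}(f_i)/\mathrm{mon}(f_{j_i})$. Because $m_0$ is the product of all monomials of $f$ and $j_i\neq i$, this equals $\mathrm{mon}(f_i)^2\prod_{k\neq i,j_i}\mathrm{mon}(f_k)$, which lies in $\Mon$. This is exactly where the choice of $m_0$ is used, and I expect it to be the only real content of this direction. Thus $\mu(g,h)$ arises from the Construction for $m_0$.

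For ($\Leftarrow$), suppose $(g,h)\in(\base[\Mon])^2$ and a monomial $\mu\in\base[\Gp]$ are such that $\mu(g,h)$ arises for $m_0$. Write $\mu h=\sum_i h'_i$ with $m_0f_i\leq f_{j_i}h'_i$ and $\mu g=\mu h+m_0$. Set $m:=\mu^{-1}m_0$ and $h_i:=\mu^{-1}h'_i$, so that $g=h+m$ and $h=\sum_i h_i$. Dividing the inequalities by $\mu$, which Lemma~\ref{lemma:cancelIneqs} allows in $\base[\Gp]$, gives $mf_i\leq f_{j_i}h_i$. It remains to see that $m$ and the $h_i$ lie in $\base[\Mon]$. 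Each $h_i$ is nonzero by the observation, and it is a term of $h\in\base[\Mon]$, up to the usual merging of terms with equal monomials, which can be absorbed by replacing $h_i$ with the full term of $h$ on that monomial; that replacement only increases the right-hand sides. For $m$, note that $m$ is not in the support of $h$, since the defining inequality with $j_i\neq i$ prevents $m$ from coinciding with any $h_i$'s monomial; this is as in the proof of Lemma~\ref{lem: allm-witness}, using cancellativity of monomials. So $m$ appears in $g=h+m\in\base[\Mon]$, hence $m\in\Mon$, and $(g,h)$ arises from the Construction for $m$.

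The main obstacle is this bookkeeping: ensuring that terms and monomials obtained by dividing or multiplying by $\mu$ land back in $\Mon$ rather than only in $\Gp$. The plan handles it through the rigidity of term inequalities together with the divisibility built into $m_0$ in one direction, and membership in the supports of $g$ and $h$ in the other.
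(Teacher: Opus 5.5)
Your proof is correct, and the reverse direction is essentially the paper's. The paper also observes that $\mu^{-1}m_0$ and each $\mu^{-1}h_i$ lie below $g$ (resp.\ $h$), so their monomials are in $\Mon$.

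Two steps there are unnecessary. Since $h_i\le h$ and $m\le g$ with $h_i$ and $m$ nonzero terms, their monomials already lie in $\supp(h)$ and $\supp(g)$, both contained in $\Mon$. So there is no need to merge terms, and no need to check $m\notin\supp(h)$.

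Your forward direction is organized differently from the paper's. The paper does not multiply by $m_0/m$ directly. It first passes up to $m'=m_0m$ using Claim~2, which scales by a genuine monomial of $\Mon$, so the result trivially stays in $\base[\Mon]$. It then comes down to $m_0$ using Claim~3: for monomials divisible by $m_0$, every choice of $j_i\neq i$ is admissible, so the Construction outputs for two such monomials are exact Laurent-monomial translates of each other.

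Your explicit computation $\mathrm{mon}(\mu h_i)=\mathrm{mon}(f_i)^2\prod_{k\neq i,j_i}\mathrm{mon}(f_k)\in\Mon$ uses the same divisibility fact, $\mathrm{mon}(f_{j_i})\mid m_0/\mathrm{mon}(f_i)$ for $j_i\neq i$, but applies it once and directly. Your argument is therefore shorter and self-contained. The paper's detour buys intermediate facts of independent interest: $m_0$ is a missing monomial, being a missing monomial passes to multiples, and above $m_0$ the Construction is translation-invariant.
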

\begin{proof}
%Let $m$ and $m'$ be monomials of $\base[\Mon]$. 
Let $m$ and $m'$ be monomials of $\base[\Mon]$ and let $f_1,\ldots,f_k$ be the terms of $f$.
We proceed to prove a series of claims.

Claim 1: $m$ is a \missmon\ for $f$ if and only if, for each $i\in\{1,\ldots,k\}$ there is a $j_i\neq i$ such that $f_{j_i}|mf_i$. In this case, the terms that work as $h_i$ are those of the form $a_i\dfrac{mf_i}{f_{j_i}}$ for $a\geq 1$ in $\base$ and $j_i\neq i$ with $f_{j_i}|mf_i$.

Proof of Claim 1: This follows immediately from the details of Construction~\ref{constr: pairsGH}.

Claim 2: If $m|m'$ and $(g,h)$ arises from Construction~\ref{constr: pairsGH} for $m$ then $\dfrac{m'}{m}(g,h)$ occurs from Construction~\ref{constr: pairsGH} for $m'$. In particular, if $m$ is a missing monomial for $f$ and $m|m'$ then $m'$ is a missing monomial for $f$. 

Proof of claim 2:
%To see this, suppose $(g,h)$ arises from Construction~\ref{constr: pairsGH} for $m$, so for each $i=1,\ldots,k$ we have a $j_i\neq i$ and a term $h_i$ such that $mf_i\leq f_{j_i}h_i$, $h=\dsum_{i=1}^k h_i$, and $g=h+m$. Multiplying through by $\dfrac{m'}{m}$, we get $m'f_i=\dfrac{m'}{m}mf_i\leq\dfrac{m'}{m}f_{j_i}h_i=f_{j_i}\left(\dfrac{m'}{m}h_i\right)$, $\dfrac{m'}{m}h=\dsum_{i=1}^k \dfrac{m'}{m}h_i$, and $\dfrac{m'}{m}g=\dfrac{m'}{m} h + \dfrac{m'}{m}m = \left(\dfrac{m'}{m} h\right) + m'$. So the pair $\dfrac{m'}{m}(g,h)$ arises from Construction~\ref{constr: pairsGH} for $m'$. This completes the proof of claim 1.
This follows from Claim 1 and the fact that, in applying Construction~\ref{constr: pairsGH} for $m$, we have $g=h+m$.

Claim 3: 
The monomial $m_0$ is a missing monomial for $f$.
Moreover, if $m$ and $m'$ are both divisible by $m_0$ then the pairs $(g',h')$ that arise from Construction~\ref{constr: pairsGH} for $m'$ are exactly those pairs of the form $\dfrac{m'}{m}(g,h)$ where $(g,h)$ is a pair that arises from Construction~\ref{constr: pairsGH} for $m$.

Proof of claim 3: 
%Say $m_0|m$ and $i\in\{1,\ldots,k\}$. Then for any $j_i\neq i$ we have $f_{j_i}|mf_i$, so we can let $h_i=\dfrac{mf_i}{f_{j_i}}$. Since we can do this for $m=m_0$, we see that $m$ is a \missmon\ for $f$. Moreover, the terms that work as $h_i$ are exactly those of the form $a\dfrac{mf_i}{f_{j_i}}$ where $a\geq 1$ is in $\base$, which implies the part of the claim about $m$ and $m'$.
This follows from Claim 1 because, if $m_0|m$ then, for every $j_i\neq i$, we have $f_{j_i}|m$ and so $f_{j_i}|mf_i$.

Claim 4: If $(g,h)$ arises from Construction~\ref{constr: pairsGH} for $m$ then $\dfrac{m_0}{m}(g,h)$ arises from Construction~\ref{constr: pairsGH} for $m_0$.

Proof of Claim 4: Let 
%$m'=m_0\cdot m$ \orange{do you want the cdot here, it is the only place you use it for multiplication}\blue{(it just seemed to me that $m_0m$ would look weird. But if you prefer it without the cdot, I'm okay with that.)}\red{it is not that weird so let's do that}, 
$m'=m_0m$,
so $m|m'$ and $m_0|m'$.  So Claim 2 tells us that $\dfrac{m'}{m}(g,h)$ arises from Construction~\ref{constr: pairsGH} for $m'$, and thus Claim 3 tells us that $\dfrac{m_0}{m'}\dfrac{m'}{m}(g,h)=\dfrac{m_0}{m}(g,h)$ arises from Construction~\ref{constr: pairsGH} for $m_0$. This proves Claim 4.

Claim 4 gives us the $(\Rightarrow)$ direction of the proposition. For the other direction, suppose that $(g,h)\in(\base[\Mon])^2$ and there is a monomial $\mu\in\base[M]$ such that $(g_0,h_0):=\mu(g,h)$ arises from Construction~\ref{constr: pairsGH} for $m_0$. 
% Write $\mu=\dfrac{m'}{m}$ with $m,m'\in\base[\Mon]$ monomials and $\gcd(m,m')=1$. Then the fact that 
% %$\dfrac{m}{m'}(g_0,h_0)=\mu^{-1}(g_0,h_0)=(g,h)$
% $\dfrac{m}{m'}g_0=\mu^{-1}g_0=g\in\base[\Mon]$
% gives us that $m'$ divides $g_0$ in $\base[\Mon]$. 
% Since $m_0$ is a term of $g$, $m'|m_0$; let $m'':=\dfrac{m_0}{m'}$, which is a term of $\base[\Mon]$.
% We have that $\dfrac{m'' f_i}{f_{j_i}}=$
Since
$\mu^{-1}g_0=g\in\base[\Mon]$
and $m_0$ is a term of $g_0$, we have that $\mu^{-1}m_0$ is a term in $\base[\Mon]$. 
Let $h_1,\ldots,h_k$ be the terms constructed in using Construction~\ref{constr: pairsGH} to produce $(g_0,h_0)$.
Since each $h_i$ ($1\leq i\leq k$) is a term of $g_0$, $\mu^{-1}h_i$ is a term in $\base[\Mon]$. By Claim 1, $h_i=a_i\dfrac{m_0f_{i}}{f_{j_i}}$ for some $j_i\neq i$ with $f_{j_i}|m_0f_{i}$ and $a_i\geq 1$ in $\base$.
We have $\mu^{-1}h_i=a_i\dfrac{(\mu^{-1}m_0)f_i}{f_{j_i}}$, so $f_{j_i}|(\mu^{-1}m_0)f_i$. Thus $h=\mu^{-1}h_0=\dsum_{i=1}^k a_i\dfrac{(\mu^{-1}m_0)f_i}{f_{j_i}}$ and $g=\mu^{-1}g_0=h+\mu^{-1}m_0$ form a pair $(g,h)$ that arises from Construction~\ref{constr: pairsGH} for $\mu^{-1}m_0$.
\end{proof}
% end of pinky
%%%%%%

We are now ready to put together the many reductions of this section with Construction~\ref{constr: pairsGH}.

\begin{thm}\label{thm:constructAllWitnessPairs}
Consider the following process.
\begin{enumerate}
\item
For each monomial $m$ in $\base[\Mon]$, let $X^{(1)}_m$ denote the set of those pairs that arise from Construction~\ref{constr: pairsGH} for $m$.

\item 
Let $X^{(2)}_m$ be the set of sums $(g,h)+(\theta,\theta)$ where $(g,h)\in X^{(1)}_m$ and the coefficient of $m$ in $\theta$ is strictly less than $1_{\base}$. Then let $X^{(2)}$ denote the union of $X^{(2)}_m$ over all $m$.

\item Let $X^{(3)}$ be the set of all nonzero constant multiples of pairs in $X^{(2)}$.

\item Let $X^{(4)}$ be the transitive closure of $X^{(3)}$, i.e., the set of those pairs $(G,H)$ for which there are $n\geq 1_{\Z}$ and $(g_1,h_1),(g_2,h_2),\ldots,(g_n,h_n)\in X^{(3)}$ such that $G=g_1$, $h_i=g_{i+1}$ for $1\leq i\leq n-1$ and $h_n=H$.

\item Let $X^{(5)}$ be the set of those pairs of the form $(g,h)$ or $(h,g)$ for $(g,h)\in X^{(4)}$, or of the form $(h,h')$ for some $(g,h),(g,h')\in X^{(4)}$ with $h\neq h'$.
\end{enumerate}

Then $X^{(5)}$ is the set of witness pairs for $f$.
\end{thm}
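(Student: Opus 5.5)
The plan is to prove the two inclusions separately: first that every pair in $X^{(5)}$ is a witness pair for $f$, then that every witness pair lands in $X^{(5)}$. The second inclusion just runs the reductions of this section in order.

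\emph{Soundness.}
\begin{enumerate}
\item By Lemma~\ref{lemma:ConstructionWitnesses}, each $(g,h)\in X^{(1)}_m$ is an $m$-witness pair. So $fg=fh$, $g=h+m$ and $m\notin\supp h$.
\item For $(g,h)+(\theta,\theta)\in X^{(2)}_m$, we still have $f(g+\theta)=f(h+\theta)$. To see that the two entries differ, compare coefficients of $m$. In $g+\theta$ the coefficient is $1_\base+\theta_m=1_\base$, since $\theta_m<1_\base$. In $h+\theta$ it is $\theta_m<1_\base$, since $m\notin\supp h$. So $X^{(2)}$ consists of witness pairs, and they are decreasing because $g\geq h$.
\item Multiplying by a nonzero constant $c\in\base$ preserves both $fg=fh$ and $g\neq h$, since $c$ is invertible. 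Hence $X^{(3)}$ consists of decreasing witness pairs.
\item For a chain in $X^{(4)}$ we get $fG=fg_1=fh_1=\cdots=fh_n=fH$ and $G=g_1\geq h_1\geq\cdots\geq h_n=H$. If $G=H$, antisymmetry of $\leq$ would force $g_1=h_1$, which is a contradiction. So $X^{(4)}$ consists of decreasing witness pairs.
\item Reversing a witness pair gives a witness pair. If $(g,h),(g,h')\in X^{(4)}$ with $h\neq h'$, then $fh=fg=fh'$, so $(h,h')$ is a witness pair. This proves $X^{(5)}\subseteq\{\text{witness pairs}\}$.
\end{enumerate}

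\emph{Completeness, decreasing case.} Let $(G,H)$ be a decreasing witness pair.
\begin{enumerate}
\item Lemma~\ref{lemma:reduceToOneCoeff} writes it as a chain of decreasing witness pairs $(g_i,h_i)$ with $\delta(g_i,h_i)=1$. So it suffices to show each link lies in $X^{(3)}$.
\item Such a link has the form $(\ph+am,\ph+bm)$, where $m\notin\supp\ph$ and $a>b$ (possibly $b=0_\base$).
\item By Lemma~\ref{lemma:ReduceToWitnessPairs}, $(\ph+am,\ph)$ is a witness pair. Hence $(a^{-1}\ph+m,\,a^{-1}\ph)$ is an $m$-witness pair.
\item By Lemma~\ref{lem: allm-witness}, this pair equals $(g,h)+(\eta,\eta)$, where $(g,h)\in X^{(1)}_m$ and $m\notin\supp\eta$.
\item Put $\theta=\eta+\tfrac{b}{a}m$. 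Its coefficient of $m$ is $b/a<1_\base$, so $(g,h)+(\theta,\theta)\in X^{(2)}_m$.
\item Since $m\in\supp g$ and $m\notin\supp h$, we get $g+\theta=a^{-1}\ph+m$ and $h+\theta=a^{-1}\ph+\tfrac{b}{a}m$. Therefore $a\big((g,h)+(\theta,\theta)\big)=(\ph+am,\ph+bm)$, which lies in $X^{(3)}$. Hence $(G,H)\in X^{(4)}$.
\end{enumerate}

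\emph{Completeness, general case.} Apply Lemma~\ref{lemma:ReduceToMonotone}.
\begin{enumerate}
\item An increasing witness pair is the reverse of a decreasing one, so it lies in $X^{(5)}$ by the decreasing case.
\item Otherwise, with $\ph=G+H$, both $(\ph,G)$ and $(\ph,H)$ are decreasing witness pairs, so both lie in $X^{(4)}$. Since $G\neq H$, the pair $(G,H)$ has the third form allowed in $X^{(5)}$.
\end{enumerate}

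The main obstacle I anticipate is the coefficient bookkeeping in steps 5 and 6 of the decreasing case. One must check that the $m$-coefficient of $\theta$ is strictly below $1_\base$, that $\theta$ recombines correctly with $g$ and $h$, and that the case $b=0_\base$ is allowed. This uses $\base$ totally ordered, exactly as in Lemma~\ref{lemma:ReduceToWitnessPairs}.
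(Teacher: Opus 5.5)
Your proof is correct and follows essentially the same route as the paper, which identifies each $X^{(i)}$ exactly using Lemmas~\ref{lem: allm-witness}, \ref{lemma:ReduceToWitnessPairs}, \ref{lemma:reduceToOneCoeff}, and \ref{lemma:ReduceToMonotone}. You split that identification into separate soundness and completeness inclusions. Your choice $\theta=\eta+\tfrac{b}{a}m$ is just an explicit form of the paper's observation that $(g,h)+(am,am)=(h+m,h+am)$ for $a<1_{\base}$.
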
\begin{proof}
Consider $(g,h)\in X^{(1)}_m$ and a polynomial $\theta$ such that the coefficient of $m$ in $\theta$ is strictly less than $1_{\base}$. 
Then $\theta$ is an arbitrary polynomial of the form $\theta=\diagel+am$ where $m\notin\supp\diagel$ and $a<1_{\base}$. 
So letting $X^{(1.5)}_m$ be the set of those pairs $(g,h)+(\diagel,\diagel)$ with $(g,h)\in X^{(1)}_m$ and $m\notin\supp\diagel$ we have that $X^{(2)}_m$ is the set of those pairs of the form $(g',h')+(am,am)$ with $(g',h')\in X^{(1.5)}_m$ and $a<1_{\base}$.

Lemma~\ref{lem: allm-witness} tells us that $X^{(1.5)}_m$ is the set of all $m$-witness pairs for $f$. 
Note that if $(g,h)\in X^{(1.5)}_m$ and $a<1_{\base}$ then $(g,h)+(am,am)=(h+m+am,h+am)=(h+m,h+am)$. So, by Lemma~\ref{lemma:ReduceToWitnessPairs}, $X^{(2)}_m$ is the set of those decreasing witness pairs $(g,h)$ such that $g$ and $h$ differ only in their coefficient of $m$ and the coefficient of $m$ in $g$ is $1_{\base}$. 
Thus $X^{(3)}$ is the set of all decreasing witness pairs $(g,h)$ where $g$ and $h$ differ only in the coefficient of a single monomial.
Lemma~\ref{lemma:reduceToOneCoeff} now tells us that $X^{(4)}$ is the set of all decreasing witness pairs for $f$, and so Lemma~\ref{lemma:ReduceToMonotone} tells us that $X^{(5)}$ is the set of all witness pairs for $f$. 
\end{proof}

\begin{coro}\label{coro:witnessCongruence}
Let $X$ be the set of all witness pairs for $f$ and let $\calc$ be the congruence on $\base[\Mon]$ generated by all of the pairs that arise from Construction~\ref{constr: pairsGH}. Then $X\cup\Delta_{\base[\Mon]}=\calc$.
\end{coro}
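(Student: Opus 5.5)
We prove the two inclusions separately. For $X\cup\Delta_{\base[\Mon]}\subseteq\calc$ there is nothing to show for $\Delta$. For a witness pair, Theorem~\ref{thm:constructAllWitnessPairs} says it lies in $X^{(5)}$. So it suffices to check that each of $X^{(1)},\dots,X^{(5)}$ lies inside $\calc$, stage by stage.

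\begin{itemize}
\item $X^{(1)}$ is, by definition, contained in the generating set of $\calc$.
\item Congruences are closed under adding diagonal pairs: if $(g,h)\in\calc$ then $(g+\theta,h+\theta)\in\calc$. Hence $X^{(2)}\subseteq\calc$.
\item Congruences are closed under multiplication by constants, so $X^{(3)}\subseteq\calc$.
\item Congruences are transitive, so $X^{(4)}\subseteq\calc$.
\item Congruences are symmetric, which handles the pairs $(h,g)$ in $X^{(5)}$. A pair $(h,h')$ with $(g,h),(g,h')\in X^{(4)}$ is handled by symmetry and then transitivity through $g$.
\end{itemize}

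This gives $X\subseteq\calc$.

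For the reverse inclusion $\calc\subseteq X\cup\Delta$, set $\mathcal{W}:=X\cup\Delta_{\base[\Mon]}=\{(g,h)\,:\,fg=fh\}$. This equality holds because a witness pair is exactly a pair with $fg=fh$ and $g\neq h$, while diagonal pairs satisfy $fg=fh$ trivially. So $\mathcal{W}$ is the congruence-kernel of the semiring map $\varphi\mapsto f\varphi$, viewed as a congruence of $\base[\Mon]$. Checking this directly:

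\begin{itemize}
\item $\mathcal{W}$ is reflexive, symmetric and transitive because it is defined by an equality.
\item It respects addition, since $fg=fh$ and $fg'=fh'$ give $f(g+g')=f(h+h')$.
\item It respects multiplication, since $fg=fh$ gives $f(gk)=f(hk)$, and together with the previous point this yields $f(gg')=f(hh')$ by combining $(gg',hg')$ and $(hg',hh')$ via transitivity.
\end{itemize}

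Lemma~\ref{lemma:ConstructionWitnesses} shows every generator of $\calc$ satisfies $fg=fh$, so the generators lie in $\mathcal{W}$. Since $\mathcal{W}$ is a congruence and $\calc$ is the smallest congruence containing these generators, $\calc\subseteq\mathcal{W}$.

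All of the real work sits in Theorem~\ref{thm:constructAllWitnessPairs}, so the argument is essentially bookkeeping. The only point needing a moment's care is the ``$(h,h')$'' clause of $X^{(5)}$, and symmetry plus transitivity handles it. No obstacle is expected.
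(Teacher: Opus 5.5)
Your proof is correct and matches the paper's: $\calc\subseteq X\cup\Delta_{\base[\Mon]}$ follows because $\{(g,h)\,:\,fg=fh\}$ is a congruence containing the pairs from Construction~\ref{constr: pairsGH}, and the reverse inclusion follows because every stage of Theorem~\ref{thm:constructAllWitnessPairs} uses only congruence-preserving operations, which you spell out stage by stage where the paper calls it routine. One small wording slip: $\varphi\mapsto f\varphi$ is a $\base[\Mon]$-module map rather than a semiring map, but you verify the congruence axioms directly, so this does not affect the argument.
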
\begin{proof}
Let $Y$ be the set of those pairs that arise from Construction~\ref{constr: pairsGH}.

It is routine to check that $X\cup\Delta_{\base[\Mon]}=\{(g,h)\in(\base[\Mon])^2\,:\,fg=fh\}$ is a congruence and contains 
%all pairs that arise from Construction~\ref{constr: pairsGH}, 
$Y$
so $X\cup\Delta_{\base[\Mon]}\supseteq\calc$. %\red{K: do you want to decorate the diagonal with a subscript as in the statement of the proposition} \blue{Nati: I was thinking that within this proof, it is clear what diagonal we are talking about. But we could definitely add those subscripts in.}

Since $\calc$ is a congruence, $\Delta\subseteq \calc$. So we only need to show $X\subseteq \calc$. But $Y\subseteq\calc$ by definition and Theorem~\ref{thm:constructAllWitnessPairs} tells us that if we start with $Y$ and only do operations that preserve being in a congruence, we get all of $X$.
\end{proof}

One way to approach Conjecture~\ref{conj: cancellatives} would be to show that if $f\in\base[\Mon]$ has more than one term then there is a pair $(g,h)\notin\Bend(J)$ such that $fg=fh$ in $\base[\Mon]$, i.e., there is a witness pair for $f$ that is not in $\Bend(J)$. 
Corollary~\ref{coro:witnessCongruence} tells us that this is equivalent to the existence of a pair that arises from Construction~\ref{constr: pairsGH} but is not contained in $\Bend(J)$.
%end of pinky 10

% In particular, the above lemma gives a sufficient condition for checking if $f$ is not cancellative in the quotient by the bend congruence of a tropical ideal $I$, namely, it is enough to check that none of the $m$-witness pairs that it gives rise to (as per Lemma~\ref{lem: wit-to-M-wit}) are in $\Bend(I)$.

% %We now consider when a pair of the form $(g,h)$ with $g=h+m$ and $m\notin\supp(h)$ can be in the bend congruence of a tropical ideal.

% %------------------------------------------------------------------------------

\appendix
%\section{ Counting circuits }
%\section{Degree bounds on circuits }
\section{Bounds on circuits} 
\label{app: countingCircuits}

In this appendix, we give a computational approach to Conjecture~\ref{coro:witnessCongruence}. Using as a starting point the discussion at the end of Section~\ref{sec: witness pairs}, we want to show that there are pairs $(g,h)$ that arise from Construction~\ref{constr: pairsGH} that are not in $\Bend(\trop I)$. We begin by characterizing when such a pair is in $\Bend(\trop I)$.

\begin{prop}\label{prop:thisPairInBend?}
Let $\base$ be a subsemifield of $\T$, let $v:K\to\base$ be a valuation on a field $K$, and let $\Mon$ be a toric monoid. 
Let $I\subseteq K[\Mon]$ be an ideal.
Let $h\in\base[\Mon]$, let $m$ be a monomial not in the support of $h$, and let $g=h+m$.
Then $(g,h)\in\Bend(\trop I)$ if and only if there is a $G\in I$ in which $m$ occurs with coefficient $1_K$ such that $\trop(G)\leq g$.
\end{prop}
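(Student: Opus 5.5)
The plan is to prove the two directions separately, using Lemma~\ref{lemma:strongMR} for the forward direction and a single bend relation for the backward one.

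\emph{Backward direction.} Suppose $G\in I$ has coefficient $1_K$ at $m$ and $\trop(G)\leq g$. Then $\trop(G)$ has coefficient $1_{\base}$ at $m$. Since $g=h+m$ and $m\notin\supp h$, every other term of $\trop(G)$ is bounded by the corresponding term of $h$, so $\trop(G)_{\what{m}}\leq h$. Thus we have $g=h+\trop(G)$ and $h=h+\trop(G)_{\what{m}}$. The bend relation $\trop(G)\sim\trop(G)_{\what{m}}$, with $H=h$ added, lies in $\Bend(\trop I)$. This gives $(g,h)\in\Bend(\trop I)$.

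\emph{Forward direction.} Suppose $(g,h)\in\Bend(\trop I)$; this pair is non-diagonal. Apply Lemma~\ref{lemma:strongMR} to get a chain $g=F_0\sim F_1\sim\cdots\sim F_n=h$. Since $g$ and $h$ differ only in the coefficient of $m$, Remark~\ref{rmk:lengthOfTransitiveChain} gives $n=1$, so $g\sim h$ is a single step. It has the form $a\trop(h_1)+H\sim a\trop(h_1)_{\what u}+H$ or its reverse, with $h_1\in I$, $a\in\base^\times$ and $u\in\supp(h_1)$. Because the two sides differ exactly at $m$, we must have $u=m$. Because $g>h$, the side containing the $m$-term of $a\trop(h_1)$ is $g$, so $g=a\trop(h_1)+H$. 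Then $a\cdot v(\text{coeff of }m\text{ in }h_1)$ equals the coefficient of $m$ in $g$, which is $1_{\base}$; here $H$ cannot be what supplies the $m$-coefficient, since otherwise both sides would agree at $m$.

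Now choose $c\in K$ with $v(c)=a$ times the correct normalization, namely $c=1/(\text{coeff of }m\text{ in }h_1)$, and set $G=c\,h_1\in I$. Then $G$ has coefficient $1_K$ at $m$, and $\trop(G)=v(c)\trop(h_1)$. We also have $v(c)=a$, because both $v(c)$ and $a$ send the $m$-coefficient to $1$. Therefore $\trop(G)=a\trop(h_1)\leq a\trop(h_1)+H=g$.

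The main obstacle is the case analysis in the single-step reduction. We must confirm that the deleted index is $m$, and that the orientation of the step places the full $\trop(h_1)$ on the $g$ side. Both facts follow from comparing the $m$-coefficients, using that $h$ has zero coefficient at $m$ and $g$ has coefficient $1$. We also need that the $m$-coefficient of $h_1$ is nonzero, which holds since $m=u\in\supp(h_1)$.
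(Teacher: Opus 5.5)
Your proof is correct and follows essentially the same route as the paper. The backward direction adds $(h,h)$ to the bend relation $(\trop(G),\trop(G)_{\what{m}})$, and the forward direction uses Lemma~\ref{lemma:strongMR} with Remark~\ref{rmk:lengthOfTransitiveChain} to reduce to a single bend step and then rescales by the inverse of the $m$-coefficient. The only cosmetic difference is that the paper gets $m\notin\supp(H)$ directly from $H\leq h$, which is slightly cleaner than your argument comparing the $m$-coefficients of the two sides.
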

\begin{proof}
($\Rightarrow$) 
Since $\delta(g,h)=1$ and $(g,h)$ is decreasing, 
Lemma~\ref{lemma:strongMR} and Remark~\ref{rmk:lengthOfTransitiveChain} tell us that there is an $a\in\base^\times$, $\frakf\in I$ with $m\in\supp(\frakf)$, 
and $H\in\base[\Mon]$ such that $g=a\trop(\frakf)+H$ and $h=a(\trop(\frakf))_{\what{m}}+H$. 
Since $m\notin\supp(h)$ and $H\leq h$, $m\notin\supp(H)$. Thus the coefficient of $m$ in $g=a\trop(\frakf)+H$ equals the coefficient of $m$ in $a\trop(\frakf)$. Letting $\beta$ be the coefficient of $m$ in $\frakf$, this says that $1_{\base}=a\cdot v(\beta)$. So, letting $G:=\beta^{-1}\frakf\in I$, the coefficient of $m$ in $G$ is $\beta^{-1}\beta=1_K$ and we have $\trop(G)=a\trop(\frakf)\leq a\trop(\frakf)+H=g$.

($\Leftarrow$) In this case we have $\trop(G)_{\what{m}}\leq h$, so 
\begin{align*}
(g,h)&=(h+m,h)\\
&=
(h+\trop(G)_{\what{m}}+m,h+\trop(G)_{\what{m}})\\
&=
(h,h)+(\trop(G),\trop(G)_{\what{m}})\in\Bend(\trop I).
\end{align*}
\par\nopagebreak\vspace{-1.4\baselineskip}\mbox{}
\end{proof}
% end of pinky 1

Putting together the discussion after Corollary~\ref{coro:witnessCongruence} with Proposition~\ref{prop:thisPairInBend?}, 
%we \red{aim?} 
to show that the image of a polynomial $f\in\base[\Mon]$ %\red{$\in \base[\Mon]$?} 
is not cancellative in $\base[\Mon]/\Bend(\trop I)$ it suffices to find a pair $(g,h)$ arising from Construction~\ref{constr: pairsGH} for some monomial $m$ such that there is no $G\in I$ in which $m$ occurs with coefficient $1_K$ such that $\trop(G)\leq g$. 
In particular, it suffices to do this restricted to the case where $G\in I$ has minimal support. Given such a $G$, $\trop(G)$ is a \emph{valuated circuit} in the \emph{valuated matroids} given by the \emph{tropical ideal} $\trop I$; see \cite[Section 4, the second paragraph after Definition 4.1]{MR14} and \cite{MR18} for the relevant definitions.
Note that we only need to consider those $\trop(G)$ which have at most as many terms as $g$.

Since the tropical ideal $\trop I$ is not finitely generated, one may wonder whether there is any hope for being able to check this computationally. Indeed, there are examples where there are infinitely many valuated circuits of the same size, all of which include the same monomial $m$.

\begin{example}
Let $K$ be a field of characteristic $p$, and let $f$ be a polynomial in $K[\Mon]$ with nonzero constant term and let $I$ be the ideal generated by $f$. Then, for any $n$, $f^{p^n}\in I$ has the same support size as $f$ and has $1\in\supp(f^{p^n})$. 
%\red{(technically there is another part here: so there is some minimal $g_n$ with $1\in\supp(g_n)\subseteq\supp(f^{p^n})$ and this $g_n$ has support size $\leq|\supp(f)|$. Since there are only finitely many options for the support size and infinitely many $g_n$, infinitely many of them have the same size.) (Should we include this, leave it out, or make some brief mention?)}
\exEnd
\end{example}

\begin{example}
Let $K$ be any field, let $f=(1+x)(1+y)=1+x+y+xy\in K[x,y]$, and let $I$ be the ideal generated by $f$. A consideration of Newton polytopes shows that $f$ does not contain any monomials, binomials, or trinomials. So $f\cdot\left(\dsum_{i=0}^n(-1)^ix^i\right)=1+y+(-1)^{n}x^{n+1}+(-1)^nx^{n+1}y$ in $I$ gives a valuated circuit whose support has size $4$ and contains $1$.
\exEnd
\end{example}
% end of pinky 2

We now simplify the problem further. To check that there is no $G\in I$ in which $m$ occurs with coefficient $1_K$ such that $\trop(G)\leq g$, it is enough to show that there is no $G\in I$ with $m\in\supp(G)\subseteq\supp(g)$. As before, it suffices to consider such $G$ with minimal support. In this case, $\supp(G)$ is a \emph{circuit} of the \emph{underlying matroids} of the valuated matroids of $\trop(I)$; see \cite[Section 4, the second paragraph after Definition 4.1]{MR14}. 
For any tropical ideal $J$, we will say that $C$ is a \emph{circuit of $J$} if it is a circuit of the underlying matroid of a valuated matroid of $J$. Similarly, we will talk about a circuit of an ideal in $K[\Mon]$. Thus we have shown the following corollary.

\begin{coro}\label{coro:SuppConditionForNotCancellative}
Let $\base$ be a subsemifield of $\T$, let $v:K\to\base$ be a valuation on a field $K$, and let $\Mon$ be a toric monoid. 
Let $I\subseteq K[\Mon]$ be an ideal and fix $f\in\base[\Mon]$. If there is a pair $(g,h)$ that arises from Construction~\ref{constr: pairsGH} for $f$ such that $\supp(g)$ does not contain any circuit of $I$, then the image of $f$ in $\base[\Mon]/\Bend(\trop I)$ is not cancellative. 
\end{coro}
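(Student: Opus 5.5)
The plan is to chain Lemma~\ref{lemma:ConstructionWitnesses} (the pair is a witness pair in $\base[\Mon]$) with Proposition~\ref{prop:thisPairInBend?} (a criterion for membership in $\Bend(\trop I)$). Fix a pair $(g,h)$ arising from Construction~\ref{constr: pairsGH} for $f$ and some monomial $m$, with $\supp(g)$ containing no circuit of $I$.

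First, Lemma~\ref{lemma:ConstructionWitnesses} tells us that $(g,h)$ is an $m$-witness pair for $f$. So $fg=fh$ in $\base[\Mon]$, $g=h+m$ and $m\notin\supp(h)$. Let $\pi:\base[\Mon]\to\base[\Mon]/\Bend(\trop I)$ be the quotient map. Since $\pi$ is a homomorphism, $\pi(f)\pi(g)=\pi(f)\pi(h)$. Hence it suffices to show $\pi(g)\neq\pi(h)$, i.e., $(g,h)\notin\Bend(\trop I)$. We should also make sure $\pi(f)\neq 0$, since cancellativity is only defined for nonzero elements. If $\pi(f)=0$, then $\pi(f)$ is trivially not cancellative under the convention that $0$ is never cancellative, so either case gives the conclusion.

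Second, suppose for contradiction that $(g,h)\in\Bend(\trop I)$. Proposition~\ref{prop:thisPairInBend?} applies, since $m\notin\supp h$ and $g=h+m$. It gives $G\in I$ with $m$ occurring in $G$ with coefficient $1_K$ and $\trop(G)\leq g$. The inequality $\trop(G)\leq g$ forces $\supp(G)\subseteq\supp(g)$, and $m\in\supp(G)$, so $G$ is a nonzero element of $I_{\supp(g)}$. Now take an element $G'\in I$ with $\emptyset\neq\supp(G')\subseteq\supp(G)$ and support minimal among nonzero elements of $I$. Such a $G'$ exists because $\supp(G)$ is finite. Its support is a circuit of the underlying matroid of $I_{\supp(g)}$, i.e., a circuit of $I$ in the sense just defined. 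It is contained in $\supp(g)$, contradicting the hypothesis.

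I expect the only delicate step to be checking that minimal-support elements of $I$ have circuits as supports, matching the definition of ``circuit of $I$'' used above. This is the standard fact that the circuits of the matroid of a linear subspace $I_D\subseteq K^D$ are exactly the minimal nonempty supports of its vectors, and I would cite it as in the discussion preceding the statement. Note that the argument does not even need $m\in\supp(G')$; any circuit inside $\supp(g)$ contradicts the hypothesis.
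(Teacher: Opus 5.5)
Your proposal is correct and follows essentially the same argument as the discussion preceding the corollary in the paper. Lemma~\ref{lemma:ConstructionWitnesses} makes $(g,h)$ a witness pair, Proposition~\ref{prop:thisPairInBend?} turns $(g,h)\in\Bend(\trop I)$ into some $G\in I$ with $m\in\supp(G)\subseteq\supp(g)$, and passing to a minimal-support element of $I$ inside $\supp(G)$ yields a circuit of $I$ contained in $\supp(g)$ (the paper routes the first step through Corollary~\ref{coro:witnessCongruence}, but only the easy direction you use is needed).
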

%\red{Currently, the details of the proof of this Corollary are scattered in various places. Should we put them together into a proof here?} \blue{that might be an overkill, if you want we can put a few references.}
%%%%%Okay, I'm good leaving it as it is right now.

Note that we only need to check those circuits of size at most the size of the support of $g$. Before we state our main theorem of this appendix, we need one more definition.

\begin{definition}\label{def:PrimitiveCircuit}
Let $J\subset \T[x_1, \dots, x_n]$ be a tropical ideal. Let $C$ be a circuit of $J$. We call $C$ a \emph{primitive circuit} if it is not a nontrivial monomial multiple of another circuit. 
\end{definition}

Here we say that a circuit $C$ is a nontrivial monomial multiple of a circuit $C'$ if there are $f,f'\in I$ where $C=\supp(f)$, $C'=\supp(f')$ and $f$ is the product of $f'$ and a monomial other than $1$.

We will call $f\in I$ a \emph{cycle} of $I$ if it has minimal support among elements of $I$. If $f$ is a cycle with support $C=\supp(f)$, then the cycles with support $C$ are exactly the nonzero constant multiples of $f$.

In analogy to Definition~\ref{def:PrimitiveCircuit}, we will say that a cycle $f$ of $I$ is a \emph{primitive cycle} of $I$ if it is not a nontrivial monomial multiple of any other cycle. 
Note that, if $I$ is prime and does not contain any monomial, then a cycle $f\in I$ is primitive if and only if it is not divisible by any monomial other than $1$.
We will also use the fact that, if $g$ and $g'$ are cycles with the same support, then $g$ and $g'$ are scalar multiples of each other.
% end of pinky 3

\begin{thm}\label{thm:TrinomialFinitePrimitiveCircuits}
%Let $K$ be a field of characteristic 0. Let $f\in K[x,y]$ be a trinomial whose exponent vectors are not collinear, i.e., they do not lie on the same (affine) line, and let $I$ be the ideal generated by $f$. Then $I$ has finitely many primitive circuits of each size.
Let $K$ be a field of characteristic 0. Let $f\in K[x_1,\ldots,x_n]$ be a trinomial whose exponent vectors are not collinear, i.e., they do not lie on the same (affine) line, and let $I$ be the ideal generated by $f$. Then $I$ has finitely many primitive circuits of each size.
\end{thm}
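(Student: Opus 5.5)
We must show: for $I=(f)$ with $f$ a trinomial in characteristic $0$ whose exponents are not collinear, and for each $s$, there are only finitely many primitive circuits of size $s$. A circuit is the support of a cycle $G=fq\in I$ of minimal support. A primitive cycle is not divisible by a nontrivial monomial. The circuit is primitive exactly when the cycle is, since cycles with a given support are scalar multiples of one another and $I$ is prime-free of monomials up to the monomial content of $f$. After dividing $f$ by its monomial content, we may assume $f$ is not divisible by a monomial. The natural strategy is therefore to bound the degree, equivalently the Newton polytope, of $G$ in terms of $s$ alone. There are only finitely many monomials of bounded degree, hence only finitely many supports of size $s$ inside them.

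\textbf{Step 1: reduce to two variables.} Write $f=\alpha x^{u}+\beta x^{v}+\gamma x^{w}$. Since $u,v,w$ are affinely independent, the monomial change of coordinates $z_1=x^{v-u}$, $z_2=x^{w-u}$ expresses $f$, up to a monomial, as $\alpha+\beta z_1+\gamma z_2$. This gives a linear polynomial in a rank-$2$ sublattice. Grading $K[x_1,\ldots,x_n]$ by the cosets of the lattice $L=\Z(v-u)+\Z(w-u)$, multiplication by $f$ preserves cosets. So any $G=fq$ splits as $\sum_c f q_c$ over cosets. Minimality of support forces $G$ to live in a single coset, after which it is a monomial times a Laurent polynomial in $z_1,z_2$.

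This reduces the problem to the ideal generated by $\ell=1+z_1+z_2$ (after rescaling the variables) in $K[L\cap\text{cone}]$. Monomial multiples on the original side correspond to translations in $L$. Primitivity, together with positivity of exponents, then pins down the translate up to finitely many choices. I will need to check this carefully, because the embedding $L\hookrightarrow\Z^n$ need not be saturated. I plan to handle that by passing to the saturation and noting that $\ell$ stays irreducible.

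\textbf{Step 2: the key bound (the main obstacle).} We need: if $G=\ell q\in K[z_1^{\pm},z_2^{\pm}]$ has exactly $s$ terms, then the Newton polygon of $G$, up to translation, has diameter bounded by a function of $s$. This is where characteristic $0$ is essential. In characteristic $p$, $\ell^{p^n}$ is a trinomial of unbounded degree, as the example above shows.

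The first approach I would try is restriction to the curve $V(\ell)$, parametrized by $z_1=t$, $z_2=-1-t$. Then $G\in(\ell)$ if and only if $\sum_{i=1}^s c_i t^{a_i}(-1-t)^{b_i}=0$. This is a vanishing sum of $s$ terms in $K(t)$ that is minimal, because of the minimal support. I would invoke a Mason--Stothers/abc-type or Wronskian argument for vanishing sums of products of powers of $t$ and $1+t$ in characteristic $0$. Brownawell--Masser, or Voloch's function field $S$-unit theorem with $S=\{0,-1,\infty\}$, gives that the exponent differences $a_i-a_j$ and $b_i-b_j$ in a minimal (non-degenerate) vanishing subsum are bounded by a constant $C(s)$. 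Minimality of the support of $G$ should be exactly what guarantees that no proper subsum vanishes. A vanishing subsum would give an element of $I$ with smaller support, so non-degeneracy holds.

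\textbf{Step 3: conclude.} By Step 2, a primitive cycle of size $s$ has support contained, after a translation, in a box of side $C(s)$. Primitivity means the support touches both coordinate hyperplanes of the cone, with the obvious adaptation through Step 1. Hence the translation is fixed and the support lies in a fixed finite set. There are finitely many $s$-subsets of a finite set, which proves the theorem.
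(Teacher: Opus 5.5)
Your proposal is correct and follows essentially the same route as the paper. You use the $\mathbb{Z}^n/L$-grading and the identification $K[L]\cong K[z_1^{\pm1},z_2^{\pm1}]$ to reduce to the ideal $(1+z_1+z_2)$, as in Proposition~\ref{prop:TrinomialReduction}; you then substitute $z_2=-1-z_1$ and apply the Brownawell--Masser/Voloch generalized abc theorem, with minimality of support supplying non-degeneracy, as in Lemma~\ref{lemma:1XYCircuits}. The differences are cosmetic: the paper uses primitivity first to get $\gcd(f_1,\ldots,f_n)=1$ and bound degrees directly, whereas your height formulation bounds exponent spreads and fixes the translate afterwards; and your concern about $L$ not being saturated is unnecessary, since the paper works in $K[L]$ itself and never needs saturation or irreducibility of $\ell$ in a larger ring.
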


Our proof will, in fact, give slightly more information; see Theorem~\ref{thm:TrinomialPrimitiveCircuitsCount}.
% end of pinky 4

We will prove Theorem~\ref{thm:TrinomialFinitePrimitiveCircuits} by reducing to a specific case.  In order to do this, we recall the "abc theorem for polynomials", also known as the Mason-Stothers theorem, and a generalization thereof.

\begin{theorem}[ABC for polynomials; \cite{Mason}, \cite{Stot}]\label{thm: abc}
    Let $K$ be a field of characteristic $0$. Let $a, b, c \in K[x]$ be polynomials such that $a, b, c$ 
    %do not have a common divisor 
    have greatest common divisor $1$
    and $a + b = c$. Then 
    $$\max\{\deg a, \deg b, \deg c\} \leq 
    %n_0(abc) 
    r(abc)
    -1,$$
    where 
    %$n_0(f)$ 
    $r(f)$
    is the number of distinct roots of $f\in K[x]$ in an algebraic closure of $K$.
\end{theorem}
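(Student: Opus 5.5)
The plan is the standard proof of Mason–Stothers via the Wronskian. After extending scalars, assume $K$ is algebraically closed; this changes neither degrees nor the number $r(abc)$ of distinct roots. If any of $a,b,c$ is zero, say $a=0$, then $b=c$, and $\gcd(a,b,c)=\gcd(b,c)=1$ forces $b=c$ to be a nonzero constant, so the inequality is trivial with the convention $\deg 0=-\infty$.

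So assume $a,b,c$ are all nonzero. Then pairwise coprimality follows: a common factor of $a$ and $b$ divides $c=a+b$, and so divides $\gcd(a,b,c)=1$; the other pairs are similar.

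If $a',b'$ both vanish, then in characteristic $0$ the polynomials $a,b,c$ are constants and the bound $0\le r(abc)-1$ would fail only when $r(abc)=0$. This is a genuine degenerate case that the statement implicitly excludes, or that holds under the usual convention that at least one polynomial is nonconstant. I will note this and assume the polynomials are not all constant.

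Differentiate $a+b=c$ to get $a'+b'=c'$, and set $W=ab'-a'b$. Using $c=a+b$, one checks that also $W=ac'-a'c=cb'-c'b$.

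Next, $W\neq0$. Otherwise $(b/a)'=0$, so $b/a$ is constant, since the characteristic is $0$. Coprimality of $a$ and $b$ then forces both to be constants, and hence $c$ as well, contradicting the nonconstancy assumption.

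The key step is divisibility. For each root $\alpha$ of $a$ with multiplicity $m$, the polynomial $(x-\alpha)^{m-1}$ divides both $a$ and $a'$, hence divides $W$. The same holds for the roots of $b$ (via the first expression for $W$) and of $c$ (via $W=cb'-c'b$). Since $a,b,c$ are pairwise coprime, these roots are distinct. Therefore
\[
\frac{abc}{\operatorname{rad}(abc)}\ \Big|\ W ,
\]
where $\operatorname{rad}(abc)=\prod (x-\alpha)$ over the distinct roots $\alpha$.

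Taking degrees gives
\[
\deg a+\deg b+\deg c-r(abc)\le \deg W\le \deg a+\deg b-1 .
\]
Hence $\deg c\le r(abc)-1$. By symmetry, rewriting the relation as $a=c-b$ or $b=c-a$ (and noting that root counts ignore signs), the same bound holds for $a$ and $b$.

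The main obstacle is the divisibility step: matching each root to the right form of $W$ and using coprimality to combine the local divisibilities into a single global one. The degenerate constant and zero cases need separate care.
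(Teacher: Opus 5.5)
Your proof is correct. Since the paper gives no proof of this statement and simply quotes it from Mason and Stothers, there is no argument in the paper to compare yours against. What you have written is the standard Wronskian proof: Mason's derivative method, in the short form using $W=ab'-a'b$. It supplies a self-contained justification, and the key steps check out:
\begin{itemize}
\item the identities $W=ab'-a'b=ac'-a'c=cb'-c'b$;
\item $W\neq 0$, because $(b/a)'=0$ forces $b/a$ to be constant in characteristic $0$;
\item the divisibility $abc/\operatorname{rad}(abc)\mid W$, via the three forms of $W$ and pairwise coprimality;
\item the degree count $\deg W\le \deg a+\deg b-1$, with the other two bounds obtained by applying the argument to $c+(-b)=a$ and $c+(-a)=b$.
\end{itemize}
One sentence worth adding: coprimality over $K$ persists over $\overline{K}$, since a B\'ezout identity $ua+vb=1$ survives extension of scalars. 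That is what you need to conclude that the roots of $a$, $b$, $c$ in $\overline{K}$ are distinct.

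Your remark on the degenerate case is a genuine correction to the statement as printed. Take $a=b=1$ and $c=2$ in $\mathbb{Q}[x]$. These satisfy every hypothesis, yet the conclusion reads $0\le r(2)-1=-1$. The Mason--Stothers theorem actually assumes that $a,b,c$ are not all constant; equivalently, at most one of them is constant, since two constants force the third to be constant. That is exactly the hypothesis your proof uses.

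One small imprecision concerns the case where one polynomial is zero, say $a=0$. There the maximum degree is $0$, not $-\infty$, since $b=c$ is a nonzero constant. So the inequality holds only because $abc=0$ has infinitely many roots, not because $\deg 0=-\infty$. In any case this is itself an all-constant case, so it is cleaner to fold it into the excluded degenerate situation than to treat it separately.
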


To prove Theorem~\ref{thm:TrinomialFinitePrimitiveCircuits}, we will need a generalization of this theorem due to Brownawell and Masser \cite{BM86} and independently due to Voloch \cite{Volo85}. After adapting the language of 
%\cite[Theorem B]{BM86} 
\cite[Corollary I]{BM86} 
we have the following theorem:

\begin{theorem}[Generalized ABC for polynomials]\label{thm: abcGeneral}
    Let $K$ be a field of characteristic 0. Let $f_1, \dots, f_n \in K[x]$, $n\geq 3$ be polynomials such that $f_1, \dots, f_n $ 
    %do not have a common divisor. 
    have greatest common divisor 1.
    If $f_1 + \dots + f_n = 0$ and 
    %no smaller sub-sum is zero, 
    every proper subset of $\{f_1,\ldots,f_n\}$ is linearly independent over $K$,
    then 
    $$\deg f_i \leq \frac{(n-1)(n-2)}{2}(r(f_1\dots f_n)-1),$$
    where $r(f_1\dots f_n)$ is the number of distinct roots of $f_1\dots f_n\in K[x]$ in an algebraic closure of $K$.
\footnote{
We thank Anthropic's Claude Sonnet 4.6 for bringing to our attention this theorem and its usefulness in relation to circuits of an ideal.
No generative artificial intelligence was used in the writing of this paper.
} 
\end{theorem}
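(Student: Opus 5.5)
The plan is to follow the Wronskian method, which is the standard proof of the Mason--Stothers theorem (Theorem~\ref{thm: abc}) and is also the route used by Voloch and by Brownawell--Masser. Set $m=n-1$. For polynomials $g_1,\ldots,g_m\in K[x]$ let $W(g_1,\ldots,g_m)=\det\big(g_j^{(k)}\big)_{0\le k<m,\,1\le j\le m}$. In characteristic $0$, $W(g_1,\ldots,g_m)\neq 0$ exactly when the $g_j$ are linearly independent over $K$. By hypothesis every proper subset of $\{f_1,\ldots,f_n\}$ is linearly independent, so for each $i$ the Wronskian $W_i:=W(f_1,\ldots,\widehat{f_i},\ldots,f_n)$ is nonzero. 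Moreover, $f_1+\cdots+f_n=0$ lets us replace one column by minus the sum of the others, so all the $W_i$ agree up to sign. Call this common nonzero polynomial $W$.

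\textbf{Lower bound on the divisibility of $W$.} Let $\alpha$ be a root of $f_1\cdots f_n$, and let $e_j$ be the multiplicity of $\alpha$ in $f_j$. In the column of $f_j$, the $k$-th derivative is divisible by $(x-\alpha)^{\max(e_j-k,0)}$. Hence $W_i$ is divisible by $(x-\alpha)^{\sum_{j\ne i}e_j-\binom{m}{2}}$. Since the gcd of the $f_j$ is $1$, some $f_{i_0}$ has $e_{i_0}=0$. Computing $W$ as $\pm W_{i_0}$ then shows that $(x-\alpha)$ occurs in $W$ to order at least $\sum_{j}e_j-\binom{m}{2}$. Taking the product over the $r:=r(f_1\cdots f_n)$ distinct roots gives
$$\deg W\ \ge\ \sum_{j=1}^n\deg f_j-\binom{m}{2}\,r .$$

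\textbf{Upper bound on $\deg W$.} Fix $i$. Write $W=W_i=\big(\prod_{j\neq i}f_j\big)\cdot\det\big(f_j^{(k)}/f_j\big)$. Each entry $f_j^{(k)}/f_j$ is a rational function of degree at most $-k$ (degree at infinity). So the determinant has degree at most $-\binom{m}{2}$, and
$$\deg W\le \sum_{j\ne i}\deg f_j-\binom{m}{2}.$$

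\textbf{Combining.} Comparing the two bounds gives
$$\deg f_i\ \le\ \binom{m}{2}(r-1)=\frac{(n-1)(n-2)}{2}\big(r(f_1\cdots f_n)-1\big),$$
which is the desired inequality for each $i$.

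The main obstacle I expect is the local multiplicity count at a root $\alpha$. One has to justify rigorously that the determinant cancels at least $\binom{m}{2}$ orders and no more is lost. This is where the sign-equality of the various $W_i$ and the gcd hypothesis are used essentially: they let us choose, for each $\alpha$ separately, an index $i_0$ with $f_{i_0}(\alpha)\neq0$. I would handle this by factoring $(x-\alpha)^{e_j}$ out of each column and checking that the remaining matrix has entries with poles of order at most $k$ in row $k$. Characteristic $0$ is used only for the nonvanishing of $W$. If the details become delicate, I would instead cite \cite[Corollary I]{BM86} directly, as the paper does.
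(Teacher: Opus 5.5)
Your proof is correct. It takes a genuinely different route from the paper, because the paper does not prove this statement at all: it imports it from Brownawell--Masser (their Corollary I) and Voloch.

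The cited corollary is formulated under the weaker hypothesis that no proper subsum vanishes. That allows linear dependencies among subfamilies, so a single Wronskian does not suffice there and an additional reduction is required. The paper's version instead assumes linear independence of every proper subfamily. Under that hypothesis your one-Wronskian argument, which is essentially Voloch's, is complete.

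The step you flag as the main obstacle is immediate from the Leibniz expansion. Each term $\pm\prod_{j\neq i_0} f_j^{(\sigma(j))}$ has order at $\alpha$ at least $\sum_{j\neq i_0}(e_j-\sigma(j))=\sum_j e_j-\binom{m}{2}$. Nothing about ``no more is lost'' is needed, since only a lower bound on $\operatorname{ord}_\alpha W$ enters. The degree bound follows term by term in the same way from $\deg f_j^{(k)}\le \deg f_j-k$.

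What your route buys is a short, self-contained proof of exactly what the paper uses in Lemma~\ref{lemma:1XYCircuits}, where the cycle property supplies linear independence of proper subfamilies. It also shows that independence of a single $(n-1)$-subfamily already suffices: the relation $\sum_j f_j=0$ then forces every $W_i=\pm W\neq 0$. What the citation buys is the stronger theorem with the weaker hypothesis.

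One small point: your argument, correctly, reads the hypothesis as independence of every proper sub\emph{family} $(f_j)_{j\in J}$ with $J\subsetneq\{1,\ldots,n\}$. Read literally for the \emph{set} $\{f_1,\ldots,f_n\}$, the triple $1,1,-2$ would satisfy all the hypotheses (with $r=0$) and violate the bound $\deg f_i\le -1$.
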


We are now ready to prove the specific case of Theorem~\ref{thm:TrinomialFinitePrimitiveCircuits} that we will later reduce to.

\begin{lemma}\label{lemma:1XYCircuits}
Let $K$ be a field of characteristic $0$ and let $I\subseteq K[x,y]$ be the ideal generated by $1+x+y$. Then any primitive circuit of $I$ of size $n$ consists of monomials of degree at most $\dfrac{(n-1)(n-2)}{2}$. In particular, $I$ has only finitely many primitive circuits of size $n$.
\end{lemma}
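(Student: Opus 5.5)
We plan to reduce to the one-variable generalized ABC theorem (Theorem~\ref{thm: abcGeneral}) by substituting $y=-1-x$. Let $C$ be a primitive circuit of $I$ of size $n$. Then $C=\supp(G)$ for a cycle $G\in I$, and $G=\sum_{k=1}^n c_k x^{a_k}y^{b_k}$ with all $c_k\neq0$. Since $I$ is prime and contains no monomial, primitivity means that $G$ is not divisible by $x$ or by $y$. Hence some $a_k=0$ and some $b_l=0$.

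Because $G\in(1+x+y)$, we have $\sum_k c_k x^{a_k}(-1-x)^{b_k}=0$ in $K[x]$. We set $f_k:=c_kx^{a_k}(-1-x)^{b_k}$. The roots of $f_1\cdots f_n$ lie in $\{0,-1\}$, so $r(f_1\cdots f_n)\le 2$ and $r-1\le1$. This is exactly what makes the bound $\frac{(n-1)(n-2)}{2}$ appear, since $\deg f_k=a_k+b_k$. It then remains to verify the hypotheses of Theorem~\ref{thm: abcGeneral}.

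\emph{Gcd equal to $1$.} The only possible common factors are $x$ and $1+x$. Since some $a_k=0$, $x$ does not divide that $f_k$, and since some $b_l=0$, $1+x$ does not divide that $f_l$. So the gcd is $1$.

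\emph{Proper subsets are linearly independent.} Suppose some proper subset of the $f_k$ satisfied a nontrivial $K$-linear relation $\sum_{k\in S}\lambda_kf_k=0$. Then $G'=\sum_{k\in S}\lambda_k c_kx^{a_k}y^{b_k}$ would vanish on the line $y=-1-x$. Since $1+x+y$ is irreducible, this gives $G'\in I$. Moreover $G'\neq0$, because the monomials $x^{a_k}y^{b_k}$ are distinct. Then $\supp(G')\subsetneq C$, contradicting the minimality of $C$.

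Here we must be careful: Theorem~\ref{thm: abcGeneral} requires $n\ge3$. The small cases are handled separately. For $n=1$ or $n=2$, $I$ contains no monomials or binomials, so no such circuits exist. For $n=3$, the circuits of $I$ of size $3$ are supports of scalar multiples of $\mu(1+x+y)$ by the same minimality argument, so the only primitive one is $\{1,x,y\}$.

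With the hypotheses checked, Theorem~\ref{thm: abcGeneral} gives $a_k+b_k=\deg f_k\le\frac{(n-1)(n-2)}{2}$ for every $k$. This is the claimed degree bound, and finiteness follows because there are only finitely many monomials of bounded degree. The main obstacle I expect is the independence hypothesis. Vanishing on the curve must yield membership in $I$, which uses that $I$ is radical and prime. We must also make sure that $G$ being a cycle gives minimality over all of $I$, not just within a fixed valuated matroid $I_D$. I believe taking $D=C$ handles this.
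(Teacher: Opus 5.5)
Your proposal is correct and follows the paper's proof essentially step for step. You substitute $y=-1-x$, get $\gcd=1$ from primitivity, get linear independence of proper subsets from minimality of the circuit, and apply Theorem~\ref{thm: abcGeneral} with $r(f_1\cdots f_n)\le 2$.

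One correction: Theorem~\ref{thm: abcGeneral} already covers $n=3$ (it gives degree at most $1$), so you should drop your separate $n=3$ paragraph. Its appeal to ``the same minimality argument'' does not justify that size-$3$ circuits come from monomial multiples of $1+x+y$. That claim genuinely needs characteristic $0$: in characteristic $p$ it fails, since $(1+x+y)^p=1+x^p+y^p$. The paper in fact deduces it from this very lemma in Example~\ref{example:1XYSize3}.
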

\begin{proof}

Because taking the quotient by $I$ is equivalent to making the substitution $y=-1-x$, for any $g(x,y)\in K[x,y]$ we have $g(x,y)\in I$ if and only if $g(x,-1-x)=0$.

Considering the number of vertices of the Newton polytope of $f\in I$ shows that $I$ cannot contain any monomials or binomials. So $n\geq3$.

Suppose that $g(x,y)=\dsum_{i=1}^n \alpha_i x^{k_i}y^{\ell_i}\in I$ is a primitive cycle whose support has size $n$; we want to show that $\deg g\leq \dfrac{(n-1)(n-2)}{2}$. Since $I$ is prime and contains no monomials,% \red{monomials?}, %%%%%Changed!
%this means that $g(x,y)$ 
the assumption on $g(x,y)$ means that it 
is not divisible by any monomial other than $1$. This, in turn, means that some $k_i$ is zero as is some $\ell_i$ (possibly for different values of $i$).

Let $f_i=\alpha_i x^{k_i}(-1-x)^{\ell_i}\in K[x]$, so $\dsum_{i=1}^n f_i=0$. 
Since $g(x,y)$ is a cycle of $I$, every proper subset of $\{f_1,\ldots, f_n\}$ is linearly independent over $K$. 
Since some $k_i$ is $0$ as is some $\ell_i$, the greatest common divisor of $f_1,\ldots,f_n$ is $1$. 
So, by Theorem~\ref{thm: abcGeneral}, $\deg f_i \leq \frac{(n-1)(n-2)}{2}(r(f_1\cdots f_n)-1)$. 
But the only zeros of $f_1\cdots f_n$ are $x=0$ and $x=-1$, so $\deg f_i\leq\dfrac{(n-1)(n-2)}{2}$. 
Thus, $\deg g=%\max\{k_i+\ell_i\,:\,i=1,\ldots,n\}=
\max\{\deg f_i\,:\, i=1,\ldots,n\}\leq\dfrac{(n-1)(n-2)}{2}$.

The final statement now follows because every primitive circuit of size $n$ is a subset of the finite set of monomials in $K[x,y]$ of degree at most $\dfrac{(n-1)(n-2)}{2}$.
\end{proof}
% end of pinky 6

\begin{example}\label{example:1XYSize3}
%Let $K$ and $I$ be as in Lemma~\ref{lemma:1XYCircuits}. For $n=3$, Lemma~\ref{lemma:1XYCircuits} tells us that every primitive circuit of $I$ of size $3$
For $n=3$, Lemma~\ref{lemma:1XYCircuits} tells us that any primitive cycle of $I=(1+x+y)$ with support size three 
has degree at most $\dfrac{(3-1)(3-2)}{2}=1$ and is therefore a constant multiple of $1+x+y$. Also note that, since $I$ does not contain any monomial or binomial, every trinomial in $I$ is a cycle. Thus, we conclude that the only trinomials that are multiples of $1+x+y$ are the monomial multiples of $1+x+y$. \exEnd
\end{example}

For $n>3$ the computation of the primitive circuits of $I$ of size $n$ requires significantly more work. Already for $n=4$ there are 210 sets of 4 monomials of degree at most $\frac{(4-1)(4-2)}{2}=3$ that need to be considered. Rather than do such computations by hand, we have written a script in SageMath to compute the primitive circuits. The script and output files up through $n=7$ can be found at \url{https://github.com/Nati-F/Circuits-of-trinomial-ideal}. We provide a chart below with a summary of the data.

%\red{(It could be that we should put this chart in a "figure" environment (or a similar one) and label and caption it.)} \blue{If it is a figure, we cannot control where it appears on the page and it is really annoying}

\begin{center}
    \begin{tabular}{|c|c|c|} 
    \hline
    %$n=$ circuit size & number of circuits & appearing in degrees \\
    \multirow{2}{*}{$n=$ circuit size} & number of primitive& primitive cycles \\
    &circuits/cycles&have degrees\\
    \hline
      3  & 1   & 1   \\
      4  & 7   & 2,3   \\
      5  & 69%695   
      & 3,4,5  \\
      6  & 1053   & 4,5,6,7   \\
      7  & 18500   & 5,6,7,8,9   \\
   \hline
   \end{tabular}
\end{center}

\begin{example}\label{example:1XYSize4}
The primitive circuits of $I=(1+x+y)$ of size four are 
\begin{align*}
% \{1, x, x^2, y^2\},&&
% \{1, x, xy, y^2\},&&
% \{1, x^2, xy, y^2\},&&
% \{1, x^2, xy, y\},\\
% \{1, x^2, y, y^2\},&&
% \{x, x^2, y, y^2\},&&\text{and}&&
% \{1, x^3, xy, y^3\},
%
% \{1, x, x^2, y^2\},&&
% \{1, x^2, xy, y^2\},&&
% \{1, x^2, y, y^2\},&&\\
% \{1, x, xy, y^2\},&&
% \{1, x^2, xy, y\},&&
% %\{x, x^2, y, y^2\},&&\text{and}&&
% \{x, x^2, y, y^2\},\\
% \text{and}&&
% \{1, x^3, xy, y^3\},
%
%
\{1, x^2, y^2, x\},&&
\{1, x^2, y^2, xy\},&&
\{1, x^2, y^2, y\},&&\\
\{1, y^2, x, xy\},&&
\{1, x^2, xy, y\},&&
%\{x, x^2, y, y^2\},&&\text{and}&&
\{x^2, y^2, x, y\},\\
\text{and}&&
\{1, x^3, y^3, xy\},
\end{align*}
corresponding to the cycles
\begin{align*}
% (1+x-y)\cdot(1+x+y)&=%(1+x)^2-y^2
% 1+2x+x^2-y^2,
% \\
% (1-y)\cdot(1+x+y)&=%(1-y^2)+x-xy
% 1+x-xy-y^2,
% \\
% (1-x-y)\cdot(1+x+y)&=%1-(x+y)^2
% 1-x^2-2xy-y^2,
% \\
% (1-x)\cdot(1+x+y)&=%(1-x^2)+y-xy
% 1-x^2+y-xy,
% \\
% (1-x+y)\cdot(1+x+y)&=%(1+y)^2-x^2
% 1-x^2+2y+y^2,
% \\
% (x-y)\cdot(1+x+y)&=%x-y+(x^2-y^2)
% x+x^2-y-y^2,
% \\
% \intertext{and}
% (1-x-y+x^2-xy+y^2)\cdot(1+x+y)&=1+x^3-3xy+y^3.%\exEnd
%
%1
(1+x-y)\cdot(1+x+y)&=%(1+x)^2-y^2
1+2x+x^2-y^2,
\\
%3
(1-x-y)\cdot(1+x+y)&=%1-(x+y)^2
1-x^2-2xy-y^2,
\\
%5
(1-x+y)\cdot(1+x+y)&=%(1+y)^2-x^2
1-x^2+2y+y^2,
\\
%2
(1-y)\cdot(1+x+y)&=%(1-y^2)+x-xy
1+x-xy-y^2,
\\
%4
(1-x)\cdot(1+x+y)&=%(1-x^2)+y-xy
1-x^2+y-xy,
\\
%6
(x-y)\cdot(1+x+y)&=%x-y+(x^2-y^2)
x+x^2-y-y^2,
\\
\intertext{and}
%7
(1-x-y+x^2-xy+y^2)\cdot(1+x+y)&=1+x^3-3xy+y^3.%\exEnd
\end{align*}
\par\nopagebreak\vspace{-1.4\baselineskip}\mbox{}
\exEnd
\end{example}
% end of pinky 7

We are now ready to prove the reduction step for Theorem~\ref{thm:TrinomialFinitePrimitiveCircuits}.

\begin{prop}\label{prop:TrinomialReduction}
Let $K$ be a field and let $k\leq r$ be positive integers. 
Let $f\in K[x_1,\ldots,x_r]$ be a polynomial with $k+1$ terms whose exponent vectors are affinely independent. Then there is a bijection between the primitive circuits of $I=(f)$ of size $n$ and the primitive circuits %of $I_0=(1+x_1+\cdots+x_k)$ of size $n$.
of size $n$ of the ideal $I_0$ generated by $1+x_1+\cdots+x_k$ in $K[x_1,\ldots,x_k]$.
\end{prop}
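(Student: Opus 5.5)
Write $f=\sum_{i=0}^k c_i\chi^{u_i}$ with $c_i\in K^\times$ and $u_0,\dots,u_k\in\Z_{\geq0}^r$ affinely independent. The plan is to pass to Laurent polynomial rings, where circuits up to monomial multiples are easy to compare, and then use primitivity to come back to honest polynomial rings. First, $f$ is a monomial times $f'=\sum_i c_i\chi^{u_i-u_0}$, and the ideal $(f)$ and $(f')$ have the same Laurent extension. So the problem reduces to comparing the ideals $fK[\Z^r]$ and $(1+x_1+\cdots+x_k)K[\Z^k]$, followed by a dictionary between primitive circuits of a polynomial ideal and circuits of its Laurent extension modulo translation.

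Step 1 (the Laurent-level correspondence). The vectors $w_i=u_i-u_0$, $i=1,\dots,k$, are linearly independent in $\Z^r$. Let $L$ be the saturation of $\bigoplus\Z w_i$ in $\Z^r$. I would first treat the case $L=\bigoplus \Z w_i$; the non-saturated case is not handled by the plan below. In the saturated case, choose a complement $L'$ with $\Z^r=L\oplus L'$, so that
\[
K[\Z^r]\cong K[L]\otimes K[L']=K[L][L'].
\]
The change of variables $x_i\mapsto c_0^{-1}c_i\chi^{w_i}$ identifies $K[L]\cong K[\Z^k]$ and sends $1+x_1+\cdots+x_k$ to $c_0^{-1}f'$.

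Since $K[L']$ is a free $K[L]$-module on the monomials of $L'$, an element $g=\sum_{\lambda\in L'}g_\lambda\chi^\lambda$ lies in $f'K[\Z^r]$ if and only if every $g_\lambda$ lies in $f'K[L]$. Minimality of support then forces a cycle of $fK[\Z^r]$ to have a single nonzero $g_\lambda$. Hence the cycles of the extended ideal are exactly $\chi^\lambda$ times the images of cycles of $(1+x_1+\cdots+x_k)K[\Z^k]$. Supports correspond bijectively, modulo translation by $\Z^r$ on one side and by $\Z^k$ on the other, preserving size. The rescaling by the $c_i$ changes coefficients but not supports.

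Step 2 (primitive circuits versus translation classes). For an ideal $J\subseteq K[\Mon]$ with $\Mon=\N^m$, $J$ prime, and $J$ containing no monomial, I claim the following two facts. First, a cycle of $J$ is a cycle of $JK[\Z^m]$; this holds because $J=JK[\Z^m]\cap K[\N^m]$ by monomial saturation. Second, every circuit of the Laurent ideal has a unique translate lying in $\N^m$ and not divisible by any nontrivial monomial, and that translate is a circuit of $J$. So primitive circuits of $J$ are in bijection with translation classes of circuits of $JK[\Z^m]$. I would apply this to $I=(f)$ and to $I_0$ and compose with Step 1.

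Main obstacle. The delicate point is that $(f)$ need not be prime, since $f$ may factor. In that case one needs $(f)=fK[\Z^r]\cap K[x_1,\dots,x_r]$, which holds because $f$ has no monomial factor exactly when $u_0,\dots,u_k$ have no common nonzero minimum; I would handle this by dividing $f$ by its monomial gcd at the outset. The other genuine gap is the non-saturated lattice case, which Step 1 does not yet cover.
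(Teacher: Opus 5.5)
Your architecture matches the paper's. You extend to Laurent rings and write $K[\Z^r]$ as a free $K[L]$-module, so that minimality of support forces a cycle into a single summand. You then identify $K[L]\cong K[\Z^k]$ via $w_i\mapsto e_i$ plus rescaling. Your Step~2 is also fine. It only needs monomial saturation of $(f')$, and this holds in the UFD $K[x_1,\dots,x_r]$ whether or not $f'$ is irreducible. Multiplication by the monomial content $m$ of $f$ matches cycles and primitive cycles of $(f/m)$ with those of $(f)$.

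The genuine gap is the one you flag yourself: Step~1 only handles the case where $\bigoplus_i\Z w_i$ is saturated in $\Z^r$, so as written the proposal does not prove the statement. This case is not marginal. It excludes, for example, $f=1+x^2+y^2$ (index $4$) and $f=1+x^2y+xy^2$ (index $3$), both of which fall under the trinomial theorem this proposition is used for. It is also the only case in which $(f')$ can fail to be prime, e.g.\ $1+x^2$ over a field containing $\sqrt{-1}$, or $1+x^p=(1+x)^p$ in characteristic $p$.

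The missing observation is that you never needed a complement $L'$. Keep $L:=\bigoplus_{i=1}^k\Z w_i$ itself, not its saturation, and grade $K[\Z^r]$ by $\Z^r/L$, allowing torsion. For each coset $\kappa$, the graded piece $R_\kappa=\bigoplus_{u\in\kappa}K\chi^u$ equals $\chi^{u_\kappa}K[L]$ for any representative $u_\kappa\in\kappa$. Hence $K[\Z^r]=\bigoplus_{\kappa\in\Z^r/L}\chi^{u_\kappa}K[L]$ is free over $K[L]$ on a set of coset representatives. Since $f'\in K[L]$, this gives $f'K[\Z^r]=\bigoplus_\kappa\chi^{u_\kappa}f'K[L]$, and your argument runs verbatim with coset representatives in place of $L'$:
\begin{itemize}
\item every cycle is homogeneous, hence a Laurent monomial times a cycle of $f'K[L]$;
\item $f'K[\Z^r]\cap K[L]=f'K[L]$, because $K[L]$ is a domain;
\item Laurent-monomial classes of cycles on the two sides correspond.
\end{itemize}
Finally, $w_1,\dots,w_k$ are linearly independent and generate $L$, so they form a basis of $L$. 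Thus $K[L]\cong K[\Z^k]$ with no saturation hypothesis. This is exactly the paper's proof, which takes $L$ to be the lattice generated by differences of exponent vectors of $f$ and uses the $\Z^r/L$-grading.
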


\begin{proof}
Throughout this proof we consider the relation $\sim$ on (Laurent) polynomials given by $g\sim g'$ if $g'$ is a multiple of $g$ by a Laurent monomial. %\red{why not "by a unit"}\blue{Because we want this relation to be multiplication by Laurent monomials even when we are only dealing with polynomials, not Laurent polynomials.}. 
Note that, for any principal ideal $\fraka\subset K[x_1,\ldots,x_r]$, the primitive cycles of $\fraka$ (up to scaling) form a system of representatives for the equivalence classes of cycles of $\fraka$. Moreover, extension gives a bijection between equivalence classes of cycles of $\fraka$ and equivalence classes of cycles of $\fraka K[x_1^{\pm1},\ldots,x_r^{\pm1}]$, and this bijection preserves support size.

Let $\wt{I}:=IK[x_1^{\pm1},\ldots,x_r^{\pm1}]$ and $\wt{I}_0:=I_0 K[x_1^{\pm1},\ldots,x_k^{\pm1}]$. 
Then, by the previous paragraph, it suffices to give a bijection between equivalence classes of cycles of $\wt{I}$ of support size $n$ and equivalence classes of cycles of $\wt{I}_0$ of support size $n$.

%Let $g$ be a cycle of $\wt{I}$ of support size $n$. So $g=fh$ for some $h\in K[x_1^{\pm1},\ldots,x_r^{\pm1}]$.

Let $L$ be the sublattice of $\Z^r$ generated by the edge directions of the Newton polytope of $f$, i.e., the lattice generated by differences between exponent vectors of terms of $f$. Give $R:=K[x_1^{\pm1},\ldots,x_r^{\pm1}]$ the $\Z^r/L$-grading $R=\displaystyle\bigoplus_{\kappa\in\Z^r/L}R_\kappa$ where, for any coset $\kappa\in\Z^r/L$, $R_\kappa:=\displaystyle\bigoplus_{u\in\kappa}K\cdot x^u$. In particular, $f$ is homogeneous in this grading. 
% For the remainder of this proof whenever we refer to "homogeneous" or "grading", it is always with this grading.
For the remainder of this proof, whenever we use the term ``homogeneous'', it is always with respect to this grading.

%We claim that, for any cycle $g$ of $\wt{I}$, all of the exponent vectors of terms of $g$ are in a single coset of $L$ in $\Z^r$. That is, every cycle $g$ of $\wt{I}$ is homogeneous. 
We claim that every cycle $g$ of $\wt{I}$ is homogeneous. %\red{"in this grading"?}\blue{see new pinky in the previous line in response.}
Towards seeing this, let $g=\dsum_{i=1}^{\ell}g_i$ be the decomposition of $g$ into homogeneous components, so the definition of the grading tells us that $\supp(g)$ is the disjoint union of $\supp(g_1), \ldots, \supp(g_\ell)$. Since $\wt{I}=f\cdot K[x_1^{\pm1},\ldots,x_r^{\pm1}]$ is a homogeneous ideal, each $g_i\in \wt{I}$. So, because $\supp(g)$ is minimal among supports of elements of $\wt{I}$, we must have $\ell=1$, so $g=g_1$ is homogeneous.

We now consider $K[L]$, viewed as the subring $R_L$ (where $L$ is being considered as the zero coset of $\Z^r/L$) of $K[x_1^{\pm1},\ldots,x_r^{\pm1}]$ consisting of Laurent polynomials whose exponent vectors are all in $L$.
Since every cycle of $\wt{I}$ is homogeneous, every equivalence class of cycles of $\wt{I}$ has a representative in $K[L]$. Thus, the equivalence classes of cycles of $\wt{I}$ are in bijection with the equivalence classes of cycles of the ideal $J:=\wt{I}\cap K[L]$ of $K[L]$, and this bijection preserves support size. 

Fix a term $\tau$ of $f$ and let $\wt{f}:=\tau^{-1}f$. Then $1$ is a term of $\wt{f}$, $\wt{f}\in K[L]$, and the ideal of $K[x_1^{\pm1},\ldots,x_r^{\pm1}]$ generated by $\wt{f}$ is $\wt{I}$. Moreover, the ideal of $K[L]$ generated by $\wt{f}$ is $J$.

Let $v_1,\ldots,v_k$ be the nonzero exponent vectors of $\wt{f}$, so $\wt{f}=1+\dsum_{i=1}^k a_ix^{v_i}$ with $a_i\in K$ nonzero. Since $v_1,\ldots,v_k$ are linearly independent and generate $L$, there is a group isomorphism $L\toup{\cong}\Z^k$ that sends $v_i$ to $e_i$, the $i^{\text{th}}$ standard basis vector of $\Z^k$. 
This gives a ring isomorphism $\ph:K[L]\to K[\Z^k]=K[x_1^{\pm1},\ldots,x_k^{\pm1}]$ that sends $x^v_i$ to $x^{e_i}=x_i$, and so sends $\wt{f}=1+a_1 x^{v_1}+\cdots+a_k x^{v_k}$ to $\ph\left(\wt{f}\right)=1+a_1x_1+\cdots+a_kx_k$. 
Letting $\psi$ be the automorphism of $K[x_1^{\pm1},\ldots,x_k^{\pm1}]$ sending $x_i$ to $a_i^{-1}x_i$, we have $\psi\left(\ph\left(\wt{f}\right)\right)=1+x_1+\cdots+x_k$. 
Since $\psi\circ\ph: K[L]\to K[x_1^{\pm1},\ldots,x_k^{\pm1}]$ is an isomorphism that preserves monomials, it gives a support-size-preserving bijection between equivalence classes of cycles of $J$ and equivalence classes of cycles of $\psi(\ph(J))$. 
Finally, $\psi(\ph(J))$ is the ideal of $K[x_1^{\pm1},\ldots,x_k^{\pm1}]$ generated by $\psi\left(\ph\left(\wt{f}\right)\right)=1+x_1+\cdots+x_k$.
\end{proof}
% end of pinky 8

\begin{thm}\label{thm:TrinomialPrimitiveCircuitsCount}
Let $K$ be a field of characteristic 0. Let $f\in K[x_1,\ldots,x_n]$ be a trinomial whose exponent vectors are not collinear, i.e., they are affinely independent, and let $I$ be the ideal generated by $f$. Then $I$ has only finitely many primitive circuits of size $n$ and this number is the same as the number of primitive circuits of size $n$ of the ideal of $K[x,y]$ generated by $1+x+y$.
\end{thm}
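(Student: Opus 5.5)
Note first a notational clash: in the statement, $n$ is used both as the number of variables and as the circuit size. I will write $r$ for the number of variables, so $f\in K[x_1,\ldots,x_r]$, and reserve $n$ for circuit size. The plan is to combine Proposition~\ref{prop:TrinomialReduction} with Lemma~\ref{lemma:1XYCircuits}.

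First I apply Proposition~\ref{prop:TrinomialReduction} with $k=2$. The trinomial $f$ has $3=k+1$ terms. Its exponent vectors are affinely independent, since non-collinear is the same as affinely independent for three points. The condition $k\leq r$ requires $r\geq 2$. This holds because three distinct non-collinear points cannot lie in $\R^1$; for $r=1$ any three exponents are collinear, so the hypothesis forces $r\geq2$. The proposition then gives a bijection between the primitive circuits of $I=(f)$ of size $n$ and the primitive circuits of size $n$ of the ideal $(1+x_1+x_2)\subseteq K[x_1,x_2]$. After renaming $x_1,x_2$ as $x,y$, this is exactly the ideal $(1+x+y)\subseteq K[x,y]$, which gives the equality of counts.

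Second, I apply Lemma~\ref{lemma:1XYCircuits}, which uses $\operatorname{char}K=0$. Every primitive circuit of $(1+x+y)$ of size $n$ consists of monomials of degree at most $\frac{(n-1)(n-2)}{2}$, so there are only finitely many such circuits. Transporting this through the bijection shows that $I$ has finitely many primitive circuits of size $n$, and that the number equals the count for $(1+x+y)$.

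The only delicate point is checking that the hypotheses of Proposition~\ref{prop:TrinomialReduction} are met, in particular $k\leq r$ and the affine independence. Otherwise the argument is a direct composition of the two earlier results. Theorem~\ref{thm:TrinomialFinitePrimitiveCircuits} follows immediately.
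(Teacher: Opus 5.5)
Your proposal is correct and matches the paper's proof, which likewise composes Proposition~\ref{prop:TrinomialReduction} (with $k=2$) and Lemma~\ref{lemma:1XYCircuits}. The paper leaves implicit two details that you handle correctly: non-collinearity of the exponent vectors forces at least two variables, so that $k\leq r$ holds, and the two roles of $n$ in the statement must be kept separate.
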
\begin{proof}
This follows immediately from Theorem~\ref{lemma:1XYCircuits} and Proposition~\ref{prop:TrinomialReduction}.
\end{proof}

%\red{(If we make the table of numbers a figure and give it a label, then we should say here that this means that the table also applies to the case of this theorem.)}\blue{the number of circuits is the same but the degree might be different} Good point!

\begin{coro}\label{coro:TrinomialsSize3}
Let $K$ be a field of characteristic 0. Let $f\in K[x_1,\ldots,x_n]$ be a trinomial whose exponent vectors are not collinear and let $I$ be the ideal generated by $f$. Then the only primitive circuit of $I$ of size three is $\supp(f)$.
\end{coro}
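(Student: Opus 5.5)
The plan is to reduce to the model ideal $(1+x+y)$ and then read off the answer from Example~\ref{example:1XYSize3}.

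First, apply Proposition~\ref{prop:TrinomialReduction} with $k=2$. The three exponent vectors of $f$ are not collinear, so they are affinely independent. This gives a support-size-preserving bijection between the primitive circuits of $I=(f)$ and those of $I_0=(1+x+y)\subseteq K[x,y]$. Example~\ref{example:1XYSize3}, which rests on Lemma~\ref{lemma:1XYCircuits} and uses characteristic $0$, shows that $I_0$ has exactly one primitive circuit of size three, namely $\{1,x,y\}$. Hence $I$ has exactly one primitive circuit of size three.

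It remains to see that this circuit is $\supp(f)$. For that I will check directly that $\supp(f)$ is a primitive circuit of $I$ of size three.

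First, $I$ contains no monomial or binomial. To see this, compare Newton polytopes: for nonzero $h$, $\operatorname{Newt}(fh)=\operatorname{Newt}(f)+\operatorname{Newt}(h)$. The polytope $\operatorname{Newt}(f)$ is a triangle, so this Minkowski sum has at least three vertices, and every vertex of $\operatorname{Newt}(fh)$ lies in $\supp(fh)$. Since $\supp(f)$ has size three, it is therefore minimal, so it is a circuit and $f$ is a cycle.

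Next, $\supp(f)$ is primitive. Suppose $f=\chi^u f'$ with $f'\in I$ and $\chi^u\neq1$. Then $f'=fh$ for some nonzero $h$, which gives $\chi^u h=1$, impossible in the polynomial ring. If $\supp(f)$ were instead a monomial multiple of another circuit $\supp(f')$ with $f'$ some cycle, the same argument applies after normalising by scalars: cycles with equal support are scalar multiples of each other.

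The main obstacle is the identification step, since the bijection in Proposition~\ref{prop:TrinomialReduction} is only stated abstractly. A cleaner route is to trace it explicitly instead of arguing directly: $f\mapsto\tau^{-1}f\mapsto 1+x+y$, so $\supp(f)$ is sent to $\{1,x,y\}$. Either way, uniqueness comes from the reduction, and the direct check confirms the identification.
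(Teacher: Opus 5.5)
Your proof is correct and matches the paper's argument: Example~\ref{example:1XYSize3}, combined with the reduction to $(1+x+y)$, gives exactly one primitive circuit of size three, and $\supp(f)$ is such a circuit, so it must be that one. (The paper cites Theorem~\ref{thm:TrinomialPrimitiveCircuitsCount}, which is just Proposition~\ref{prop:TrinomialReduction} together with Lemma~\ref{lemma:1XYCircuits}.) You also supply the Newton-polytope and divisibility checks that $\supp(f)$ is a primitive circuit, which the paper only asserts; because of that check, your closing paragraph about tracing the bijection explicitly is unnecessary.
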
\begin{proof}
By Theorem~\ref{thm:TrinomialPrimitiveCircuitsCount} and Example~\ref{example:1XYSize3}, $I$ has only one primitive circuit of size $3$. Since $f$ is a primitive circuit of $I$, this completes the proof.
\end{proof}
% end of pinky 9

\begin{coro}
Let $K$ be a field of characteristic 0 and let $f\in K[x_1,\ldots,x_n]$ be a trinomial whose exponent vectors are not collinear. If $g\in K[x_1,\ldots,x_n]$ is such that $fg$ is a trinomial, then $g$ is a single term.
\end{coro}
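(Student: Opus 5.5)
The plan is to deduce this from Corollary~\ref{coro:TrinomialsSize3} together with the structure of cycles of a principal prime-like ideal. Let $I=(f)$ and suppose $fg$ is a trinomial, so $fg\in I$ has support of size three. Since $f$ has three terms with affinely independent exponent vectors, the Newton polytope argument (as in the proof of Lemma~\ref{lemma:1XYCircuits}, transported via Proposition~\ref{prop:TrinomialReduction}) shows that $I$ contains no monomials or binomials: any nonzero $fg$ has a Newton polytope equal to the Minkowski sum $\newt(f)+\newt(g)$, which is at least two-dimensional and hence has at least three vertices, each of which carries a nonzero term. Consequently every trinomial in $I$ has minimal support among nonzero elements of $I$, i.e.\ $fg$ is a cycle of $I$ and $\supp(fg)$ is a circuit of size three.

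Next I reduce to primitive cycles. Write $fg=\mu\cdot c$ where $\mu$ is the largest monomial dividing $fg$. I claim $c$ is a primitive cycle: it lies in $I$ because $I$ is monomially saturated ($f$ is not divisible by any variable after removing a common monomial factor; more precisely, if $\chi^u F\in (f)$ then $f\mid F$ since $f$ has no monomial factors beyond a monomial we can absorb, and $K[x_1,\dots,x_n]$ is a UFD). It has the same support size as $fg$ up to translation, so it is a cycle, and it is not divisible by any nontrivial monomial, so it is primitive. By Corollary~\ref{coro:TrinomialsSize3}, $\supp(c)=\supp(f_0)$, where $f_0$ is $f$ with its monomial content removed ($f_0$ is itself a primitive cycle of size three). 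Since cycles with the same support are scalar multiples of each other, $c=\lambda f_0$ for some $\lambda\in K^\times$.

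Finally, write $f=\nu f_0$ with $\nu$ a monomial. Then $\nu f_0 g=fg=\lambda\mu f_0$, and cancelling the nonzero polynomial $f_0$ in the domain $K[x_1,\dots,x_n]$ gives $\nu g=\lambda\mu$. Hence $g=\lambda\mu/\nu$ is a single term (and it is a genuine polynomial term because $g$ is a polynomial). The main point needing care is the bookkeeping between the primitive-circuit statement, which is about $I=(f)$ with possibly non-primitive $f$, and the monomial content. Also, Corollary~\ref{coro:TrinomialsSize3} phrases things in terms of circuits rather than cycles. I expect this step to be the principal obstacle, and I would handle it by passing to the Laurent ring as in Proposition~\ref{prop:TrinomialReduction}, where monomial factors are units and the argument becomes simply: $fg$ is a cycle of size three, hence equivalent to $f$, hence $g$ is a Laurent monomial.
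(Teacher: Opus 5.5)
Your overall route is the paper's: $fg$ is a size-three cycle of $(f)$ (no monomials or binomials by the Newton polytope argument), Corollary~\ref{coro:TrinomialsSize3} forces $fg$ to be a monomial multiple of a scalar multiple of $f$, and you cancel in the domain. The detour through the monomial content, however, contains a false step. $(f)$ is \emph{not} monomially saturated when $f$ has a monomial factor. Take $f=x(1+x+y)$ and $g=1$: then $c=1+x+y\notin(f)$. Likewise $f_0\notin(f)$, so $f_0$ is not a cycle of $(f)$ at all. As written, your second step mixes the ideals $(f)$ and $(f_0)$.

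The repair is easy, in either of two ways.

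\emph{Work in $(f_0)$ throughout.} This ideal contains $fg=\nu f_0 g$. It is monomially saturated, because $f_0$ has no variable factor in the UFD $K[x_1,\ldots,x_n]$. And $f_0$ is still a trinomial with non-collinear exponent vectors, so Corollary~\ref{coro:TrinomialsSize3} applies to $(f_0)$ and gives $c=\lambda f_0$. Your final computation then goes through unchanged.

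\emph{Skip the stripping, as the paper does.} Primitivity is relative to the ideal, and $f$ itself is a primitive cycle of $(f)$. Indeed, if $f=mc'$ with $c'\in(f)$ and $m\neq 1$ a monomial, then $c'$ would be divisible by $f$ yet have smaller degree. Every cycle is a monomial multiple of a primitive cycle, by descent on degree. So $fg$ is a monomial multiple of a primitive size-three cycle, whose support must be the support of $f$, and hence $fg=a\,m\,f$ directly.

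In particular, the obstacle you anticipated for non-primitive $f$ does not actually arise.
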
\begin{proof}
If $fg$ is a trinomial then Corollary~\ref{coro:TrinomialsSize3} tells us that it is a monomial multiple of a scalar multiple of $f$, i.e., $fg=fam$ for some monomial $m$ and scalar $a$. Since $K[x_1,\ldots,x_n]$ is an integral domain, $g=am$.
\end{proof}

\begin{coro}
%Let $K$ be a field of characteristic 0 with a valuation $K\to\base$ where $\base$ is a subsemifield of $\T$. Let $f\in K[x,y]$ be a trinomial whose exponent vectors are not collinear and let $I$ be the ideal generated by $f$. Then the image of any binomial of $\base[x,y]$ in $\base[x,y]/\Bend(\trop I)$ is not cancellative.
Let $K$ be a field of characteristic 0 with a valuation $K\to\base$ where $\base$ is a subsemifield of $\T$. Let $f\in K[x_1,\ldots,x_n]$ be a trinomial whose exponent vectors are not collinear and let $I$ be the ideal generated by $f$. Then the image of any binomial of $\base[x_1,\ldots,x_n]$ in $\base[x_1,\ldots,x_n]/\Bend(\trop I)$ is not cancellative.
\end{coro}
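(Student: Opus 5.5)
The plan is to apply Corollary~\ref{coro:SuppConditionForNotCancellative}. That is, we produce a pair $(g,h)$ from Construction~\ref{constr: pairsGH} for the binomial such that $\supp(g)$ contains no circuit of $I$. Write the binomial as $F=f_1+f_2$ with $f_1=a m_1$ and $f_2=b m_2$, where $a,b\in\base^\times$ and $m_1\neq m_2$ are distinct monomials (otherwise $F$ is a single term).

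First I would run Construction~\ref{constr: pairsGH} with the missing monomial $m=m_1m_2$, which is the monomial $m_0$ from the finiteness proposition. For $i=1$ take $j_1=2$ and the term $h_1=\frac{a}{b}m_1^2$. Then
$$mf_1=a m_1^2m_2=f_2h_1.$$
Symmetrically, for $i=2$ take $j_2=1$ and $h_2=\frac{b}{a}m_2^2$, so that $mf_2=f_1h_2$. This gives
$$h=\tfrac{a}{b}m_1^2+\tfrac{b}{a}m_2^2,\qquad g=h+m_1m_2.$$
The three monomials $m_1^2$, $m_1m_2$, $m_2^2$ are distinct, so $m\notin\supp h$. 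Moreover, their exponent vectors $2u_1$, $u_1+u_2$, $2u_2$ are collinear, since the middle one is the midpoint of the other two.

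Next I would show that no subset of $\supp(g)$ is a circuit of $I$. A circuit inside $\supp(g)$ would be the support of some nonzero $G\in I$ with $\supp(G)\subseteq\supp(g)$. Write $G=fq$ with $q\neq0$. Then $\newt(G)=\newt(f)+\newt(q)$ contains a translate of the triangle $\newt(f)$, which is $2$-dimensional because the exponent vectors of $f$ are not collinear. Hence the exponent vectors of $\supp(G)$ cannot all lie on a line. But every subset of $\supp(g)$ is collinear, a contradiction. This Newton-polytope argument avoids even needing Corollary~\ref{coro:TrinomialsSize3}. Alternatively, one can rule out monomials and binomials by the vertex count and rule out trinomials by Corollary~\ref{coro:TrinomialsSize3}, since $\supp(f)$ and its monomial multiples are not collinear.

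Finally, Corollary~\ref{coro:SuppConditionForNotCancellative} gives that the image of $F$ in $\base[x_1,\ldots,x_n]/\Bend(\trop I)$ is not cancellative.

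The only delicate point is the middle step: making sure the minimal-support elements of $I$ contained in $\supp(g)$ are correctly excluded, including the degenerate sizes $1$ and $2$. The Minkowski-sum dimension argument handles all sizes uniformly.
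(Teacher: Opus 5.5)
Your proposal is correct. The construction of $(g,h)$ with $m=m_1m_2$ and the final appeal to Corollary~\ref{coro:SuppConditionForNotCancellative} are exactly what the paper does. The difference lies in how you exclude circuits of $I$ inside the collinear set $\supp(g)$.

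The paper argues by support size. Monomials and binomials are excluded by counting vertices of Newton polytopes. Trinomials are excluded via Corollary~\ref{coro:TrinomialsSize3}: every trinomial in $I$ is a term multiple of $f$, so its exponent vectors are not collinear. That corollary rests on the generalized $abc$ theorem (Theorem~\ref{thm: abcGeneral}) and hence on $\operatorname{char} K=0$.

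You instead use the Minkowski-sum identity $\newt(fq)=\newt(f)+\newt(q)$. This holds over any domain, because a vertex of the Minkowski sum is uniquely a sum of vertices, so its coefficient in $fq$ is a product of nonzero coefficients. It shows that no nonzero element of $I$ has collinear support, handling all support sizes at once.

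Your route is more elementary and never uses characteristic $0$, so it actually proves the binomial statement for an arbitrary valued field $K$. What the paper's route buys is economy within the appendix. Corollary~\ref{coro:TrinomialsSize3} is needed anyway for the trinomial case (Corollary~\ref{coro:ImageOfTrinomialNotCancellative}). There the candidate supports $G_j$ are typically two-dimensional, so a pure dimension count no longer rules out circuits.
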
\begin{proof}
% Let $\theta=a_1x^{u_1}y^{v_1}+a_2x^{u_2}y^{v_2}$ be a binomial in $\base[x,y]$ and let $m=x^{u_1+u_2}y^{v_1+v_2}$. Then in Construction~\ref{constr: pairsGH} we have $h=
% %mf_1/f_2+mf_2/f_1=
% %x^{u_1+u_2}y^{v_1+v_2}\dfrac{a_1x^{u_1}y^{v_1}}{a_2x^{u_2}y^{v_2}} + x^{u_1+u_2}y^{v_1+v_2}\dfrac{a_2x^{u_2}y^{v_2}}{a_1x^{u_1}y^{v_1}}=
% \dfrac{a_1}{a_2}x^{2u_1}y^{2v_1} + \dfrac{a_2}{a_1}x^{2u_2}y^{2v_2}
% $
% and $g=h+m=\dfrac{a_1}{a_2}x^{2u_1}y^{2v_1} + x^{u_1+u_2}y^{v_1+v_2} + \dfrac{a_2}{a_1}x^{2u_2}y^{2v_2}$. Note that the exponent vectors of $g$, being $(2u_1, 2v_1)$, $(u_1+u_2, v_1+v_2)$, and $(2u_2, 2v_2)$, are collinear.

% Since each trinomial in $I$ is the product of a term and $f$, its exponent vectors are not collinear. Thus, $\supp(g)$ does not contain any circuit of $I$ of size $3$. Since $I$ contains no monomials or binomials, this means that $\supp(g)$ does not contain any circuit of $I$. 
% So Corollary~\ref{coro:SuppConditionForNotCancellative} tells us that the image of $\theta$ in $\base[x,y]/\Bend(\trop I)$ is not cancellative.
%
Let $\theta=a\chi^{u}+b\chi^{v}$ be a binomial in $\base[x_1,\ldots,x_n]$ and let $m=\chi^{u+v}$. Then in Construction~\ref{constr: pairsGH} we have $h=
%\dfrac{a_1}{a_2}x^{2u_1}y^{2v_1} + \dfrac{a_2}{a_1}x^{2u_2}y^{2v_2}
\dfrac{a}{b}\chi^{2u} + \dfrac{b}{a} \chi^{2v}
$
and $g=h+m=\dfrac{a}{b}\chi^{2u} + \chi^{u+v} + \dfrac{b}{a} \chi^{2v}$. Note that the exponent vectors of $g$ are collinear.
%\blue{this is the first time we use the $\ubar{x}$ notation.} \red{(N: Would it be better to use $\chi^\bullet$?) K: yes I think} %% Done!

Since each trinomial in $I$ is the product of a term and $f$, its exponent vectors are not collinear. Thus, $\supp(g)$ does not contain any circuit of $I$ of size $3$. Since $I$ contains no monomials or binomials, this means that $\supp(g)$ does not contain any circuit of $I$. 
So Corollary~\ref{coro:SuppConditionForNotCancellative} tells us that the image of $\theta$ in $\base[x,y]/\Bend(\trop I)$ is not cancellative.
\end{proof}
% end of pinky 10

%%%%%
%Because we want to re-state this Corollary later, we create a temporary counter here to use later.
%%%%%
\newcounter{copySection}
\setcounter{copySection}{\value{section}}
\newcounter{singleUseCounter}
\setcounter{singleUseCounter}{\value{thm}}
%%%%%
\begin{coro}\label{coro:ImageOfTrinomialNotCancellative}
Let $K$ be a field of characteristic 0 with a valuation $K\to\base$ where $\base$ is a subsemifield of $\T$. Let $f\in K[x_1,\ldots,x_n]$ be a trinomial whose exponent vectors are not collinear and let $I$ be the ideal generated by $f$. Then the image of any trinomial of $\base[x_1,\ldots,x_n]$ in $\base[x_1,\ldots,x_n]/\Bend(\trop I)$ is not cancellative.
\end{coro}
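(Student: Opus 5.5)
The plan is to run the argument of the preceding corollary (about binomials) with one extra case analysis. By Corollary~\ref{coro:SuppConditionForNotCancellative}, it suffices to produce a pair $(g,h)$ from Construction~\ref{constr: pairsGH} for $\theta$ such that $\supp(g)$ contains no circuit of $I$. Write $\theta=a\chi^{u}+b\chi^{v}+c\chi^{w}$.

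\textbf{Step 1: the candidate pairs.} I would take $m=\chi^{u+v}$. Claim~1 in the proof of the finiteness proposition for Construction~\ref{constr: pairsGH} then gives the following admissible choices.
\begin{itemize}
\item For the term $a\chi^u$, divide by $b\chi^v$ to get $h_1=\frac{a}{b}\chi^{2u}$.
\item For the term $b\chi^v$, divide by $a\chi^u$ to get $h_2=\frac{b}{a}\chi^{2v}$.
\item For the term $c\chi^w$, divide by either $\chi^u$ or $\chi^v$ to get $h_3=\frac{c}{a}\chi^{v+w}$ or $h_3=\frac{c}{b}\chi^{u+w}$.
\end{itemize}
Hence $\supp(g)\subseteq\{2u,\,u+v,\,2v,\,v+w\}$ or $\supp(g)\subseteq\{2u,\,u+v,\,2v,\,u+w\}$. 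Each of these has at most four points, and three of them, $2u$, $u+v$, $2v$, are collinear with $u+v$ the midpoint. Permuting the roles of $u,v,w$ gives $6$ candidate $4$-sets, or fewer if some points coincide.

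\textbf{Step 2: which circuits can occur.} Since $I$ has no monomials or binomials, only circuits of size $3$ or $4$ matter. For size $3$, Corollary~\ref{coro:TrinomialsSize3} says every such circuit is a monomial translate of $\supp(f)$, which is non-collinear. So $\{2u,u+v,2v\}$ is never a circuit. The remaining size-$3$ subsets of a candidate set contain $v+w$ (or $u+w$), and these must be checked against translates of $\supp(f)$.

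For size $4$, I would use Proposition~\ref{prop:TrinomialReduction}. After translating so that one term of $f$ is $1$, every cycle is homogeneous for the $\Z^r/L$-grading. The isomorphism $L\cong\Z^2$ followed by a monomial rescaling is affine on exponents and preserves collinearity and midpoints. Therefore the size-$4$ circuits of $I$ are exactly the images, under a fixed affine lattice embedding $\Z^2\to\Z^r$, of translates of the $7$ sets in Example~\ref{example:1XYSize4}.

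\textbf{Step 3: the main obstacle.} The difficulty is that two of those seven sets, $\{1,x,x^2,y^2\}$ and $\{1,xy,x^2,y^2\}$, do contain a collinear triple with a midpoint, just as our candidate sets do. So one candidate can genuinely fail. I would handle this as follows.
\begin{itemize}
\item First, a candidate set can only be a circuit if all four of its points lie in a single coset of $L$. This already forces strong conditions relating $u$, $v$, $w$ to the lattice $L$ of $f$.
\item Next, in the coordinates of $L\cong\Z^2$, a bad configuration of the type $\{1,x,x^2,y^2\}$ forces the segment $[2u,2v]$ to be an image of the segment $[1,x^2]$ and forces the fourth point to be the image of $y^2$ (and similarly for the other bad type, and for the size-$3$ coincidences in Step~2).
\item Finally, I would show that these conditions cannot hold simultaneously for all $6$ choices of roles, including both choices of $h_3$. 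For example, if $2u,2v$ match $1,x^2$ and $v+w$ matches $y^2$, then swapping the roles of $u$ and $w$ produces a segment $[2u,2w]$ whose direction is incompatible with both exceptional shapes.
\end{itemize}
I expect this finite combinatorial verification to be the hardest part, and I would organize it as a case analysis on the direction $v-u$ relative to $L$.

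Once a good choice of roles exists, the conclusion follows from Corollary~\ref{coro:SuppConditionForNotCancellative} exactly as in the binomial case.
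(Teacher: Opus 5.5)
Your plan has a genuine gap. The choice $m=\chi^{u+v}$ cannot work in general, and the failure comes from the size-$3$ circuits you set aside in Step~2, not from the size-$4$ ones.

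Every candidate you list contains a translate of the geometric support of $\theta$ itself:
\[
\{u+v,\,2v,\,v+w\}=v+\{u,v,w\},\qquad \{2u,\,u+v,\,u+w\}=u+\{u,v,w\}.
\]
So whenever $\supp(\theta)=\supp(f)$ (for instance $\theta=\trop(f)$), each of your six candidates contains $\supp(\chi^{u}f)$ or $\supp(\chi^{v}f)$, with the roles permuted. That set is a circuit of $I$, so Corollary~\ref{coro:SuppConditionForNotCancellative} never applies.

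Concretely, take $f=1+x+y$, $\theta=1+x+y$ and $m=x$. You get $g=1+x+x^2+xy$ or $g=1+x+x^2+y$. Both pairs $(g,h)$ actually lie in $\Bend(\trop I)$ by Proposition~\ref{prop:thisPairInBend?}: take $G=xf$, respectively $G=f$. The choices $m=y$ and $m=xy$ fail in the same way. Hence the claim in Step~3, that some assignment of roles survives, is false precisely in this natural case. (A minor slip as well: three, not two, of the seven size-four sets contain a three-term progression, since $\{1,y,y^2,x^2\}$ does too.)

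The missing idea is to take $m=\chi^{u+v+w}$, the product of all monomials of $\theta$. This is a missing monomial by Claim~3 in the proof of the finiteness proposition, and it lets each term be divided by either of the other two.

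Your six sets are, up to translation by $\chi^{w}$ (resp.\ $\chi^u$, $\chi^v$), exactly the six non-cyclic choices for this $m$. For example, $w+\{2u,u+v,2v,v+w\}$ is the paper's $G_1$. Your reasoning handles the case where some difference of exponents of $\theta$ is not $\pm(u_k-u_l)$ for the exponents $u_1,u_2,u_3$ of $f$. There you pick the pair with that difference, and the two distinct fourth points rule out a translate of $\supp(f)$; this is the paper's first case.

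What your candidates cannot handle is the case where $\{u,v,w\}$ is a translate of $\{u_1,u_2,u_3\}$. There one needs the cyclic choice
\[
h_1\propto \chi^{2u+w},\qquad h_2\propto\chi^{u+2v},\qquad h_3\propto\chi^{v+2w},
\]
giving $\supp(g)=\{2u+w,\,u+2v,\,v+2w,\,u+v+w\}$. Lemma~\ref{lemma:G3Works} shows that this set avoids translates of all circuits of size at most four. That check includes the centroid-type circuit $\{3u_1,3u_2,3u_3,u_1+u_2+u_3\}$ and uses Lemma~\ref{lemma:TheseExpVectSetsAreDistinct}. This cyclic choice is only available because $m$ is divisible by all three monomials of $\theta$.
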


While Corollary~\ref{coro:ImageOfTrinomialNotCancellative} is 
a fairly straightforward
consequence of results 
%\red{not a fan of "things", perhaps "results"} 
%%%%% Done!
we have already shown, the proof is long. We therefore give the proof its own appendix.
% end of pinky 11

\section{Proof of Corollary~\ref{coro:ImageOfTrinomialNotCancellative}}\label{app:ImageOfTrinomial}

In this section it will often be more convenient to work with exponent vectors rather than monomials. As such, we will call the set of exponent vectors of monomials of a (Laurent) polynomial $f$ the \emph{geometric support} of $f$. Similarly, if $C$ is a circuit of an ideal $I$ then the corresponding \emph{geometric circuit of $I$} is the set of exponent vectors of monomials in $C$.

% end of pinky 0

\newcommand{\circNameZero}{C_0}
\newcommand{\circSetZero}{\{u_1,u_2,u_3\}}
\newcommand{\circNameOne}{C_1}
\newcommand{\circSetOne}{\{2u_1, 2u_2, 2u_3, u_1+u_3\}}
\newcommand{\circNameTwo}{C_4}
\newcommand{\circSetTwo}{\{2u_2, 2u_3, u_1+u_2, u_1+u_3\}}
\newcommand{\circNameThree}{C_2}
\newcommand{\circSetThree}{\{2u_1, 2u_2, 2u_3, u_1+u_2\}}
\newcommand{\circNameFour}{C_5}
\newcommand{\circSetFour}{\{2u_1, 2u_3, u_1+u_2, u_2+u_3\}}
\newcommand{\circNameFive}{C_3}
\newcommand{\circSetFive}{\{2u_1, 2u_2, 2u_3, u_2+u_3\}}
\newcommand{\circNameSix}{C_6}
\newcommand{\circSetSix}{\{2u_1, 2u_2, u_1+u_3, u_2+u_3\}}
\newcommand{\circNameSeven}{C_7}
\newcommand{\circSetSeven}{\{3u_1, 3u_2, 3u_3, u_1+u_2+u_3\}}
%
%%%%% Capital and non-capital variations
%
\newcommand{\circNamezero}{\circNameZero}
\newcommand{\circnameZero}{\circNameZero}
\newcommand{\circnamezero}{\circNameZero}
\newcommand{\circSetzero}{\circSetZero}
\newcommand{\circsetZero}{\circSetZero}
\newcommand{\circsetzero}{\circSetZero}
\newcommand{\circNameone}{\circNameOne}
\newcommand{\circnameOne}{\circNameOne}
\newcommand{\circnameone}{\circNameOne}
\newcommand{\circSetone}{\circSetOne}
\newcommand{\circsetOne}{\circSetOne}
\newcommand{\circsetone}{\circSetOne}
\newcommand{\circNametwo}{\circNameTwo}
\newcommand{\circnameTwo}{\circNameTwo}
\newcommand{\circnametwo}{\circNameTwo}
\newcommand{\circSettwo}{\circSetTwo}
\newcommand{\circsetTwo}{\circSetTwo}
\newcommand{\circsettwo}{\circSetTwo}
\newcommand{\circNamethree}{\circNameThree}
\newcommand{\circnameThree}{\circNameThree}
\newcommand{\circnamethree}{\circNameThree}
\newcommand{\circSetthree}{\circSetThree}
\newcommand{\circsetThree}{\circSetThree}
\newcommand{\circsetthree}{\circSetThree}
\newcommand{\circNamefour}{\circNameFour}
\newcommand{\circnameFour}{\circNameFour}
\newcommand{\circnamefour}{\circNameFour}
\newcommand{\circSetfour}{\circSetFour}
\newcommand{\circsetFour}{\circSetFour}
\newcommand{\circsetfour}{\circSetFour}
\newcommand{\circNamefive}{\circNameFive}
\newcommand{\circnameFive}{\circNameFive}
\newcommand{\circnamefive}{\circNameFive}
\newcommand{\circSetfive}{\circSetFive}
\newcommand{\circsetFive}{\circSetFive}
\newcommand{\circsetfive}{\circSetFive}
\newcommand{\circNamesix}{\circNameSix}
\newcommand{\circnameSix}{\circNameSix}
\newcommand{\circnamesix}{\circNameSix}
\newcommand{\circSetsix}{\circSetSix}
\newcommand{\circsetSix}{\circSetSix}
\newcommand{\circsetsix}{\circSetSix}
\newcommand{\circNameseven}{\circNameSeven}
\newcommand{\circnameSeven}{\circNameSeven}
\newcommand{\circnameseven}{\circNameSeven}
\newcommand{\circSetseven}{\circSetSeven}
\newcommand{\circsetSeven}{\circSetSeven}
\newcommand{\circsetseven}{\circSetSeven}

\begin{lemma}\label{lemma:TrinomialsSize4}
Let $K$ be a field of characteristic 0. Let $f=a_1\chi^{u_1}+a_2\chi^{u_2}+a_3\chi^{u_3}\in K[x_1,\ldots,x_n]$ be a trinomial whose exponent vectors are not collinear and let $I$ be the ideal generated by $f$. 
%Then the sets of exponent vectors of circuits of $I$ of sizes 3 and 4 are shifts of
Then all geometric circuits of $I$ of size at most four are shifts of
\begin{align*}
% C_0&=\{u_1,u_2,u_3\},\\
% C_1&=\{2u_3, u_1+u_3, 2u_1, 2u_2\},\\
% C_2&=\{2u_3, u_1+u_3, u_1+u_2, 2u_2\},\\
% C_3&=\{2u_3, 2u_1, u_1+u_2, 2u_2\},\\
% C_4&=\{2u_3, 2u_1, u_1+u_2, u_2+u_3\},\\
% C_5&=\{2u_3, 2u_1, u_2+u_3, 2u_2\},\\
% C_6&=\{u_1+u_3, 2u_1, u_2+u_3, 2u_2\},\quad\text{and}\\
% C_7&=\{3u_3, 3u_1, u_1+u_2+u_3, 3u_2\}.
%
% \circNameZero&=\circSetZero,\\
% \circNameOne&=\circSetOne,\\
% \circNameTwo&=\circSetTwo,\\
% \circNameThree&=\circSetThree,\\
% \circNameFour&=\circSetFour,\\
% \circNameFive&=\circSetFive,\\
% \circNameSix&=\circSetSix,\quad\text{and}\\
% \circNameSeven&=\circSetSeven.
\circNameZero&=\circSetZero,\\
\circNameOne&=\circSetOne,\\
\circNameThree&=\circSetThree,\\
\circNameFive&=\circSetFive,\\
\circNameTwo&=\circSetTwo,\\
\circNameFour&=\circSetFour\\
\circNameSix&=\circSetSix,\quad\text{and}\\
\circNameSeven&=\circSetSeven.
\end{align*}
%\red{(Nati: Consider macroing the names of these sets as well as the sets themselves. Then reorder the sets to look nicer. And also reorder the elements in the sets to look nicer/be consistent between the sets. Then Also implement this in the instances where these sets are used in the following lemmas.)}
\end{lemma}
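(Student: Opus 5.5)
The plan is to transport the problem to the model ideal $I_0=(1+x+y)\subseteq K[x,y]$ using the machinery of Proposition~\ref{prop:TrinomialReduction}, and then read off the answer from Example~\ref{example:1XYSize3} and Example~\ref{example:1XYSize4}. First, since $I$ is principal and contains no monomials or binomials (look at vertices of Newton polytopes of multiples of $f$), every circuit has size at least three. Every circuit of $I$ is the support of a cycle, and every cycle is a monomial multiple of a primitive cycle. So geometric circuits are shifts of geometric supports of primitive cycles, and it suffices to show that the geometric supports of primitive cycles of size three and four are shifts of the listed sets.

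Next I will make the correspondence in the proof of Proposition~\ref{prop:TrinomialReduction} explicit. Pass to Laurent polynomials and take $\tau=a_3\chi^{u_3}$, so that $\wt f=1+\tfrac{a_1}{a_3}\chi^{u_1-u_3}+\tfrac{a_2}{a_3}\chi^{u_2-u_3}$. The isomorphism $\psi\circ\ph$ is induced by the affine lattice map $\Z^2\to L$ sending $(i,j)\mapsto i(u_1-u_3)+j(u_2-u_3)$; up to shift this is $(i,j)\mapsto iu_1+ju_2+(d-i-j)u_3$ for any fixed $d$, and it preserves the scalar structure only by rescaling coefficients. By that proof, equivalence classes of cycles correspond support-size-preservingly, and geometric supports correspond via this affine map up to shift. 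In particular a primitive cycle of $I_0$ with geometric support $\{(i_k,j_k)\}$ of degree $\le d$ gives a cycle of $I$ whose geometric support is, up to shift, $\{i_ku_1+j_ku_2+(d-i_k-j_k)u_3\}$.

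Finally apply this to the lists. For size three, Example~\ref{example:1XYSize3} gives $\{1,x,y\}$, which with $d=1$ maps to $\{u_3,u_1,u_2\}=C_0$. For size four, take the seven supports of Example~\ref{example:1XYSize4}. Using $d=2$ for the first six and $d=3$ for the last, $\{1,x^2,y^2,x\}\mapsto\{2u_3,2u_1,2u_2,u_1+u_3\}=C_1$, $\{1,x^2,y^2,xy\}\mapsto C_2$, $\{1,x^2,y^2,y\}\mapsto C_3$, $\{1,y^2,x,xy\}\mapsto\{2u_3,2u_2,u_1+u_3,u_1+u_2\}=C_4$, $\{1,x^2,xy,y\}\mapsto C_5$, $\{x^2,y^2,x,y\}\mapsto C_6$, and $\{1,x^3,y^3,xy\}\mapsto C_7$.

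The main obstacle is being careful that the bijection in Proposition~\ref{prop:TrinomialReduction} is between equivalence classes up to Laurent monomial multiples. Hence the image in $K[x_1,\ldots,x_n]$ is determined only up to shift, which is exactly why the statement is phrased with shifts. Completeness of the size-four list rests on Lemma~\ref{lemma:1XYCircuits}, which bounds degree by $3$, together with the enumeration recorded in Example~\ref{example:1XYSize4}. I would cite that enumeration (the finite check of the $210$ candidate sets) rather than redo it.
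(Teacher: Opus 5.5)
Your proposal is correct and follows essentially the paper's argument: transport the size-three and size-four primitive cycles of $(1+x+y)$ from Examples~\ref{example:1XYSize3} and~\ref{example:1XYSize4} through the reduction in the proof of Proposition~\ref{prop:TrinomialReduction} with $\tau=a_3\chi^{u_3}$, then shift the results into the polynomial ring. Your explicit affine map $(i,j)\mapsto iu_1+ju_2+(d-i-j)u_3$ is a tidier packaging of the paper's ``multiply by monomials'' step, and handling size three via Example~\ref{example:1XYSize3} is equivalent to the paper's appeal to Corollary~\ref{coro:TrinomialsSize3}.
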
\begin{proof}
As we have seen before, $I$ contains no monomials or binomials, i.e., $I$ has no circuits of size one or two. Corollary~\ref{coro:TrinomialsSize3} give us that every circuit of $I$ of size three is a monomial multiple of 
%$\circNameZero=\circSetZero$. 
$\{\chi^{u_1}, \chi^{u_2}, \chi^{u_3}\}$.
So we just need to consider circuits of size four. 
To do so, we apply the process given in the proof of Proposition~\ref{prop:TrinomialReduction} to the circuits from Example~\ref{example:1XYSize4}.

We extend $I$ to $\wt{I}:=I K[x_1^{\pm1},\ldots,x_n^{\pm1}]$ and let $I_0$ be the ideal of $K[x,y]$ generated by $1+x+y$.
%, and let $\wt{I}_0:= I K[x^{\pm1},y^{\pm1}]$. 
%
Fix the term $\tau=a_3\chi^{u_3}$ of $f$ and let $\wt{f}:=\tau^{-1}f=1+\frac{a_1}{a_3}\chi^{u_1-u_3}+\frac{a_2}{a_3}\chi^{u_2-u_3}$. We now let $\ph:K[x,y]\to K[x_1^{\pm1},\ldots,x_n^{\pm1}]$ be the $K$-algebra morphism given by $x\mapsto\frac{a_1}{a_3}\chi^{u_1-u_3}$ and $y\mapsto\frac{a_2}{a_3}\chi^{u_2-u_3}$. 
By the proof of Proposition~\ref{prop:TrinomialReduction}, every cycle of $\wt{I}$ is a  Laurent monomial multiple of the image under $\ph$ of a cycle of $I_0$. Applying the corresponding map on monomials to the circuits from Example~\ref{example:1XYSize4} gives us
%\orange{this is a bit overflowing}\blue{(Yes, but this structure mirrors how the sets in Example~\ref{example:1XYSize4} are laid out and also fits with the different appearances of $C_1,C_2,C_3$ versus $C_4,C_5,C_6$ versus $C_7$. In my mind this makes the proof easier to follow.)}
\begin{align*}
% \{1, (\chi^{u_1-u_3})^2, (\chi^{u_2-u_3})^2, \chi^{u_1-u_3}\},&&
% \{1, (\chi^{u_1-u_3})^2, (\chi^{u_2-u_3})^2, (\chi^{u_1-u_3})\chi^{u_2-u_3}\},&&
% \{1, (\chi^{u_1-u_3})^2, (\chi^{u_2-u_3})^2, \chi^{u_2-u_3}\},&&\\
% \{1, (\chi^{u_2-u_3})^2, \chi^{u_1-u_3}, \chi^{u_1-u_3}\chi^{u_2-u_3}\},&&
% \{1, (\chi^{u_1-u_3})^2, \chi^{u_1-u_3}\chi^{u_2-u_3}, \chi^{u_2-u_3}\},&&
% \{(\chi^{u_1-u_3})^2, (\chi^{u_2-u_3})^2, \chi^{u_1-u_3}, \chi^{u_2-u_3}\},\\
% \text{and}&&
% \{1, (\chi^{u_1-u_3})^3, (\chi^{u_2-u_3})^3, \chi^{u_1-u_3}\chi^{u_2-u_3}\}.
%
\hspace{-0.5cm}
\{1, \chi^{2u_1-2u_3}, \chi^{2u_2-2u_3}, \chi^{u_1-u_3}\},&&
\{1, \chi^{2u_1-2u_3}, \chi^{2u_2-2u_3}, \chi^{u_1+u_2-2u_3}\},&&
\{1, \chi^{2u_1-2u_3}, \chi^{2u_2-2u_3}, \chi^{u_2-u_3}\},&&\\
\hspace{-2cm}
\{1, \chi^{2u_2-2u_3}, \chi^{u_1-u_3}, \chi^{u_1+u_2-2u_3}\},&&
\{1, \chi^{2u_1-2u_3}, \chi^{u_1+u_2-2u_3}, \chi^{u_2-u_3}\},&&
\{\chi^{2u_1-2u_3}, \chi^{2u_2-2u_3}, \chi^{u_1-u_3}, \chi^{u_2-u_3}\},\\
\text{and}&&
\{1, \chi^{3u_1-3u_3}, \chi^{3u_2-3u_3}, \chi^{u_1+u_2-2u_3}\}.
\end{align*}
Multiplying by monomials to ensure that we are in $K[x_1,\ldots,x_n]$ instead of $K[x_1^{\pm1},\ldots,x_n^{\pm1}]$, we find that every circuit of $I$ is a Laurent monomial multiple of one of 
\begin{align*}
% \hspace{-0.5cm}
% \{1, \chi^{2u_1-2u_3}, \chi^{2u_2-2u_3}, \chi^{u_1-u_3}\},&&
% \{1, \chi^{2u_1-2u_3}, \chi^{2u_2-2u_3}, \chi^{u_1+u_2-2u_3}\},&&
% \{1, \chi^{2u_1-2u_3}, \chi^{2u_2-2u_3}, \chi^{u_2-u_3}\},&&\\
% \hspace{-2cm}
% \{1, \chi^{2u_2-2u_3}, \chi^{u_1-u_3}, \chi^{u_1+u_2-2u_3}\},&&
% \{1, \chi^{2u_1-2u_3}, \chi^{u_1+u_2-2u_3}, \chi^{u_2-u_3}\},&&
% \{\chi^{2u_1-2u_3}, \chi^{2u_2-2u_3}, \chi^{u_1-u_3}, \chi^{u_2-u_3}\},\\
% \text{and}&&
% \{1, \chi^{3u_1-3u_3}, \chi^{3u_2-3u_3}, \chi^{u_1+u_2-2u_3}\}.
%
\{\chi^{2u_3}, \chi^{2u_1}, \chi^{2u_2}, \chi^{u_1+u_3}\},&&
\{\chi^{2u_3}, \chi^{2u_1}, \chi^{2u_2}, \chi^{u_1+u_2}\},&&
\{\chi^{2u_3}, \chi^{2u_1}, \chi^{2u_2}, \chi^{u_2+u_3}\},&&\\
\{\chi^{2u_3}, \chi^{2u_2}, \chi^{u_1+u_3}, \chi^{u_1+u_2}\},&&
\{\chi^{2u_3}, \chi^{2u_1}, \chi^{u_1+u_2}, \chi^{u_2+u_3}\},&&
\{\chi^{2u_1}, \chi^{2u_2}, \chi^{u_1+u_3}, \chi^{u_2+u_3}\},\\
\text{and}&&
\{\chi^{3u_3}, \chi^{3u_1}, \chi^{3u_2}, \chi^{u_1+u_2+u_3}\}.
\end{align*}
The lemma statement is obtained by taking the exponent vectors of these monomials.
\end{proof}

We will build up to using Lemma~\ref{lemma:TrinomialsSize4} over the course of several lemmas.

\begin{lemma}\label{lemma:TheseExpVectSetsAreDistinct}
Let $u_1,$ $u_2,$ and $u_3$ be distinct elements of 
%an abelian group (written additively) that has no 7-torsion. 
a real vector space. 
Then $\{2u_1-u_2-u_3, 2u_2-u_1-u_3, 2u_3-u_1-u_2\}$ is not equal to either of $\{u_1-u_2, u_2-u_3, u_3-u_1\}$ or $\{u_1-u_3, u_2-u_1, u_3-u_2\}$. 
%\orange{should this be done for real vectors and the 7-torsion observation should be in a remark?}
%%%%% Done!
\end{lemma}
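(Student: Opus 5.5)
Set $A:=\{2u_1-u_2-u_3,\,2u_2-u_1-u_3,\,2u_3-u_1-u_2\}$, $B:=\{u_1-u_2,\,u_2-u_3,\,u_3-u_1\}$ and $B':=\{u_1-u_3,\,u_2-u_1,\,u_3-u_2\}=-B$. The plan is to reduce to the single case $A=B$ and then rule it out by a short linear-algebra computation.

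\emph{Reduction.} The set $A$ is unchanged when we swap $u_2$ and $u_3$. That swap sends $B$ to $\{u_1-u_3,\,u_3-u_2,\,u_2-u_1\}=B'$. So if $A=B'$ for the triple $(u_1,u_2,u_3)$, then $A=B$ for the distinct triple $(u_1,u_3,u_2)$. It therefore suffices to show $A\neq B$.

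\emph{Setup.} Translating all three $u_i$ by a common vector changes neither $A$ nor $B$, so we may assume $u_1+u_2+u_3=0$. Then $A=\{3u_1,3u_2,3u_3\}$. Both $A$ and $B$ have elements summing to $0$. Put $d_1=u_1-u_2$, $d_2=u_2-u_3$, $d_3=u_3-u_1$.

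\emph{Casework on the matching.} Suppose $A=B$ as sets. The elements $3u_i$ are distinct because the $u_i$ are, so $A=B$ means $3u_i=d_{\sigma(i)}$ for some bijection $\sigma$ of $\{1,2,3\}$.
\begin{itemize}
\item[(i)] If $\sigma$ is the identity: from $3u_1=u_1-u_2$ we get $u_2=-2u_1$, and then $u_3=-u_1-u_2=u_1$, contradicting distinctness.
\item[(ii)] If $\sigma$ is any of the other five permutations, each equation is linear in $u_1,u_2$ after eliminating $u_3=-u_1-u_2$. Each case should force two of the $u_i$ to coincide, typically all of them to be $0$.
\end{itemize}

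A cleaner way to organise (ii) is to note that the eliminated system is a $2\times 2$ matrix $M_\sigma$ acting on $(u_1,u_2)$. One checks that either $\det M_\sigma\neq0$, forcing $u_1=u_2=0$ and hence $u_3=0$, or, when $\det M_\sigma=0$, that the kernel lies on a diagonal where two of the $u_i$ coincide.

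The main obstacle is bookkeeping. No case may be missed, and in the degenerate-determinant cases we must verify that the kernel really forces two of the $u_i$ to be equal rather than allowing a nonzero distinct solution. I expect each of the six cases to take a line or two once $u_3$ is eliminated. If the paper prefers, the 7-torsion remark would be the observation that the nonzero determinants that arise are divisible only by primes such as $7$.
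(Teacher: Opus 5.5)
Your strategy is sound: the swap reduction, the translation to $u_1+u_2+u_3=0$, and the bijection $\sigma$ with $3u_i=d_{\sigma(i)}$ all work. But the proposal stops where the proof actually happens. The five non-identity $\sigma$ are only predicted to work, and you leave open the possibility of a singular $M_\sigma$.

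Here is the missing check. After eliminating $u_3$ you have $d_1=u_1-u_2$, $d_2=u_1+2u_2$, $d_3=-2u_1-u_2$. Use only the two equations $3u_1=d_{\sigma(1)}$ and $3u_2=d_{\sigma(2)}$; the third is redundant.
\begin{itemize}
\item For $\sigma(1)=1,2,3$ the first row of $M_\sigma$ is $(2,1)$, $(2,-2)$, $(5,1)$ respectively.
\item For $\sigma(2)=1,2,3$ the second row is $(-1,4)$, $(-1,1)$, $(2,4)$ respectively.
\item For $(\sigma(1),\sigma(2))=(1,2),(1,3),(2,1),(2,3),(3,1),(3,2)$ this gives $\det M_\sigma=3,6,6,12,21,6$.
\end{itemize}
None of these vanishes. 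So every case forces $u_1=u_2=0$, hence $u_3=0$, and there is no degenerate case. With this table filled in, your argument is a complete proof.

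The paper takes a different route: it neither translates nor enumerates bijections, but works one pairing at a time. Six of the nine possible equalities between an element of $\{2u_1-u_2-u_3,\,2u_2-u_1-u_3,\,2u_3-u_1-u_2\}$ and an element of $\{u_1-u_2,\,u_2-u_3,\,u_3-u_1\}$ are individually impossible, since each reduces to $u_i=u_j$ or $2u_i=2u_j$. That leaves only the cyclic matching. The paper rewrites it as $2(u_1-u_2)=u_2-u_3$, $2(u_2-u_3)=u_3-u_1$, $2(u_3-u_1)=u_1-u_2$, and chains these to get $8(u_1-u_2)=u_1-u_2$.

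That cyclic matching is exactly your $\det M_\sigma=21=3\cdot 7$ case; the extra factor $3$ comes from your normalization. The paper's version has one nontrivial case instead of six determinants and never divides by $3$. Yours is more uniform and mechanical.

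Your closing guess about the torsion remark does not follow from your route. The translation divides by $3$, and the determinants have prime factors $2$, $3$ and $7$, not only $7$.

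In fact $2$-torsion matters for any version of the argument. In $(\mathbb{Z}/2)^2$ with $u_1=0$, $u_2=(1,0)$, $u_3=(0,1)$, the set $\{2u_1-u_2-u_3,\,2u_2-u_1-u_3,\,2u_3-u_1-u_2\}$ equals $\{u_1-u_2,\,u_2-u_3,\,u_3-u_1\}$. So ``no $7$-torsion'' alone would not suffice.
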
\begin{proof}
Note that the permutation interchanging $u_1$ and $u_2$ leaves $\{2u_1-u_2-u_3, 2u_2-u_1-u_3, 2u_3-u_1-u_2\}$ invariant and exchanges $\{u_1-u_2, u_2-u_3, u_3-u_1\}$ and $\{u_1-u_3, u_2-u_1, u_3-u_2\}$. So it suffices to show that $\{2u_1-u_2-u_3, 2u_2-u_1-u_3, 2u_3-u_1-u_2\}\neq\{u_1-u_2, u_2-u_3, u_3-u_1\}$. Suppose, for contradiction, that $\{2u_1-u_2-u_3, 2u_2-u_1-u_3, 2u_3-u_1-u_2\}=\{u_1-u_2, u_2-u_3, u_3-u_1\}$.

If $u_1-u_2=2u_1-u_2-u_3$ then $0=u_1-u_3$, %so $u_1=u_3$, 
contradicting the hypothesis that $u_1$, $u_2$, and $u_3$ are distinct. Thus $u_1-u_2\neq 2u_1-u_2-u_3$. Similar considerations show that $u_2-u_3\neq 2u_2-u_1-u_3$, $u_3-u_1\neq 2u_3-u_1-u_2$, $u_1-u_2\neq 2u_3-u_1-u_2$, $u_2-u_3\neq 2u_1-u_2-u_3$, and $u_3-u_1\neq 2u_2-u_1-u_3$.

So we must have $u_1-u_2=2u_2-u_1-u_3$, $u_2-u_3=2u_3-u_1-u_2$, and $u_3-u_1=2u_1-u_2-u_3$. Rearranging each of these gives $2(u_1-u_2)=u_2-u_3$, $2(u_2-u_3)=u_3-u_1$, and $2(u_3-u_1)=u_1-u_2$. 
%Then $8(u_1-u_2)=u_1-u_2$, i.e., $7(u_1-u_2)=0$. Since there is no 7-torsion, $u_1-u_2=0$, 
Then $8(u_1-u_2)=u_1-u_2$ and so $u_1-u_2=0$, 
contradicting $u_1$, $u_2$, and $u_3$ being distinct.
\end{proof}
% end of pinky 1

\begin{remark}
The proof of Lemma~\ref{lemma:TheseExpVectSetsAreDistinct} shows that the result is also true if $u_1$, $u_2$, and $u_3$ are distinct elements of an abelian group with no 7-torsion.
\end{remark}

\begin{lemma}\label{lemma:quadrilaterals}
Let $u_1$, $u_2$, and $u_3$ be affinely independent vectors in a real vector space. Then the convex hulls of each of the sets 
% $C_2=\{2u_3, u_1+u_3, u_1+u_2, 2u_2\}$, 
% $C_4=\{2u_3, 2u_1, u_1+u_2, u_2+u_3\}$, and 
% $C_6=\{u_1+u_3, 2u_1, u_2+u_3, 2u_2\}$ 
$\circNameTwo=\circSetTwo$, 
$\circNameFour=\circSetFour$, and 
$\circNameSix=\circSetSix$ 
are all quadrilaterals. 
\end{lemma}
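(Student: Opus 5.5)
Since $u_1,u_2,u_3$ are affinely independent, I will pass to barycentric-type coordinates. The affine map sending $u_1,u_2,u_3$ to the vertices $(0,0),(1,0),(0,1)$ of the standard triangle in $\R^2$ extends to an affine isomorphism of their affine span. Doubling or summing points commutes with it in the appropriate sense: the set $\{2u_i, u_j+u_k\}$ lies in the affine plane of $2\cdot\mathrm{aff}\{u_1,u_2,u_3\}$. Equivalently, I will identify $u_1,u_2,u_3$ with $e_1,e_2,e_3\in\R^3$, since affinely independent points span a linear image of this configuration. Linear injections preserve convex hulls and their combinatorial type, so it suffices to treat $u_i=e_i$. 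All points then lie in the plane $x_1+x_2+x_3=2$.

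With $u_i=e_i$ the sets become
\begin{align*}
C_4&=\{(0,2,0),(0,0,2),(1,1,0),(1,0,1)\},\\
C_5&=\{(2,0,0),(0,0,2),(1,1,0),(0,1,1)\},\\
C_6&=\{(2,0,0),(0,2,0),(1,0,1),(0,1,1)\}.
\end{align*}
These are related by permuting coordinates. For example, $C_4$ is $C_6$ after the permutation swapping coordinates $1$ and $3$: $C_6\mapsto\{(0,0,2),(0,2,0),(1,0,1),(1,1,0)\}=C_4$. So it suffices to prove the claim for one of them, say $C_6$.

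For $C_6$ I will show that the four points are in convex position with no three collinear. Then their convex hull is a quadrilateral (in the plane $x_1+x_2+x_3=2$ it is a polygon with exactly four vertices).

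For no three collinear, I will check the four triples directly by rank of differences, which is a short computation. For convex position, I will exhibit a supporting line for each point in the plane, i.e.\ a linear functional maximized uniquely at that point over $C_6$. The functional $x_1$ works for $(2,0,0)$ and $x_2$ for $(0,2,0)$. For the other two points, $x_3-x_2$ picks out $(1,0,1)$ with value $1$, against $0$, $-2$ and $0$ at the other points. Likewise $x_3-x_1$ picks out $(0,1,1)$. Each point is thus a vertex, and four vertices with none in the hull of the others gives a quadrilateral.

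The only potential obstacle is justifying the reduction to $u_i=e_i$ cleanly. Affine independence gives a linear map $\R^3\to V$ sending $e_i\mapsto u_i$. Its restriction to the plane $\{x_1+x_2+x_3=2\}$ is injective: a kernel vector would have coordinate sum $0$ and correspond to a nontrivial affine dependence of the $u_i$. Injective affine maps preserve convex hulls and vertex sets, so the reduction holds. Everything else is a routine check.
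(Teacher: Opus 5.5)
Your proof is correct. It shares the paper's skeleton: reduce by symmetry to one of the three sets, then show the hull is two-dimensional and that every point is a vertex. The execution differs.

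You first normalize to $u_i=e_i$ via the linear map $\mathbb{R}^3\to V$, $e_i\mapsto u_i$. Its restriction to the plane $x_1+x_2+x_3=2$ is injective precisely because of affine independence. After that, the dimension bound is automatic, and each vertex of $C_6$ is certified by an explicit exposing functional: $x_1$, $x_2$, $x_3-x_2$, $x_3-x_1$. I checked these, and the coordinate permutations relating $C_6$ to $C_4$ and $C_5$.

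The paper instead works inside $V$ throughout. It bounds $\dim\operatorname{conv}(C_4)$ above by the affine relation $2u_2=2(u_1+u_2)-2(u_1+u_3)+2u_3$, and below by two non-parallel edges. It shows each $p\in C_4$ is a vertex by checking that the cone generated by $C_4-p$ is spanned by two vectors (e.g.\ $u_2-u_3$ and $u_1-u_3$ for $p=2u_3$), hence pointed. These two vertex certificates are dual to one another. Your normalization gives one uniform finite computation and removes the separate dimension argument, at the cost of a transfer step, which you justify correctly.

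Two small remarks:
\begin{itemize}
\item Your first paragraph's affine map to $(0,0),(1,0),(0,1)$ is loosely stated. An affine map does not commute with $x\mapsto 2x$ unless its translation part vanishes, so that paragraph is best dropped in favor of the linear map you actually use.
\item Your ``no three collinear'' check is redundant. Four coplanar points that are all exposed cannot include three collinear ones, and their hull must be two-dimensional.
\end{itemize}
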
\begin{proof}
Since the permutations of $\{u_1,u_2,u_3\}$ act transitively on $\{\circNameTwo, \circNameFour, \circNameSix\}$, it suffices to show that the convex hull of $\circNameTwo$ is a quadrilateral.

Note that $2u_2=2(u_1+u_2)-2(u_1+u_3)+1(2u_3)$, 
%with \green{$2-2+1=1$}, 
so $2u_2$ is in the affine hull of $\{2u_3, u_1+u_2, u_1+u_3\}$, so the dimension of the convex hull of $\circNameTwo
%\green{=\circSetTwo}
$ is at most two.

Since $u_1$, $u_2$, and $u_3$ are affinely independent, the line through $\{u_1,u_2\}$ is not parallel to the line through $\{u_1,u_3\}$. So the lines between $\{u_1,u_2\}+u_2=\{u_1+u_2, 2u_2\}$ and $\{u_1,u_3\}+u_3=\{u_1+u_3,2u_3\}$ are not parallel. Thus the dimension of the convex hull of $\circNameTwo
%\green{=\circSetTwo}
$ is at least two. So, in light of the previous paragraph, the dimension of the convex hull of $\circNameTwo$ is exactly two.

Thus, it now suffices to show that each point of $\circNameTwo
%\green{=\circSetTwo}
$ is not contained in the convex hull of the other three. Note that it follows from the affine independence of $u_1$, $u_2$, and $u_3$ that $2u_2$, $2u_3$, $u_1+u_2$, and $u_1+u_3$ are distinct. 
So, to show that $p\in\circNameTwo$ is not in the convex hull of $\circNameTwo\sdrop\{p\}$, it suffices to show that the cone generated by the shift $\circNameTwo-p$ of $\circNameTwo$ by $-p$ is strictly convex. Since this cone is two-dimensional, it suffices to show that this cone can be generated by two vectors.

For $p=2u_3$, we have that $\circNameTwo-p
%=\{2u_2, 2u_3, u_1+u_2, u_1+u_3\}-(2u_3)
=\{2(u_2-u_3), 0, u_1+u_2-2u_3, u_1-u_3\}
$ generates the same cone as $\{u_2-u_3, u_1-u_3\}$. The details for the other values of $p$ are similar.%, and are left to the reader.
%%%%% Here are the details for the other values of $p$.
% $p=2u_2$: $\circNameTwo-p
% =\{2u_2, 2u_3, u_1+u_2, u_1+u_3\}-(2u_2)
% =\{0, 2(u_3-u_2), u_1-u_2, u_1+u_3-2u_2\}$ generates the same cone as $\{u_3-u_2,u_1-u_2\}$.\\
% $p=u_1+u_2$: $\circNameTwo-p
% =\{2u_2, 2u_3, u_1+u_2, u_1+u_3\}-(u_1+u_2)
% =\{u_2-u_1, 2u_3-u_1-u_2, 0, u_3-u_2\}$ generates the same cone as $\{u_2-u_1, u_3-u_2\}$ because $2u_3-u_1-u_2=2(u_3-u_2)+(u_2-u_1)$.\\
% $p=u_1+u_3$: $\circNameTwo-p
% =\{2u_2, 2u_3, u_1+u_2, u_1+u_3\}-(u_1+u_3)
% =\{2u_2-u_1-u_3, u_3-u_1, u_2-u_3, 0\}$ generates the same cone as $\{u_3-u_1, u_2-u_3\}$ because $2u_2-u_1-u_3=2(u_2-u_3)+(u_3-u_1)$.\\
%%%%% These were the details for the other values of $p$.
\end{proof}

\begin{lemma}\label{lemma:G3Works}
Let $u_1$, $u_2$, and $u_3$ be affinely independent vectors in a real vector space. Then no shift of the set $G=\{2u_1+u_3, u_1+2u_2, u_2+2u_3, u_1+u_2+u_3\}$ contains any shift of any of the following sets:
\begin{align*}
% C_0&=\{u_1,u_2,u_3\},\\
% C_1&=\{2u_3, u_1+u_3, 2u_1, 2u_2\},\\
% C_2&=\{2u_3, u_1+u_3, u_1+u_2, 2u_2\},\\
% C_3&=\{2u_3, 2u_1, u_1+u_2, 2u_2\},\\
% C_4&=\{2u_3, 2u_1, u_1+u_2, u_2+u_3\},\\
% C_5&=\{2u_3, 2u_1, u_2+u_3, 2u_2\},\\
% C_6&=\{u_1+u_3, 2u_1, u_2+u_3, 2u_2\},\quad\text{and}\\
% C_7&=\{3u_3, 3u_1, u_1+u_2+u_3, 3u_2\}.
\circNameZero&=\circSetZero,\\
\circNameOne&=\circSetOne,\\
\circNameThree&=\circSetThree,\\
\circNameFive&=\circSetFive,\\
\circNameTwo&=\circSetTwo,\\
\circNameFour&=\circSetFour\\
\circNameSix&=\circSetSix,\quad\text{and}\\
\circNameSeven&=\circSetSeven.
\end{align*}
\end{lemma}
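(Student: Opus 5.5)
Everything reduces to affine geometry in the plane spanned by $u_1,u_2,u_3$. Set $v=u_2-u_1$ and $w=u_3-u_1$; these are linearly independent. Translating, write $G-(u_1+u_2+u_3)$ in coordinates $(\alpha,\beta)$ meaning $\alpha v+\beta w$:
\begin{align*}
2u_1+u_3-(u_1+u_2+u_3)&=u_1-u_2 \mapsto (-1,0),\\
u_1+2u_2-(u_1+u_2+u_3)&=u_2-u_3 \mapsto (1,-1),\\
u_2+2u_3-(u_1+u_2+u_3)&=u_3-u_1 \mapsto (0,1),\\
0&\mapsto(0,0).
\end{align*}
So $G$ is, up to shift, the origin together with the three vectors $u_1-u_2$, $u_2-u_3$, $u_3-u_1$, which sum to zero. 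In particular $0$ is in the interior of the triangle they span, so $G$ is a triangle with one interior point. I would also record the set of pairwise differences of $G$. These are the elements of $\pm\{u_1-u_2,\,u_2-u_3,\,u_3-u_1\}$ together with differences such as $(u_1-u_2)-(u_2-u_3)=u_1-2u_2+u_3$ and its cyclic analogues.

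For the size-four sets $C_1,\dots,C_7$, a shift of $C_i$ contained in a shift of $G$ must equal a shift of $G$, since both have four elements. Convex hull shape is preserved by translation, so I would compare shapes. By Lemma~\ref{lemma:quadrilaterals}, $C_4,C_5,C_6$ have quadrilateral convex hulls, whereas $G$ has a triangular hull with an interior point. That rules them out. For $C_7$, the point $u_1+u_2+u_3$ is interior to the triangle $\{3u_1,3u_2,3u_3\}$. Up to shift, $C_7$ is $\{0,\,2u_1-u_2-u_3,\,2u_2-u_1-u_3,\,2u_3-u_1-u_2\}$, where $0$ is the unique interior point. Any translation carrying $C_7$ to $G$ must match interior points, so the three outer vectors would have to agree as sets. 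Lemma~\ref{lemma:TheseExpVectSetsAreDistinct} says exactly that $\{2u_1-u_2-u_3,\dots\}\ne\{u_1-u_2,u_2-u_3,u_3-u_1\}$, which settles $C_7$.

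For $C_1,C_2,C_3$, each contains the triangle $\{2u_1,2u_2,2u_3\}$ plus a midpoint of one of its edges. Three points of such a set are collinear, namely two vertices and the midpoint between them. I would check that no three points of $G$ are collinear. In the coordinates above, the points are $(0,0),(-1,0),(1,-1),(0,1)$, and a direct check of the four triples shows that none is collinear. This rules out $C_1,C_2,C_3$.

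For $C_0$, I would argue via differences. A shift of $C_0$ inside a shift of $G$ would require $u_2-u_1$, $u_3-u_1$, and $u_3-u_2$ all to be differences of elements of $G$, and in particular to be consistent within one triple. In coordinates these are $(1,0)$, $(0,1)$, and $(-1,1)$. The three-element subsets of $G$ are finite in number, namely four, so I would enumerate them. Each triple is a translate of $C_0$ iff it equals $\{p,\,p+(1,0),\,p+(0,1)\}$ for some $p$. Checking the four triples:
\begin{itemize}
\item $\{0,(-1,0),(0,1)\}$ fails, since it would need $p=(-1,0)$ and then $(0,0)$ and $(-1,1)$, giving $\{(-1,0),(0,0),(-1,1)\}$.
\item The other three triples fail similarly.
\end{itemize}
This is a routine finite check and is the main bookkeeping step, though not difficult. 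The only place care is needed is the $C_7$ case, and that is exactly what Lemma~\ref{lemma:TheseExpVectSetsAreDistinct} was set up for.
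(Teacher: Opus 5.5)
Your proof is correct. It shares the paper's skeleton: four-element sets must coincide with a translate of $G$, $C_4,C_5,C_6$ are excluded by Lemma~\ref{lemma:quadrilaterals}, and $C_7$ is excluded by matching interior points and applying Lemma~\ref{lemma:TheseExpVectSetsAreDistinct}, all exactly as in the paper.

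The differences are in the remaining cases.
\begin{itemize}
\item For $C_1,C_2,C_3$ the paper uses the fact that each non-vertex point is the midpoint of an edge, whereas $u_1+u_2+u_3$ lies in the relative interior of the hull of $G$. Your ``three collinear points versus none'' check is an equivalent invariant.
\item For $C_0$ the paper stays coordinate-free. The vectors from $u_1+u_2+u_3$ to the outer points of $G$ form the cyclic set $\{u_1-u_2,u_2-u_3,u_3-u_1\}$, which contains no two vectors $u_i-u_k$, $u_j-u_k$ with a common $k$. So a translate of $C_0$ in $G$ must be the outer triangle, and the paper then compares difference sets with Lemma~\ref{lemma:TheseExpVectSetsAreDistinct} a second time.
\item Your coordinates turn the $C_0$ case into a finite check. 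Your write-up only spells out one of the four triples, but all four are handled at once by one observation: the only pair in $\{(0,0),(-1,0),(1,-1),(0,1)\}$ differing by $(1,0)$ is $(-1,0),(0,0)$, and $(-1,0)+(0,1)=(-1,1)$ is not in the set.
\end{itemize}

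The coordinate route makes every case mechanical. It would even make Lemma~\ref{lemma:TheseExpVectSetsAreDistinct} unnecessary for $C_7$, since the outer points of the shifted $C_7$ are $(-1,-1),(2,-1),(-1,2)$, visibly different from $(-1,0),(1,-1),(0,1)$. The paper's route instead keeps the argument symmetric in $u_1,u_2,u_3$ and reuses a single distinctness lemma for both $C_7$ and $C_0$.
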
\begin{proof}
It suffices to show that $G$ cannot contain any shift of any of $C_0,C_1,C_2,C_3,C_4,C_5,C_6$ and $C_7$. 
Also, since $C_1, C_2, C_3, C_4, C_5, C_6, C_7$, and $G$ all have four elements, 
%a shift of $G$ contains a shift of some $C_i$ for $i=1,\ldots,7$ if and only if those shifts are equal.
$G$ contains a shift of some $C_i$ for $i=1,\ldots,7$ if and only if $G$ equals that shift.

Note that the convex hulls of each of the sets $\circNameOne$, $\circNameThree$, $\circNameFive$ are triangles and that the non-vertex point in each of these sets 
%lays \orange{lies?} along 
is the midpoint of
an edge of the triangle.

Since $u_1+u_2+u_3=\frac{1}{3}(2u_1+u_3)+\frac{1}{3}(u_1+2u_2)+\frac{1}{3}(u_2+2u_3)$, $p=u_1+u_2+u_3$ is in the relative interior of the convex hull of $G\sdrop\{p\}$, the previous paragraph and Lemma~\ref{lemma:quadrilaterals} %\orange{the lemma or its proof} \blue{(The lemma itself. Because the convex hulls of $C_4,C_5$ and $C_6$ are quadrilaterals while the convex hull of $G$ has at most 3 vertices.)} 
give us that 
%no shift of $G$ can be equal to any shift of $C_1,C_2,C_3,C_4,C_5$, or $C_6$.
$G$ cannot be equal to any shift of $C_1,C_2,C_3,C_4,C_5$, or $C_6$.

So we just need to show that $G$ cannot be equal to a shift of $C_7$ or contain a shift of $C_0$.

Suppose that $G$ is equal to a shift of $\circNameSeven=\circSetSeven$. Since $u_1+u_2+u_3\in C_7$ is in the relative interior of the triangle that is the convex hull of $C_7\sdrop\{u_1+u_2+u_3\}$ and $u_1+u_2+u_3\in G$ is in the relative interior of the convex hull of $G\sdrop\{u_1+u_2+u_3\}$, the shift of $u_1+u_2+u_3\in C_7$ must equal $u_1+u_2+u_3\in G$. That is, we must have $C_7=G$. Thus, $(C_7\sdrop\{u_1+u_2+u_3\})-(u_1+u_2+u_3)=\{2u_1-u_2-u_3, 2u_2-u_1-u_3, 2u_3-u_1-u_2\}$ and $(G\sdrop\{u_1+u_2+u_3\})-(u_1+u_2+u_3)=\{u_1-u_2, u_2-u_3, u_3-u_1\}$ must be the same. But, by Lemma~\ref{lemma:TheseExpVectSetsAreDistinct}, this is impossible. 

So now suppose that $G$ contains a shift of $C_0=\{u_1,u_2,u_3\}$. Because $(G\sdrop\{u_1+u_2+u_3\})-(u_1+u_2+u_3)=\{u_1-u_2, u_2-u_3, u_3-u_1\}$ does not contain any two vectors of the forms $u_i-u_k$ and $u_j-u_k$ for $\{i,j,k\}=\{1,2,3\}$, any shift of $C_0$ contained in $G$ cannot contain $u_1+u_2+u_3$. So $G\sdrop\{u_1+u_2+u_3\}=\{2u_1+u_3, u_1+2u_2, u_2+2u_3\}$ must be equal to a shift of $C_0=\{u_1,u_2,u_3\}$.  So 
$\{(2u_1+u_3)-(u_2+2u_3), (u_1+2u_2)-(2u_1+u_3), (u_2+2u_3)-(u_1+2u_2)\}=\{2u_1-u_2-u_3, 2u_2-u_1-u_3, 2u_3-u_1-u_2\}$ must be equal to either $\{u_1-u_2, u_2-u_3, u_3-u_1\}$ or $\{u_1-u_3, u_3-u_2, u_2-u_1\}$. But both of these are impossible by Lemma~\ref{lemma:TheseExpVectSetsAreDistinct}.
\end{proof}

% end of pinky 2

We now recall Corollary~\ref{coro:ImageOfTrinomialNotCancellative} and provide its proof.

\setcounter{section}{\value{copySection}}
\setcounter{thm}{\value{singleUseCounter}}%%%%%This was the state of the "thm" counter just before 
\begin{coro}%\label{coro:ImageOfTrinomialNotCancellative}
Let $K$ be a field of characteristic 0 with a valuation $K\to\base$ where $\base$ is a subsemifield of $\T$. Let $f\in K[x_1,\ldots,x_n]$ be a trinomial whose exponent vectors are not collinear and let $I$ be the ideal generated by $f$. Then the image of any trinomial of $\base[x_1,\ldots,x_n]$ in $\base[x_1,\ldots,x_n]/\Bend(\trop I)$ is not cancellative.
\end{coro}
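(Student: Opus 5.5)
The plan is to mimic the binomial case: produce, via Construction~\ref{constr: pairsGH}, a pair $(g,h)$ whose support contains no circuit of $I$, and then invoke Corollary~\ref{coro:SuppConditionForNotCancellative}. Let the trinomial be $\theta=a_1\chi^{v_1}+a_2\chi^{v_2}+a_3\chi^{v_3}$ in $\base[x_1,\ldots,x_n]$. I will take the missing monomial $m=\chi^{v_1+v_2+v_3}$, with the cyclic choice $j_1=3$, $j_2=1$, $j_3=2$. This gives $h_1=\frac{a_1}{a_3}\chi^{2v_1+v_2}$, $h_2=\frac{a_2}{a_1}\chi^{2v_2+v_3}$, and $h_3=\frac{a_3}{a_2}\chi^{v_1+2v_3}$, or some cyclic variant of these chosen so that the geometric support of $g=h+m$ is exactly a set of the shape
$$G=\{2w_1+w_3,\ w_1+2w_2,\ w_2+2w_3,\ w_1+w_2+w_3\}$$
from Lemma~\ref{lemma:G3Works}, after relabelling the $v_i$ as $w_i$. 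Each $h_i$ is a term, $m\notin\supp h$, and $j_i\neq i$, so Construction~\ref{constr: pairsGH} applies.

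The key point is that the circuits of $I$ are governed by the exponents $u_1,u_2,u_3$ of $f$, while $G$ is built from the exponents $v_i$ of $\theta$. These are unrelated, so Lemma~\ref{lemma:G3Works} does not apply verbatim. I would split into cases.

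\textbf{Case 1: the $v_i$ are collinear.} Use Lemma~\ref{lem: specialfNotCancelS}-style reasoning: taking $m=\chi^{v_i+v_k}$ for the two extreme points yields a $g$ whose geometric support is collinear. Since every circuit of $I$ of size at most $|\supp g|$ is a shift of some $C_i$ by Lemma~\ref{lemma:TrinomialsSize4}, and no such $C_i$ is collinear (each contains a shift of the affinely independent triple, or a quadrilateral or triangle by Lemma~\ref{lemma:quadrilaterals}), the support contains no circuit.

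\textbf{Case 2: the $v_i$ are affinely independent.} Here $\supp g$ has four points and is $G$. I must show that $G$ contains no shift of any $C_0,\ldots,C_7$ built from the $u_i$. Circuits of size greater than four are irrelevant, since $|\supp g|=4$. Size-four circuits would have to equal $G$. The convex hulls of $C_1,C_2,C_3$ are triangles with an edge midpoint, those of $C_4,C_5,C_6$ are quadrilaterals (Lemma~\ref{lemma:quadrilaterals}), and $C_7$ is a triangle with interior centroid-type point. The set $G$ is a triangle with its barycenter, so only $C_7$ is possible. If $G=C_7+s$, then the three vertex-differences of $G$, which are $\{w_1-w_2,w_2-w_3,w_3-w_1\}$ centred at the barycenter, must equal those of $C_7$, namely $\{2u_1-u_2-u_3,\ldots\}$.

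This is where the main obstacle lies, since the $u$'s and $w$'s are independent. The plan is to exploit the freedom of the other cyclic orientation, $j=(2,3,1)$, which gives $G'=\{2w_1+w_2, 2w_2+w_3, 2w_3+w_1, w_1+w_2+w_3\}$ with vertex-differences $\{w_1-w_3,\ldots\}$. If both $G$ and $G'$ equalled shifts of $C_7$, the two difference triples would have to coincide (both equal the $C_7$ triple). That forces $\{w_1-w_2,w_2-w_3,w_3-w_1\}=\{w_1-w_3,w_2-w_1,w_3-w_2\}$, which contradicts affine independence by an argument like that of Lemma~\ref{lemma:TheseExpVectSetsAreDistinct}. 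Hence one orientation works.

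The size-three circuits are shifts of $C_0$ contained in $G$. Since $G$ minus its barycenter consists of the centred vertices, a shift of the triangle $C_0$ inside $G$ is either the three vertices of $G$ or includes the barycenter. The latter is impossible by a difference-vector check as in Lemma~\ref{lemma:G3Works}. The former needs the $C_0$ edge triple to avoid both orientations' edge triples, which I would handle by the same two-orientation trick, choosing an orientation avoiding both $C_7$ and $C_0$. I expect this combined choice, rather than the construction itself, to be the delicate step. If it fails, I would try $m=\chi^{2v_1+v_2}$-type alternatives.
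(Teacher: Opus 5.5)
Your Case~1 is fine: with $m=\chi^{v_1+v_2+v_3}$, every choice of the $j_i$ already gives a collinear support. Your treatment of the size-four circuits in Case~2 is also fine: $G$ is a triangle with its barycenter, so only $C_7$ can occur, and $D=\{w_1-w_2,w_2-w_3,w_3-w_1\}$ and $-D$ cannot both equal $\{2u_1-u_2-u_3,2u_2-u_1-u_3,2u_3-u_1-u_2\}$.

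The gap is in the size-three circuits. You claim that a shift of $C_0$ through the barycenter of $G$ is ``impossible by a difference-vector check as in Lemma~\ref{lemma:G3Works}''. This is false: that check uses that the $w_i$ are a translate of the $u_i$, which is exactly what fails here. For a concrete counterexample, take $f=1+x+y$ and $\theta=1+y+xy$ with $w_1=(1,1)$, $w_2=(0,1)$, $w_3=(0,0)$. Then $m=xy^2$ and $h=x^2y^2+xy^3+y$, and $\supp(g)$ contains $xy^2\cdot\{1,x,y\}=\supp(xy^2 f)$. So this orientation gives a pair lying in $\Bend(\trop I)$ by Proposition~\ref{prop:thisPairInBend?}. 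Each orientation can therefore be obstructed in three ways: by a shift of $C_7$, by its three vertices forming a shift of $C_0$, or by a shift of $C_0$ through the barycenter. The two-orientation trick by itself only shows that both orientations cannot be obstructed in the \emph{same} way. What is missing is exactly the ``combined choice'' that you flag as delicate and leave open.

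This can be closed inside your framework. Put $p_i=u_i-\frac13(u_1+u_2+u_3)$. The orientation with difference set $D$ is obstructed iff $D$ is one of the five sets $\{3p_1,3p_2,3p_3\}$, $\{p_1,p_2,p_3\}$, or $\{u_i-u_k,\,u_j-u_k,\,3p_k\}$ with $\{i,j,k\}=\{1,2,3\}$. The other orientation is obstructed iff $-D$ is one of these five sets. In coordinates with respect to the basis $p_1,p_2$, the following checks show no set on the list is the negative of one on the list:
\begin{itemize}
\item The differences $u_a-u_b$ are $\pm(1,-1),\pm(2,1),\pm(1,2)$, which never lie in $\pm\{p_a\}\cup\pm\{3p_a\}$.
\item Neither $\{p_a\}$ nor $\{3p_a\}$ is the negative of a set on the list.
\item A set of the third type contains exactly two differences, and they share the subtracted index. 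Its negative contains two differences that share the leading index.
\end{itemize}
So no $D$ obstructs both orientations, and your route then gives a proof genuinely different from the paper's.

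The paper avoids this bookkeeping. It first uses the non-cyclic choices $G_1,G_2$, whose evenly spaced collinear triple $2v_1+v_3,\ v_1+v_2+v_3,\ 2v_2+v_3$ excludes $C_4,\ldots,C_7$ outright, and excludes $C_1,C_2,C_3$ unless $v_1-v_2\in\{\pm(u_i-u_j)\}$. It then shows that $G_1$ and $G_2$ cannot both contain a shift of $C_0$. It reserves the cyclic $G_3$ for the rigid case $\{\pm(v_i-v_j)\}=\{\pm(u_i-u_j)\}$, where the $v_i$ are a translate of $\pm$ the $u_i$ and Lemma~\ref{lemma:G3Works} applies verbatim.
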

\begin{proof}%[Proof of Corollary~\ref{coro:ImageOfTrinomialNotCancellative}]
Let 
$\theta=b_1\chi^{v_1}+b_2\chi^{v_2}+b_3\chi^{v_3}\in\base[x_1,\ldots,x_n]$ be a trinomial; we want to show that the image of $\theta$ in $\base[x_1,\ldots,x_n]/\Bend(\trop I)$ is not cancellative.
The possibilities for the geometric support of $g$ where $(g,h)$ arises from Construction~\ref{constr: pairsGH} are
%%%%% To me, the following seems very hard to read - it feels like a solid block of text where the symbols almost start to swim if you're looking in the middle. So I'm doing a version with more space below.
% \begin{align*}
% G_1&=\{2v_1+v_3, 2v_2+v_3, v_2+2v_3, v_1+v_2+v_3\},\\
% G_2&=\{2v_1+v_3, 2v_2+v_3, v_1+2v_3, v_1+v_2+v_3\},\\
% G_3&=\{2v_1+v_3, v_1+2v_2, v_2+2v_3, v_1+v_2+v_3\},\\
% G_4&=\{2v_1+v_3, v_1+2v_2, v_1+2v_3, v_1+v_2+v_3\},\\
% G_5&=\{2v_2+v_2, 2v_2+v_3, v_2+2v_3, v_1+v_2+v_3\},\\
% G_6&=\{2v_2+v_2, 2v_2+v_3, v_1+2v_3, v_1+v_2+v_3\},\\
% G_7&=\{2v_2+v_2, v_1+2v_2, v_2+2v_3, v_1+v_2+v_3\}, \quad\text{and}\\
% G_8&=\{2v_2+v_2, v_1+2v_2, v_1+2v_3, v_1+v_2+v_3\}.
% \end{align*}

\begin{align*}
G_1&=\{2v_1+v_3, &2v_2+v_3,&&v_2+2v_3, &&v_1+v_2+v_3\},\\
G_2&=\{2v_1+v_3, &2v_2+v_3,&&v_1+2v_3, &&v_1+v_2+v_3\},\\
G_3&=\{2v_1+v_3, &v_1+2v_2,&&v_2+2v_3, &&v_1+v_2+v_3\},\\
G_4&=\{2v_1+v_3, &v_1+2v_2,&&v_1+2v_3, &&v_1+v_2+v_3\},\\
G_5&=\{2v_1+v_2, &2v_2+v_3,&&v_2+2v_3, &&v_1+v_2+v_3\},\\
G_6&=\{2v_1+v_2, &2v_2+v_3,&&v_1+2v_3, &&v_1+v_2+v_3\},\\
G_7&=\{2v_1+v_2, &v_1+2v_2,&&v_2+2v_3, &&v_1+v_2+v_3\},\\%& \quad\green{\text{and}}\\
G_8&=\{2v_1+v_2, &v_1+2v_2,&&v_1+2v_3, &&v_1+v_2+v_3\}.
\end{align*}
%\orange{for $G_5-G_8$ in the first column do you mean $2v_2 +v_1$} 
%%%%% Oops! It is now changed to $2v_1+v_2$. The patern is that in column $i=1,2,3$, we have $2v_i$ and then either $+v_j$ or $+v_k$ where $\{i,j,k\}=\{1,2,3\}$. 
%
%\blue{(The "and"s in the above and below displayed/aligned equations seem to me to be grammatically necessary.)}
%
By Corollary~\ref{coro:SuppConditionForNotCancellative} and Lemma~\ref{lemma:TrinomialsSize4}, it suffices to show that one of $G_1,\ldots,G_8$ does not contain any shift of any of 
\begin{align*}
\circNameZero&=\circSetZero,\\
\circNameOne&=\circSetOne,\\
\circNameThree&=\circSetThree,\\
\circNameFive&=\circSetFive,\\
\circNameTwo&=\circSetTwo,\\
\circNameFour&=\circSetFour,\\
\circNameSix&=\circSetSix,\\%\quad\green{\text{and}}\\
\circNameSeven&=\circSetSeven.
\end{align*}
Since all $G_j$ (for $j=1,\ldots,8$) and $C_i$ for $i=1,\ldots,7$ have size four, for any $G_j$ to contain a shift of one of these $C_i$, it must be equal to that shift.

By \ref{lemma:quadrilaterals}, none of $\circNameTwo$, $\circNameFour$, and $\circNameSix$ contain three collinear points.  Since $u_1+u_2+u_3$ is in the relative interior of the triangle whose vertices are $3u_1$, $3u_2$, and $3u_3$, $\circNameSeven$ also does not contain three collinear points. On the other hand, $\circNameOne$, $\circNameThree$, and $\circNameFive$ each have three evenly spaced collinear points (with the fourth point not on that line): 
$\circNameOne$ has $2u_1$, $u_1+u_3$, and $2u_3$, 
$\circNameThree$ has $2u_1$, $u_1+u_2$, and $2u_2$, and
$\circNameFive$ has $2u_2$, $u_2+u_3$, and $2u_3$. 
For each of $\circNameOne$, $\circNameThree$, and $\circNameFive$, we now compute the vectors pointing from the middle point on the line to the outer points. For $\circNameOne$ we get $\pm(u_1-u_3)$, for $\circNameThree$ we get $\pm(u_1-u_2)$, and for $\circNameFive$ we get $\pm(u_2-u_3)$.

Note that both $G_1$ and $G_2$ contain the three evenly spaced collinear points $2v_1+v_3$, $v_1+v_2+v_3$, and $2v_2+v_3$. The vectors pointing from the middle point ($v_1+v_2+v_3$) to the outer points are $v_1-v_2$ and $v_2-v_1$.

We now consider the case where $v_1-v_2\notin\{\pm(u_1-u_2), \pm(u_1-u_3), \pm(u_2-u_3)\}$. By the last two paragraphs, we see that $G_1$ and $G_2$ both cannot be any shift of $C_i$ for $i=1,\ldots,7$.
So, to finish this case, it suffices to show that $G_1$ and $G_2$ don't both contain a shift of $\circnamezero$. 
% Towards this, let $p_1=v_2+2v_3$ and $p_2=v_1+2v_3$ so $G_j=\{2v_1+v_3, 2v_2+v_3, p_j, v_1+v_2+v_3\}$ for $j=1,2$.
% If $G_j$ contains a shift $D_j=\circnamezero+w_j$ of $\circnamezero$ then exactly two vertices of $D_j=\{u_1+w_j, u_2+w_j, u_3+w_j\}$ must be in $\{2v_1+v_3, 2v_2+v_3, v_1+v_2+v_3\}$. 
Towards this, let $p_1=v_2+2v_3$ and $p_2=v_1+2v_3$ so $G_j=\{2v_1+v_3, 2v_2+v_3, p_j, v_1+v_2+v_3\}$ for $j=1,2$ and suppose, for contradiction, that both $G_1$ and $G_2$ contain shifts of $\circnamezero$. 
Let $D_j=\circnamezero+w_j$ be the shift of $\circnamezero$ contained in $G_j$.
So exactly two vertices of $D_j=\{u_1+w_j, u_2+w_j, u_3+w_j\}$ must be in $\{2v_1+v_3, 2v_2+v_3, v_1+v_2+v_3\}\subsetneq G_j$. 
Since $v_1-v_2\notin\{\pm(u_1-u_2), \pm(u_1-u_3), \pm(u_2-u_3)\}$, these two vertices must be $2v_1+v_3$ and $2v_2+v_3$. By re-indexing the $u_i$'s if needed, we may assume without loss of generality that  $2v_1+v_3=u_1+w_j$ and $2v_2+v_3=u_2+w_j$. In particular, $w_1=w_2$. Since the third vertex of $D_j$ is $p_j=u_3+w_j$, we have $p_1=u_3+w_1=u_3+w_2=p_2$, but $p_1=v_2+2v_3$ and $p_2=v_1+2v_3$, so this contradicts the fact that $v_1$ and $v_2$ are distinct. Thus, at least one of $G_1$ and $G_2$ does not contain a shift of $\circnamezero$,
% so the image of $\theta$ is not cancellative in $\base[x_1,\ldots,x_n]/\Bend(\trop I)$.
completing the proof in this case.

By symmetry in the terms of $\theta=b_1\chi^{v_1}+b_2\chi^{v_2}+b_3\chi^{v_3}$, this corollary is also true any time $v_i-v_j\notin\{\pm(u_1-u_2), \pm(u_1-u_3), \pm(u_2-u_3)\}$ for some $i\neq j$.

So now we just need to consider the case, where 
$$\{\pm(v_1-v_2), \pm(v_1-v_3), \pm(v_2-v_3)\}=\{\pm(u_1-u_2), \pm(u_1-u_3), \pm(u_2-u_3)\}.$$ 
In fact, in this case $\{v_2-v_1, v_3-v_2, v_1-v_3\}$ must be one of $\pm\{u_2-u_1, u_3-u_1, u_1-u_3\}$ because $(v_2-v_1)+(v_3-v_2)+(v_1-v_3)=0$. By re-indexing the $u_i$'s, we may assume without loss of generality that $v_2-v_1=u_2-u_1$, and so $\{v_3-v_2,v_1-v_3\}=\{u_3-u_2, u_1-u_3\}$. We consider two sub-cases as to whether $v_3-v_2=u_3-u_2$ and $v_1-v_3=u_1-u_3$ or $v_3-v_2=u_1-u_3$ and $v_1-v_3=u_3-u_2$.

Sub-case 1: $v_3-v_2=u_3-u_2$ and $v_1-v_3=u_1-u_3$. Letting $w=v_1-u_1$ we have $v_1=u_1+w$, $v_2=u_2-u_1+v_1=u_2+w$ and $v_3=u_3-u_2+v_2=u_3+w$. Then $G_3
%=\{2v_1+v_3, v_1+2v_2, v_2+2v_3, v_1+v_2+v_3\}
=\{2u_1+u_3, u_1+2u_2, u_2+2u_3, u_1+u_2+u_3\}+3w
$
which, by Lemma~\ref{lemma:G3Works}, does not contain any shift of any of $C_i$ for $i=0,\ldots,7$.

Sub-case 2: $v_3-v_2=u_1-u_3$ and $v_1-v_3=u_3-u_2$. Letting $w=v_1-u_1$ we have $v_1=u_1+w$, $v_2=u_2-u_1+v_1=u_2+w$ and $v_3=u_1-u_3+v_2=u_1+u_2-u_3+w$.
Then 
\begin{align*}
G_3
%=\{2v_1+v_3, v_1+2v_2, v_2+2v_3, v_1+v_2+v_3\}
&=\{2u_1+(u_1+u_2-u_3), u_1+2u_2, u_2+2(u_1+u_2-u_3), u_1+u_2+(u_1+u_2-u_3)\}+3w\\
&=\{3u_1+u_2-u_3, u_1+2u_2, 2u_1+3u_2-2u_3 , 2u_1+2u_2-u_3\}+3w\\
&=\{2u_1+u_3, u_2+2u_3, u_1+2u_2, u_1+u_2+u_3\}+(u_1+u_2-2u_3)+3w,
\end{align*}
and so Lemma~\ref{lemma:G3Works} tells us that $G_3$ does not contain any shift of any of $C_i$ for $i=0,\ldots,7$.
\end{proof}

\bibliographystyle{alpha}

\end{document}